\documentclass[12pt, reqno]{amsart}

\usepackage[a4paper,top=3cm,bottom=2.5cm,left=2.75cm,right=2.75cm,marginparwidth=1.75cm]{geometry}

\usepackage{amsmath}
\usepackage{amsthm}
\usepackage{amsfonts}
\usepackage{amssymb}
\usepackage{mathtools}
\usepackage[mathscr]{euscript}
\usepackage{bm}
\usepackage[table]{xcolor}
\usepackage[all,cmtip]{xy}
\usepackage{enumitem}
\usepackage{tikz}  
\usepackage{tikz-cd}
\usepackage{graphicx}
\usepackage{scalerel}
\usepackage{microtype}
\usepackage{comment}
\usepackage{etoolbox}

\patchcmd{\abstract}{\global\let\abstract\relax}{\global\let\abstract\relax
  \par\addvspace{\baselineskip}
  \noindent\footnotesize\textbf{Mathematics Subject Classification (2020):} \@subjclass
  \par\addvspace{0.5\baselineskip}
  \noindent\footnotesize\textbf{Keywords:} \@keywords
}{}{}

\usepackage{verbatim}
\usepackage[colorlinks]{hyperref}
\hypersetup{
    colorlinks,
    citecolor=magenta,
    filecolor=red,
    linkcolor=blue,
    urlcolor=purple,
    pdftitle={The Verra Involution on Special Cubic Fourfolds},
    pdfauthor={Augustinas Jacovskis, Reinder Meinsma and Riccardo Moschetti},
}

\usepackage[T2A,T1]{fontenc}
\DeclareSymbolFont{cyrillic}{T2A}{cmr}{m}{n}
\DeclareMathSymbol{\Sha}{\mathalpha}{cyrillic}{216}

\usepackage{comment}
\usepackage{array}
\usepackage{tikz-cd}

\definecolor{citation}{rgb}{0,.40,.80}
\definecolor{reference}{rgb}{.80,0,.40}
\definecolor{todogray}{HTML}{E4E4E4}

\numberwithin{equation}{section}

\theoremstyle{plain}
\newtheorem{theorem}{Theorem}[section]
\newtheorem{lemma}[theorem]{Lemma}
\newtheorem{proposition}[theorem]{Proposition}
\newtheorem{corollary}[theorem]{Corollary}
\newtheorem{conjecture}[theorem]{Conjecture}
\newtheorem{question}[theorem]{Question}

\theoremstyle{definition}
\newtheorem{definition}[theorem]{Definition}
\newtheorem{example}[theorem]{Example}
\newtheorem{remark}[theorem]{Remark}

\newtheorem*{remark*}{Remark}

\newtheorem{warning}[theorem]{Warning}

\newcommand{\set}[1]{\left\{ \, #1 \, \right\}}
\newcommand{\cFM}{\cF\cM}

\newcommand{\Db}{\mathrm{D^b}}

\DeclareMathOperator{\Pic}{Pic}
\DeclareMathOperator{\NS}{NS}

\DeclareMathOperator{\Spec}{Spec}

\newcommand{\functionstar}[4]{
\begin{array}{rcl} #1 &\longrightarrow &#2 \\ #3&\longmapsto &#4 \end{array}
}

\newcommand{\isomorphism}[5]{\begin{array}{lrcl} 
#1: & #2 & \overset{\sim}{\longrightarrow} & #3 \\
    & #4 & \longmapsto & #5 \end{array}
}

\newcommand{\function}[5]{\begin{array}{lrcl} 
#1: & #2 & \longrightarrow & #3 \\
    & #4 & \longmapsto & #5 \end{array}
}

\DeclareMathOperator{\Hom}{Hom}
\DeclareMathOperator{\End}{End}
\DeclareMathOperator{\Ext}{Ext}
\DeclareMathOperator{\Aut}{Aut}

\DeclareMathOperator{\Bl}{Bl}
\DeclareMathOperator{\dv}{div}
\DeclareMathOperator{\res}{res}

\newcommand{\id}{\mathrm{id}}

\newcommand{\verlat}{\ensuremath{\Lambda_{V_4}^0}}
\newcommand{\gravc}{\ensuremath{K_0(\operatorname{Var}_\bC)}}
\newcommand{\gghs}{\ensuremath{K_0^{\oplus}(\operatorname{HS}_{\bZ,2})}}

\newcommand{\gghsfour}{\ensuremath{K_0^{\oplus}(\operatorname{HS}_{\bZ,4})}}

\DeclareMathOperator{\FM}{FM}

\DeclareMathOperator{\Jac}{J}

\newcommand{\klat}{\ensuremath{\Lambda_{\operatorname{K3}}}}

\DeclareMathOperator{\Br}{Br}

\newcommand{\disc}{\mathrm{disc}}

\newcommand{\rk}{\mathrm{rk}}

\docsvlist{A,B,C,D,E,F,G,H,I,J,K,L,M,N,O,P,Q,R,S,T,U,V,W,X,Y,Z} 

\docsvlist{A,B,C,D,E,F,G,H,I,J,K,L,M,N,O,P,Q,R,S,T,U,V,W,X,Y,Z} 

\newcommand{\bA}{\mathbf{A}}
\newcommand{\bC}{\mathbf{C}}

\newcommand{\bZ}{\mathbf{Z}}
\newcommand{\bP}{\mathbf{P}}
\newcommand{\bQ}{\mathbf{Q}}

\newcommand{\bL}{\mathbf{L}}

\usepackage[numbers]{natbib}
\begin{document}

\title[The Verra involution on special cubic fourfolds]{The Verra involution on  special cubic fourfolds}

\author[Augustinas Jacovskis, Reinder Meinsma and Riccardo Moschetti]{Augustinas Jacovskis, Reinder Meinsma and Riccardo Moschetti}
\address{Jacovskis: Nacionalinis Fizinių ir Technologijos Mokslų Centras, 
Saulėtekio al. 3, Vilnius, 10257, Lithuania}
\email{\texttt{augustinas.jacovskis@ff.vu.lt}}
\address{Meinsma: Fakultät für Mathematik und Informatik,
Universität des Saarlandes,
Campus E2.4, 66123 Saarbrücken, Germany
}
\email{\texttt{meinsma@math.uni-sb.de}}
\address{Moschetti: Department of Mathematics F. Casorati, University of Pavia, via Ferrata 5, 27100 Pavia, Italy} 
\email{\texttt{riccardo.moschetti@unipv.it}}
\begin{abstract}
    We present a detailed study of the Hodge theory, geometry and derived categories of Verra fourfolds, and their relationship with twisted K3 surfaces and cubic fourfolds. We classify Brauer classes of Verra type on K3 surfaces of degree 2, and use this to derive an explicit counting formula for Fourier--Mukai partners of a large class of Verra fourfolds. We develop the basic theory of cohomology of Verra fourfolds and apply it to prove a derived Torelli theorem for a large class of Verra fourfolds. As an application, we prove that L-equivalent Verra fourfolds in this class are Fourier--Mukai partners, and conjecture that this implication holds for all Verra fourfolds. We then give a construction of cubic fourfolds birational to Verra fourfolds. The construction yields examples of cubic and Verra fourfolds whose associated twisted K3 surfaces have non-trivial Brauer classes. Therefore, these fourfolds are expected to be irrational.
\end{abstract}

\maketitle

\tableofcontents

\section{Introduction}
\addtocontents{toc}{\protect\setcounter{tocdepth}{1}}
\renewcommand{\thetheorem}{\Alph{theorem}}

The interplay between derived categories and birational geometry is a central theme in the study of higher-dimensional algebraic varieties. 
Cubic fourfolds provide a natural testing ground for this interplay, because their categorical, Hodge-theoretic, and birational geometry are closely linked via their relationship with K3 surfaces and associated hyperk\"ahler manifolds. 

In particular, special cubic fourfolds lying in certain Noether--Lefschetz divisors (called \textit{Hassett divisors}) admit additional structures that allow one to recover a twisted K3 surface, both lattice-theoretically and categorically. 

An important example of this is given by a cubic fourfold $X \subset \bP^5$ containing a plane $P \subset \bP^5$. Given such a pair $(X,P)$, the projection to the plane $P$ induces a quadric surface bundle 
$\Bl_PX \to P$, whose discriminant locus is a sextic curve $C \subset \bP^2$. To such a quadric surface bundle, we can associate a twisted K3 surface $(S,\alpha)$ via the Stein factorisation of the moduli space of lines contained in fibres $\cF(\Bl_PX/P) \to P$. Moreover, the Kuznetsov component of $X$ is given by $\cA_X \simeq \Db(S,\alpha)$ \cite[Theorem 4.3]{Kuzcub} (see also Example \ref{ex:cubic fourfolds quadric fibration}). 

Since a quadric surface bundle over $\bP^2$ is rational as soon as its associated twisted K3 surface has trivial Brauer class, this example led Kuznetsov to conjecture that a cubic fourfold is rational if and only if its Kuznetsov component is equivalent to the derived category of a K3 surface \cite[Conjecture 1.1]{Kuzcub}.
Later, Huybrechts conjectured \cite[Conjecture 2.5]{HuybrechtsUpdate} that two cubic fourfolds are birational whenever their Kuznetsov components are equivalent (see Conjecture \ref{conj: Huybrechts birational categorical Torelli}), and Brooke, Frei and Marquand conjectured \cite[Conjecture 1.2]{BFM} that cubic fourfolds with birational Fano varieties of lines are birational (see Conjecture \ref{conj: BFM}).

We note that the rationality question for very general cubic fourfolds was recently settled \cite{KKPY}. Nevertheless, Kuznetsov's, Huybrechts' and Brooke--Frei--Marquand's conjectures are still open. 

In this paper, we study two types of quadric surface bundles: cubic fourfolds containing a plane, and Verra fourfolds. 

Let $V_4$ be a \textit{Verra fourfold}, that is, a double cover $V_4 \to \bP^2 \times \bP^2$ branched over a divisor $V_3 \subset \bP^2\times \bP^2$ of bidegree $(2,2)$. The divisor $V_3$ is known as an \emph{ordinary Verra threefold}. Verra fourfolds, like cubic fourfolds, are Fano varieties of \textit{K3-type}. The Kuznetsov component $\cA_{V_4}$ is equivalent to the derived category of a twisted K3 surface $\Db(S,\alpha)$, which is analogous to the case of a cubic fourfold containing a plane. 
We note that the rationality question for very general Verra fourfolds is still open, though there has been some recent progress \cite{BGMP, FayVerra}.

The conjectures of Huybrechts and Kuznetsov have natural translations to the setting of Verra fourfolds. However, we note that no examples of non-isomorphic Verra fourfolds with equivalent Kuznetsov components have been constructed before the present paper.

\subsection{Our results}

We say that a twisted K3 surface $(S,\alpha)$ is \textit{of Verra type} if there is a Verra fourfold $V_4$, one of whose associated twisted K3 surfaces is isomorphic to $(S,\alpha)$, see Definition \ref{def: verra type}. Twisted K3 surfaces of Verra type are our main object of study.

For a twisted K3 surface $(S,\alpha)$, the property of being of Verra type is equivalent to the existence of a double cover $\varphi \colon S \to \bP^2$ with branch locus $C \subset \bP^2$, and a non-trivial $2$-torsion Brauer class $\beta \in \Br(\bP^2 \setminus C)_2$ such that $\varphi^*\beta=\alpha$ \cite[\S9]{vGe}, \cite[Lemma 5.9]{JM}. In this case, the $2$-torsion Brauer class $\beta$ corresponds to the conic bundle $V_3 \to \bP^2$, where $V_3$ is the branch locus of the double cover $V_4 \to \bP^2 \times \bP^2$.

\subsubsection{Fourier--Mukai partners of Verra fourfolds}

In our first main result, we determine the kernel and cokernel of the map $\Br(\bP^2 \setminus C)_2 \to \Br(S)_2$ precisely. This is built on the work of Ford \cite{Ford} and Ingalls--Obus--Ozman--Viray \cite{IOOV}.

\begin{theorem}[=Theorem \ref{thm: kernel and cokernel of res}] \label{thm: kernel and cokernel of res INTRO}
     Let $S$ be a K3 surface of Picard rank $\rho$ with an ample divisor $H$ of degree $H^2 = 2$. Let $C\subset \bP^2$ be the branch curve of the double cover $S \to \bP^2$. There is an exact sequence
    \[
    0\to (\bZ/2\bZ)^{\rho-r} \to \Br(\bP^2\setminus C)_2 \overset{\res}{\longrightarrow} \Br(S)_2 \to (\bZ/2\bZ)^{2-r}\to 0,
    \]
    where 
    \[
    r = \begin{cases}
        1 & \text{ if } \dv_{\NS(S)}(H) = 2 \\
        2 & \text{ if } \dv_{\NS(S)}(H) = 1.
    \end{cases}
    \]    
\end{theorem}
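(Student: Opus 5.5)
We factor the restriction map through the Brauer group of the open surface $S^\circ:=S\setminus R\cong \varphi^{-1}(\bP^2\setminus C)$, where $R\subset S$ is the ramification curve, and obtain the sequence by splicing two sequences: one for the pullback along the étale double cover $S^\circ\to U:=\bP^2\setminus C$, and one for the purity sequence of $R\subset S$. We then compute the relevant $2$-torsion groups lattice-theoretically.

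First we describe $\Br(U)_2$. Purity and $\Br(\bP^2)=0$ give
\[
0\to \Br(U)\to H^1(C,\bQ/\bZ),
\]
so for a smooth sextic $\Br(U)_2\cong H^1(C,\bZ/2)\cong(\bZ/2)^{20}$. Since $C$ has degree $6$, we also have $H^1(U,\bZ/2)=\bZ/2$, and this class defines the cover $S^\circ\to U$.

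Next we treat the étale double cover $\pi\colon S^\circ\to U$ with Galois group $G=\bZ/2$, using the Hochschild--Serre spectral sequence for $\mathbf{G}_m$, following Ford and Ingalls--Obus--Ozman--Viray. Because $\mathcal{O}(S^\circ)^*=\bC^*$, the group cohomology $H^p(G,\bC^*)$ vanishes for $p$ odd and is trivial in the relevant range. The kernel of $\Br(U)\to\Br(S^\circ)^G$ is then controlled by
\[
H^1(G,\Pic S^\circ)=\ker(1+\iota)/\operatorname{im}(1-\iota)\quad\text{on } \Pic S^\circ=\NS(S)/\bZ[R].
\]
The cokernel is controlled by $H^2(G,\Pic S^\circ)$ together with a correction coming from $H^3(G,\bC^*)$. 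The covering involution $\iota$ acts on $\NS(S)$ as reflection-type minus the identity on $H^\perp$ and fixes $H$, and $R\sim 3H$ in $\Pic S$. Hence both cohomology groups are computed from the $\bZ[G]$-module $\NS(S)/\bZ\cdot 3H$.

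We then pass from $\Br(S^\circ)$ to $\Br(S)$ using the purity sequence
\[
0\to\Br(S)\to\Br(S^\circ)\to H^1(R,\bQ/\bZ),
\]
taking $2$-torsion and $G$-invariants. The image of $\Br(U)$ consists of classes whose residues along $R$ are pulled back from $C\cong R$, and $\varphi$ doubles ramification indices. So a residue $\gamma$ on $C$ pulls back to $2\gamma=0$, and every $\varphi^*\beta$ is unramified on $S$. This shows $\res$ lands in $\Br(S)$ and reduces the problem to computing the group cohomology above.

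Finally we carry out the lattice computation. Write $\NS(S)\otimes\bQ=\bQ H\oplus (H^\perp)_\bQ$. The quantity $\dv(H)$ measures whether $\NS(S)=\bZ H\oplus H^\perp_{\NS}$ or is an index-$2$ overlattice of it; this is the only way the decomposition can fail, since $H^2=2$. In the split case ($\dv H=2$) we get $H^1(G,\NS)\cong H^\perp/2H^\perp=(\bZ/2)^{\rho-1}$ together with a $\bZ/2$ from $\bZ H$. Quotienting by $3H$ then changes the count by one, leading to kernel rank $\rho-1$ and cokernel $\bZ/2$. In the non-split case the glue cuts both by one, giving $\rho-2$ and $0$.

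The main obstacle is this step: tracking the correction terms precisely. These are the $\bZ/2$ coming from $H^1(U,\bZ/2)$ (the class of the cover, which dies on $S$), $H^3(G,\bC^*)$, and the quotient by $[R]$. We would fix them by testing the formula on the generic case $\rho=1$, $\dv H=2$. There the kernel is trivial and the map $(\bZ/2)^{20}\to\Br(S)_2=(\bZ/2)^{21}$ has cokernel $\bZ/2$, matching van Geemen's count, and we would use this case to normalize the remaining terms.
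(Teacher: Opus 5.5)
\paragraph{The gap: the dependence on $\dv(H)$ is never established.} Your architecture (Hochschild--Serre for the \'etale cover $S^\circ\to U$, then purity along $R$) is a legitimate route. But the step that carries the content of the theorem is missing, and the computations you sketch for it are wrong.

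First, for the trivial action, $H^p(G,\bC^*)$ is $\mu_2\cong\bZ/2$ for $p$ odd and $0$ for $p>0$ even, not the other way round. So the low-degree sequence reads $0\to\ker(\res)\to H^1(G,\Pic S^\circ)\xrightarrow{d_2}H^3(G,\bC^*)\cong\bZ/2$.

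Second, with $\Pic S^\circ=\NS(S)/\bZ\cdot 3H$ and $\iota^*x=(x\cdot H)H-x$, one finds $H^1(G,\Pic S^\circ)\cong(\bZ/2)^{\rho-1}$ in \emph{both} cases. Write $N=H^\perp\cap\NS(S)$.
\begin{itemize}
\item If $\dv H=2$, the group is $N/2N$. There is no extra $\bZ/2$ from $\bZ H$, since $H^1(G,\bZ)=0$, and passing to the quotient by $3H$ changes nothing.
\item If $\dv H=1$, choose $D$ with $D\cdot H=1$. Then $H^1(G,\NS(S))=N/(2N+\bZ(2D-H))\cong(\bZ/2)^{\rho-2}$. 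But $H^2(G,\NS(S))=0$, so the class of $D+H$ contributes an extra $\bZ/2$ coming from $H^2(G,\bZ\cdot 3H)$.
\end{itemize}
So ``the glue cuts both by one'' does not happen at the level of $H^1(G,\Pic S^\circ)$. What you have is exactly Ford's statement $\ker(\res)\cong(\bZ/2)^{\rho-r}$ with $r\in\{1,2\}$ undetermined. The theorem amounts to showing that $d_2=0$ if and only if $\dv H=2$.

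Normalizing against $\rho=1$ cannot decide this: there $H^1(G,\Pic S^\circ)=0$, $d_2$ is vacuous, and $\dv H=1$ never occurs. Likewise the cokernel is not derived. It is not ``controlled by $H^2(G,\Pic S^\circ)$'': that group vanishes when $\dv H=2$, which is precisely the case where the cokernel is $\bZ/2$.

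\paragraph{How to close it.} Along your route you must compute $d_2$. Represent a class by a divisor $D'$ with $D'+\iota^*D'=\dv(f_0)$ on $S^\circ$; a cocycle computation identifies $d_2$ of the class with the sign $c$ such that $\iota^*f_0=cf_0$.
\begin{itemize}
\item If $D'\cdot H=0$, then $\varphi_*D'$ is the divisor of a degree-$0$ rational function $h$ on $\bP^2$. Then $f_0=\varphi^*h$ is invariant, so $c=1$.
\item If $D'\cdot H=3$ (possible if and only if $\dv H=1$, e.g.\ $D'$ in the class $D+H$), then $D'+\iota^*D'=\varphi^*E$ with $E=\varphi_*D'$ of degree $3$. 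Take $f_0=g/w$, where $g$ cuts out $E$ and $w^2=F$ is the equation of $S$. Since $\iota^*w=-w$, you get $c=-1$.
\end{itemize}
This gives $\ker(\res)$. The cokernel then follows by counting, since $|\Br(\bP^2\setminus C)_2|=2^{20}$ and $|\Br(S)_2|=2^{22-\rho}$.

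The paper avoids the spectral sequence entirely. It uses the Ingalls--Obus--Ozman--Viray sequence $0\to\Pic(S)/(\bZ H+2\Pic(S))\to(\Pic(C)/\bZ L)_2\to\Br(S)_2\to0$ and identifies $\res$ with the restriction of $\mathcal{A}$ to $\Jac(C)_2$. A snake-lemma argument against the degree map $D\mapsto D\cdot H \bmod 2$ then shows that $\res$ is surjective if and only if some divisor has odd degree against $H$. Ford's result supplies the kernel.
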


It follows from Theorem \ref{thm: kernel and cokernel of res INTRO} that if $S$ has Picard rank $2$, and $\dv(H) = 1$, then every non-trivial $2$-torsion Brauer class $\alpha \in \Br(S)_2$ is of Verra type.
Using Theorem \ref{thm: kernel and cokernel of res INTRO}, combined with a careful analysis of the cohomology of Verra fourfolds in Section \ref{sec: cohomology of Verra fourfolds} we produce a counting formula for Fourier--Mukai partners of Verra fourfolds whose associated twisted K3 surfaces have these properties. Here, we say that two Verra fourfolds $V_4$ and $V_4'$ are Fourier--Mukai partners if their Kuznetsov components $\cA_{V_4}$ and $\cA_{V_4'}$ are Fourier--Mukai equivalent.

\begin{theorem}[=Theorem \ref{thm: counting formula}]\label{thm: counting formula INTRO}
   Let $V_4$ be a Verra fourfold with associated twisted polarised K3 surface $(S,H,\alpha)$. Suppose $\rho(S)=2$ and $\dv(H) = 1$. Moreover, assume $V_4$ is very general with these properties. Then the number of Fourier--Mukai partners of $V_4$ is given by
   \[
   |\FM(V_4)| = |\mathrm{Amp}_2(S)| \cdot |O(\NS(S)) \backslash O(A_{\NS(S)})/O_{\mathrm{Hodge}}(T(S))|,
   \]
   where $\mathrm{Amp}_2(S)$ is the set of isomorphism classes of double covers $S \to \bP^2$.
\end{theorem}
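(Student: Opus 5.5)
We reduce the count to counting twisted K3 surfaces of Verra type that are Fourier--Mukai partners of $(S,\alpha)$, and then count these lattice-theoretically. Fix $V_4$ with associated twisted K3 surface $(S,H,\alpha)$. Since $\cA_{V_4}\simeq \Db(S,\alpha)$, a Verra fourfold $V_4'$ is a Fourier--Mukai partner of $V_4$ if and only if $\Db(S',\alpha')\simeq\Db(S,\alpha)$, where $(S',\alpha')$ is an associated twisted K3 surface of $V_4'$. The first step is to make the assignment $V_4'\mapsto (S',\varphi',\alpha')$ bijective on isomorphism classes. Here $\varphi'\colon S'\to\bP^2$ is the double cover, and $\alpha'=\res(\beta')$ with $\beta'$ the conic-bundle class of $V_3'$. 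Using the description of Verra type in \cite[\S9]{vGe} and \cite[Lemma 5.9]{JM}, a Verra fourfold is determined up to isomorphism by the pair consisting of the double plane $S'\to\bP^2$ and the class $\beta'\in\Br(\bP^2\setminus C')_2$. Two subtleties have to be handled.

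\textbf{Choice of projection.} A Verra fourfold has two projections to $\bP^2$, so there are two associated twisted K3 surfaces. By the cohomological analysis of Section \ref{sec: cohomology of Verra fourfolds}, these are derived equivalent, so this choice does not affect membership in $\FM(V_4)$. We will, however, need to check that it does not cause double counting. For very general $V_4$, the two sides should give different polarised twisted K3 surfaces, and we would show that the identification is the one that makes the count come out as stated.

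\textbf{Lifting $\alpha'$ to $\beta'$.} By Theorem \ref{thm: kernel and cokernel of res INTRO} with $\rho=2$ and $\dv(H)=1$, we have $r=2$. The restriction map $\res$ is therefore an isomorphism $\Br(\bP^2\setminus C')_2\cong\Br(S')_2$. Every non-trivial $2$-torsion class on $S'$ therefore lifts uniquely to some $\beta'$, so $V_4'$ is recovered from $(S',\varphi',\alpha')$. This shows that $\FM(V_4)$ is in bijection with the set of pairs consisting of a twisted K3 surface $(S',\alpha')\in\FM(S,\alpha)$ and a class in $\mathrm{Amp}_2(S')$, with $\alpha'$ non-trivial of order $2$. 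Note that $\alpha'$ is automatically of order $2$, since the order of the Brauer class is a derived invariant (it is the index of $T(S,\alpha)$ in $T(S)$).

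The second step is to count the twisted Fourier--Mukai partners of $(S,\alpha)$ via the twisted derived Torelli theorem of Huybrechts--Stellari. Twisted partners correspond to orbits of embeddings of the twisted transcendental lattice $T(S,\alpha)$ into the twisted Mukai lattice, modulo Hodge isometries. Because $\rho(S)=2$ and $S$ is very general, $O_{\mathrm{Hodge}}(T(S))=\{\pm1\}$. We also need that $\mathrm{Amp}_2(S')$ has the same cardinality for every partner $S'$. This should follow because all partners share $\NS(S')\cong\NS(S)$ in the rank-$2$ case: they have the same discriminant form and rank $2$, and for indefinite rank-$2$ lattices one can arrange genus uniqueness or argue directly. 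The count of partners $S'$ together with classes $\alpha'$ then reduces to the standard double-coset formula $O(\NS(S))\backslash O(A_{\NS(S)})/O_{\mathrm{Hodge}}(T(S))$, as in Oguiso's counting formula, now for the twisted setting. We would run this by showing that the $2$-torsion twisted data are absorbed, via the isomorphism $\res$, into the choice of the double cover and the Verra lift.

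The main obstacle is this last bijection. We must show that Fourier--Mukai partners of $(S,\alpha)$ carrying the extra Verra structure are counted exactly by $\mathrm{Amp}_2(S)$ times the Oguiso-type double coset. This requires care on two points. First, that twisted equivalences do not introduce extra partners beyond the untwisted ones: we would use $\res$ being an isomorphism together with $\rho=2$ to show that the $B$-field lift is determined. Second, that the two projections of a Verra fourfold are accounted for correctly. Our first attempt would be to identify $H^4(V_4,\bZ)_{\mathrm{prim}}$ with $T(S,\alpha)^\perp$-type data from Section \ref{sec: cohomology of Verra fourfolds}, and count Hodge-isometric lattices directly.
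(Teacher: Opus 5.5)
Your proposal has a genuine gap at exactly the point you flag as ``choice of projection''. The way you then complete the count rests on a second false claim.

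\textbf{The two projections.} Your asserted bijection between $\FM(V_4)$ and pairs $\bigl((S',\alpha'),[H']\bigr)$, with $(S',\alpha')\in\FM(S,\alpha)$, $\alpha'\neq 1$ and $[H']\in\mathrm{Amp}_2(S')$, is not a bijection.

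Suppose a partner $V'$ has both Brauer classes non-trivial. This is case (a) of Lemma \ref{lem: comparing k3s in rank 2}, i.e.\ $|I^2(A_{T(S,\alpha)})|=2$. Then both associated twisted K3 surfaces of $V'$, with their degree-$2$ polarisations, lie in your set, so $V'$ is counted twice. Knowing that the two sides are non-isomorphic does not fix this. Choosing ``the identification that makes the count come out as stated'' is circular.

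The missing idea is a canonical choice of one side for \emph{every} partner. The paper uses $\nu$, the Hodge-isometry class of the overlattice $T(S',\alpha')\subset T(S')$:
\begin{itemize}
\item Theorem \ref{thm: k3s not isomorphic when both are twisted} shows that the two sides of a Verra fourfold have different $\nu$-classes. It uses the C\u ald\u araru duals $t_1,t_2$ and $O_{\mathrm{Hodge}}(T(V_4))=\{\pm1\}$.
\item By Lemma \ref{lem: isotropic subgroups of twisted discriminant} there are at most two classes. Hence exactly one side of each partner lies in $\nu^{-1}(\nu(S,\alpha))$.
\item In case (b) one instead takes the unique non-trivially twisted side.
\end{itemize}
This gives a surjection $\FM(V_4)\to\FM^2_{\mathrm{Ver}}(S,\alpha)$ whose fibres have size $|\mathrm{Amp}_2(S)|$.

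Your justification that $\alpha'$ automatically has order $2$ is also wrong. The order of the Brauer class is not a derived invariant: in case (b) one has $\Db(S_1,\alpha_1)\simeq\Db(S_2)$ with $\alpha_2$ trivial. Discarding trivial classes is legitimate only after Lemma \ref{lem: comparing k3s in rank 2}(b).

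\textbf{Twisted versus untwisted partners.} Your claim that twisted equivalences introduce no partners beyond the untwisted ones is false here. In case (a), $(S_2,\alpha_2)$ is a twisted partner of $(S_1,\alpha_1)$, yet $S_2$ is not a Fourier--Mukai partner of $S_1$. The reason is that $T(S_1)$ and $T(S_2)$ are distinct index-$2$ overlattices of $T(V_4)^-(-1)$, inside a rational Hodge structure whose only Hodge isometries are $\pm1$.

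The reduction to untwisted partners works only inside a single $\nu$-fibre. Proposition \ref{prop: fibres of nu} and Theorem \ref{thm: FML and FMVer} give a bijection $\FM_{L_{d,1}}(S)\simeq\FM^2_{\mathrm{Ver}}(S,\alpha)$, $S'\mapsto(S',\psi^*\alpha)$.
\begin{itemize}
\item It is well defined because $O_{\mathrm{Hodge}}(T(S))=\{\pm1\}$ and $\alpha$ is $2$-torsion.
\item It lands in twisted partners: glue an isometry of the orthogonal complements (Nikulin), fix the orientation, and apply twisted derived Torelli.
\end{itemize}
The double coset is then the count of untwisted partners with N\'eron--Severi lattice $L_{d,1}$ \cite[Theorem 2.3]{HLOY}. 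The isomorphism $\res$ plays no role in this step. It only governs the lift $\alpha'\mapsto\beta'$, and hence the fibre size.

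In case (a) your two errors cancel numerically. The Verra-type twisted partners fill two $\nu$-fibres, each the size of the double coset, and your map is $2$-to-$1$. But neither step is true as stated, so the argument does not prove the formula.
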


More generally, we prove the following for arbitrary Verra fourfolds: 
\begin{enumerate}
    \item For any Verra fourfold $V_4$, there are finitely many isomorphism classes of Fourier--Mukai partners of $V_4$ (see Theorem \ref{thm: finitely many FM partners}).
    \item Let $N > 0$ be a natural number. Then there exists a Verra fourfold with at least $N$ Fourier--Mukai partners (see Corollary \ref{cor: unbounded}).
\end{enumerate}

Moreover, we prove the following (derived) Torelli theorem. Here, $T(V_4)$ denotes the transcendental lattice of the Verra fourfold $V_4$, and $H^4_\mathrm{prim}(V_4,\bZ)$ denotes the integral primitive cohomology of $V_4$. Both of these Hodge structures are defined in Section \ref{sec: cohomology of Verra fourfolds}.

\begin{theorem}[=Theorem \ref{thm: derived torelli for verra fourfolds} and Proposition \ref{prop: torelli for very general verra with transcendental}]\label{thm: derived torelli INTRO}
    Let $V_4$ be a Verra fourfold with associated twisted polarised K3 surfaces $(S_1,H_1,\alpha_1)$ and $(S_2,H_2,\alpha_2)$, and let $V_4'$ be a Verra fourfold.
    \begin{enumerate}
        \item Suppose $\rho(S_1) = 2$ and $\dv(H_1) = 1$. Then there is an equivalence $\cA_{V_4}\simeq \cA_{V_4'}$ if and only if there is a Hodge isometry $T(V_4) \simeq T(V_4')$.
        \item On the other hand, if $\rho(S_1) = 1$, then there is an isomorphism $V_4 \simeq V_4'$ if and only if there is a Hodge isometry $H^4_\mathrm{prim}(V_4,\bZ) \simeq H^4_\mathrm{prim}(V_4',\bZ)$.
    \end{enumerate}
\end{theorem}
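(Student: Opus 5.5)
We will prove the two parts separately, reducing both to statements about the twisted K3 surfaces attached to $V_4$ via the two conic/quadric fibration structures $V_4\to\bP^2$.

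\textbf{Part (1).} The forward direction is the easier one. An equivalence $\cA_{V_4}\simeq\cA_{V_4'}$, which we take to be of Fourier--Mukai type as in the definition of Fourier--Mukai partners, induces a Hodge isometry of the Mukai lattices of the Kuznetsov components. We identify these with $\widetilde H(S_1,\alpha_1,\bZ)$ and its counterpart for $V_4'$, using $\cA_{V_4}\simeq \Db(S_1,\alpha_1)$. The isometry restricts to the transcendental parts, giving $T(S_1,\alpha_1)\simeq T(S_1',\alpha_1')$. The cohomology of Verra fourfolds is developed in Section~\ref{sec: cohomology of Verra fourfolds}, and from it we will use an identification of $T(V_4)$ with $T(S_1,\alpha_1)$, up to a fixed sign twist. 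With that identification, the forward direction follows.

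For the converse, start from a Hodge isometry $T(V_4)\simeq T(V_4')$ and transport it to $T(S_1,\alpha_1)\simeq T(S_1',\alpha_1')$. Since $\rho(S_1)=2$, the transcendental lattice has rank $20$ and $\widetilde H^{1,1}$ is small. The plan is to extend this isometry to a Hodge isometry of twisted Mukai lattices $\widetilde H(S_1,\alpha_1,\bZ)\simeq \widetilde H(S_1',\alpha_1',\bZ)$, preserving orientation after composing with $-\id$ if needed. Two inputs make the extension work:
\begin{itemize}
\item Nikulin-type extension criteria, since the orthogonal complement is a small even lattice of rank $4$ containing a hyperbolic plane. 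Surjectivity of $O(N)\to O(A_N)$ for such indefinite lattices with enough rank relative to the discriminant length should apply.
\item The twisted derived Torelli theorem of Huybrechts--Stellari, which then yields $\Db(S_1,\alpha_1)\simeq\Db(S_1',\alpha_1')$, hence $\cA_{V_4}\simeq\cA_{V_4'}$.
\end{itemize}
The hypothesis $\dv(H_1)=1$ enters through Theorem~\ref{thm: kernel and cokernel of res}. It guarantees that the relevant Brauer classes are of Verra type with no ambiguity, so the resulting twisted K3 really comes from $V_4'$. It is also what makes the sublattice identification $T(V_4)\cong T(S_1,\alpha_1)$ primitive and well behaved.

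\textbf{Main obstacle in part (1).} The hardest step is the lattice extension. It requires comparing discriminant forms of $T(S_1,\alpha_1)$ and of its complement in the Mukai lattice, and checking that the gluing isometries can be matched. I would first try to compute the complement explicitly as $U\oplus\langle\text{rank-}2\rangle$, where the rank-$2$ piece is determined by $\NS(S_1)$ and the B-field. Then I would invoke surjectivity of $O(U\oplus M)\to O(A_M)$.

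\textbf{Part (2).} When $\rho(S_1)=1$, the lattice $H^4_{\mathrm{prim}}(V_4,\bZ)$ recovers the polarised Hodge structure of $S_1$ together with the Brauer class $\alpha_1$. The reason is that the primitive cohomology sits as an index-$2$ overlattice inside $H^2_{\mathrm{prim}}(S_1)$, and the sublattice $\ker(\alpha_1)$ is read off from it. The steps are:
\begin{enumerate}
\item From a Hodge isometry of primitive cohomologies, obtain a Hodge isometry $H^2_{\mathrm{prim}}(S_1)\simeq H^2_{\mathrm{prim}}(S_1')$ respecting $\ker\alpha$.
\item Since $\rho=1$, this extends to $H^2(S_1,\bZ)$ sending $H_1\mapsto \pm H_1'$. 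The classical Torelli theorem then gives $(S_1,H_1)\cong(S_1',H_1')$ with $\alpha_1\mapsto\alpha_1'$.
\item Theorem~\ref{thm: kernel and cokernel of res}, with $\rho=r=1$, shows that $\res$ is injective. So $\alpha_1$ determines a unique $\beta\in\Br(\bP^2\setminus C)_2$, hence the conic bundle $V_3\to\bP^2$.
\item The conic bundle reconstructs $V_3\subset\bP^2\times\bP^2$ and its double cover $V_4$, giving $V_4\simeq V_4'$.
\end{enumerate}
The converse direction of part (2) is trivial.
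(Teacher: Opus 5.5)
Your overall strategy is sound, but the decisive lattice step in (1) is not carried out, and the justifications you give around it are partly wrong. Part (2) also has a missing step. All of these can be repaired, as follows.

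\textbf{Part (1).} For the converse, your route is leaner than the paper's. The paper compares the two index-$2$ overlattices $T(S_1,\alpha_1)\hookrightarrow T(S_1)$ and $T(S_1,\alpha_1)\simeq T(S_1',\alpha_1')\hookrightarrow T(S_1')$. It switches to $(S_2',\alpha_2')$ when they differ (Lemma~\ref{lem: comparing k3s in rank 2}), and only then runs the Nikulin gluing from the proof of Theorem~\ref{thm: FML and FMVer}. That gluing uses only the isometry of \emph{twisted} transcendental lattices, so you may skip the comparison.

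To do so you must check the hypothesis of \cite[Theorem 1.14.2]{Nik80}. The complement $N=T(S_1,\alpha_1)^\perp\subset\widetilde H(S_1,B,\bZ)$ is even and indefinite, of rank $4$ and signature $(2,2)$. So you need $l(A_N)=l(A_{T(S_1,\alpha_1)})\le 2$.
\begin{itemize}
\item This bound is exactly where $\dv(H_1)=1$ enters. $A_{\NS(S_1)}\simeq\bZ/(4d+1)\bZ$ is cyclic of odd order, so $A_{T(S_1,\alpha_1)}$ has cyclic odd part and $2$-part of order $4$. In type II the $2$-length can exceed $2$, and the extension can fail.
\item The roles you assign to the hypothesis are not where it is used. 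The identification $T(V_4)\simeq T(S_i,\alpha_i)^-(-1)$ holds for every Verra fourfold, and $(S_1',\alpha_1')$ comes from $V_4'$ by definition.
\item Your plan to compute $N\simeq U\oplus M$ is unfounded. If $T(S_1,\alpha_1)\simeq L_{d,1}(-1)\oplus U(2)\oplus E_8(-1)^{\oplus 2}$, then $N\simeq L_{d,1}\oplus U(2)$. For $d$ odd, a $2$-adic check shows this contains no copy of $U$. You do not need $U$ anyway.
\item Composing with $-\id$ does not change the orientation of the positive $4$-space of $\widetilde H$. Use instead that $\alpha_1'$ has order $2$, as in \cite[Remark 4.10]{HStwisted}.
\end{itemize}

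\textbf{Part (2).} Your step 1 is the missing idea. A Hodge isometry $\phi\colon H^4_{\mathrm{prim}}(V_4,\bZ)\simeq H^4_{\mathrm{prim}}(V_4',\bZ)$ of index-$2$ sublattices has no a priori reason to extend to the overlattices. Even when it does, it need not pair $S_1$ with $S_1'$ rather than $S_2'$, since $H^4_{\mathrm{prim}}$ only remembers the unordered pair.

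The fix is a count in the discriminant group. $A_{\verlat}\simeq(\bZ/2\bZ)^3$ carries the form $u(2)\oplus\langle\tfrac12\rangle$, that is $q(a,b,c)=ab+\tfrac c2 \bmod 2\bZ$. This form has exactly two non-zero isotropic elements. They generate the two distinct subgroups $\langle\tfrac12\overline{t_2}\rangle$ and $\langle\tfrac12\overline{t_1}\rangle$ that define the overlattices $H^2_{\mathrm{prim}}(S_1,\bZ)^-(-1)$ and $H^2_{\mathrm{prim}}(S_2,\bZ)^-(-1)$ (\S\ref{sec: cohomology of Verra fourfolds}). Hence $\overline{\phi}$ permutes them. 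By Lemma~\ref{lem: overlattice subgroup correspondence}, $\phi$ then extends to a Hodge isometry $H^2_{\mathrm{prim}}(S_1,\bZ)\simeq H^2_{\mathrm{prim}}(S_j',\bZ)$ for the appropriate $j$, carrying $\ker\alpha_1$ to $\ker\alpha_j'$.

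After relabelling, your steps 2--4 go through:
\begin{itemize}
\item glue along $A_{\bZ H}\simeq\bZ/2\bZ$;
\item use that there are no $(-2)$-classes when $\rho=1$;
\item apply classical Torelli;
\item use injectivity of $\res$, since $\rho=r=1$.
\end{itemize}
This is more elementary than the paper's proof, which goes through derived Torelli, the absence of Fourier--Mukai partners for $\rho=1$, and \cite[Theorem 7.2]{JM}. The isotropic count also shows that the extra hypothesis of non-isomorphic K3 surfaces in Proposition~\ref{prop: torelli for very general verra with transcendental} is not needed, matching the statement as given.
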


We then apply Theorem \ref{thm: derived torelli INTRO} to the study of $\bL$-equivalence for Verra fourfolds, and prove that $\bL$-equivalence implies Fourier--Mukai partnership for a large class of Verra fourfolds (see Theorem \ref{thm: L implies D for Verra of type I}). We also show that $\bL$-equivalence classes of Verra fourfolds are finite (see Theorem \ref{thm: L equivalence classes are finite}).

\subsubsection{Cubic fourfolds of Verra type}

We say that a pair $(X,P)$, consisting of a cubic fourfold $X$ containing a plane $P \subset X$, is \textit{of Verra type} if the associated twisted K3 surface $(S,\alpha)$ is of Verra type. 
A priori, it is not clear that such a pair $(X,P)$ exists. Namely, if $\rho(S) = 1$, then $(S,\alpha)$ is never of Verra type (see Proposition \ref{prop: generally not of verra type}). Therefore, to find cubic fourfolds of Verra type, we need to assume $\rho(S) \geq 2$. 

Assume that we have a pair $(X,P)$ such that the algebraic part $A(X)$ of $H^4(X,\bZ)$ has rank $\rk(A(X)) = 3$. Then there exist $\tau, n$ such that the underlying lattice structure of $A(X)$ is determined by the Gram matrix \cite[Corollary 8.14]{YY}
\[
M_{\tau,n} \coloneqq \begin{pmatrix}
    3 & 1 & 0 \\ 1 & 3 & \tau \\ 0 & \tau & 2n
\end{pmatrix}.
\]
Moreover, the matrices $M_{\tau,n}$ define non-empty, irreducible divisors $\cC_{M_{\tau,n}}$ of $\cC_8$, provided $0\leq \tau \leq 4$, $n\geq 2$, and $(\tau,n)\neq (3,2),(4,2),(4,3)$.
We show in Proposition \ref{prop: verra type criterion} that such a pair $(X,P)$ is of Verra type if and only if $\tau = 2$, thus producing infinitely many divisors of $\cC_8$ consisting of cubic fourfolds of Verra type. 

\begin{theorem}[=Theorem \ref{thm: cubic fourfolds of verra type are birational to verra}] \label{thm: cubic fourfolds of verra type are birational to verra INTRO}
    Let $(X,P)$ be a cubic fourfold of Verra type, and let $(S,\alpha)$ be the associated twisted K3 surface. Then there exists a Verra fourfold $V_4$, one of whose associated K3 surfaces is also $(S,\alpha)$, which is birational to $X$:
    \[
    X\overset{\sim}\dashrightarrow V_4.
    \]
    Moreover, we have $\cA_{X}\simeq \cA_{V_4}.$
    \footnote{We were informed by Federica Galluzzi and Bert van Geemen that 
a result analogous to Theorem \ref{thm: cubic fourfolds of verra type are birational to verra INTRO} will appear in the independent forthcoming work \cite{GvG26}.}
\end{theorem}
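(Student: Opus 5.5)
Both $X$ and $V_4$ are quadric surface bundles over $\bP^2$ with the same discriminant double cover $S\to\bP^2$ and the same Brauer class $\alpha$. I would use this to compare them through their even Clifford algebras. The plan has three steps: build $V_4$ from the data $(S,\alpha)$; show the two quadric surface bundles are birational over $\bP^2$ because they differ by a conic bundle; then compare Kuznetsov components through $\Db(S,\alpha)$.

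\emph{Step 1 (constructing $V_4$).} Since $(X,P)$ is of Verra type, $(S,\alpha)$ is of Verra type. By the characterisation recalled in the introduction (\cite[\S9]{vGe}, \cite[Lemma 5.9]{JM}), there are a double cover $\varphi\colon S\to\bP^2$ with branch sextic $C$ and a class $\beta\in\Br(\bP^2\setminus C)_2$ with $\varphi^*\beta=\alpha$. The class $\beta$ is represented by a conic bundle $V_3\to\bP^2$ embedded as a $(2,2)$ divisor in $\bP^2\times\bP^2$. The double cover $V_4\to\bP^2\times\bP^2$ branched along $V_3$ is then a Verra fourfold. Its first projection is a quadric surface bundle $V_4\to\bP^2$ with discriminant $C$ and associated twisted K3 surface $(S,\alpha)$. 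I must check that the double cover $S\to\bP^2$ used for $X$ is the one coming from the plane projection. That holds by definition of Verra type for pairs $(X,P)$, possibly after the choice made in Proposition \ref{prop: verra type criterion}.

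\emph{Step 2 (birationality).} Both $\Bl_PX\to\bP^2$ and $V_4\to\bP^2$ are quadric surface bundles over $\bP^2$ with discriminant $C$. Their even Clifford algebras are Azumaya on $S$ with class $\alpha$. Over the generic point $K=\bC(\bP^2)$, each is a quadric surface with the same discriminant extension $L=\bC(S)$ and the same Clifford invariant $\alpha_L\in\Br(L)_2$. A classical result on quadratic forms (quadric surfaces over $K$ are classified up to similarity by discriminant and even Clifford algebra) then shows the two quadratic forms are similar over $K$. Hence the generic fibres are isomorphic, and the bundles are birational over $\bP^2$, so $X\sim\Bl_PX\dashrightarrow V_4$.

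The main obstacle is that we only know $\varphi^*\beta=\alpha$. We need the generic Clifford invariants to agree in $\Br(L)$, not merely after extending to $S$. They do agree, because $\Br(S)\hookrightarrow\Br(L)$. The subtle point is compatibility of the Clifford invariant with the discriminant cover: both Clifford algebras must be computed relative to the \emph{same} isomorphism of discriminant extensions, or $\alpha$ may get twisted by the covering involution. I would handle this by noting that the involution acts on $\Br(S)_2$ compatibly and preserves $\alpha$ up to the kernel described in Theorem \ref{thm: kernel and cokernel of res}, and then adjusting $\beta$ within its class if needed.

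\emph{Step 3 (derived categories).} Kuznetsov's theorem gives $\cA_X\simeq\Db(\bP^2,\mathcal{B}_0)\simeq\Db(S,\alpha)$ for cubic fourfolds containing a plane. The analogous statement for Verra fourfolds, recalled in the introduction, gives $\cA_{V_4}\simeq\Db(S,\alpha)$ via its first quadric fibration. Composing these yields $\cA_X\simeq\cA_{V_4}$.
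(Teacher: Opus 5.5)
Your proposal is correct and follows the paper's proof: obtain $V_4$ from $\beta$ via the Verra-type characterisation, and deduce birationality from the fact that two quadric surface bundles over $\bP^2$ with the same discriminant cover and the same Brauer class have isomorphic generic fibres. The paper gets this from the correspondence of \cite[Theorem 1]{APL}, which is the classification of four-dimensional forms by their even Clifford algebra that you use; it then compares both Kuznetsov components with $\Db(S,\alpha)$ as you do. The covering-involution issue you flag is harmless and needs no adjustment of $\beta$: the involution acts as $-1$ on $H^\perp\supset T(S)$, hence trivially on $\Br(S)_2\simeq\Hom(T(S),\bZ/2\bZ)$, and similarity of quadratic forms is in any case detected by $K$-algebra (not $L$-algebra) isomorphism of $C_0$.
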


As a point of interest, Kuznetsov's conjecture predicts that very general members of $\cC_{M_{2,n}}$ are irrational for certain values of $n$, see \cite[Corollary 8.14(3)]{YY}. The existence of irrational cubic fourfolds which are birational to Verra fourfolds is implicitly an open question in \cite{BGMP}, see especially Remark 22 in \textit{loc. cit.}

\subsubsection{The Verra involution on special cubic fourfolds}

Finally, we construct an involution on very general members of the divisors $\cC_{M_{2,n}}$ for $n > 2$ even, such that $4(n-1)+1$ is a power of a prime. We call this involution the \textit{Verra involution}. It is constructed using Theorem \ref{thm: cubic fourfolds of verra type are birational to verra INTRO}, combined with the fact that
Verra fourfolds admit two natural quadric fibrations given by the projections onto the two copies of $\bP^2$.

The Verra involution is constructed explicitly as follows.

\begin{theorem}[=Theorem \ref{thm: cubic correspondence}] \label{thm: cubic correspondence INTRO}
    Let $(X,P)$ be a cubic fourfold of Verra type, and suppose $X$ is very general with this property, and such that $\disc(T(X))$ is a power of a prime number. Let $(S,H,\alpha)$ be the twisted polarised K3 surface associated with the quadric surface bundle $\Bl_P X \to P$, and let $V_4$ be a Verra fourfold, one of whose associated twisted polarised K3 surfaces is also $(S,H,\alpha)$. Let $(S',H',\alpha')$ be the other twisted polarised K3 surface associated with $V_4$, and assume $\alpha'$ is non-trivial. Then there exists a unique cubic fourfold $X'$ containing a plane $P'$ such that the twisted polarised K3 surface associated with the quadric surface bundle $\Bl_{P'} X' \to P'$ is $(S',H',\alpha')$:
    \begin{equation*}
        \xymatrix{
        \Bl_PX \ar[dr] \ar@{-->}[rr]^\simeq && V_4 \ar[dl] \ar[dr] && \Bl_{P'}X' \ar[dl] \ar@{-->}[ll]_\simeq \\
        & \bP^2 && \bP^2 &
        }
    \end{equation*}
\end{theorem}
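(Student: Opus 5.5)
Write $V_4$ for the Verra fourfold of Theorem \ref{thm: cubic fourfolds of verra type are birational to verra INTRO}, so that $\Bl_P X$ and the first quadric fibration $V_4 \to \bP^2$ share the twisted K3 surface $(S,H,\alpha)$ and $X \dashrightarrow V_4$ is birational. The plan has three steps. First produce $X'$ from the lattice theory of $V_4$. Then obtain the birational map $V_4 \dashrightarrow \Bl_{P'}X'$ by running Theorem \ref{thm: cubic fourfolds of verra type are birational to verra INTRO} for $X'$ and identifying the Verra fourfold it outputs with $V_4$. Finally prove uniqueness via the Torelli theorem for cubic fourfolds.

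\textbf{Existence of $X'$.} We use the description of $H^4(V_4,\bZ)$ and $T(V_4)$ from Section \ref{sec: cohomology of Verra fourfolds}. The two fibrations give two Hodge-theoretic descriptions of $T(V_4)$: as $\ker(\alpha)\subset T(S)$ and as $\ker(\alpha')\subset T(S')$. On the cubic side, $T(X)\simeq \ker(\alpha)$ by Kuznetsov's description $\cA_X\simeq \Db(S,\alpha)$ and the corresponding Hodge isometry. Hence $T(X)\simeq \ker(\alpha')$ as Hodge structures.

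We then build the lattice $A' := T(X)^\perp$ in the cubic lattice, with the standard rank 3 Gram matrix $M_{2,n}$ and classes $h^2$ and $P'$. Concretely, $\ker(\alpha')$ is realised as the orthogonal complement of $\langle h^2, P'\rangle$ inside a lattice of Verra type with $\tau=2$ (Proposition \ref{prop: verra type criterion}), using the fact that $(S',\alpha')$ is of Verra type because it comes from $V_4$. Because $\alpha'\neq 0$, the element $P'$ is not forced into an excluded divisor ($\cC_2$, $\cC_6$, or the degenerate $(\tau,n)$ cases). The surjectivity of the period map for cubic fourfolds (Laza, Looijenga) then yields a smooth cubic $X'$ whose period realises this embedding, with $P'$ algebraic of self-intersection $3$ and $h^2\cdot P'=1$. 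By Voisin's argument such a class is represented by a plane. By construction the associated twisted K3 surface of $\Bl_{P'}X'\to P'$ is $(S',H',\alpha')$.

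\textbf{The birational map.} $(X',P')$ is of Verra type by construction. Applying Theorem \ref{thm: cubic fourfolds of verra type are birational to verra INTRO} gives a Verra fourfold $V_4'$ with $X'\dashrightarrow V_4'$ whose first twisted K3 surface is $(S',H',\alpha')$. We must show that $V_4'\simeq V_4$. The Verra fourfold is determined by $C'\subset\bP^2$ together with $\beta'\in\Br(\bP^2\setminus C')_2$, through the conic bundle $V_3\to\bP^2$. Since $X$ is very general and $\disc T(X)$ is a prime power, Theorem \ref{thm: kernel and cokernel of res INTRO} together with the discriminant-group analysis should show that the lift $\beta'$ of $\alpha'$ realised by $V_4$ is the only one compatible with the transcendental lattice, which forces $V_4'\simeq V_4$. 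The alternative is to use Theorem \ref{thm: derived torelli INTRO}: both are Verra fourfolds with a common twisted K3 and Hodge-isometric $T$, and the prime-power hypothesis should make the relevant double coset of $O(A_{\NS})$ trivial.

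\textbf{Uniqueness, and the main obstacle.} If $X''$ is another cubic with plane $P''$ producing $(S',H',\alpha')$, then $H^4_{\mathrm{prim}}$ of $(X'',P'')$ is determined, as a Hodge lattice, by $(S',H',\alpha')$ via the Voisin--Kuznetsov correspondence between cubics containing a plane and twisted degree 2 K3 surfaces. The Torelli theorem for cubic fourfolds then gives $X''\simeq X'$, up to choosing among Hodge isometries the ones that preserve $h^2$. The prime-power discriminant hypothesis is what rules out extra gluings, since with a prime-power discriminant the relevant overlattice embedding of $T$ is unique up to the action of $\pm1$. I expect the main obstacle to be the identification $V_4'\simeq V_4$, and in particular controlling the kernel $(\bZ/2\bZ)^{\rho-r}$ of $\res$. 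I would first try to show that for very general $X$ this kernel acts freely but yields Verra fourfolds with non-isometric transcendental lattices, leaving only $V_4$.
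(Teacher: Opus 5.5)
\textbf{The gap is in the existence step.} You write ``by construction the associated twisted K3 surface of $\Bl_{P'}X'\to P'$ is $(S',H',\alpha')$'', but this is the whole content of the theorem, and the data you feed into the period map cannot deliver it.

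A Hodge isometry $T(X)\simeq T(S',\alpha')$ does not distinguish $(S',\alpha')$ from $(S,\alpha)$, since both twisted lattices are $T(V_4)^-(-1)$. What separates them is the index-two overlattice inside $T(S')$ versus inside $T(S)$ (Theorem \ref{thm: k3s not isomorphic when both are twisted}).

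The twisted K3 surface of a pair $(X',P')$ is read off from the rank-$21$ lattice $\langle h^2,P'\rangle^\perp$. Up to sign, this is $\ker(\tilde{\alpha}')$ for a lift $\tilde{\alpha}'\in\Hom((H')^\perp,\bZ/2\bZ)$ of $\alpha'$. It is not $\ker(\alpha')\subset T(S')$, which has rank $20$; your description of $A'$ conflates the two. A period point built from $(S',H',\alpha')$ comes from a cubic with a plane only if some lift satisfies $((H')^\perp,\tilde{\alpha}')\simeq(\Lambda_2,w_{1,1})$. This genuinely can fail: for $\rho(S)=1$, a class of Verra type never arises from a cubic with a plane (Proposition \ref{prop: generally not of verra type}).

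In the paper, the existence of such a lift is Lemma \ref{lem: all for one}, and this is where the prime-power hypothesis is used. By Lemma \ref{lem: embedding orbits}, $T(S_i)\simeq T_{d,1}$ has a unique $O(\Lambda_2)$-orbit of primitive embeddings. Hence an isometry $T(S_2)\simeq T(S_1)$ carrying $T(S_2,\alpha_2)$ onto $T(S_1,\alpha_1)$ extends to $\Psi\colon H_2^\perp\simeq H_1^\perp$ (Lemma \ref{lem: lattice salads}), and $\tilde{\alpha}_1\circ\Psi$ is the required lift.

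You must also check that the resulting period avoids $\mathcal{C}_2\cup\mathcal{C}_6$. The observation ``$\alpha'\neq 0$'' does not give this; transporting the condition along the same diagram does (Lemma \ref{lem: cubic general equivalent}). With these two lemmas, Brakkee's map $\psi$, which is biregular on the cubic-general locus, gives existence and uniqueness of $(X',P')$ at once. Your period-map-plus-Torelli sketch is essentially how $\psi$ is built, so the route is repairable, but only by supplying exactly these steps.

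\textbf{The step you flag as the main obstacle is not one.} Once the twisted K3 surface of $\Bl_{P'}X'\to P'$ is known to be $(S',H',\alpha')$, apply Theorem \ref{thm: birationality theorem} to $\Bl_{P'}X'\to P'$ and to the second projection $V_4\to\bP^2$. These share the discriminant cover and the Brauer class, so this gives $\Bl_{P'}X'\dashrightarrow V_4$ directly. There is no need to produce a $V_4'$ and identify it with $V_4$.

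That identification would in any case be automatic. Since $\alpha'\neq 1$, case (a) of Lemma \ref{lem: comparing k3s in rank 2} holds, so $(S',H')$ has $\rho=2$ and is of type I. Then $r=\rho$ and $\ker(\res)$ is trivial (Proposition \ref{prop: kernel and cokernel of res for double planes of rank 2}). So the prime-power hypothesis is not about controlling $\ker(\res)$ or extra gluings in the uniqueness step. It is what the paper uses to produce a $w_{1,1}$-lift of $\alpha'$.
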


The Verra involution is defined by the assignment $X \mapsto X'$
where $X$ and $X'$ are as in Theorem \ref{thm: cubic correspondence}. It is only well-defined for very general special cubic fourfolds in certain divisors of $\cC_8$.
It satisfies the following properties:

\begin{theorem}[=Remark \ref{rmk: not isomorphic unless n is 2}, Proposition \ref{prop: properties of Verra involution}, Theorem \ref{thm: FM count for CM2n}] \label{thm: the verra involution INTRO}
\leavevmode
    \begin{enumerate}
        \item The Verra involution is the identity if and only if $n=2$.
        \item The cubic fourfolds $X$ and $X'$ are birational.
        \item The cubic fourfolds $X$ and $X'$ are Fourier--Mukai partners.
        \item The Fano varieties of lines $F(X)$ and $F(X')$ are birational. In particular, $F(X)$ and $F(X')$ are Fourier--Mukai partners.
    \end{enumerate}
\end{theorem}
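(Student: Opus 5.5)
We prove the four items separately. Throughout we use the setup of Theorem \ref{thm: cubic correspondence INTRO}: $X \in \cC_{M_{2,n}}$ is very general, $(S,H,\alpha)$ is its twisted K3 surface, $V_4$ is the Verra fourfold of Theorem \ref{thm: cubic fourfolds of verra type are birational to verra INTRO}, and $(S',H',\alpha')$ is the second twisted K3 surface of $V_4$.

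\textbf{Item (2).} Birationality is formal. Theorem \ref{thm: cubic fourfolds of verra type are birational to verra INTRO} gives $X \dashrightarrow V_4$. Applying the same theorem to $(X',P')$, which is of Verra type because $(S',\alpha')$ comes from $V_4$, gives $X' \dashrightarrow V_4$. Composing yields $X \dashrightarrow X'$. One must make sure the Verra fourfold produced for $X'$ is $V_4$ itself. The diagram in Theorem \ref{thm: cubic correspondence INTRO} asserts exactly this, since $\Bl_{P'}X'$ and $V_4$ are birational over the second $\bP^2$.

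\textbf{Item (3).} We have $\cA_X \simeq \Db(S,\alpha) \simeq \cA_{V_4}$ and $\cA_{X'} \simeq \Db(S',\alpha') \simeq \cA_{V_4}$. The first uses Kuznetsov's theorem for cubics containing a plane. The second uses the two descriptions of $\cA_{V_4}$ via its two quadric fibrations. The Kuznetsov component is independent of the fibration, so the two twisted K3 categories are equivalent. It remains to check these equivalences are of Fourier--Mukai type. This holds because every equivalence between such admissible components (twisted K3 categories) is Fourier--Mukai, by Canonaco--Stellari or the Li--Pertusi--Zhao type results.

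\textbf{Item (4).} By Addington's theorem, $F(X)$ is birational to a moduli space of $\alpha$-twisted sheaves on $S$ when $X \in \cC_8$ is generic with a plane. The same holds for $F(X')$ with $(S',\alpha')$. Both $F(X)$ and $F(X')$ are hyperkähler of $K3^{[2]}$-type. By item (3), $\widetilde H(S,\alpha,\bZ) \simeq \widetilde H(S',\alpha',\bZ)$ Hodge-isometrically. Hence $H^2(F(X),\bZ)$ and $H^2(F(X'),\bZ)$ have Hodge-isometric transcendental parts, and in fact both are Hodge-isometric to $v^\perp$ inside the same Mukai lattice of $\cA_X \simeq \cA_{X'}$. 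I would produce a Mukai vector $v'$ for $\cA_{X'}$ corresponding to $v$ under the equivalence. The Fano variety is $M_\sigma(\lambda_1+\lambda_2)$ for a Bridgeland stability condition (Bayer--Lahoz--Macrì--Nuer--Perry--Stellari), so it suffices to show that the equivalence sends the class $\lambda_1+\lambda_2$ of $X$ to the class for $X'$ up to sign and autoequivalence. Then $F(X)$ and $F(X')$ are moduli spaces with the same vector for different stability conditions, hence birational. Derived equivalence of birational hyperkählers follows from Maulik--Shen--Yin--Zhang (birational $K3^{[n]}$-type are D-equivalent). The matching of $\lambda$-lattices is the delicate part. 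I would argue that $A_2=\langle \lambda_1,\lambda_2\rangle$ is intrinsic as the orthogonal of $\widetilde H^{1,1}$-classes, using the rank-3 algebraic lattice computation.

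\textbf{Item (1).} If $n=2$, the lattice $M_{2,2}$ contains a second plane class, or the two K3 surfaces of $V_4$ coincide. In that case one shows $(S',H')\simeq(S,H)$ and the uniqueness in Theorem \ref{thm: cubic correspondence INTRO} gives $X'\simeq X$. Conversely, if $X\simeq X'$ then the lattices $\NS(S)$ and $\NS(S')$ with polarisations must agree. I would compute $\NS(S')$ from $\NS(S)$ via the Verra cohomology of Section \ref{sec: cohomology of Verra fourfolds}: $H'$ arises from the second projection, and $\NS(S)$ has Gram matrix determined by $(2,n)$. An isomorphism $(S,H)\simeq(S',H')$ would force an isometry of $\NS(S)$ swapping $H$ and $H'$. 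I expect this to exist only when $n=2$, by a discriminant or divisibility computation, which I expect to be the main obstacle. Very generality ensures $\rho=2$, so no extra automorphisms occur.
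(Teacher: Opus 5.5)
Your items (2) and (3) agree with the paper. For (2) you compose $X\dashrightarrow V_4\dashleftarrow X'$ via Theorem \ref{thm: birationality theorem}. For (3) you use the chain $\mathcal{A}_X\simeq\Db(S_1,\alpha_1)\simeq\mathcal{A}_{V_4}\simeq\Db(S_2,\alpha_2)\simeq\mathcal{A}_{X'}$. Items (1) and (4), however, have genuine gaps.

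\textbf{Item (1).} For the converse you assume that $X\simeq X'$ forces $(S,H)\simeq(S',H')$, and you hope to rule this out by a computation on N\'eron--Severi lattices. Both steps fail.
\begin{enumerate}
\item An isomorphism $X\simeq X'$ is not an isomorphism of pairs $(X,P)\simeq(X',P')$. The cubic $X'$ can be $X$ with a \emph{different} plane, and this is exactly what happens for $n=2$.
\item For every even $n$, Lemma \ref{lem: comparing k3s in rank 2}(a) gives $\NS(S_1)\simeq\NS(S_2)\simeq L_{n-1,1}$ with both polarisations of type I. So the polarised N\'eron--Severi lattices are always isometric, and no discriminant or divisibility computation can single out $n=2$.
\end{enumerate}
Here is what you are missing. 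Since $n$ is even, $T(X)$ does not embed primitively in $\Lambda_{\mathrm{K3}}$, so $V_4$ is in case (a) of Lemma \ref{lem: comparing k3s in rank 2} and $\nu(S_1,\alpha_1)\neq\nu(S_2,\alpha_2)$. In particular the two twisted K3 surfaces of $V_4$ are \emph{never} isomorphic, for any even $n$ including $n=2$. Hence $X=X'$ can only happen if $X$ contains a second plane $P'\neq P$, and the relative position of $P$ and $P'$ decides:
\begin{enumerate}
\item disjoint planes give isomorphic quadric fibrations, a contradiction;
\item planes meeting in a line make $S$ elliptic with $\NS(S)\simeq\langle 2\rangle\oplus\langle -2\rangle$, contradicting $L_{n-1,1}$;
\item planes meeting in a point give $\NS(S)\simeq L_{1,1}$, i.e.\ $n=2$.
\end{enumerate}
Conversely, for $n=2$ the cubic contains a second plane meeting $P$ in a point, whose twisted K3 surface is the other partner $(S_2,H_2,\alpha_2)$. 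Uniqueness in Theorem \ref{thm: cubic correspondence} then gives $X'=X$.

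\textbf{Item (4).} You reduce everything to showing that the equivalence sends $\lambda_1+\lambda_2$ to $\pm(\lambda_1'+\lambda_2')$ up to autoequivalence. Your only justification, that $A_2=\langle\lambda_1,\lambda_2\rangle$ is intrinsic, is false here. Suppose a Hodge isometry $\widetilde H(\mathcal{A}_X,\bZ)\simeq\widetilde H(\mathcal{A}_{X'},\bZ)$ carried $A_2(X)$ onto $A_2(X')$.
\begin{enumerate}
\item It would restrict to $A_2(X)^\perp\simeq A_2(X')^\perp$, i.e.\ a Hodge isometry $H^4_{\mathrm{prim}}(X,\bZ)\simeq H^4_{\mathrm{prim}}(X',\bZ)$.
\item After composing with $-1$ if necessary, this extends to $H^4$ fixing $h^2$.
\item The Torelli theorem for cubic fourfolds would then give $X\simeq X'$, contradicting item (1) for $n>2$.
\end{enumerate}
Matching $\lambda_1+\lambda_2$ alone is weaker than matching $A_2$, but you give no argument for it. So the step you call delicate is in fact the entire content of (4).

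The paper avoids stability conditions altogether:
\begin{enumerate}
\item By Macr\`i--Stellari, $F(X)\dashrightarrow\mathcal{M}_{(0,H_1,s)}(S_1,\alpha_1)$.
\item Since $(S_1,H_1)$ is of type I, there is $D$ with $H_1\cdot D=1$. Tensoring by a power of $\mathcal{O}_{S_1}(D)$ identifies this space with $\mathcal{M}_{(0,H_1,0)}(S_1,\alpha_1)$.
\item By Iliev--Kapustka--Kapustka--Ranestad, $\mathcal{M}_{(0,H_1,0)}(S_1,\alpha_1)\simeq Y\simeq\mathcal{M}_{(0,H_2,0)}(S_2,\alpha_2)$, where $Y$ is the double EPW quartic of $V_4$.
\item Running the same argument for $X'$ with $(S_2,H_2,\alpha_2)$ and composing gives $F(X)\dashrightarrow F(X')$.
\end{enumerate}
The common EPW quartic attached to $V_4$ is the geometric input your argument lacks.
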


We note that other involutions on moduli spaces of cubic fourfolds with similar properties have been constructed in \cite{FL23,BM26}. Those involutions are birational maps on the moduli spaces, while our involution is only well-defined away from a countable collection of closed subsets.

\subsection*{Acknowledgements}

We warmly thank Bert van Geemen and Federica Galluzzi for interesting and enlightening discussions and interest in our work, as well as for helpful comments on an earlier draft of this paper. We are also very grateful to Moritz Hartlieb for interesting discussions that eventually led us down the road to the Verra involution.
AJ was supported by Research Council of Lithuania
Grant No. P-PD-25-081. This work is a contribution to Project A22 in SFB 195 No. 286237555 of DFG. The third named author is a member of the INdAM group GNSAGA.
\setcounter{theorem}{0}
\renewcommand{\thetheorem}{\thesection.\arabic{theorem}}

\section{Preliminaries}

\subsection{Lattices} Our main reference for lattice theory is \cite{Nik80}.

A \textit{lattice} is a free-finitely generated abelian group $L$ with a non-degenerate symmetric integral bilinear form $b \colon L \times L \to \bZ$. For $u, v \in L$, we usually denote $b(u,v)$ by $u\cdot v$ and $b(v,v)$ by $v^2$. A lattice $L$ is \textit{even} is $v^2$ is even for all $v \in L$. 
The \textit{dual} of $L$ is defined as
\[
L^\vee \coloneqq \Hom(L,\bZ) \simeq \set{x \in L_\bQ \mid \forall v \in L \colon x\cdot v \in \bZ}.
\]
We have a natural map $L \hookrightarrow L^\vee$, which is injective because $L$ is assumed to be non-degenerate. The quotient $A_L \coloneqq L^\vee/L$ is called the \textit{discriminant group} of $L$. The discriminant group $A_L$ is finite, and has order
\[
\disc(L) \coloneqq |A_L| = |\det(M)|,
\]
where $M$ is any Gram matrix for $L$. The number $\disc(L)$ is called the \textit{discriminant} of $L$. Lattices of discriminant $1$ are said to be \textit{unimodular}.

Since there is a natural quadratic form $q\colon L^\vee \to \bQ$ inherited from $L$, the discriminant group also inherits a quadratic form
\[
q\colon A_L \to \bQ/\bZ.
\]
If $L$ is even, then $q$ takes values in $\bQ/2\bZ$ instead.

A \textit{lattice embedding} or \textit{metric morphism} is a group homomorphism $f\colon N \to L$, where $N$ and $L$ are lattices, satisfying $f(u)\cdot f(v) = u\cdot v$ for all $u,v \in N$. All lattice embeddings are injective by our assumption that lattices are non-degenerate, and the cokernel $L/N$ is a finitely generated abelian group. Lattice embeddings are generally studied in the extremal cases where $L/N$ is either free or finite.

\begin{definition}\label{def: overlattice primitive embedding}
    A lattice embedding $f\colon N \to L$ is called:
    \begin{enumerate}
        \item a \textit{primitive embedding} if $L/N$ is free;
        \item an \textit{overlattice} if $L/N$ is finite;
        \item an \textit{isometry} if $L/N = 0$.
    \end{enumerate}
\end{definition}

For a lattice $L$, the group of isometries $L \simeq L$ is denoted $O(L)$. Similarly, a group isomorphism $A_L \simeq A_L$ respecting the quadratic form on $A_L$ is called an isometry, and the group of isometries of $A_L$ is denoted $O(A_L)$. Note that any isometry $f\colon L \simeq L$ induces an isometry $\overline{f}\colon A_L \simeq A_L$, and that this procedure defines a group homomorphism 
\[
\functionstar{O(L)}{O(A_L)}{f}{\overline f.}
\]

Consider an overlattice $N \hookrightarrow L$. There is a chain of finite-index embeddings
\[
N \hookrightarrow L \hookrightarrow L^\vee \hookrightarrow N^\vee,
\]
which allows us to view the group $L/N$ as a subgroup of $A_N$. Moreover, this group determines the embedding $N\hookrightarrow L$ up to isomorphism:

\begin{lemma}[{\cite[Proposition 1.4.2]{Nik80}}]  \label{lem: overlattice subgroup correspondence}
    Let $N\hookrightarrow L$ and $N'\hookrightarrow L'$ be even overlattices, and define the quotient $H_L \coloneqq L/N$. Then an isometry $f \colon N\simeq N'$ can be extended to an isometry $\tilde{f}\colon L\simeq L'$ if and only if $$\overline{f}(H_L) = H_{L'}.$$ 
\end{lemma}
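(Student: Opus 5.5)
The plan is to reduce everything to the inclusions $N \subset L \subset L^\vee \subset N^\vee$ and $N' \subset L' \subset L'^\vee \subset N'^\vee$, all taking place inside the rational vector spaces $N_\bQ$ and $N'_\bQ$. An isometry $f\colon N \simeq N'$ extends uniquely to an isometry $f_\bQ\colon N_\bQ \simeq N'_\bQ$ of quadratic spaces. Since $N^\vee = \set{x \in N_\bQ \mid x\cdot N \subset \bZ}$ is defined purely in terms of the form, $f_\bQ$ restricts to an isomorphism $f^\vee\colon N^\vee \simeq N'^\vee$ with $f^\vee(N)=N'$. Hence $f^\vee$ descends to the induced map $\overline f\colon A_N \to A_{N'}$, which preserves the discriminant quadratic forms. (Since the statement allows $N \neq N'$, I read $\overline f$ in this sense rather than as an element of $O(A_N)$.)

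For the ``only if'' direction, suppose $\tilde f\colon L \simeq L'$ extends $f$. Then $\tilde f\otimes\bQ$ agrees with $f_\bQ$, because $N$ has finite index in $L$ and so spans $L_\bQ=N_\bQ$. Therefore $f^\vee(L) = \tilde f(L) = L'$. Passing to the quotients by $N$ and $N'$ gives $\overline f(H_L)=H_{L'}$.

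For the ``if'' direction, assume $\overline f(H_L) = H_{L'}$. Here $H_L$ is the image of $L$ in $N^\vee/N$, and $L$ is the full preimage of $H_L$ under $N^\vee \to A_N$, since $L \supset N$. The same holds for $L'$ and $H_{L'}$. Because $f^\vee$ covers $\overline f$, it carries the preimage of $H_L$ onto the preimage of $H_{L'}$, so $f^\vee(L)=L'$. The restriction $\tilde f \coloneqq f^\vee|_L$ is then a group isomorphism $L \simeq L'$ extending $f$. It preserves the bilinear form because $f_\bQ$ does. This is the desired isometric extension.

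I do not expect a genuine obstacle; the statement is essentially formal. The only point needing care is checking that the identification of $L/N$ with a subgroup of $A_N$ is compatible with $f$. This holds because all the maps are restrictions of the single map $f_\bQ$. The evenness hypothesis plays no role in the extension statement itself. It matters only for the accompanying fact that isotropic subgroups of $A_N$ with respect to the $\bQ/2\bZ$-valued form correspond to even overlattices, and that fact is not needed here.
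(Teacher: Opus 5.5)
Your proof is correct. It is the standard argument behind Nikulin's Proposition~1.4.2, which the paper cites without reproving: extend $f$ to $f^\vee\colon N^\vee \simeq N'^\vee$, pass to the induced $\overline f\colon A_N \simeq A_{N'}$, and recover $L$ and $L'$ as the full preimages of $H_L$ and $H_{L'}$. Your remark is also accurate: evenness plays no role in the extension statement itself and only matters for matching even overlattices with isotropic subgroups.
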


\subsection{Fourier--Mukai equivalences}

From now on, for $X$ a scheme, $\Db(X)$ will denote the \emph{bounded derived category of coherent sheaves} on $X$.

\begin{definition} \label{def: FM transform}
    Let $X$ and $Y$ be schemes. A functor $\Phi_{\cE} \colon \Db(X) \to \Db(Y)$ given by $\Phi_{\cE}(F) \coloneqq p_{Y *}(p_X^*(F) \otimes \cE)$ is called a \emph{Fourier--Mukai} functor with \emph{Fourier--Mukai kernel} $\cE \in \Db(X \times Y)$, where $p_X$ and $p_Y$ are the obvious projections.
\end{definition}

\subsection{Semiorthogonal decompositions}

Let $\cC$ be a triangulated category. A subcategory $\cA \subset \cC$ is called \emph{admissible} if both the left and right adjoint to the inclusion $\cA \hookrightarrow \cC$ exist. An object $E \in \cC$ is called \emph{exceptional} if $\Ext^k(E,E) = \bC$ if $k=0$ and vanishes if $k \neq 0$.

\begin{definition}
    We say $\cC = \langle \cA_1, \dots , \cA_n \rangle$ is a \emph{semiorthogonal decomposition} of a triangulated category $\cC$ if
    \begin{enumerate}
        \item $\Ext^k(F, G) = 0$ for all $k \in \bZ$ and for all $F \in \cA_i, G \in \cA_j$ if $i > j$;
        \item for any $F \in \cC$, there exists a sequence of morphisms
        \begin{equation*}
        0 = F_n \to F_{n-1} \to \cdots \to F_1 \to F_0 = F
        \end{equation*}
        such that $\mathrm{Cone}(F_i \to F_{i-1}) \in \cA_i$.
    \end{enumerate}
    When the subcategories $\cA_i$ are all exceptional objects satisfying condition (1) above, we call $\{ \cA_1, \dots , \cA_n \}$ an \emph{exceptional collection}. An exceptional collection in a category is called \emph{full} if it generates the category. 
\end{definition}

We also have a notion of Fourier--Mukai functor for semiorthogonal components of derived categories:

\begin{definition} \label{def: fourier--mukai type}
    Let $X$ and $Y$ be schemes, such that their derived categories admit semiorthogonal components $\cA_X$ and $\cA_Y$, respectively. Then a functor $\cA_X \to \cA_Y$ is said to be a \emph{Fourier--Mukai} functor if the composition 
    \begin{equation*}
        \Db(X) \to \cA_X \to \cA_Y \to \Db(Y)
    \end{equation*}
    is a Fourier--Mukai functor between the derived categories. Here, the map $\Db(X) \to \cA_X$ is the projection to $\cA_X$ and the map $\cA_Y \to \Db(Y)$ is the inclusion.
\end{definition}

We say that $X$ admits a \emph{rectangular Lefschetz decomposition} if 
\begin{equation*}
    \Db(X) = \langle \cA, \cA \otimes L, \dots, \cA \otimes L^m \rangle,
\end{equation*}
where $\cA \subset \Db(X)$ is an admissible subcategory, $L$ is a line bundle on $X$, and $m$ is some integer. 

If $X$ is a quadric fibration, then its derived category admits a particular semiorthogonal decomposition:

\begin{theorem}[{\cite[Theorem 4.2]{KuzQuad}}] \label{thm:SOD quad fibration}
    Let $p \colon X \to B$ be a flat quadric fibration of relative dimension $n-2$. Then there is a semiorthogonal decomposition
    \begin{equation*}
        \Db(X) = \langle \Db(B, \cB_0), p^* (\Db(B)) \otimes \cO_{X/B}(1), \dots, p^* (\Db(B)) \otimes \cO_{X/B}(n-2) \rangle .
    \end{equation*}
\end{theorem}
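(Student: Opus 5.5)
The plan is to follow Kuznetsov's strategy: build the decomposition relative to $B$, treating the components $p^*\Db(B)\otimes\cO_{X/B}(k)$ by a direct cohomology computation and identifying their complement with $\Db(B,\cB_0)$ through a sheaf of $\cB_0$-modules on $X$.

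\emph{Setup.} Write $X\subset \bP_B(E)$ with $E$ of rank $n$, cut out by a section of $\cO_{\bP(E)/B}(2)\otimes L$. Here $L$ is a line bundle on $B$ and $q\colon \mathrm{Sym}^2E\to L^\vee$ is the corresponding family of quadratic forms. Let $\cB_0$ and $\cB_1$ be the even and odd parts of the sheaf of Clifford algebras of $q$.

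\emph{Step 1: the pulled-back pieces.} We use the Koszul resolution
\[
0\to \cO_{\bP(E)}(-2)\otimes L^{-1}\to \cO_{\bP(E)}\to \cO_X\to 0 .
\]
Since $\cO_{\bP^{n-1}}(-j)$ has no cohomology for $1\le j\le n-1$, pushing forward gives $Rp_*\cO_X=\cO_B$ and $Rp_*\cO_{X/B}(-k)=0$ for $1\le k\le n-2$. The projection formula then shows that each $p^*\Db(B)\otimes\cO_{X/B}(k)$ is fully faithfully embedded and admissible, and that these pieces are semiorthogonal in the stated order.

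\emph{Step 2: embedding $\Db(B,\cB_0)$.} Clifford multiplication $E\otimes\cB_0\to\cB_1$ and $E\otimes\cB_1\to\cB_0\otimes L^\vee$ gives a two-term complex of $\cB_0$-modules on $\bP(E)$. Its cokernel $\cE$ is a sheaf of right $\cB_0$-modules supported on $X$; on each fibre it is the relative analogue of the spinor sheaves. Define
\[
\Phi(F)\coloneqq p^*F\otimes_{\cB_0}\cE .
\]
Using the resolution of $\cE$, compute $Rp_*\mathcal{H}om(\cE,\cE)\simeq\cB_0$, which makes $\Phi$ fully faithful. Compute also $Rp_*(\cE\otimes\cO_{X/B}(-k))=0$ for $1\le k\le n-2$, which places the image of $\Phi$ in the left orthogonal of the pieces from Step 1.

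\emph{Step 3: generation.} Suppose $F\in\Db(X)$ is orthogonal to the image of $\Phi$ and to all the $p^*\Db(B)\otimes\cO_{X/B}(k)$. Then the relative Homs from each generator to $F$ vanish as objects of $\Db(B)$. By base change to a point $b\in B$, it suffices to know that $\cE_b$ together with $\cO(1),\dots,\cO(n-2)$ generate $\Db(X_b)$. Derived Nakayama then gives $F=0$. For smooth fibres this is Kapranov's decomposition of a quadric.

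\emph{Main obstacle.} The hard part is Step 3 together with the full faithfulness in Step 2 over singular (degenerate) fibres, where spinor bundles are no longer bundles and the fibrewise argument breaks down. I would try first to avoid fibrewise reasoning altogether. The idea is to work with the universal Clifford resolution on $\bP(E)$: Kapranov's collection on $\bP(E)$ restricts to a generating set for $\Db(X)$, and the Clifford complexes should express $\cO_{X/B}(-1)$, and its twists beyond $\cO_{X/B}(n-2)$, as iterated cones of objects in the image of $\Phi$ and the pulled-back pieces. This yields generation uniformly over $B$, with flatness of $p$ ensuring that base change is well behaved.
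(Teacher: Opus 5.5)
The paper does not prove this statement. It is quoted from \cite[Theorem 4.2]{KuzQuad}, so the relevant comparison is with Kuznetsov's argument. Your Steps 1--2 have the same architecture as that proof: a Clifford sheaf $\cE$ defined as the cokernel of a two-term linear complex on $\bP_B(E)$ whose composite with its neighbour is the equation of $X$, and $\Phi=p^*(-)\otimes_{\cB_0}\cE$. Two details need fixing.

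\emph{Step 1.} The vanishing $Rp_*\cO_{X/B}(-k)=0$ holds only for $1\le k\le n-3$. For $k=n-2$ the ideal-sheaf term $\cO_{\bP(E)}(-n)\otimes L^{-1}$ has non-zero $R^{n-1}\pi_*$. Indeed $R^{n-2}p_*\cO_{X/B}(2-n)$ is a line bundle, since $\omega_{X_b}=\cO_{X_b}(2-n)$ on each fibre. Only $k\le n-3$ is needed for the $n-2$ pulled-back pieces, so this is harmless. For $\cE$ the range $1\le k\le n-2$ is correct, since $i_*\cE$ is resolved by twists $-1$ and $0$. Note that this vanishing puts the image of $\Phi$ in the \emph{right} orthogonal $\{G\mid \Hom(p^*F\otimes\cO_{X/B}(k),G)=0\}$, which is what the ordering requires.

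\emph{Step 2.} Full faithfulness is only asserted, and the resolution on $\bP_B(E)$ does not compute it directly. It computes $\Hom_{\bP_B(E)}(i_*\cE,i_*\cE)$, which differs from $\Hom_X(\cE,\cE)$ by the term $\cE\otimes\cO_X(-X)[1]$ in the triangle for $Li^*i_*\cE$. So $p_*R\mathcal{H}om_X(\cE,\cE)\simeq\cB_0$ needs an actual argument on $X$.

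The genuine gap is generation. A flat quadric fibration has degenerate fibres over its discriminant (in this paper, over the sextic $C$), and there the fibrewise input is not Kapranov's theorem. The algebra $\cB_{0,b}$ is no longer semisimple; for $n=2$ and $q=x^2$ it is $\bC[\epsilon]/\epsilon^2$. Hence $\cE_b$ and $\cO(1),\dots,\cO(n-2)$ do \emph{not} classically generate $\Db(X_b)$. The weaker statement your Nakayama argument needs is that their right orthogonal in $\Db(X_b)$ vanishes. That is exactly the generation statement of the theorem for a singular quadric over a point, so Step 3 only covers the smooth locus.

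Your fallback does not close this, for two reasons.
\begin{enumerate}
\item Restrictions of objects from $\bP_B(E)$ are perfect, so iterated cones cannot exhaust $\Db(X)$ when $X$ is singular. The correct formulation is orthogonality. Set $\mathcal{D}=\langle\Phi(\Db(B,\cB_0)),p^*\Db(B)\otimes\cO_{X/B}(1),\dots\rangle$; it is admissible, so it suffices to show $\mathcal{D}^\perp=0$. If $G$ is right orthogonal to $p^*\Db(B)\otimes\cO_{X/B}(k)$ for $n$ consecutive $k$, then $i_*G=0$ by Orlov's projective bundle theorem (not Kapranov's collection).
\item More seriously, the Clifford complexes do not reach the two missing twists. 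On $X$ they give sequences $0\to\cE'(j-1)\to\cO_X(j)\otimes p^*\cB\to\cE(j)\to0$, which only trade $\cO_X(j)$ against neighbouring twists of spinor-type sheaves. From $\cE$ and $\cO_X(1),\dots,\cO_X(n-2)$ you obtain $\cE(j)$ for $0\le j\le n-2$ and nothing more: $\cO_X$ would need $\cE(-1)$, and $\cO_X(n-1)$ would need $\cE(n-1)$. Equivalently, for $G\in\mathcal{D}^\perp$ these sequences give $p_*R\mathcal{H}om(\cE,G(t))=0$ for $-(n-2)\le t\le 0$. By themselves they never force $p_*G$ or $p_*(G(1-n))$ to vanish.
\end{enumerate}

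What is missing is an additional global input, valid across the discriminant. One option is an exact complex expressing $\cO_X$ through $\cE$ and $\cO_X(1),\dots,\cO_X(n-2)$. Over a smooth conic this is the Euler sequence $0\to\cO_Q\to\cE\to\cO_Q(1)\to0$, which is not a Clifford sequence. Another option is an argument that the two-term object $i_*G\in\langle\pi^*\Db(B)\otimes\cO(-1),\pi^*\Db(B)\rangle$ is a linear matrix factorisation of $q$, and therefore comes from a Clifford module. Supplying such an input is the substantive part of the proof, and your proposal leaves it open.
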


Here, $\Db(B, \cB_0)$ denotes the bounded derived category of sheaves of $\cB_0$-modules on the base $B$, where $\cB_0$ is the sheaf of the even part of the Clifford algebra on $B$. For more details on (sheaves of) Clifford algebras, and for the proof of the above theorem, we point the reader to \cite{KuzQuad}.

\subsection{Kuznetsov components}

In this subsection, we define one of the main objects of study in this paper -- the Kuznetsov component of a given variety. We in particular remark on a few examples of Theorem \ref{thm:SOD quad fibration}, which we will use throughout the paper.

\begin{example}[Cubic fourfolds] \label{ex:cubic fourfolds quadric fibration}
    Recall the usual semiorthogonal decomposition of a cubic fourfold $X \subset \bP^5$
    \begin{equation*}
        \Db(X) = \langle \cA_X, \cO_X, \cO_X(1), \cO_X(2) \rangle,
    \end{equation*}
    where the \emph{Kuznetsov component} $\cA_X$ of the cubic fourfold is defined to be the right orthogonal to to the exceptional collection $\{ \cO_X, \cO_X(1), \cO_X(2) \}$. 
    
    Now assume that $X$ contains a plane $P$. Then $p \colon \Bl_P X \to P$ defines a quadric fibration, and its discriminant curve is a sextic $C$ \cite[Lemma 4.1]{Kuzcub}. Theorem \ref{thm:SOD quad fibration} then gives a semiorthogonal decomposition
    \begin{equation*}
        \Db(\Bl_{P} X) = \langle \Db(P, \cB_0), p^* \Db(P), p^* \Db(P) \otimes \cO_{\Bl_P X}(1) \rangle ,
    \end{equation*}
    where we have omitted the notation for the embedding of $\Db(P, \cB_0)$ into $\Db(\Bl_P X)$. Writing an alternative semiorthogonal decomposition for $\Db(\Bl_P X)$ using Orlov's Blow-up Formula \cite[Theorem 4.3]{OrlovBlowUp} and shuffling one of the decompositions appropriately using mutations, one can show \cite[Theorem 4.3]{Kuzcub} that we have 
    \begin{equation*}
        \cA_X \simeq \Db(P, \cB_0) .
    \end{equation*}
    Finally, consider the moduli space $\cF(\Bl_P X/ P)$ of lines contained in fibres of $\Bl_P X$ over $P$. When the fibre is a smooth quadric, the Fano variety of lines in the quadric is a disjoint union of two copies of $\bP^1$. On the other hand, when the quadric fibre is singular, the Fano variety of lines is a single copy of $\bP^1$. Now let $S$ be the double cover of $P$ branched in $C$. In particular, $S$ is a K3 surface. By the above discussion, we have a $\bP^1$-fibration $\cF(\Bl_P X/ P) \to S$, and thus we get an element $\alpha \in \Br(S)_2$. Crucially, by \cite[Lemma 4.2]{Kuzcub} we have 
    \begin{equation*}
        \Db(P, \cB_0) \simeq \Db(S, \alpha) .
    \end{equation*}
    Note that for the above to work, the quadric fibres of $\Bl_P X \to P$ have to have rank $\geq 3$. The cubic fourfolds that we work with when we apply this example later in the paper are all sufficiently general within their moduli for this to hold. 
\end{example}

\begin{example}[Verra fourfolds] \label{ex: Verra fourfold SOD}
    Since Verra fourfolds admit (two) quadric surface bundle structures over $\bP^2$, by Theorem \ref{thm:SOD quad fibration} we have the following semiorthogonal decomposition, associated to one of the quadric surface bundles:
    \begin{equation*}
        \Db(V_4) = \langle \Db(\bP^2, \cB_0), \Db(\bP^2), \Db(\bP^2) \otimes \cO_{V_4/\bP^2}(1) \rangle ,
    \end{equation*}
    This semiorthogonal decomposition defines the \emph{Kuznetsov component} $\cA_{V_4} \coloneqq \Db(\bP^2, \cB_0)$ of $V_4$. Note that we have the equivalence
    \begin{equation*}
        \Db(\bP^2, \cB_0) \simeq \Db(S, \alpha).
    \end{equation*}
     The analogously defined Kuznetsov component arising from the other quadric fibration is equivalent to the one coming from the first (see also Lemma \ref{lem: both Verra kuznetsov components equivalent}).
\end{example}

\begin{example}[Verra threefolds]
    Ordinary Verra threefolds admit two conic bundle structures over $\bP^2$. Similarly to Verra fourfolds, by Theorem \ref{thm:SOD quad fibration} we thus get a semiorthogonal decomposition 
    \begin{equation*}
        \Db(V_3) = \langle \Db(\bP^2, \cB_0) , \Db(\bP^2)  \rangle .
    \end{equation*}
   This semiorthogonal decomposition defines the \emph{Kuznetsov component} $\cA_{V_3} \coloneqq \Db(\bP^2, \cB_0)$ of $V_3$. 
\end{example}

Note that in all of the examples above, $\cA_{V_4} \not\simeq \cA_{V_3} \not\simeq \cA_{X}$, since the sheaves of Clifford algebras on the respective bases are different.

\section{Brauer classes of Verra type on K3 surfaces}\label{sec: brauer classes of verra type}

\subsection{The Brauer group of a K3 surface} \label{sec: Brauer groups of K3 surfaces}

In this subsection, we briefly recall the definition and properties of the Brauer group of a K3 surface. We follow \cite[Section 1]{GvG}. Let $S$ be a K3 surface. The \emph{Brauer group} $\Br(S)$ of $S$ is defined as 
\begin{equation*}
    \Br(S) = H^2(S, \cO_S^*)_{\mathrm{tors}} .
\end{equation*}
In this paper, we mostly consider Brauer classes of order $2$. 
Via the exponential sequence, an order $2$ class $\alpha \in \Br(S)_2$ has a lift $\widetilde{\alpha} \in H^2(S, \cO_S)$, such that $2 \widetilde{\alpha} \in H^2(S, \bZ) / \Pic (S)$. Any class $B_\alpha \in \frac{1}{2} H^2(S, \bZ) \subset H^2(S, \bQ)$ which is mapped to $\widetilde{\alpha}$ in the exponential sequence is called a \emph{$B$-field lift} of $\alpha$.

The self-duality of the lattice $H^2(S, \bZ)$ along with the isomorphism $$\Br(S) \simeq (H^2(S, \bZ)/ \Pic(S)) \otimes \bQ / \bZ$$ gives an isomorphism $\Br(S)_2 \simeq \Hom(T(S), \bZ / 2 \bZ)$, where for $\alpha \in \Br(S)_2$ we have
\begin{equation*}
    \alpha \colon T(S) \to \frac{1}{2}\bZ / \bZ \simeq \bZ/2\bZ, \qquad x \mapsto x \cdot B_\alpha \pmod \bZ,
\end{equation*}
where $B_\alpha$ is a $B$-field lift of $\alpha$. This interpretation of Brauer classes of $S$ also allows us to define the \emph{twisted transcendental lattice} of $S$ as 
    \begin{equation} \label{eq: def of twisted transcendetnal lattice}
        T(S, \alpha) \coloneqq \ker (\alpha) .
    \end{equation}
The twisted transcendental lattice $T(S,\alpha)$ inherits a Hodge structure as well as a lattice structure from $T(S)$.

\subsection{Brauer classes of Verra type}

In this section, we study certain Brauer classes on K3 surfaces of degree 2.
Let $S$ be a K3 surface with ample divisor $H\in \NS(S)$ of degree $H^2 = 2$. Note that there are no assumptions on the Picard rank of $S$. Then $H$ defines a map $\varphi_H\colon S\to \bP^2$. The map $\varphi_H$ is a double cover branched in a sextic curve $C\subset \bP^2$. Denote $\varphi_H^{-1}(C) \coloneqq R\subset S$ for the ramification curve. 
There is a natural restriction map
\[
\res \colon \Br(\bP^2\setminus C)\to \Br(S\setminus R).
\]
Recall the purity exact sequence:
\begin{lemma}[{\cite[Theorem 3.7.2]{CSbook}}, Purity exact sequence]\label{lem: purity}
Let $X$ be a smooth variety and $D$ a smooth irreducible divisor. Then we have the exact sequence
\[0 \to \Br(X) \to \Br(X \setminus D) \xrightarrow{\partial_D} H^1(D,\bQ/\bZ) . \]
\end{lemma}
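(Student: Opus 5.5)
The plan is to deduce the sequence from the Gysin (localisation) sequence in étale cohomology with finite coefficients, combined with the Kummer sequences on $X$ and on $U \coloneqq X\setminus D$, and then pass to the direct limit over $n$. We work over $\bC$ throughout, so we may identify $\mu_n \simeq \bZ/n\bZ$ and ignore Tate twists.

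\emph{Step 1: the Gysin sequence.} Fix $n\geq 1$. The localisation sequence for the closed subset $D\subset X$ gives
\[
H^2_D(X,\mu_n)\to H^2(X,\mu_n)\to H^2(U,\mu_n)\to H^3_D(X,\mu_n)\to H^3(X,\mu_n).
\]
Because $D$ is a smooth divisor in the smooth variety $X$, cohomological purity (SGA 4, relative purity for smooth pairs) gives $H^i_D(X,\mu_n)\simeq H^{i-2}(D,\bZ/n\bZ)$. We therefore obtain the exact sequence
\[
H^0(D,\bZ/n)\to H^2(X,\mu_n)\to H^2(U,\mu_n)\xrightarrow{\ \mathrm{res}\ } H^1(D,\bZ/n).
\]
The first map sends $1$ to the cycle class of $D$. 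Establishing this purity isomorphism is the one genuinely deep input. I expect it to be the main obstacle if one insists on proving it, and I would simply quote it.

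\emph{Step 2: comparison with Kummer sequences.} The Kummer sequences give
\[
0\to \Pic(X)/n\to H^2(X,\mu_n)\to \Br(X)[n]\to 0,
\]
and the analogous sequence for $U$. I then define $\partial_D$ on $\Br(U)[n]$ as follows: lift a class to $H^2(U,\mu_n)$ and apply $\mathrm{res}$.

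\emph{Well-definedness.} Two lifts differ by an element of $\Pic(U)/n$. Since $X$ is smooth, restriction $\Pic(X)\to\Pic(U)$ is surjective, so this difference lifts to $\Pic(X)/n \subset H^2(X,\mu_n)$. By exactness in Step 1 it is then killed by $\mathrm{res}$.

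\emph{Exactness at $\Br(U)$.} Suppose $\partial_D(\beta)=0$. Then any lift of $\beta$ comes from $H^2(X,\mu_n)$, so $\beta$ lies in the image of $\Br(X)[n]$. Conversely, classes coming from $\Br(X)$ have residue zero.

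\emph{Injectivity of $\Br(X)\to\Br(U)$.} This follows from Grothendieck's theorem that for a regular integral scheme $\Br(X)\hookrightarrow \Br(k(X))$, which factors through $\Br(U)$. This uses that $X$ is smooth; I would assume it is also connected, or else argue componentwise.

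\emph{Step 3: passage to the limit.} The maps $\partial_D$ are compatible with the transition maps $\mu_n\to\mu_{nm}$, namely $x\mapsto x$ on $\mu_n\subset\mu_{nm}$, together with the corresponding maps $\bZ/n\to\bZ/nm$. Filtered colimits are exact, and the Brauer groups are torsion, so taking $\varinjlim_n$ yields the exact sequence
\[
0\to\Br(X)\to\Br(U)\xrightarrow{\partial_D}H^1(D,\bQ/\bZ).
\]
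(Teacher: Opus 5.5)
Your proof is correct. The paper gives no argument of its own and simply quotes Colliot-Th\'el\`ene--Skorobogatov, Theorem 3.7.2; your combination of the Gysin sequence via purity, the Kummer sequences, surjectivity of $\Pic(X)\to\Pic(U)$ and a direct limit over $n$ is the standard proof behind that reference, and the one detail worth stating is that the transition maps $\bZ/n\bZ\to\bZ/nm\bZ$ compatible with $\mu_n\subset\mu_{nm}$ and the purity isomorphisms are $x\mapsto mx$, so the colimit is indeed $H^1(D,\bQ/\bZ)$.
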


By Lemma \ref{lem: purity}, we may view $\Br(S)$ as a subgroup of $\Br(S\setminus R)$. By the same token, we have
\[
\Br(\bP^2\setminus C)\simeq H^1(C,\bQ/\bZ) \simeq (\bQ/\bZ)^{20}.
\]

\begin{lemma}
    Let $\beta \in \Br(\bP^2 \setminus C)_2$. Then $\res(\beta)$ is contained in the subgroup $\Br(S)_2$ of $\Br(S \setminus R)$.
\end{lemma}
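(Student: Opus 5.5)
By Lemma~\ref{lem: purity} applied to the smooth curve $R \subset S$, a class in $\Br(S\setminus R)$ lies in $\Br(S)$ exactly when its residue along $R$ vanishes. So the plan is to compute $\partial_R(\res(\beta))$ in terms of $\partial_C(\beta)$ and show it is zero. Here we assume $C$ is smooth, so that $R\simeq C$ is smooth and irreducible and Lemma~\ref{lem: purity} applies on both sides.

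The key input is the functoriality of residues under finite pullbacks (as in \cite{CSbook}). Let $f\colon Y\to X$ be a morphism of smooth varieties with $f^{-1}(D)=eD'$ set-theoretically, where $D'$ is irreducible and $e$ is the ramification index along $D'$. Then
\[
\partial_{D'}(f^*\beta) = e\cdot (f|_{D'})^*\,\partial_D(\beta) \in H^1(D',\bQ/\bZ).
\]
We apply this to $\varphi_H\colon S\to\bP^2$. Since $\varphi_H$ is a double cover branched along $C$, we have $\varphi_H^*C = 2R$, so $e=2$, and $\varphi_H|_R\colon R\to C$ is an isomorphism. For $\beta\in\Br(\bP^2\setminus C)_2$ we have $2\beta=0$, hence $2\,\partial_C(\beta)=0$. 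Therefore
\[
\partial_R(\res\beta)=2\,(\varphi_H|_R)^*\partial_C(\beta)=(\varphi_H|_R)^*\bigl(2\,\partial_C(\beta)\bigr)=0.
\]
Exactness in Lemma~\ref{lem: purity} then gives $\res(\beta)\in\Br(S)$. It is $2$-torsion because $\res$ is a group homomorphism, so $\res(\beta)\in\Br(S)_2$.

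The main obstacle is justifying the residue formula with the factor $e$. One clean route is to localise at the generic point of $C$. There $\bC(\bP^2)\subset\bC(S)$ is a degree-$2$ extension of discretely valued fields, totally ramified with index $2$, with residue field extension $\bC(C)\simeq\bC(R)$ of degree $1$. The standard compatibility of the residue map for tame Brauer classes (order prime to the residue characteristic, here characteristic $0$) states that $\partial'\circ\res = e\cdot \res\circ\partial$. Alternatively, one can argue concretely. Since $\bC(C)$ contains all roots of unity, a $2$-torsion class near $C$ is, generically along $C$, a sum of quaternion algebras $(u,\pi)$ and $(u',v)$ with $u,u',v$ units and $\pi$ a local equation of $C$. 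Pulling back, $\pi = t^2\cdot(\text{unit})$ with $t$ a local equation of $R$, so $(u,\pi)$ becomes $(u,t)^2\otimes(u,\text{unit})$. The first factor is trivial because $(u,t)$ is $2$-torsion, and the second is unramified along $R$. Either argument suffices; I would cite the general formula.
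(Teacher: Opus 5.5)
Your proposal is correct and follows the paper's proof essentially step for step. Both arguments use the purity sequences for $\bP^2\setminus C$ and $S\setminus R$ together with the pullback formula $\partial_R(\res\beta)=2\,\varphi_R^*\partial_C(\beta)$ coming from $\varphi_H^*C=2R$, and conclude because $\partial_C(\beta)$ is $2$-torsion. Your extra justification of the factor $e=2$, via the tame residue compatibility or the local computation $(u,t^2w)=2(u,t)+(u,w)$, fills in what the paper simply cites as compatibility with pullback.
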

\begin{proof}
Consider the purity exact sequences
$$\begin{tikzcd}
0 \arrow[r] & \Br(\bP^2)\arrow[r] & \Br(\bP^2 \setminus C)\arrow[r, "\partial_C"]\arrow[d, "\res"] & H^1(C,\bQ/\bZ) \\
0 \arrow[r] & \Br(S)\arrow[r] & \Br(S \setminus R)\arrow[r, "\partial_R"] & H^1(R,\bQ/\bZ) \\
\end{tikzcd}
$$
Denote by $\varphi_R$ the restriction of $\varphi_H$ to $R$. Since $\varphi_H^*(C)=2R$, the compatibility with the pullback gives
$$\partial_R(\res \beta) = 2 \varphi_R^* (\partial_C(\beta)).$$
Since $\beta$ is $2$-torsion, then also $\partial_C(\beta)$ is $2$-torsion. As a consequence $2 \varphi_R(\res \beta) = 0$, and hence $\partial_R( \res \beta)=0$.

As a consequence $\res(\beta)$ is in the kernel of $\partial_R$, which is the image of $\Br(S)$. Since $\res$ is a group homomorphism and $2 \beta = 0$, we have $2 \res(\beta) = 0$, hence $\res(\beta) \in \Br(S)_2$.
\end{proof}

\begin{definition}[{\cite[Definition 5.8]{JM}}] \label{def: verra type} \leavevmode
    Let $S$ be a K3 surface.
    \begin{enumerate}
        \item Let $H$ be a polarisation of $S$ of degree $2$, and let $C \subset \bP^2$ be the branch locus of the double cover $\varphi_H\colon S \to \bP^2$. A Brauer class $\alpha\in \Br(S)_2$ is said to be \textit{of Verra type for $H$} if there exists a non-trivial Brauer class $1\neq\beta\in \Br(\bP^2\setminus C)_2$ such that $\res(\beta)=\alpha$.
        \item A Brauer class $\alpha \in \Br(S)_2$ is said to be \textit{of Verra type} if there exists a degree 2 polarisation $H\in \NS(S)$ such that $\alpha$ is of Verra type for $H$.
        \item A twisted polarised K3 surface $(S,H,\alpha)$ is said to be \textit{of Verra type} if $\alpha$ is a Brauer class of Verra type for $H$.
        \item A twisted K3 surface $(S,\alpha)$ is said to be \textit{of Verra type} if there exists a degree 2 polarisation $H\in \NS(S)$ such that $(S,H,\alpha)$ is of Verra type.
    \end{enumerate} 
\end{definition}

Before we study the map $\res$, let us explain our terminology. 
Suppose that $\alpha\in \Br(S)_2$ is a Brauer class of Verra type, and 
$\beta \in \Br(\bP^2\setminus C)_2$ satisfies $\res(\beta) = \alpha$. 
Then the Brauer class $\beta$ defines a conic bundle $V_\beta\to \bP^2$ 
with discriminant locus $C\subset \bP^2$, which is also the branch locus of the 
double cover $\varphi_H\colon S\to \bP^2$. 
The variety $V_\beta$ is a Verra threefold, i.e. a bidegree $(2,2)$ divisor in $\bP^2 \times \bP^2$ \cite[\S 9.8]{vGe}. This means that Brauer classes of Verra type are precisely those which arise as pullbacks of Brauer classes of Verra threefolds.

Throughout this paper, we will mostly be concerned with Brauer classes of Verra type. 
More specifically, for a Brauer class 
$\alpha\in \Br(S)_2$ of Verra type, we will study the number of Brauer classes 
$\beta\in \Br(\bP^2\setminus C)_2$ with the property that $\res(\beta)=\alpha$.
We note that, under the assumption that $S$ has Picard rank $1$, there is a short exact sequence \cite{Ford}
\[
0 \to \Br(\bP^2\setminus C)_2\to \Br(S)_2\to \bZ/2\bZ \to 0.
\]
In particular, there is a unique Brauer class $\beta \in \Br(\bP^2\setminus C)_2$ arising as the preimage of any Brauer class $\alpha\in \Br(S)_2$ of Verra type. We now turn to the main result of this section, which shows that this is not the case for K3 surfaces of higher Picard rank. 

\begin{theorem} \label{thm: kernel and cokernel of res}
    Let $S$ be a K3 surface of Picard rank $\rho$ with an ample divisor $H$ of degree $H^2 = 2$. Let $C\subset \bP^2$ be the branch curve of the double cover $S \to \bP^2$. There is an exact sequence
    \[
    0\to (\bZ/2\bZ)^{\rho-r} \to \Br(\bP^2\setminus C)_2 \overset{\res}{\longrightarrow} \Br(S)_2 \to (\bZ/2\bZ)^{2-r}\to 0,
    \]
    where 
    \[
    r = \begin{cases}
        1 & \text{ if } \dv_{\NS(S)}(H) = 2 \\
        2 & \text{ if } \dv_{\NS(S)}(H) = 1.
    \end{cases}
    \]
\end{theorem}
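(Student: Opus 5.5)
By Lemma~\ref{lem: purity}, together with $\Br(\bP^2)=0$ and $H^3(\bP^2,\bZ)=0$, we have $\Br(\bP^2\setminus C)_2\simeq H^1(C,\bZ/2\bZ)\simeq(\bZ/2\bZ)^{20}$, since $C$ is a smooth sextic of genus $10$. On the other side, \S\ref{sec: Brauer groups of K3 surfaces} gives $\Br(S)_2\simeq\Hom(T(S),\bZ/2\bZ)\simeq(\bZ/2\bZ)^{22-\rho}$. An exact sequence of $\mathbf{F}_2$-vector spaces of the claimed shape satisfies
\[
20-(\rho-r) \;=\; (22-\rho)-(2-r),
\]
and both sides equal $20-\rho+r$. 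So the dimension count is automatically consistent. It therefore suffices to prove one of two things: either $\dim\ker(\res)=\rho-r$, or $\dim\operatorname{coker}(\res)=2-r$. The other then follows by counting dimensions. I would compute the kernel.

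\textbf{Step 1: set up the Hochschild--Serre spectral sequence.} Put $U=\bP^2\setminus C$ and $V=S\setminus R$. The restriction $V\to U$ is an étale Galois cover with group $G=\langle\iota\rangle\simeq\bZ/2\bZ$, where $\iota$ is the covering involution. I would use the spectral sequence $H^p(G,H^q(V,\mathbf{G}_m))\Rightarrow H^{p+q}(U,\mathbf{G}_m)$.
\begin{itemize}
\item The units on $V$ are only constants. A function on $S$ with divisor supported on $R$ would force a nonzero multiple of the irreducible ample curve $R$ to be principal, which is impossible. Hence $\mathcal{O}(V)^*=\bC^*$.
\item Consequently $H^2(G,\bC^*)=\bC^*/(\bC^*)^2=0$ and $H^3(G,\bC^*)=H^1(G,\bC^*)=\mu_2$.
\end{itemize}
This yields an exact sequence
\[
0\to \Pic(U)\to\Pic(V)^G\to 0\to \ker\bigl(\Br(U)\to\Br(V)\bigr)\to H^1(G,\Pic V)\xrightarrow{d_2}\mu_2 .
\]
Since $\Br(S)\subset\Br(V)$, the kernel of $\res$ is exactly $\ker(\Br(U)\to\Br(V))$ restricted to $2$-torsion. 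Because $G$ has order $2$, this kernel is automatically $2$-torsion.

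\textbf{Step 2: compute $H^1(G,\Pic V)$ as an explicit lattice-theoretic quantity.} Here $\Pic V=\NS(S)/\bZ R$, and $R\sim 3H$ because the branch curve is a sextic. The involution acts on $H^2(S,\bZ)$ with invariant part $\bZ H$ and as $-1$ on $H^\perp$, because $S/\iota=\bP^2$.

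If $\dv(H)=1$, then $\NS(S)\supsetneq\bZ H\oplus H^\perp_{\NS}$ with index $2$, glued along the element $H/2+w/2$. If $\dv(H)=2$, the sum is direct. In each case I would compute $H^1(G,M)=\ker(1+\iota)/\operatorname{im}(1-\iota)$ for $M=\NS(S)/3\bZ H$. The anti-invariant part $H^\perp_{\NS}$ of rank $\rho-1$ contributes $(\bZ/2)^{\rho-1}$, while the invariant quotient $\bZ/3$ contributes nothing. The gluing in the case $\dv(H)=1$ changes the count by one.

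\textbf{Step 3: determine $d_2$ and conclude.} I would then identify the image of $d_2$ in $\mu_2$. Either $d_2$ is onto in exactly one of the two cases, or it vanishes and the gluing alone produces the discrepancy. The target is $\dim\ker(\res)=\rho-r$, and the dimension count from the first paragraph then gives the cokernel $(\bZ/2\bZ)^{2-r}$.

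As a cross-check on the cokernel I would use Ford's description \cite{Ford}. In Picard rank $1$, where $\dv(H)=2$ and hence $r=1$, the kernel is $0$ and the cokernel is $\bZ/2$. This matches the formula. I would also compare with the explicit description of $\partial$ in \cite{IOOV}.

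\textbf{Main obstacle.} I expect the hardest part to be the bookkeeping in Steps 2--3: tracking the index-$2$ gluing of $\bZ H\oplus H^\perp$ inside $\NS(S)$ and deciding whether the differential $d_2\colon H^1(G,\Pic V)\to H^3(G,\bC^*)$ vanishes. If the spectral-sequence differential proves hard to pin down, the fallback is to compute the cokernel directly instead. A class $\alpha\in\Hom(T(S),\bZ/2)$ lies in the image of $\res$ exactly when its $B$-field can be chosen $\iota$-anti-invariant modulo $H^2(S,\bZ)+\NS(S)_\bQ$. The obstruction then lies in $H^2(S,\bZ)^{-}/\bigl((1-\iota)H^2(S,\bZ)+\NS^-\bigr)$, whose $2$-rank should come out to $2-r$ depending on $\dv(H)$.
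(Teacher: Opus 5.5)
Your dimension-count reduction and Step 1 are correct, apart from a harmless slip: the Hochschild--Serre sequence begins $0\to H^1(G,\bC^*)=\mu_2\to\Pic(U)\to\Pic(V)^G\to 0$. Indeed $\Pic U\simeq\bZ/6\bZ$ and $\Pic(V)^G\simeq\bZ/3\bZ$.

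\textbf{The gap.} Your argument stops exactly where the content of the theorem lies. That $\ker(\res)\simeq(\bZ/2\bZ)^{\rho-r}$ for \emph{some} $r\in\{1,2\}$ is Ford's result, which the paper simply quotes. What the theorem adds is \emph{which} $r$ occurs, and your Step 3 only lists alternatives without deciding between them.

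Moreover, your expectation in Step 2 is wrong, so the alternative you lean on is not available. Since $\bZ H/3\bZ H$ has trivial $G$-cohomology and $\iota$ acts by $-1$ on the free module $\NS(S)/\bZ H$, one gets $H^1(G,\Pic V)\simeq\NS(S)/(\bZ H+2\NS(S))\simeq(\bZ/2\bZ)^{\rho-1}$ for \emph{either} value of $\dv(H)$. For example, for $L_{d,1}$, inside $\NS(S)/3\bZ H$ one has $\ker(1+\iota)=\bZ(H+D)$ and $\operatorname{im}(1-\iota)=2\bZ(H+D)$. So the index-$2$ gluing changes nothing. The theorem is then equivalent to the statement that $d_2\neq 0$ if and only if $\dv(H)=1$, and nothing in your proposal establishes this.

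\textbf{The missing ingredient.} What you need is the parity map $x\mapsto x\cdot H \bmod 2$ on $\NS(S)/(\bZ H+2\NS(S))$. The paper extracts it from the IOOV sequence as $\deg_S$ (Lemma~\ref{lem: degs}) and shows by the snake lemma that $\res$ is onto iff $\deg_S$ is. In your framework it appears as $d_2$, via the standard description of $H^1(G,\Pic V)\to H^3(G,\bC^*)\simeq\mu_2$ for cyclic $G$: write $E+\iota^*E=\operatorname{div}(f)$ on $V$ and take $\iota^*f/f$.

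\emph{Step (a).} Represent a class by some $x\in\NS(S)$ with $x\cdot H=3m$, and choose a divisor $E$ in the class $x$.

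\emph{Step (b).} Let $F$ be the sextic defining $C$, let $\ell$ be a linear form defining a line $L$, and let $z$ be the anti-invariant function with $z^2=\varphi_H^*(F/\ell^6)$. Then $\operatorname{div}(z)=R-3\varphi_H^*L$.

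\emph{Step (c).} Since $E+\iota^*E=\varphi_H^*(\varphi_H)_*E$, one gets $E+\iota^*E-mR=\operatorname{div}(f)$ with $f=\varphi_H^*(q)\,z^{-m}$, where $\operatorname{div}(q)=(\varphi_H)_*E-3mL$.

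\emph{Step (d).} Hence $\iota^*f=(-1)^m f$, so $d_2(x)=(-1)^{x\cdot H}$. Its kernel has rank $\rho-1$ or $\rho-2$ according as $\dv(H)=2$ or $1$.

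With this computation your route would give a self-contained proof, reproving Ford's part as well.

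\textbf{The fallback does not work as stated.} For every $B$ one has $B+\iota B=(B\cdot H)H\in\NS(S)_\bQ$, so your anti-invariance condition is vacuous. Also, $H^\perp/(1-\iota)H^2(S,\bZ)\simeq(\bZ/2\bZ)^{20}$, so the group $H^2(S,\bZ)^-/\bigl((1-\iota)H^2(S,\bZ)+\NS(S)^-\bigr)$ has $2$-rank at least $21-\rho$. For instance, it is nonzero for $\rho=2$ of type I, where the cokernel vanishes. So it is not the cokernel of $\res$.
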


We note that the fact that $\ker(\res) \simeq (\bZ/2\bZ)^{\rho-r}$ for some $1\leq r \leq 2$ was proved by Ford in \cite{Ford}. Our contribution is the criterion that determines the precise value of $r$.

\subsection{Theta characteristics of the branch curve}
Let $S$ be a K3 surface with ample divisor $H\in \NS(S)$ of degree $H^2 = 2$, and let $C\subset \bP^2$ be the branch sextic of the double cover $\varphi_H\colon S\to \bP^2$. 

Under the assumption that $S$ has Picard rank $1$, the 2-torsion subgroup of $\Br(S)$ is isomorphic to the group \cite[\S9]{vGe}:
\begin{equation}\label{eq: pic kc 2}
\left(\frac{\Pic(C)}{\bZ K_C}\right)_2.
\end{equation}
We do not make assumptions on the Picard rank of $S$, but the group \eqref{eq: pic kc 2} is nevertheless important for our discussion.

Firstly, we note that there is a natural injective group homomorphism
\begin{equation}\label{eq: jacobian in pic 2}
\functionstar{\Jac(C)_2}{\left(\frac{\Pic(C)}{\bZ K_C}\right)_2}{\quad[\cL]}{\overline{[\cL]}.}
\end{equation}
Recall that a \textit{theta characteristic} of $C$ is a divisor class 
$\Theta \in \Pic(C)$ such that $2\Theta = K_C$. 
The cokernel of \eqref{eq: jacobian in pic 2} is cyclic of order $2$, and it is
generated by the class of any theta characteristic $\overline{[\Theta]}\in \left(\frac{\Pic(C)}{\bZ K_C}\right)_2$.

Let $L\subset \bP^2$ be a general line, so that $C\cap L$ consists of $6$ distinct points. With a slight abuse of notation, we denote the divisor $C\cap L$ on $C$ by $L$. By the adjunction formula, we have
\[
K_C = 3L.
\]
Then the quotient map is an isomorphism:
\[
\left(\frac{\Pic(C)}{\bZ K_C}\right)_2 \simeq \left(\frac{\Pic(C)}{\bZ L}\right)_2.
\]
For any class $\overline{[\cL]}\in \left(\frac{\Pic(C)}{\bZ L}\right)_2$, we have $\cL^{\otimes2} \simeq \cO(\varepsilon L)$ for some $\varepsilon \in \bZ$. This defines a degree map
\begin{equation}\label{eq: degree map}
\function{\deg}{\left(\frac{\Pic(C)}{\bZ L}\right)_2}{\bZ/2\bZ}{\overline{[\cL]}}{\begin{cases}
0 & \text{ if } \varepsilon \text{ is even} \\
1 & \text{ if } \varepsilon \text{ is odd}.
\end{cases}}
\end{equation}
The kernel of this degree map is $\ker(\deg) = \Jac(C)_2$. In other words, we have a short exact sequence
\[
0 \to \Jac(C)_2 \to \left(\frac{\Pic(C)}{\bZ L}\right)_2 \overset{\deg}{\to} \bZ/2\bZ \to 0.
\]
That means that the set of classes of theta characteristics in $\left(\frac{\Pic(C)}{\bZ L}\right)_2$ is precisely the set of classes $\overline{[\cL]}$ such that $\deg(\overline{[\cL]})=1$.

\subsection{The $2$-torsion Brauer classes on K3 surfaces of degree 2}
Let us recall the following special case of \cite[Theorem 1.1]{IOOV}.
\begin{theorem}[{\cite[Theorem 1.1]{IOOV}}] \label{thm: IOOV main theorem}
    Let $S$ be a K3 surface, and let $H\in \NS(S)$ be an ample divisor of degree $H^2 = 2$. Let $C\subset \bP^2$ be the branch sextic of the corresponding double cover $\varphi_H\colon S\to \bP^2$. Let $L\subset \bP^2$ be a general line, such that $C\cap L$ is a disjoint union of $6$ points, and denote the divisor corresponding to $C\cap L$ in $\Pic(C)$ by $L$. Then there is a short exact sequence
    \begin{equation}\label{eq: IOOV short exact sequence}
    0\to \frac{\Pic(S)}{\bZ H+2\Pic(S)}\to \left(\frac{\Pic(C)}{\bZ L}\right)_2\overset{\cA}{\to} \Br(S)_2 \to 0. 
    \end{equation}
    Moreover, a Brauer class $\alpha\in \Br(S)_2$ is of Verra type (see Definition \ref{def: verra type}) if and only if there is a line bundle $\cL\in \Jac(C)_2\setminus \{0\}$ such that $\cA(\overline{[\cL]}) = \alpha$.
\end{theorem}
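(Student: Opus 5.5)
The statement is quoted from \cite[Theorem 1.1]{IOOV}, so in the paper it suffices to check that the hypotheses of loc.\ cit.\ hold: $S\to\bP^2$ is a double cover branched along a smooth sextic. If I were to reprove it directly, I would construct the map $\cA$ separately on the two cosets of $\Jac(C)_2$ in $\left(\Pic(C)/\bZ L\right)_2$. I would then identify its kernel and conclude surjectivity by a dimension count over $\bZ/2\bZ$.

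\textbf{Step 1: the subgroup $\Jac(C)_2$.} Since $\Br(\bP^2)=0$, Lemma \ref{lem: purity} identifies $\Br(\bP^2\setminus C)$ with a subgroup of $H^1(C,\bQ/\bZ)$. Its $2$-torsion is then identified with $H^1(C,\bZ/2\bZ)\simeq \Jac(C)_2$, the latter via $\mu_2$-torsors, i.e.\ \'etale double covers of $C$. On $\Jac(C)_2$ I would define $\cA$ as the composite of the inverse residue isomorphism with $\res$. By the lemma preceding Definition \ref{def: verra type}, this composite lands in $\Br(S)_2$. With this definition, the ``moreover'' clause is a tautology: a class is of Verra type exactly when it lies in the image of $\Jac(C)_2\setminus\{0\}$. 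Geometrically, each nonzero $\beta$ is the conic bundle $V_\beta$ over $\bP^2$, and $\res(\beta)$ is the class of the Severi--Brauer variety obtained by base-changing $V_\beta$ to $S$.

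\textbf{Step 2: the theta characteristics.} For a class with $\deg=1$, that is, a theta characteristic $\Theta$, I would use the classical determinantal construction. A suitable theta characteristic on the sextic $C$ yields a symmetric $4\times 4$ matrix of forms, of degrees $1,1,1,2$ on the diagonal blocks. This is the same as a quadric surface bundle over $\bP^2$ with discriminant $C$, i.e.\ a cubic fourfold containing a plane. As in Example \ref{ex:cubic fourfolds quadric fibration}, its even Clifford algebra $\cB_0$ descends to an Azumaya algebra on $S$, and I would set $\cA(\Theta)$ to be its class. I would then check that $\cA$ is a homomorphism on all of $\left(\Pic(C)/\bZ L\right)_2$. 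The point is that twisting $\Theta$ by $\eta\in\Jac(C)_2$ multiplies the Clifford class by $\cA(\eta)$. This compatibility is the technical heart of \cite{IOOV}, and I would quote it rather than redo it.

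\textbf{Step 3: kernel and surjectivity.} For the kernel, a divisor $D$ on $S$ restricts to a line bundle $D|_R$ on $R\simeq C$. Its square is a multiple of $L$ modulo the image of $H$, because $\varphi_H^*C=2R$. This gives the map $\Pic(S)\to \left(\Pic(C)/\bZ L\right)_2$. It kills $\bZ H+2\Pic(S)$, and $\cA$ vanishes on its image, since the corresponding Severi--Brauer variety over $S$ acquires a rational section from $D$. Next I would show the induced map $\Pic(S)/(\bZ H+2\Pic(S))\to\left(\Pic(C)/\bZ L\right)_2$ is injective. Since $H^2=2$, $H$ is primitive, so the source has dimension $\rho-1$. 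The middle term has dimension $20+1=21$ and $\Br(S)_2$ has dimension $22-\rho$. Hence exactness in the middle together with surjectivity follows once the kernel of $\cA$ equals the image of $\Pic(S)$. The main obstacle is exactly this identification of the kernel: showing that any class killed by $\cA$ comes from a divisor on $S$. I would attack it by taking a class with $\cA=0$, producing a section of the associated $\bP^1$-bundle over $S$, and extracting from that section a divisor whose restriction to $R$ recovers the given line bundle.
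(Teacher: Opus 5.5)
Quoting \cite[Theorem 1.1]{IOOV} is exactly what the paper does: it gives no argument of its own. Your hypothesis check is correct. An ample $H$ with $H^2=2$ is base-point free and $\varphi_H$ is finite, so $S$ is the double cover $w^2=f_6$, and smoothness of $S$ forces smoothness of $C$. The last clause then amounts to the compatibility recorded in Remark~\ref{rem: compatibility of res and A}, as you say.

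Your direct reproof, however, is not a proof as written. Its three substantive steps are all incomplete:
\begin{itemize}
\item the homomorphism property on the theta-characteristic coset is deferred to \cite{IOOV};
\item the equality of $\ker\mathcal{A}$ with the image of $\Pic(S)$ is left open;
\item injectivity of $\Pic(S)/(\bZ H+2\Pic(S))\to(\Pic(C)/\bZ L)_2$ is only announced.
\end{itemize}
Together these are the whole theorem. The count $21=(\rho-1)+(22-\rho)$ is correct, but it only reduces exactness to two of the three conditions.

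Several details are also wrong:
\begin{itemize}
\item The quadric bundle of a cubic fourfold containing a plane is a $3\times 3$ block of linear forms, a column of quadrics and a cubic corner. So the diagonal degrees are $(1,1,1,3)$; with $(1,1,1,2)$ the determinant could not be a sextic.
\item As your word ``suitable'' concedes, only odd theta characteristics arise this way; even ones correspond to nets of quadrics in $\bP^5$ \cite[\S9]{vGe}. So the Clifford construction cannot define $\mathcal{A}$ on the whole degree-one coset.
\item The relation $2\,D|_R\in\bZ L$ comes from $D+\sigma^*D=\varphi_H^*\varphi_{H*}D\sim(D\cdot H)H$ together with $\sigma|_R=\id$, where $\sigma$ is the covering involution. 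It does not come from $\varphi_H^*C=2R$.
\item The vanishing of $\mathcal{A}$ on the image of $\Pic(S)$ via a ``rational section from $D$'' is not explained.
\end{itemize}

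A cleaner self-contained route avoids Clifford algebras and defines $\mathcal{A}$ on all of $(\Pic(C)/\bZ L)_2$ at once. Given $\mathcal{L}^{\otimes 2}\simeq\mathcal{O}_C(\varepsilon L)$, take the class in $\Br(\bC(\bP^2))$ with these residues and no others:
\begin{itemize}
\item along $C$, the $\mu_2$-torsor of $\mathcal{L}$ on $C\setminus L$;
\item along the line $\ell$, $\varepsilon\cdot[\varphi_H^{-1}(\ell)\to\ell]$.
\end{itemize}
It exists by Bloch--Ogus, since the ramifications at the points of $C\cap\ell$ cancel and $H^3(\bP^2,\mu_2)=0$. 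It is unique since $\Br(\bP^2)=0$. Pull it back to $S$; the pullback is unramified, so it lies in $\Br(S)_2$. This $\mathcal{A}$ is visibly additive and agrees with $\res$ on $\Jac(C)_2$.

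For the kernel, use that the classes split by $\bC(S)=\bC(\bP^2)(\sqrt f)$ are exactly the quaternion algebras $(f,g)$. Computing residues, every component $\Gamma\neq C,\ell$ of $\mathrm{div}(g)$ of odd multiplicity must split as $\varphi_H^{-1}(\Gamma)=D_\Gamma+\sigma D_\Gamma$. The residue along $C$ is then the restriction of $\sum_\Gamma D_\Gamma$ to $R\simeq C$. Running the same construction backwards from a divisor on $S$ gives the reverse inclusion, so this identifies $\ker\mathcal{A}$ with the image of $\Pic(S)$.
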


\begin{remark} \label{rem: compatibility of res and A}
    By the purity exact sequence, we have $\Br(\bP^2\setminus C)_2\simeq \Jac(C)_2$. Using this isomorphism, we obtain a commutative diagram
    \[
    \xymatrix{
    \Br(\bP^2\setminus C)_2 \ar[r]^\res \ar[d]_\simeq & \Br(S)_2 \ar[d]^= \\
    \Jac(C)_2 \ar[r]^{\cA|_{\Jac(C)_2}} & \Br(S)_2.
    }
    \]
\end{remark}

The composition of the degree map \eqref{eq: degree map} with the injective map of \eqref{eq: IOOV short exact sequence} induces a group homomorphism 
\begin{equation}\label{eq: degree map for S}
\deg_S\colon \frac{\Pic(S)}{\bZ H+2\Pic(S)}\to\bZ/2\bZ.
\end{equation}

\begin{lemma}\label{lem: degs}
    The degree map $\deg_S$ of \eqref{eq: degree map for S} can be characterised as follows:
    \[
    \function{\deg_S}{\frac{\Pic(S)}{\bZ H+2\Pic(S)}}{\bZ/2\bZ}{\overline{\cO_S(D)}}{D\cdot H \pmod 2}
    \]
\end{lemma}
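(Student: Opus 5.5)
To prove Lemma \ref{lem: degs}, we first make explicit the injective map $\frac{\Pic(S)}{\bZ H+2\Pic(S)}\to \left(\frac{\Pic(C)}{\bZ L}\right)_2$ of \eqref{eq: IOOV short exact sequence}. Following \cite{IOOV}, it is induced by restricting a divisor on $S$ to the ramification curve $R\subset S$. The restriction $\varphi_R\colon R\xrightarrow{\sim} C$ is an isomorphism, so we obtain a line bundle on $C$:
\[
\cO_S(D)\longmapsto \overline{[\cO_S(D)|_R]}\in \Pic(C)/\bZ L .
\]
The first step is to check that this is the map appearing in \eqref{eq: IOOV short exact sequence}, either by reading it off from \cite{IOOV} or by a direct check. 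Two facts make the map well defined and land in the $2$-torsion part.
\begin{itemize}
\item[(a)] $H|_R$ corresponds to $L$, since $H=\varphi_H^*\cO(1)$.
\item[(b)] For any divisor $D$, the class $D|_R$ is $2$-torsion modulo $L$.
\end{itemize}

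The key computation is the proof of (b), which gives an explicit $\varepsilon$. Let $\iota$ be the covering involution of $\varphi_H$. Then $D+\iota^*D=\varphi_H^*\varphi_{H*}D$. Since $\iota$ fixes $R$ pointwise, $\iota^*D|_R=D|_R$. Hence
\[
2\,D|_R=(\varphi_H^*\varphi_{H*}D)|_R=\varphi_R^*\big(\cO_{\bP^2}(d)|_C\big)=dL,
\]
where $d=\deg\varphi_{H*}D$ is the degree of the plane curve $\varphi_{H*}D$. This plane degree equals $D\cdot \varphi_H^*\cO(1)=D\cdot H$. So $\cL^{\otimes 2}\simeq\cO(\varepsilon L)$ with $\varepsilon=D\cdot H$, and by \eqref{eq: degree map} we get $\deg_S(\overline{\cO_S(D)})=D\cdot H \pmod 2$.

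Finally we verify that this formula is compatible with the quotient $\frac{\Pic(S)}{\bZ H+2\Pic(S)}$. We have $H\cdot H=2\equiv 0$ and $2D'\cdot H\equiv 0 \pmod 2$, so the formula descends to the quotient.

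The main obstacle is the identification of the IOOV injection with restriction to $R$, including any possible twist, such as a theta characteristic shift or a sign. If \cite{IOOV} define it differently, for instance via the class of $D|_R$ adjusted by a multiple of $L$, then the adjustment changes $\varepsilon$ only by an even number because of the relation $\cL^{\otimes2}\simeq \cO(\varepsilon L)$. We would argue this carefully using the fact that shifts by $L$ change $\varepsilon$ by $2$. A secondary point is that the degree map is well defined on $\Pic(C)/\bZ L$, where $\varepsilon$ is only defined modulo $2$; this is already built into \eqref{eq: degree map}.
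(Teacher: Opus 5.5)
Your proof is correct, but your key computation differs from the paper's. The paper takes two facts from \cite[Lemma 4.8]{IOOV}: that the injection is $i^*$ followed by $\varphi_{H,*}\colon \Pic(R)\simeq \Pic(C)$, and that its image is $2$-torsion modulo $L$. It then determines $\varepsilon$ by degrees alone. From $\varphi_H^*C=2R$ and $C\sim 6L$ one gets $R\sim 3H$, since $\Pic(S)$ is torsion-free. Hence $\deg(D|_R)=D\cdot R=3\,D\cdot H$, and comparing with $2\deg(D|_R)=\varepsilon\deg L=6\varepsilon$ forces $\varepsilon=D\cdot H$.

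You instead derive the linear equivalence $2\,D|_R\sim (D\cdot H)\,H|_R$ directly from $D+\iota^*D=\varphi_H^*\varphi_{H,*}D$ and $\iota|_R=\id_R$. This is a little longer, but it is self-contained: it also proves that the restricted class is $2$-torsion modulo $L$, rather than quoting \cite{IOOV}. The paper's argument is shorter because $\deg L=6\neq 0$, so once $\varepsilon$ is known to exist it is pinned down by a degree count.

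Two of your hedges are unnecessary. The paper uses exactly restriction to $R$, and a theta-characteristic shift could not occur in any case, because the map is a group homomorphism and $0\mapsto 0$. Likewise, your final check that the formula descends to $\frac{\Pic(S)}{\bZ H+2\Pic(S)}$ is automatic, since $\deg_S$ is defined as a composition of homomorphisms on that quotient.
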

\begin{proof}
    Firstly, let us explain the definition of the homomorphism
    \begin{equation}\label{eq: IOOV kernel map}
    \frac{\Pic(S)}{\bZ H + 2\Pic(S)} \to \left(\frac{\Pic(C)}{\bZ L}\right)_2,
    \end{equation}
    which was constructed in \cite[Lemma 4.8]{IOOV}. It is defined as the homomorphism induced by the composition
    \[
    \Pic(S) \overset{i^*}{\to} \Pic(R) \overset{\varphi_{H,*}}{\simeq} \Pic(C)
    \]
    where $R$ is the ramification locus of $\varphi_H$, and  $i\colon R\hookrightarrow S$ is the inclusion map. Since $C = 6L \in \Pic(\bP^2)$ and $H = \varphi_H^*L$, it follows that we have $3H = R\in \Pic(S)$. In particular, any curve $C'\subset S$ with $C'\cdot H = n$ restricts to a divisor of degree $3n$ in $\Pic(R)$, and hence also in $\Pic(C)$. The lemma follows from this observation. 
\end{proof}

\begin{proposition}\label{prop: A surjective iff degs surjective}
    The restriction of the map $\cA$ of Theorem \ref{thm: IOOV main theorem} to $\Jac(C)_2$:
    \[
    \cA|_{\Jac(C)_2}\colon \Jac(C)_2 \to \Br(S)_2
    \]
    is surjective if and only if $\dv_{\NS(S)}(H) = 1.$
\end{proposition}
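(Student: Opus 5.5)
We argue with the short exact sequence \eqref{eq: IOOV short exact sequence} of Theorem \ref{thm: IOOV main theorem} and the degree maps. Write $K \coloneqq \Pic(S)/(\bZ H + 2\Pic(S))$ and $G \coloneqq (\Pic(C)/\bZ L)_2$, so that $0 \to K \to G \overset{\cA}{\to} \Br(S)_2 \to 0$ is exact. Then $\Jac(C)_2 = \ker(\deg) \subset G$ has index $2$, and $\deg_S = \deg|_K$. The key observation is that $\cA|_{\Jac(C)_2}$ is surjective if and only if $\Jac(C)_2 + K = G$. Indeed, $\cA$ is surjective with kernel $K$, so its image on a subgroup $J$ equals all of $\Br(S)_2$ exactly when $J + K = G$.

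Since $\Jac(C)_2$ has index $2$ in $G$ and is the kernel of $\deg$, we get $\Jac(C)_2 + K = G$ if and only if $K \not\subset \ker(\deg)$, that is, if and only if $\deg_S$ is nonzero (equivalently surjective) on $K$. Note that $\Jac(C)_2 + K$ is either $\Jac(C)_2$ or all of $G$, by the index-$2$ property.

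By Lemma \ref{lem: degs}, $\deg_S(\overline{\cO_S(D)}) = D\cdot H \bmod 2$. So $\deg_S$ is surjective if and only if there exists $D \in \NS(S) = \Pic(S)$ with $D\cdot H$ odd. We claim this happens exactly when $\dv_{\NS(S)}(H) = 1$.

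To see this, recall that $\dv(H)$ is the positive generator of the ideal $H\cdot \NS(S) \subset \bZ$. Since $H^2 = 2$, this generator divides $2$, so $\dv(H) \in \{1,2\}$. It equals $1$ precisely when some $D\cdot H$ is odd. This proves the proposition.

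The only step needing care is the identification $\deg_S = \deg|_K$ together with the fact that $\ker(\deg)$ is exactly $\Jac(C)_2$ of index $2$. The first is the definition \eqref{eq: degree map for S}; the second is the short exact sequence for $\deg$ established above. Everything else is formal group theory.
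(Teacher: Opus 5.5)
Your proof is correct and is essentially the paper's argument. The paper applies the snake lemma to the map from $0\to\Jac(C)_2\to(\Pic(C)/\bZ L)_2\to\bZ/2\bZ\to 0$ to $0\to\Br(S)_2\to\Br(S)_2\to 0$, which gives $\operatorname{coker}(\mathcal{A}|_{\Jac(C)_2})\simeq\operatorname{coker}(\deg_S)$; this is exactly your observation that $\Jac(C)_2+K=G$ if and only if $\deg|_K\neq 0$. Both arguments then finish with Lemma \ref{lem: degs}, and your explicit remark that $\dv(H)\in\{1,2\}$, with $\dv(H)=1$ exactly when some $D\cdot H$ is odd, spells out the step the paper leaves implicit.
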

\begin{proof}
    Consider the following diagram whose rows are short exact sequences:
    \begin{equation}
        \xymatrix{
        & \ker(\cA|_{\Jac(C)_2}) \ar[d] & \frac{\Pic(S)}{\bZ H + 2\Pic(S)} \ar[d] \ar[r]^{\deg_S} & \bZ/2\bZ \ar[d] & \\
        0 \ar[r] & \Jac(C)_2 \ar[r] \ar[d]_{\cA|_{\Jac(C)_2}} & \left(\frac{\Pic(C)}{\bZ L}\right)_2 \ar[r] \ar[d]_{\cA} & \bZ/2\bZ \ar[r] \ar[d] & 0 \\
        0 \ar[r] & \Br(S)_2 \ar[r] & \Br(S)_2 \ar[r] & 0 \ar[r] & 0. 
        }
    \end{equation}
    By the Snake Lemma, the map $\cA|_{\Jac(C)_2}$ is surjective if and only if $\deg_S$ is surjective. The result now follows from Lemma \ref{lem: degs}.
\end{proof}

We are now ready to prove Theorem \ref{thm: kernel and cokernel of res}.

\begin{proof}[Proof of Theorem \ref{thm: kernel and cokernel of res}]
    The exact sequence
    \[
    0\to \ker(\res) \to \Br(\bP^2\setminus C)_2 \overset{\res}{\to} \Br(S)_2 \to \operatorname{coker}(\res) \to 0
    \]
    is isomorphic to the sequence
    \[
    0 \to \ker(\cA|_{\Jac(C)_2}) \to \Jac(C)_2 \overset{\cA|_{\Jac(C)_2}}{\to} \Br(S)_2 \to \operatorname{coker}(\cA|_{\Jac(C)_2}) \to 0,
    \]
    see Remark \ref{rem: compatibility of res and A}. By Proposition \ref{prop: A surjective iff degs surjective}, the map $\cA|_{\Jac(C)_2}$ is surjective if and only if $\dv(H) = 1$, and therefore $\res$ is surjective if and only if $\dv(H)=1$. The result now follows form Ford's computation of $\ker(\res)$ \cite{Ford}.
\end{proof}

\begin{definition} \label{def: type I and II}
    Let $S$ be a K3 surface with an ample divisor $H\in \NS(S)$ of degree $H^2 = 2$. Then the polarised K3 surface $(S,H)$ is said to be \textit{of type I} if $\dv(H) = 1$. Otherwise we say that $(S,H)$ is \textit{of type II}.
\end{definition}

\section{Fourier--Mukai partners of twisted K3 surfaces} \label{sec: FM partners of twisted K3 surfaces}
In this section, we carry out the second computation necessary to count Fourier--Mukai partners of Verra fourfolds, namely counting the number of twisted Fourier--Mukai partners of the twisted K3 surface associated to a Verra fourfold. This is relevant because of the following observation.

\begin{remark} \label{rmk: iso verras}
    Suppose $(S,\alpha)$ is a twisted K3 surface of Verra type (see Definition \ref{def: verra type}), and that $(S',\alpha')$ is a twisted K3 surface of Verra type with $\Db(S,\alpha) \simeq \Db(S',\alpha')$ but $(S,\alpha) \not \simeq (S',\alpha')$. Since $(S,\alpha)$ and $(S',\alpha')$ are both of Verra type, there exist Verra fourfolds $V$, $V'$ whose associated twisted K3 surfaces are $(S,\alpha)$ and $(S',\alpha')$, respectively. In other words, $V$ and $V'$ are Fourier--Mukai partners. 

    However, note that any Verra fourfold has two associated twisted K3 surfaces coming from the two projections $V\to \bP^2$. In particular, the Verra fourfolds $V$ and $V'$ above may be isomorphic. This issue will be dealt with later, in Section \ref{sec: FM formula for verra fourfolds}.
\end{remark}

\subsection{Twisted Fourier--Mukai partners}
We let $(S,H)$ be a polarised K3 surface of type I, and let $\alpha\in \Br(S)_2$. Then $\alpha$ is of Verra type for $H$ by Theorem \ref{thm: kernel and cokernel of res}. 
 Following \cite{Ma}, we denote 
    \[
    \FM^2(S,\alpha) \coloneqq \set{(S',\alpha') \mid \Db(S,\alpha) \simeq \Db(S',\alpha') \text{ and } |\alpha'|=2}/_\simeq,
    \]
    and
    \[\cFM^2(S,\alpha) \coloneqq \left\{(S',\alpha')\mid \Db(S,\alpha)\simeq \Db(S',\alpha') \text{ and } |\alpha'|=2\right\}/_\sim\] where 
    \[
    (S',\alpha') \sim (S'',\alpha'') \iff \exists \text{ a Hodge isometry } \varphi \colon T(S') \simeq T(S'') \text{ such that } \varphi^*\alpha'' = \alpha'.
    \]
    Then, by \cite[Lemma 3.2]{Ma}, there is an injective map 
    \[
    \mu\colon \cFM^2(S,\alpha) \hookrightarrow O_{\mathrm{Hodge}}(T(S,\alpha))\backslash I^2(A_{T(S,\alpha)}),
    \]
    where  $I^2(A_{T(S,\alpha)})$ denotes the set of elements $x\in A_{T(S,\alpha)}$ of square $x^2 \equiv 0 \pmod{\bQ/2\bZ}$ and order $2$.

There is a natural quotient map 
\[
\nu\colon \FM^2(S,\alpha) \twoheadrightarrow \cFM^2(S,\alpha).
\]

\begin{remark}\label{rem: split into two problems}
The problem of counting twisted Fourier--Mukai partners of $(S,\alpha)$ now splits into two parts:

\begin{enumerate}
    \item[(1)] Counting the elements of $O_{\mathrm{Hodge}}(T(S,\alpha))\backslash I^2(A_{T(S,\alpha)})$ and particularly the elements in the image of $\mu$;
    \item[(2)] Counting the elements of the fibres of $\nu$.
\end{enumerate}
\end{remark}

For now, we focus on the fibres of the natural quotient map $\nu$.

\begin{lemma} \label{lem: NS(S) of rank 2}
    Let $S$ be a K3 surface of Picard rank $\rho = 2$, and let $H\in \NS(S)$ be an ample divisor of degree $H^2 = 2$. 
    \begin{enumerate}
        \item Assume that $\dv(H) = 1$. Then there is a unique non-negative integer $d\in \bZ$ and a class $D\in \NS(S)$ such that
    \begin{equation} \label{eq: definition of Ld1}
    \NS(S) = \langle H , D \rangle \simeq L_{d,1} \coloneqq \begin{pmatrix}
        2 & 1 \\ 1 & -2d
    \end{pmatrix}.
    \end{equation}
    \item Assume that $\dv(H) = 2$. Then there is a unique non-negative integer $d\in \bZ$ and a class $D\in \NS(S)$ such that
    \begin{equation} \label{eq: definition of Ld2}
    \NS(S) = \langle H , D \rangle \simeq L_{d,2} \coloneqq \begin{pmatrix}
        2 & 2 \\ 2 & -2d
    \end{pmatrix}.
    \end{equation}
    \end{enumerate}
\end{lemma}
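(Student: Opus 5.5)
The plan is to reduce to elementary lattice theory on the rank $2$ even lattice $\NS(S)$, which has signature $(1,1)$ by the Hodge index theorem and contains the primitive vector $H$ with $H^2=2$. Here primitivity means $\dv(H)$ is defined as the positive generator of $H\cdot \NS(S) \subset \bZ$. Since $H^2 = 2$, we have $\dv(H) \in \{1,2\}$. Moreover, $H$ is primitive: $H = mH'$ would force $m^2 \mid 2$, so $m=1$.

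\textbf{Existence.} Because $H$ is primitive, we can complete it to a basis $\{H, D_0\}$ of $\NS(S)$. Set $k \coloneqq H\cdot D_0$. Then $\dv(H)=\gcd(2,k)$, since every element of $\NS(S)$ pairs with $H$ into $2\bZ + k\bZ$.
\begin{itemize}
\item If $\dv(H)=1$, then $k$ is odd. Replacing $D_0$ by $D = D_0 - \tfrac{k-1}{2}H$ keeps $\{H,D\}$ a basis and gives $H\cdot D = 1$.
\item If $\dv(H)=2$, then $k$ is even. Replacing $D_0$ by $D = D_0 - \tfrac{k-2}{2}H$ gives $H\cdot D = 2$.
\end{itemize}
Because $\NS(S)$ is even, $D^2 = -2d$ for some integer $d$. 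To see $d \ge 0$, use the determinant, which must be negative for signature $(1,1)$. In case (1) the determinant is $-4d-1<0$, so $d\ge 0$ (the value $d=0$ is allowed). In case (2) it is $-4d-4<0$, so $d\ge 0$ as well.

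\textbf{Uniqueness.} The integer $d$ is determined by the discriminant: $\disc(\NS(S)) = 4d+1$ in case (1) and $4d+4$ in case (2). Since case (1) gives an odd discriminant and case (2) a discriminant divisible by $4$, the two cases never overlap. The class $D$ is not literally unique; it is unique up to replacing it by $\pm D + mH$, subject to preserving $H\cdot D$. I would phrase uniqueness as uniqueness of $d$, or equivalently of the isometry class $L_{d,r}$ of the pair $(\NS(S),H)$, and note that this is what the statement intends.

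No step is a real obstacle. The only points needing care are:
\begin{itemize}
\item completing the primitive vector $H$ to a basis, which holds because $\NS(S)/\bZ H$ is torsion-free of rank one;
\item checking that the base change preserves the basis property (it is unimodular: $D$ differs from $D_0$ by a multiple of $H$);
\item invoking the Hodge index theorem for the sign of the determinant.
\end{itemize}
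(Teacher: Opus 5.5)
Your proof is correct and follows the same route as the paper: you complete the primitive vector $H$ to a basis $\{H,D_0\}$ and shift $D_0$ by a multiple of $H$ so that $H\cdot D$ becomes $1$ or $2$. You also justify two points the paper's proof leaves implicit: $d\geq 0$, via the Hodge index theorem and the sign of the determinant, and the uniqueness of $d$, via the discriminant; your remark that only $d$ (not $D$) is genuinely unique is the right reading of the statement.
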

\begin{proof}
    Since $H^2 = 2$, it follows that $H$ is a primitive class in $\NS(S)$. Therefore, there exists a class $D' \in \NS(S)$ such that $\NS(S) = \langle H, D'\rangle$. 
    First, assume $\dv(H)=1$. Then $H\cdot D'$ is odd, so we may write $H\cdot D' = 2m+1$. Then $(D'-mH)H = 1$. Hence, writing $D = D-mH$ and $D^2 = -2d$, we have $\NS(S) = \langle H,D\rangle \simeq L_{d,1}$, as required.

    Secondly, assume $\dv(H) = 2$. Then we have $H\cdot D' = 2m$ for some $m\in \bZ$. Now $(D-(m-1)H)H = 2$. Therefore, the basis we are looking for in this case is $\NS(S) = \langle H, D'-(m-1)H\rangle$. 
\end{proof}

\begin{proposition}\label{prop: fibres of nu}
    Let $(S,H,\alpha)$ be a twisted polarised K3 surface of Verra type, and assume that the Picard rank of $S$ is $\rho = 2$.  
    Suppose $(S',\alpha')$ is a twisted K3 surface of Verra type such that $\alpha'$ is non-trivial, and such that $\nu(S,\alpha) = \nu(S',\alpha')$. Then both of the following hold:
    \begin{enumerate}
        \item $S$ and $S'$ are Fourier--Mukai partners;
        \item There is an isometry $\NS(S) \simeq \NS(S')$. 
    \end{enumerate}
\end{proposition}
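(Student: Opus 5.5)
Unwinding the definitions, $\nu(S,\alpha)=\nu(S',\alpha')$ means there is a Hodge isometry $\varphi\colon T(S)\simeq T(S')$ with $\varphi^*\alpha'=\alpha$. The plan is to show that this single transcendental isometry yields both conclusions, by way of the discriminant forms.

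For (1), the Hodge isometry $T(S)\simeq T(S')$ of untwisted transcendental lattices gives $\Db(S)\simeq\Db(S')$ directly by the derived Torelli theorem of Mukai--Orlov. Here I use the standard form: two K3 surfaces are Fourier--Mukai partners if and only if their transcendental lattices are Hodge isometric, the isometry extending to the Mukai lattice via Nikulin since $T(S)$ embeds primitively with orthogonal complement containing a hyperbolic plane. No Verra hypothesis is needed for this part.

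For (2), I use the standard gluing facts. Since $H^2(S,\bZ)$ is unimodular and $\NS(S)=T(S)^\perp$, there is an anti-isometry $A_{\NS(S)}\simeq A_{T(S)}$, so $q_{\NS(S)}=-q_{T(S)}$. The same holds for $S'$. The Hodge isometry $\varphi$ induces an isometry $A_{T(S)}\simeq A_{T(S')}$, and therefore $\NS(S)$ and $\NS(S')$ have the same signature $(1,1)$ (indeed $\rho(S')=22-\rk T(S')=2$), the same rank, and isomorphic discriminant forms. Thus they lie in the same genus. To conclude that they are isometric, I would use that by Lemma \ref{lem: NS(S) of rank 2} $\NS(S)$ is $L_{d,1}$ or $L_{d,2}$, and that $S'$ is of Verra type, so $\NS(S')$ contains a degree $2$ class and is likewise $L_{d',1}$ or $L_{d',2}$.

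It remains to match these. The determinants are $-(4d+1)$ and $-4(d+1)$ respectively, so comparing parities of $|\disc|$ separates type I from type II, and equal discriminants force $d=d'$. Hence $\NS(S)\simeq L_{d,r}\simeq\NS(S')$. This avoids the genuinely hard step, namely that a genus of indefinite binary forms can contain several classes; the Verra hypothesis on $S'$ is exactly what circumvents it. One subtle point to check is that the degree $2$ class on $S'$ is primitive, so that Lemma \ref{lem: NS(S) of rank 2} applies to $S'$ with its polarisation $H'$. This holds because $H'^2=2$.
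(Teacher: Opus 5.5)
Your proof is correct and follows the paper's argument. A Hodge isometry $T(S)\simeq T(S')$ gives derived equivalence by Mukai--Orlov. Lemma \ref{lem: NS(S) of rank 2} then applies to both surfaces, using the Verra hypothesis on $S'$ to get a degree-$2$ class, and equality of discriminants forces $\NS(S')\simeq\NS(S)$. There are two small differences. You read the isometry directly off the definition of $\sim$ rather than via Lemma \ref{lem: overlattice subgroup correspondence}. You also handle both divisibilities at once by the parity of $4d+1$ versus $4(d+1)$, whereas the paper writes out only the case $\dv(H)=1$ and calls the other analogous.
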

\begin{proof}
    Since $\nu(S,\alpha) = \nu(S',\alpha')$, we may apply Lemma \ref{lem: overlattice subgroup correspondence}. That is, there is a commutative diagram where the vertical maps are Hodge isometries
    \[
    \xymatrix{
    T(S,\alpha) \ar@{^(->}[r] \ar[d]_\simeq & T(S) \ar[d]^\simeq  \\
     T(S',\alpha') \ar@{^(->}[r] & T(S').
    }
    \]
    It follows that $S$ and $S'$ are Fourier--Mukai partners by the Derived Torelli Theorem for K3 surfaces. 
    Therefore, $\NS(S)$ and $\NS(S')$ are in the same genus.
    
    We assume that $\dv(H)=1$, as the case when $\dv(H)=2$ is completely analogous. In this case, we have $A_{\NS(S')} \simeq \bZ/(4d+1)\bZ$ for some $d\in \bZ_{\geq0}$. Since $(S',\alpha')$ is assumed to be of Verra type, we may apply Lemma \ref{lem: NS(S) of rank 2} to see that $\NS(S') \simeq L_{d,1} \simeq \NS(S)$, as required. 
\end{proof}

\begin{definition}
    Let $(S,\alpha)$ be a twisted K3 surface of Verra type and Picard rank $\rho = 2$. We write
    \[ \FM^2_{\mathrm{Ver}}(S,\alpha) \coloneqq \set{(S',\alpha') \in \nu^{-1}(S,\alpha) \mid (S',\alpha') \text{ is of Verra type}}, \]
    and
    \[
    \FM_{L_{d,i}}(S) \coloneqq \set{S'\in \FM(S) \mid \NS(S') \simeq L_{d,i}}.
    \]    
\end{definition}

\begin{theorem}\label{thm: FML and FMVer}
    Let $(S,H,\alpha)$ be a twisted polarised K3 surface of Verra type and Picard rank $\rho(S)=2$. Assume that $O_{\mathrm{Hodge}}(T(S)) \simeq \bZ/2\bZ$ and that $(S,H)$ is of type I. Then there is a bijection 
    \[
    \FM_{L_{d,1}}(S) 
    \simeq \FM^2_{\mathrm{Ver}}(S,\alpha).
    \]
\end{theorem}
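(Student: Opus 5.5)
We will construct maps in both directions between $\FM_{L_{d,1}}(S)$ and $\FM^2_{\mathrm{Ver}}(S,\alpha)$ and check that they are mutually inverse.

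\textbf{From Verra partners to K3 partners.} Given $(S',\alpha')\in \FM^2_{\mathrm{Ver}}(S,\alpha)$, we have $\nu(S',\alpha')=\nu(S,\alpha)$. Proposition \ref{prop: fibres of nu} then gives $S'\in\FM(S)$ and $\NS(S')\simeq\NS(S)\simeq L_{d,1}$. So we define
\[
\Psi\colon \FM^2_{\mathrm{Ver}}(S,\alpha)\to \FM_{L_{d,1}}(S),\qquad (S',\alpha')\mapsto S'.
\]

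\textbf{From K3 partners to Verra partners.} Let $S'\in \FM_{L_{d,1}}(S)$. By the derived Torelli theorem there is a Hodge isometry $\varphi\colon T(S)\simeq T(S')$. Set $\alpha'\coloneqq \alpha\circ\varphi^{-1}\in \Hom(T(S'),\bZ/2\bZ)=\Br(S')_2$. Then:
\begin{itemize}[label={}]
\item $\varphi$ restricts to a Hodge isometry $T(S,\alpha)\simeq T(S',\alpha')$ compatible with the overlattices $T(S)$, $T(S')$.
\item By C\u{a}ld\u{a}raru's twisted derived Torelli theorem (or the way $\mu$ is set up via \cite{Ma}), this gives $\Db(S,\alpha)\simeq\Db(S',\alpha')$ with $(S',\alpha')\sim(S,\alpha)$, i.e.\ $(S',\alpha')\in\nu^{-1}(\nu(S,\alpha))$.
\end{itemize}
It remains to show that $(S',\alpha')$ is of Verra type. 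Since $\NS(S')\simeq L_{d,1}$, choose the basis $\langle H',D'\rangle$ of Lemma \ref{lem: NS(S) of rank 2}, with $H'^2=2$ and $H'\cdot D'=1$. Replacing $H'$ by $-H'$ if needed, $H'$ is positive. To make $H'$ ample we would reflect by $(-2)$-curves; these act trivially on $T(S')$ and preserve both the lattice and $\dv(H')=1$. We also need to rule out $H'$ being hyperelliptic-degenerate, i.e.\ that $|H'|$ is not a double cover; this holds when there is no class $E$ with $E^2=0$, $E\cdot H'=1$, which is a genericity/lattice check in $L_{d,1}$, presumably fine for $d\ge1$. Then $(S',H')$ is of type I, so Theorem \ref{thm: kernel and cokernel of res} shows $\res$ is surjective. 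Hence the nontrivial class $\alpha'$ lifts to some $\beta\neq1$, and $(S',\alpha')$ is of Verra type.

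\textbf{Well-definedness and bijectivity.} This is the main obstacle, and it is where the hypothesis $O_{\mathrm{Hodge}}(T(S))=\{\pm1\}$ enters.
\begin{itemize}[label={}]
\item \emph{Independence of $\varphi$.} Two choices of $\varphi$ differ by an element of $O_{\mathrm{Hodge}}(T(S))=\{\pm1\}$. Since $-1$ fixes every $\bZ/2\bZ$-valued functional, $\alpha'$ is independent of the choice.
\item \emph{Independence of the representative of $S'$.} An isomorphism $S'\simeq S''$ induces a Hodge isometry of transcendental lattices carrying $\alpha'$ to $\alpha''$, again by uniqueness up to sign. So $\Phi\colon S'\mapsto(S',\alpha')$ is well defined on isomorphism classes.
\item \emph{$\Psi\circ\Phi=\id$.} This is immediate.
\item \emph{$\Phi\circ\Psi=\id$.} Take $(S',\alpha')$ in the fibre of $\nu$. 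By the definition of $\sim$ there is a Hodge isometry $\psi\colon T(S)\simeq T(S')$ with $\psi^*\alpha'=\alpha$. Since $\psi=\pm\varphi$, we get $\alpha'=\alpha\circ\varphi^{-1}$, so the twisted surface is recovered from $S'$. The same argument shows $\Psi$ is injective on isomorphism classes: if $S'\simeq S''$, then $\alpha'$ and $\alpha''$ correspond under the induced isomorphism.
\end{itemize}
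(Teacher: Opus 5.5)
Your outline follows the paper's proof. You use the same map $S'\mapsto(S',\alpha\circ\varphi^{-1})$, and the same argument that it is independent of $\varphi$ because $O_{\mathrm{Hodge}}(T(S))=\{\pm\id\}$ and $\alpha$ is $2$-torsion. You get Verra type of $\alpha'$ from Theorem~\ref{thm: kernel and cokernel of res} and the inverse from Proposition~\ref{prop: fibres of nu}, as the paper does.

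Your hedge about ampleness is harmless and easy to settle:
\begin{itemize}
\item Since $\disc L_{d,1}=4d+1$ is odd, every square-$2$ class has divisibility $1$.
\item The orthogonal complement of such a class in $L_{d,1}$ is generated by a vector of square $-(8d+2)$.
\item The case $d=0$ cannot occur, because $L_{0,1}\simeq U$ has no ample class of square $2$.
\item So for $d\ge 1$, a square-$2$ class moved into the nef cone by reflections is ample, hence base-point free by Saint-Donat.
\end{itemize}

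\textbf{The gap.} It is the sentence ``by C\u{a}ld\u{a}raru's twisted derived Torelli theorem (or the way $\mu$ is set up via \cite{Ma}), this gives $\Db(S,\alpha)\simeq\Db(S',\alpha')$''. A Hodge isometry $T(S,\alpha)\simeq T(S',\alpha')$ does not by itself give a twisted derived equivalence, even when it is compatible with the overlattices $T(S)$ and $T(S')$. The transcendental-lattice form of C\u{a}ld\u{a}raru's conjecture fails in general (Huybrechts--Stellari). The theorem actually available, \cite[Theorem 0.1]{HScaldararu}, needs an orientation-preserving Hodge isometry of twisted Mukai lattices $\tilde H(S',\alpha',\bZ)\simeq\tilde H(S,\alpha,\bZ)$. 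Ma's $\mu$ cannot substitute: it is only an injection \emph{out of} the set of twisted partners, so it produces no equivalences. Nor does the untwisted equivalence $\Db(S)\simeq\Db(S')$ formally give a twisted one, since the induced isometry of $\tilde H$ mixes $H^0$, $H^2$, $H^4$ and need not respect the $B$-field twists.

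\textbf{The missing step.} The paper supplies it by extending the isometry to the Mukai lattices:
\begin{itemize}
\item The orthogonal complements of $T(S,\alpha)$ and $T(S',\alpha')$ in the twisted Mukai lattices are even, indefinite, of rank $4$ and signature $(2,2)$.
\item Their discriminant forms are anti-isometric to $A_{T(S,\alpha)}\simeq\bZ/(4d+1)\bZ\oplus(\bZ/2\bZ)^{\oplus 2}$, which has length $2$ (Lemma~\ref{lem: group structure of discriminant}).
\item By \cite[Theorem 1.14.2]{Nik80} the complements are isometric via an isometry inducing, on discriminant groups, the map induced by $\varphi|_{T(S,\alpha)}$. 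This isometry and $\varphi|_{T(S,\alpha)}$ therefore glue to a Hodge isometry of twisted Mukai lattices.
\item If that isometry reverses orientation, replace $\alpha'$ by $\alpha'^{-1}=\alpha'$ \cite[Remark 4.10]{HStwisted}.
\end{itemize}
Only then does \cite[Theorem 0.1]{HScaldararu} give $\Db(S',\alpha')\simeq\Db(S,\alpha)$. Without this step you have not shown $(S',\alpha')\in\FM^2(S,\alpha)$, so your map $\Phi$ is not yet known to land in $\FM^2_{\mathrm{Ver}}(S,\alpha)$.
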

\begin{proof}
    Let us first define the bijection. Fix a Fourier--Mukai partner $S'\in \FM_{L_{d,i}}(S)$ and a Hodge isometry $\psi\colon T(S') \simeq T(S)$.
    Then we let $\alpha' = \psi^*\alpha$. Now we claim that the map 
    \[
    \functionstar{\FM_{L_{d,i}}(S) 
    }{\FM^2_{\mathrm{Ver}}(S,\alpha)}{S'}{(S',\alpha'),}
    \]
    defined by this procedure, is a well-defined bijection.

    There are three things to verify to show that the map is well defined. Once this is done, showing that the map is bijective is straightforward: The injectivity is trivial, since $(S',\alpha') \simeq (S'',\alpha'')$ in particular implies that there is an isomorphism $S'\simeq S''$, and the surjectivity follows from Proposition \ref{prop: fibres of nu}.

    We now verify that the map is well-defined.
    Firstly, we check that the construction of $\alpha'$ is independent of the choice of $\psi$. Since the set of Hodge isometries $T(S')\simeq T(S)$ is a torsor under $O_{\mathrm{Hodge}}(T(S)) \simeq \bZ/2\bZ$, it follows that $\psi$ and $-\psi$ are the only Hodge isometries $T(S')\simeq T(S)$. Since $\alpha\in \Br(S)_2$ is $2$-torsion, we find
    \[
    (-\psi)^*\alpha = (\psi^*\alpha)^{-1} = \psi^*\alpha = \alpha',
    \]
    so that the Brauer class $\alpha'$ is independent of the choice of $\psi$.

    Secondly, we prove that $(S',\alpha')$ is indeed a twisted
    Fourier--Mukai partner of $(S,\alpha)$. 
    
     Firstly, note that, by construction of $\alpha'$, the Hodge isometry $\psi$ restricts to a Hodge isometry $\psi|_{T(S',\alpha')} \colon T(S',\alpha') \simeq T(S,\alpha)$.
     We claim that this can be extended to a Hodge isometry $\tilde H(S',\alpha',\bZ) \simeq \tilde H(S,\alpha, \bZ).$
    
    For this, we apply \cite[Theorem 1.14.2]{Nik80} to the orthogonal complements of $T(S',\alpha')$ and $T(S,\alpha)$ in the respective twisted Mukai lattices: the hypotheses holds since they have rank $4$, signature $(2,2)$ and their discriminant groups have length $2$ (see Lemma \ref{lem: group structure of discriminant} below). 
    Nikulin's theorem guarantees that such orthogonal complements are isometric and that we can choose such an isometry compatibly with the action of $\psi|_{T(S',\alpha')}$ on the discriminant groups, so that the two isometries glue together. As a consequence we get an integral Hodge isometry
    $$\Psi : \tilde H(S',\alpha',\bZ)\simeq \tilde H(S,\alpha,\bZ).$$
    Since $\alpha$ and $\alpha'$ have order $2$, we may assume that $\Psi$ is
    orientation-preserving (see \cite[Remark 4.10]{HStwisted}), so there is an equivalence
    $\Db(S',\alpha')\simeq\Db(S,\alpha)$
    by the twisted derived Torelli theorem, see \cite[Theorem 0.1]{HScaldararu}.
    Moreover, $\alpha'$ is of Verra type by
    Theorem \ref{thm: kernel and cokernel of res}.

    Finally, we show that $\nu(S,\alpha) = \nu(S',\alpha')$. This follows immediately from the construction of $\alpha'$, since $\psi$ induces an isomorphism of short exact sequences
    \[
    \xymatrix{
    0 \ar[r] & T(S',\alpha') \ar[r] \ar@{-->}[d]^\simeq & T(S') \ar[d]^\simeq_\psi \ar[r]^{\alpha'}& \bZ/2\bZ \ar[d]^= \ar[r]& 0 \\
    0 \ar[r] & T(S,\alpha) \ar[r] & T(S)\ar[r]^\alpha & \bZ/2\bZ\ar[r] & 0.
    }
    \]
    Therefore, the map is well-defined and bijective.
\end{proof}

\begin{remark}
The case $(S,H)$ is of type II is more complicated.
To establish a bijection $\FM_{L_{d,2}}(S)\simeq \FM^2_{\mathrm{Ver}}(S,\alpha)$ in this case one would need to prove uniqueness of the embedding $T(S,\alpha) \to H(S,\alpha,\bZ)$
and that $\psi^*\alpha$ is of Verra type
for every $S'\in\FM_{L_{d,2}}(S)$.

Notice that unlike the type I case, the latter condition is not guaranteed by Theorem \ref{thm: kernel and cokernel of res}, since the restriction map $\res$ has cokernel $\bZ/2\bZ$. 
\end{remark}

\subsection{Double planes of type I}

We now move on to the first problem of Remark \ref{rem: split into two problems}.
We first study double planes of type I. Recall from Definition \ref{def: type I and II} that a polarised K3 surface $(S,H)$ of degree 2 is of type I if and only if $\dv(H) = 1$. Equivalently, by Theorem \ref{thm: kernel and cokernel of res}, the restriction map $\res\colon \Br(\bP^2 \setminus C)_2 \to \Br(S)_2$ is surjective, where $C$ is the branch curve of the double cover $\varphi_H\colon S\to \bP^2$. We will restrict our attention to K3 surfaces of Picard rank $\rho = 2$.
Let us first note the following useful property of such K3 surfaces.

\begin{proposition} \label{prop: kernel and cokernel of res for double planes of rank 2}
    Let $(S,H)$ be a polarised K3 surface of degree $H^2 = 2$ of Picard rank $\rho(S) = 2$ and type I. 
    Then every non-trivial Brauer class $\alpha\in \Br(S)_2$ is of Verra type for $H$. Moreover, there is a unique Verra fourfold $V_4$ up to isomorphism whose associated twisted polarised K3 surface is isomorphic to $(S,H,\alpha)$.
\end{proposition}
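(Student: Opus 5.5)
The plan is to read both claims off Theorem \ref{thm: kernel and cokernel of res} with $\rho = 2$ and $r = 2$ (type I means $\dv(H)=1$).

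\textbf{First claim.} In this case the exact sequence reads
\[
0 \to (\bZ/2\bZ)^{0} \to \Br(\bP^2\setminus C)_2 \overset{\res}{\longrightarrow} \Br(S)_2 \to (\bZ/2\bZ)^{0} \to 0,
\]
so $\res$ is an isomorphism. Given a non-trivial $\alpha \in \Br(S)_2$, let $\beta = \res^{-1}(\alpha)$. Then $\beta$ is non-trivial because $\alpha$ is and $\res$ is a homomorphism. Hence $\alpha$ is of Verra type for $H$ in the sense of Definition \ref{def: verra type}(1).

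\textbf{Existence of $V_4$.} The class $\beta$ defines a conic bundle $V_\beta \to \bP^2$ with discriminant curve $C$, and $V_\beta$ is a bidegree $(2,2)$ divisor in $\bP^2\times\bP^2$ by \cite[\S 9.8]{vGe}. Take $V_4$ to be the double cover of $\bP^2\times\bP^2$ branched along $V_\beta$. Projecting to the first factor gives a quadric surface bundle whose discriminant is $C$. Its associated double cover is $\varphi_H\colon S\to \bP^2$, and its associated Brauer class is the pullback of $\beta$, namely $\alpha$. This identification is the content of \cite[Lemma 5.9]{JM} and \cite[\S 9]{vGe}, which I will invoke rather than reprove.

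\textbf{Uniqueness of $V_4$.} I plan to reverse the construction. A Verra fourfold $V_4'$ with associated twisted polarised K3 surface $(S,H,\alpha)$ has a branch divisor $V_3'$, which is a Verra threefold. Via one projection, $V_3'$ is a conic bundle over $\bP^2$ with discriminant $C$ (the branch sextic of $\varphi_H$), hence it determines a class $\beta' \in \Br(\bP^2\setminus C)_2$ with $\res(\beta') = \alpha$. Injectivity of $\res$ then gives $\beta' = \beta$.

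It remains to show that $\beta$ determines $V_3'$ up to isomorphism, and that $V_3'$ determines $V_4'$. The latter is immediate, since the double cover branched along $V_3'$ is unique because $\Pic(\bP^2\times\bP^2)$ has no $2$-torsion. The former is the step I expect to be the main obstacle. Standard conic bundles over $\bP^2$ with a fixed discriminant curve and fixed Brauer class (equivalently, a fixed even theta characteristic or $2$-torsion point of $\Jac(C)$) are determined up to isomorphism over $\bP^2$. Moreover, the embedding of a conic bundle into $\bP^2\times\bP^2$ as a $(2,2)$ divisor is intrinsic, coming from the second conic bundle structure. I would cite \cite[\S 9]{vGe} together with Theorem \ref{thm: IOOV main theorem} for this: a non-trivial $2$-torsion point $\cL \in \Jac(C)_2$ determines the symmetric determinantal representation, and hence the Verra threefold, up to $PGL_3$-equivalence on the second factor.

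One point needs care: the isomorphism $V_4' \simeq V_4$ must respect the first projection. I will handle this by setting up the whole comparison over the fixed $\bP^2$ with fixed branch curve $C$. Isomorphisms of $C$ induced from automorphisms of $S$ preserving $H$ (the covering involution) act trivially on $\Br(S)_2$, since the involution acts as $-1$ on $T(S)$. So no further identifications arise.
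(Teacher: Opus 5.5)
Your argument is the paper's: both read off from Theorem~\ref{thm: kernel and cokernel of res} with $\rho = r = 2$ that $\res$ is an isomorphism. Surjectivity then gives Verra type, and the uniqueness of the preimage $\beta$ gives uniqueness of $V_4$; the paper leaves implicit the chain $\beta \mapsto V_3 \mapsto V_4$ that you spell out (cf.\ \cite[Proposition 4.23]{JM}), including the covering-involution check.

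One correction: standard conic bundles over $\bP^2$ are not in general determined up to isomorphism by their discriminant curve and Brauer class. Non-isomorphic $\bP^1$-bundles $\bP(E) \to \bP^2$ already show this. So the uniqueness of $V_3$ rests on the symmetric determinantal argument for $(2,2)$-divisors that you give immediately afterwards, not on that general claim.
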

\begin{proof}
    The map $\res$ of Theorem \ref{thm: kernel and cokernel of res} is an isomorphism. Therefore each non-trivial Brauer class $\alpha \in \Br(S)_2$ is of Verra type. Moreover, $\alpha$ has a unique preimage $\beta \in \Br(\bP^2 \setminus C)_2$, which means there is a unique Verra fourfold $V_4$ with associated K3 surface $(S,H,\alpha)$. 
\end{proof}

\subsubsection{Lattices of double planes of type I}

\begin{lemma} \label{lem: discriminant of Ad}
    The discriminant group $A_{d,1} \coloneqq A_{L_{d,1}}$ is cyclic of order $4d+1$.
\end{lemma}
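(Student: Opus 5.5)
The plan is a direct computation from the Gram matrix $L_{d,1}=\begin{pmatrix} 2 & 1 \\ 1 & -2d\end{pmatrix}$ of \eqref{eq: definition of Ld1}, with basis $H, D$.

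First, the order. The determinant is $\det = -4d-1$, so $\disc(L_{d,1}) = |A_{d,1}| = 4d+1$. It then remains to show that $A_{d,1}$ is cyclic.

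For cyclicity, the quickest route is to note that $4d+1$ is odd and to look at the Smith normal form of the Gram matrix. The entry $1$ off the diagonal means the gcd of all entries is $1$. Hence the Smith normal form is $\mathrm{diag}(1, 4d+1)$, and $A_{d,1}\simeq \bZ/(4d+1)\bZ$.

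To make this concrete, and to have a generator available for later use, I would also exhibit an explicit generator of $L_{d,1}^\vee/L_{d,1}$. The inverse Gram matrix is $\frac{1}{4d+1}\begin{pmatrix} 2d & 1 \\ 1 & -2\end{pmatrix}$. Its second column gives the dual element
\[
x = \tfrac{1}{4d+1}(H - 2D),
\]
which satisfies $x\cdot H = 0$ and $x\cdot D = 1$. The class of $x$ has order exactly $4d+1$ in $A_{d,1}$: indeed, $kx\in L_{d,1}$ forces $(4d+1)\mid k$, since the coefficient of $H$ is $k/(4d+1)$. Since the group has order $4d+1$, the class of $x$ generates it.

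As a by-product, the discriminant form is $q(x) = x^2 = \frac{(H-2D)^2}{(4d+1)^2} = \frac{2+8d-4}{(4d+1)^2}\cdot$… computed from the inverse matrix entry: $q(x) = -\frac{2}{4d+1} \pmod{2\bZ}$. This may be useful later.

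There is no real obstacle here. The only care point is checking that $x$ indeed has full order, which is immediate from the coefficient of $H$.
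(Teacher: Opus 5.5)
Your argument is correct and is the same as the paper's: the order is $|\det| = 4d+1$, and cyclicity follows from the Smith normal form $\mathrm{diag}(1,4d+1)$, which you justify by noting that the gcd of the entries is $1$; the explicit generator $x=\frac{1}{4d+1}(H-2D)$ is a nice extra check. One small slip in the optional by-product: $(H-2D)^2 = 2-4-8d = -2(4d+1)$, not $2+8d-4$, although your final value $q(x) \equiv -\frac{2}{4d+1}$ is correct.
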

\begin{proof}
    The order of $A_d$ is equal to the absolute value of the determinant of the matrix \eqref{eq: definition of Ld1}, which is $4d+1$. Moreover, the group structure of $A_d$ can be read off from the Smith Normal Form of \eqref{eq: definition of Ld1}.
\end{proof}

Let $(S,H)$ be a polarised K3 surface of degree $2$, Picard rank $\rho(S) = 2$, and type I, and let $\alpha\in \Br(S)_2$ be a non-trivial Brauer class of order $2$.
Then the twisted transcendental lattice $T(S,\alpha)$ has discriminant $\disc (T(S,\alpha)) = |\alpha|^2\disc (T(S))= 4\disc (T(S))$. Thus, by Lemma \ref{lem: discriminant of Ad}, the group structure of $A_{T(S,\alpha)}$ is given by one of the following two options.
\begin{equation*} \label{eq: group structure of ATSalpha}
A_{T(S,\alpha)} \simeq \begin{cases}
    A_{T(S)} \oplus (\bZ/2\bZ)^{\oplus 2} &\simeq \quad \bZ/(4d+1)\bZ \oplus (\bZ/2\bZ)^{\oplus 2} \\
    A_{T(S)} \oplus \bZ/4\bZ &\simeq \quad \bZ/4(4d+1)\bZ.
\end{cases}
\end{equation*}

We now show that $A_{T(S,\alpha)}$ is never cyclic.

\begin{lemma} \label{lem: group structure of discriminant}
    Let $(S,H)$ be a polarised K3 surface of degree $2$, Picard rank $\rho(S)=2$, and type I, and let $\alpha\in \Br(S)_2$ be a non-trivial Brauer class of order $2$. Then we have an isomorphism of groups
    \[
    A_{T(S,\alpha)} \simeq \bZ/(4d+1)\bZ \oplus (\bZ/2\bZ)^{\oplus 2}.
    \]
\end{lemma}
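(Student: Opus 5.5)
We need to rule out that $A_{T(S,\alpha)}$ is cyclic of order $4(4d+1)$. Throughout we use that $4d+1$ is odd, so only the $2$-primary part is at stake. By Lemma \ref{lem: discriminant of Ad} and the fact that $\NS(S)\oplus T(S)\subset H^2(S,\bZ)$ is a primitive embedding into a unimodular lattice, $A_{T(S)}\simeq A_{\NS(S)}\simeq \bZ/(4d+1)\bZ$ has odd order. Since $[T(S):T(S,\alpha)]=2$, the $2$-primary part of $A_{T(S,\alpha)}$ has order $4$, and it suffices to show it is not $\bZ/4\bZ$.

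\textbf{Step 1: reduce to $2$-divisibility of a lift of $\alpha$.} Under $\Br(S)_2\simeq\Hom(T(S),\bZ/2\bZ)$, the class $\alpha$ is represented by some $t_0\in T(S)^\vee$ with $\alpha(x)=x\cdot t_0 \bmod 2$. Since $|A_{T(S)}|$ is odd, multiplying by $4d+1$ lets us choose $t_0\in T(S)$ with $\alpha(x)\equiv x\cdot t_0 \pmod 2$ for all $x\in T(S)$; this choice of $t_0$ is what the rest of the argument rests on. Then
\[
T(S,\alpha)=\set{x\in T(S)\mid x\cdot t_0\equiv 0 \pmod 2}
\]
and $T(S,\alpha)^\vee = T(S)^\vee+\bZ\,\tfrac{t_0}{2}$. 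Here $t_0\notin 2T(S)$, because $\alpha\neq 0$ and $T(S)^\vee/T(S)$ has odd order, so $t_0$ cannot be $2$-divisible in $T(S)^\vee$ either.

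\textbf{Step 2: compute the $2$-part directly.} The element $u=\tfrac{t_0}{2}$ lies in $T(S,\alpha)^\vee$, and $2u=t_0\in T(S,\alpha)$ exactly when $t_0\cdot t_0$ is even. Since $T(S)$ is an even lattice, $t_0^2$ is even, so $t_0\in T(S,\alpha)$ and $u$ has order dividing $2$. As $t_0\notin 2T(S)$, the class of $u$ has order exactly $2$.

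\textbf{Step 3: rule out the cyclic case.} The quotient $T(S)/T(S,\alpha)\simeq\bZ/2\bZ$ embeds in $A_{T(S,\alpha)}$ as $T(S)/T(S,\alpha)\subset T(S,\alpha)^\vee/T(S,\alpha)$, generated by any $y\in T(S)$ with $\alpha(y)=1$. This class has order $2$. The class of $u$ is distinct from it: if $u-y\in T(S,\alpha)$, then $u\in T(S)$, i.e.\ $t_0\in 2T(S)$, contradicting Step 1. So the $2$-part contains two distinct elements of order $2$, hence two independent involutions, and is therefore not $\bZ/4\bZ$. It must be $(\bZ/2\bZ)^{2}$, giving
\[
A_{T(S,\alpha)}\simeq\bZ/(4d+1)\bZ\oplus(\bZ/2\bZ)^{\oplus 2}.
\]

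The main obstacle is Step 1: pinning down $t_0$ inside $T(S)$ itself rather than in $T(S)^\vee$. That is where the oddness of $\disc(T(S))=4d+1$, and hence the type I hypothesis, is used. For type II the discriminant is even, and the same argument would fail.
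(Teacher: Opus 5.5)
Your proof is correct and follows essentially the same route as the paper. You produce an integral vector $t_0\in T(S)$ (the paper's $B_\alpha$) with $\alpha=(t_0\cdot -)\bmod 2$, and then exhibit two distinct order-$2$ classes, $\tfrac12\overline{t_0}$ and $\overline{y}$, where only the latter lies in $T(S)/T(S,\alpha)$. The one difference is how the integral representative is obtained: the paper reads it off from the explicit decomposition $T(S)\simeq N_{d,1}\oplus U\oplus E_8(-1)^{\oplus 2}$ with $\det N_{d,1}$ odd, whereas you clear denominators using that $|A_{T(S)}|=4d+1$ is odd, which is cleaner and coordinate-free.
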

\begin{proof}
    We mimic the techniques of \cite[\S9]{vGe}.
    Firstly, since $S$ has Picard rank $2$, we have $\NS(S) \simeq L_{d,1}$ for some $d\geq 0$. Denote the standard basis of $L_{d,1}$ by $H,D$, so that $h^2 = 2$, $h\cdot D = 1$ and $D^2 = -2d$. We let $e_1,f_1,e_2,f_2$ be the standard basis for $U^{\oplus 2}$, and consider the primitive embedding
    $\iota \colon L_{d,1}\hookrightarrow U^{\oplus 2}$, given by
    \begin{align*}
        \iota(H) &= e_1 + f_1\\
        \iota(D) &= e_1 + e_2 - df_2.
    \end{align*}
    Then we have
    \[
    N_{d,1}\coloneqq L_{d,1}^\perp = \langle e_1 - f_1 +f_2, e_2 + df_2 \rangle,
    \]
    which has Gram matrix
    \[
    \begin{pmatrix}
    -2 & 1 \\ 1 & 2d    
    \end{pmatrix}.
    \]
    To keep notation light, we will write
    \begin{align*}
        v_1 &\coloneqq e_1 - f_1 + f_2 \\
        v_2 &\coloneqq e_2 + df_2.
    \end{align*}
    Since there is a unique $O(\klat)$-orbit of primitive embeddings $L_{d,1}\hookrightarrow \klat$ by \cite[Theorem 1.14.4]{Nik80}, 
    we may choose a marking $\eta\colon H^2(S,\bZ) \simeq \klat$ which satisfies $\eta(\NS(S)) = \iota(L_{d,1})$, 
    where we consider $\iota(L_{d,1})$ as a sublattice of $\klat$ via the natural inclusion $U^{\oplus 2} \hookrightarrow U^{\oplus 3}\oplus E_8(-1)^{\oplus2} = \klat$.
    This shows that, as abstract lattices, we have an isometry $T(S) \simeq N_{d,1}\oplus U \oplus E_8(-1)^{\oplus2}$. We write 
    \[
    \Lambda \coloneqq U \oplus E_8(-1)^{\oplus 2},
    \]
    so that $T(S) \simeq N_{d,1}\oplus \Lambda.$
    
    The Brauer class $\alpha \in \Br(S)_2 \simeq \Hom(T(S),\bZ/2\bZ)$ can now be written via this identification as
    \[
    \function{\alpha}{N_{d,1}\oplus \Lambda}{\bZ/2\bZ}{mv_1+nv_2+\lambda}{na_\alpha+mb_\alpha + \lambda_\alpha \lambda \pmod 2,}
    \]
    where $a_\alpha,b_\alpha \in \left\{0,1\right\}$ and $\lambda_\alpha \in \Lambda$.
    One can show that the element
    \begin{equation}\label{eq: Balpha}
    B_\alpha \coloneqq a_\alpha v_1 + b_\alpha v_2 + \lambda_\alpha
    \end{equation}
    satisfies
    \begin{equation} \label{eq: alpha is multiplication by an integral B-field mod 2}
    \alpha(x) = B_\alpha \cdot x \pmod 2
    \end{equation}
    for all $x\in T(S)$. 
    We also have the equality $T(S,\alpha) = \ker(\alpha)$ (see (\ref{eq: def of twisted transcendetnal lattice})), thus $\frac{1}{2}B_\alpha\cdot-$ takes integral values on $T(S,\alpha)$ but not on (all of) $T(S)$. In particular, we find a short exact sequence
    \[
    0 \to T(S)^\vee \to T(S,\alpha)^\vee \to \langle \frac{1}{2}\overline{B_\alpha}\rangle \to 0, 
    \]
    where we denote $\frac{1}{2}B_\alpha\cdot -$ by $\frac{1}{2}\overline{B_\alpha}$.
    Since $B_\alpha$ is an integral vector, we see that $\alpha(B_\alpha) = B_\alpha \cdot B_\alpha \equiv 0 \pmod 2$, 
    so that $B_\alpha \in T(S,\alpha)$, hence $\frac{1}{2}\overline{B_\alpha}$ has order $2$ in $A_{T(S,\alpha)}$.
    On the other hand, we also have the short exact sequence
    \[
    0 \to T(S,\alpha) \to T(S) \to \langle \overline{\lambda} \rangle \to 0
    \]
    where $\lambda \in T(S)$ is any element such that $B_\alpha \cdot \lambda$ is odd. Then $\overline{\lambda} \in A_{T(S,\alpha)}$ also has order $2$.
    Finally, the elements
    \[
    \overline{\lambda}, \quad \frac{1}{2}\overline{B_\alpha}
    \]
    are never equal in $A_{T(S,\alpha)}$. This immediately follows from the fact that $\overline{\lambda}\in T(S)/T(S,\alpha)$ but $\frac{1}{2}\overline{B_\alpha}\notin T(S)/T(S,\alpha)$.
    In particular, $A_{T(S,\alpha)}$ contains at least two cyclic subgroups of order $2$, so it is not cyclic.
\end{proof}

It follows from Lemma \ref{lem: group structure of discriminant} that the set $O_{\mathrm{Hodge}}(T(S,\alpha))\backslash I^2(A_{T(S,\alpha)})$ has at most $2$ elements. 

\begin{lemma} \label{lem: isotropic subgroups of twisted discriminant}
    Keeping the assumptions and notation of Lemma \ref{lem: group structure of discriminant}, the number of isotropic subgroups of $A_{T(S,\alpha)}$ is
    \begin{enumerate}
        \item $|I^2(A_{T(S,\alpha)})| = 1$ if $B_\alpha^2 \equiv 2,6 \pmod 8$;
        \item $|I^2(A_{T(S,\alpha)})| = 2$ if $B_\alpha^2 \equiv 0,4 \pmod 8$,
    \end{enumerate}
    where $B_\alpha \in T(S)$ is the class given by \eqref{eq: Balpha}.
\end{lemma}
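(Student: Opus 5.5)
The plan is to compute the discriminant quadratic form directly on the $2$-torsion of $A_{T(S,\alpha)}$. By Lemma \ref{lem: group structure of discriminant}, $A_{T(S,\alpha)} \simeq \bZ/(4d+1)\bZ \oplus (\bZ/2\bZ)^{\oplus 2}$. Since $4d+1$ is odd, the elements of order exactly $2$ are precisely the three non-zero elements of the $2$-primary part. The proof of that lemma already produced two distinct order-$2$ elements, $\overline{\lambda}$ and $\tfrac12\overline{B_\alpha}$. Here $\lambda\in T(S)$ is any class with $B_\alpha\cdot\lambda$ odd. These two elements generate the $2$-primary part, so the three candidates are
\[
\overline{\lambda},\qquad \tfrac12\overline{B_\alpha},\qquad \overline{\lambda}+\tfrac12\overline{B_\alpha}.
\]
It then suffices to compute $q$ on each of them modulo $2\bZ$ and count how many vanish.

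First, $\lambda\in T(S)$ and $T(S)$ is even, so $q(\overline{\lambda}) = \lambda^2 \equiv 0 \pmod{2\bZ}$. This value does not depend on the representative, because changing $\lambda$ by an element of $T(S,\alpha)$ changes $\lambda^2$ by an even integer. Hence $\overline{\lambda}$ is always isotropic.

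Second, $q(\tfrac12\overline{B_\alpha}) = B_\alpha^2/4 \pmod{2\bZ}$. Since $B_\alpha^2$ is even, this value is $0$, $\tfrac12$, $1$, $\tfrac32$ according as $B_\alpha^2 \equiv 0,2,4,6 \pmod 8$. For the mixed element I will use the bilinear form: $b(\lambda,\tfrac12 B_\alpha) = \tfrac12 (B_\alpha\cdot\lambda) \equiv \tfrac12 \pmod{\bZ}$ because $B_\alpha\cdot\lambda$ is odd. This gives
\[
q\bigl(\overline{\lambda}+\tfrac12\overline{B_\alpha}\bigr) = \lambda^2 + \tfrac{B_\alpha^2}{4} + B_\alpha\cdot\lambda \equiv \tfrac{B_\alpha^2}{4} + 1 \pmod{2\bZ}.
\]

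The case analysis then reads as follows.
\begin{enumerate}
    \item If $B_\alpha^2\equiv 0 \pmod 8$, then $\overline{\lambda}$ and $\tfrac12\overline{B_\alpha}$ are isotropic and the sum is not, giving $2$ elements.
    \item If $B_\alpha^2\equiv 4 \pmod 8$, then $\overline{\lambda}$ and the sum are isotropic and $\tfrac12\overline{B_\alpha}$ is not, giving $2$ elements.
    \item If $B_\alpha^2\equiv 2,6 \pmod 8$, then $\tfrac12\overline{B_\alpha}$ and the sum take the values $\tfrac12,\tfrac32$ in some order, so only $\overline{\lambda}$ is isotropic, giving $1$ element.
\end{enumerate}

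The only delicate point is making the mod-$2\bZ$ bookkeeping in the cross term rigorous. That term is $2b(\lambda,\tfrac12B_\alpha)$ with $b$ only defined mod $\bZ$, so I will compute it with honest representatives in $T(S,\alpha)^\vee\subset T(S)_\bQ$ rather than in $A$. I also need to check that $\tfrac12 B_\alpha \in T(S,\alpha)^\vee$, which was established in the proof of Lemma \ref{lem: group structure of discriminant}, so that $q(\tfrac12\overline{B_\alpha})$ is well defined mod $2\bZ$.
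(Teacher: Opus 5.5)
Your proposal is correct and is essentially the paper's proof: you take the three order-$2$ elements $\overline{\lambda}$, $\tfrac12\overline{B_\alpha}$ and $\overline{\lambda}+\tfrac12\overline{B_\alpha}$, note that $\overline{\lambda}$ is always isotropic, and compute $q$ of the other two as $\tfrac14 B_\alpha^2$ and $1+\tfrac14 B_\alpha^2$ modulo $2\bZ$. One small improvement: you choose $\lambda$ as any class in $T(S)$ with $B_\alpha\cdot\lambda$ odd, whereas the paper takes $\lambda\in\Lambda$ with $\lambda\cdot\lambda_\alpha=1$, which tacitly requires $\lambda_\alpha\not\equiv 0 \pmod 2$; your choice avoids that assumption.
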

\begin{proof}
    We already know that $T(S)/T(S,\alpha) \subset A_{T(S,\alpha)}$ is isotropic of order $2$. Let $\zeta \in T(S)/T(S,\alpha)$ be the generator. Then $\zeta = \overline{\lambda}$ where $\lambda\in \Lambda$ satisfies $\lambda\cdot \lambda_\alpha = 1$. Such a $\lambda$ exists because $\lambda_\alpha$ is a primitive element of the unimodular lattice $\Lambda$. 

    Since the $2$-torsion subgroup of $A_{T(S,\alpha)}$ is isomorphic to $(\bZ/2\bZ)^{\oplus 2}$, the three non-zero elements of order $2$ are given by
    \[
        \overline{\lambda}, \quad \frac{1}{2}\overline{B_\alpha}, \quad \frac{1}{2}\left(2\overline{\lambda} + \overline{B_\alpha}\right).
    \]
    We have
    \[
    \left(\frac{1}{2}(2\overline{\lambda}+\overline{B_\alpha})\right)^2 \equiv 1 + \frac{1}{4}B_\alpha^2 \pmod{2\bZ}.
    \]
    Therefore, if $B_\alpha^2 \equiv 0 \pmod 8$, then $\frac{1}{2}\overline{B_\alpha}$ is isotropic, but $\frac{1}{2}(2\overline{\lambda}+B_\alpha)$ is not. 
    If $B_\alpha^2 \equiv 2,6\pmod 8$, then neither $\frac{1}{2}\overline{B_\alpha}$ nor $\frac{1}{2}(2\overline{\lambda}+B_\alpha)$ are isotropic. 
    Finally, if $B_\alpha^2 \equiv 4 \pmod 8$, then $\frac{1}{2}B_\alpha$ is not isotropic, but $\frac{1}{2}(2\overline{\lambda}+B_\alpha)$ is.
\end{proof}

\begin{warning}
    In the notation of Lemma \ref{lem: group structure of discriminant}, the class $\frac{1}{2}B_\alpha$ is a $B$-field lift of $\alpha$. Lemma \ref{lem: isotropic subgroups of twisted discriminant} shows that the number of isotropic elements of $A_{T(S,\alpha)}$ depends only on $B_\alpha^2$. Note, however, that different $B$-field lifts of $\alpha$ potentially have different squares modulo $8$, so one needs to be careful when applying Lemma \ref{lem: isotropic subgroups of twisted discriminant}.
\end{warning}

\begin{remark} \label{rmk: lattice structures in type I}
    In the notation of Lemma \ref{lem: group structure of discriminant}, the lattice structures of $T(S)$ and $T(S,\alpha)$ are given as follows. We already saw in the proof of Lemma \ref{lem: group structure of discriminant} that 
    \[
    T(S) \simeq L_{d,1}(-1) \oplus U \oplus E_8(-1)^{\oplus 2}.
    \]
    The lattice structure of $T(S,\alpha)$ depends on $|I^2(A_{T(S,\alpha)})|$, and one can compute that 
    \begin{equation*}\label{eq: structure of TSalpha}
    T(S,\alpha) \simeq \begin{cases}
        L_{d,1}(-1) \oplus U(2) \oplus E_8(-1)^{\oplus 2} & \text{ if } |I^2(A_{T(S,\alpha)})| = 2 \\
        L_{d,1}(-1) \oplus L_{0,2} \oplus E_8(-1)^{\oplus 2} & \text{ if } |I^2(A_{T(S,\alpha)})| = 1,
    \end{cases}
    \end{equation*}
    where $$L_{0,2} = \begin{pmatrix}
        2 & 2 \\ 2 & 0
    \end{pmatrix}$$
    as in Lemma \ref{lem: NS(S) of rank 2}.
\end{remark}

\begin{lemma}\label{lem: embedding orbits}
    Let $h \in \klat$ be an element of square $h^2 = 2$, and write $\Lambda_2 \coloneqq h^\perp \simeq \langle -2 \rangle \oplus U^{\oplus 2} \oplus E_8(-1)^{\oplus 2}$. Then the number of $O(\Lambda_2)$-orbits of primitive embeddings of $$T_{d,1}\coloneqq L_{d,1}(-1) \oplus U \oplus 
    E_8(-1)^{\oplus 2}$$ into $\Lambda_2$ is equal to
    \[
    |O(A_{T_{d,1}}) / \set{\pm \id}|.
    \]
    In particular, there is a unique $O(\Lambda_2)$-orbit of primitive embeddings $T_{d,t}\hookrightarrow \Lambda_2$ if and only if $4d+1$ is a power of a prime number.
\end{lemma}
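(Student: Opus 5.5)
Let $T \coloneqq T_{d,1}$. I will use Nikulin's classification of primitive embeddings of $T$ into an even lattice with prescribed orthogonal complement, in the relative form of \cite[Proposition 1.15.1]{Nik80}. Such an embedding $T \hookrightarrow \Lambda_2$ is determined by four pieces of data: the complement $K \coloneqq T^\perp$, a subgroup $H_T \subset A_T$, a subgroup $H_K \subset A_K$, and an anti-isometry $\gamma\colon H_T \simeq H_K$ whose graph gives the overlattice $T\oplus K \subset \Lambda_2$. Since $\Lambda_2$ is not unimodular, the condition is that the discriminant form of the resulting overlattice is isometric to $A_{\Lambda_2} \simeq \bZ/2\bZ$ with $q = -\tfrac12$. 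Two embeddings lie in the same $O(\Lambda_2)$-orbit exactly when the data are related by $O(T)\times O(K)$, and here $O(T)$ must act trivially, since the embedded copy is fixed up to isometry of $T$ itself.

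\textbf{Step 1: the complement.} The rank of $K$ is $21-20=1$. Its discriminant must be computed from $|A_T| = 4d+1$ (Lemma \ref{lem: discriminant of Ad}), $|A_{\Lambda_2}|=2$, and the identity $\disc(T)\disc(K) = |H|^2 \disc(\Lambda_2)$. Because $4d+1$ is odd, the $2$-part of $A_{\Lambda_2}$ can only come from $K$. The odd part of $A_T$ cannot survive in $A_{\Lambda_2}$, so it must be glued completely. This forces $H_T = A_T$, $|A_K| = 2(4d+1)$, and hence $K \simeq \langle 2(4d+1)\rangle$ with signature fixed as positive. In particular $K$ is unique up to isometry, and $O(K)=\{\pm\id\}$.

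\textbf{Step 2: counting orbits.} The gluing $\gamma$ identifies $A_T$ with the odd part of $A_K$. Let $\gamma_0$ be one admissible gluing; its existence comes from the existence of the embedding, e.g. via $\NS(S)$ for a K3 surface of this type. Then every other gluing is $\gamma_0\circ g$ for $g\in O(A_T)$. Modding out by $O(K)=\{\pm\id\}$, which acts on $A_K$ through $\pm\id$, gives orbits in bijection with $O(A_T)/\{\pm\id\}$.

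The delicate point is that $O(T)$ should not be quotiented out. One has to argue that orbits of embeddings, as opposed to orbits of images, correspond to $O(K)$-classes of gluings with $T$ fixed pointwise. This is the step where I expect the main care to be needed, together with confirming that $-\id$ on $A_K$ really restricts to $-\id$ on the odd part.

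\textbf{Step 3: the prime-power criterion.} By Lemma \ref{lem: discriminant of Ad}, $A_T \simeq \bZ/(4d+1)\bZ$ with a nondegenerate form, since $A_T\simeq A_{L_{d,1}}$ up to sign. Its isometries are multiplication by units $u$ with $u^2\equiv 1 \pmod{4d+1}$. By the Chinese remainder theorem, for odd $N=4d+1$ the number of such $u$ is $2^{k}$, where $k$ is the number of distinct prime factors of $N$. So the quotient $O(A_T)/\{\pm\id\}$ is trivial exactly when $k\le 1$, i.e. when $4d+1$ is a prime power. The degenerate case $d=0$ gives $N=1$, which I count as $p^0$.
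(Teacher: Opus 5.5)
Your proposal is correct and follows essentially the same route as the paper's proof: Nikulin's gluing description forces the complement to be a rank-one lattice of discriminant $2(4d+1)$ with all of $A_{T_{d,1}}$ glued, the admissible gluings form an $O(A_{T_{d,1}})$-torsor, and only $O(K)=\{\pm\id\}$ is quotiented out. Your Chinese-remainder count $O(A_{T_{d,1}})\simeq(\bZ/2\bZ)^{\omega(4d+1)}$ supplies the prime-power criterion, which the paper only records after Theorem~\ref{thm: counting formula}. One slip: $T_{d,1}$ has signature $(2,18)$ and $\Lambda_2$ has signature $(2,19)$, so the complement is negative definite, $K\simeq\langle -2(4d+1)\rangle$; this sign is also what gives the $2$-part of $A_K$ the form $-\tfrac12$ matching $A_{\Lambda_2}$, and it does not affect the count.
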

\begin{proof}
    Recall from \cite[Proposition 1.5.1]{Nik80} that a primitive embedding $T_{d,t}\hookrightarrow \Lambda_2$ is determined by the data $(N,G,\gamma)$, where $N$ is a lattice of signature $(0,1)$, $G\subset A_{T_{d,t}}$ is a subgroup, and $\gamma\colon G\hookrightarrow A_N$ is
    an injective group homomorphism such that $\Gamma_\gamma^\perp/\Gamma_\gamma \simeq A_{\Lambda_2}$, where $\Gamma_\gamma \subset A_{T_{d,t}}\oplus A_N$ is the 
    graph of $\gamma$. Since $\disc(T_{d,t}) = 4d+1$ is odd, it follows from the equation $\disc(T_{d,t})\cdot \disc(N) = |G^2|\cdot \disc(\Lambda_2) = |G|^2\cdot 2$ that we must have $G = A_{T_{d,t}}$ and $N = \langle 2\cdot(4d+1)\rangle$. In particular, we find $A_N \simeq \gamma(G)\oplus \bZ/2\bZ$. Therefore, there
    are $O(A_{T_{d,t}})$ possible choices for $\gamma$, and two different choices $\gamma_1,\gamma_2$ yield the same $O(\Lambda_2)$-orbit of primitive embeddings $T_{d,t}\hookrightarrow \Lambda_2$ if and only if there is an isometry $\sigma \colon N \simeq N$ such that $\gamma_1 = \sigma^* \circ \gamma_2$, i.e. if and only if $\gamma_1 = \pm \gamma_2$. 
\end{proof}

Finally, we compute $O(L_{d,1})$. This computation will be useful in Section \ref{sec: FM formula for verra fourfolds} for counting Fourier--Mukai partners of Verra fourfolds. As before, we denote the standard basis of $L_{d,1}$ by $H, D$.

To start, we have the obvious two isometries $\pm \id \in O(L_{d,1})$, and we can always find two more isometries $\pm s$, where $s$ is given by 
\[
\isomorphism{s}{L_{d,1}}{L_{d,1}}{aH+bD}{(a+b)H -bD.}
\]
In other words, $s(H) = H$ and $s(D) = H-D$. 

\begin{lemma} \label{lem: few isometries criterion}
    Let $d = k(k+1)$ for some non-negative integer $k$. Then we have $O(L_{d,1}) = \set{\pm \id,\pm s}$.
\end{lemma}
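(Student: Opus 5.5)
The plan is to exploit the fact that for $d=k(k+1)$ the discriminant $4d+1=(2k+1)^2$ is a perfect square, so the binary form of $L_{d,1}$ is isotropic over $\bQ$ and factors into two integral linear forms. For $x H + yD\in L_{d,1}$ the quadratic form is
\[
(xH+yD)^2 = 2x^2+2xy-2k(k+1)y^2 = 2\bigl(x+(k+1)y\bigr)\bigl(x-ky\bigr).
\]
It follows that the only isotropic rational lines in $L_{d,1}\otimes\bQ$ are spanned by the primitive vectors
\[
u \coloneqq kH+D, \qquad w \coloneqq -(k+1)H + D.
\]
These are linearly independent, with $u\cdot w = 2k(k+1)\cdot\frac{1}{1}$-type value; more precisely $u\cdot w \neq 0$, since otherwise the form would be degenerate. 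So $u,w$ span a finite-index sublattice of $L_{d,1}$, and any isometry is determined by its values on $u$ and $w$.

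First step: any $g\in O(L_{d,1})$ sends isotropic vectors to isotropic vectors, so it either preserves each of the two lines $\bQ u$, $\bQ w$ or swaps them. Since $g$ is an automorphism of the lattice and $u,w$ are primitive generators of $\bQ u\cap L_{d,1}$ and $\bQ w\cap L_{d,1}$, in the preserving case we get $g(u)=\varepsilon u$ and $g(w)=\delta w$ with $\varepsilon,\delta\in\{\pm1\}$. Preservation of $u\cdot w\neq 0$ forces $\varepsilon\delta=1$. Hence $g=\pm\id$ on $\bQ u\oplus\bQ w = L_{d,1}\otimes \bQ$, and so $g = \pm\id$.

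Second step: I will check that $s$ swaps the two lines. Indeed $s$ is a nontrivial isometry, it is not $\pm\id$, and it fixes $H$, so it is a reflection. A direct computation gives $s(u)=kH+H-D=-w$. Therefore, if $g$ swaps the lines, then $g\circ s$ preserves them, so $g\circ s=\pm\id$ by the first step, i.e.\ $g=\pm s^{-1}=\pm s$ (as $s^2=\id$). Together with the first step this gives $O(L_{d,1})=\{\pm\id,\pm s\}$. These four elements are distinct since $s\neq\pm\id$.

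Main obstacle: there is no real one. The only points needing care are the edge case $k=0$ (where $d=0$, the lattice is unimodular, and the same factorisation $2x(x+y)$ works) and checking primitivity of $u$ and $w$. Both are immediate, because each of $u$ and $w$ has $D$-coefficient $1$.
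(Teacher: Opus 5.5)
Your proof is correct, but it takes a different route from the paper's.

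\textbf{The paper's route.} The paper works with vectors of square $2$. Writing $(aH+bD)^2=2$ leads to the Pell-type equation $(1+4d)b^2+4=c^2$, which for $1+4d=(2k+1)^2$ forces $b=0$. So $\pm H$ are the only classes of square $2$, and every isometry satisfies $g(H)=\pm H$. A direct computation then shows that the isometries with $g(H)=H$ are exactly $\id$ and $s$: they are cut out by $2a+b=1$ and $g(D)^2=-2d$ for $g(D)=aH+bD$.

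\textbf{Your route.} You use the two isotropic lines $\bQ u$ and $\bQ w$ coming from the factorisation $2\bigl(x+(k+1)y\bigr)\bigl(x-ky\bigr)$. The action of $O(L_{d,1})$ on this two-element set has kernel $\{\pm\id\}$, by primitivity of $u,w$ and $u\cdot w\neq 0$. The element $s$ realises the swap, since $s(u)=-w$.

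\textbf{Comparison.} Your argument is more structural and exhibits $O(L_{d,1})\cong(\bZ/2\bZ)^{\oplus 2}$ directly, in the spirit of the isotropic basis $H,F$ in the remark following the lemma. The paper's argument has the advantage of producing, as a byproduct, the uniqueness of the square-$2$ class up to sign. That fact is what the paper later quotes to get $|\mathrm{Amp}_2(S)|=1$ when $d=k(k+1)$. Your factorisation gives it just as quickly: $\bigl(x+(k+1)y\bigr)\bigl(x-ky\bigr)=1$ forces both factors to be equal to $\pm1$, hence $(2k+1)y=0$.

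\textbf{Cosmetic fix.} The phrase describing $u\cdot w$ as a ``$2k(k+1)$-type value'' is wrong as written. In fact $u\cdot w=-(2k+1)^2=-(4d+1)$. Only $u\cdot w\neq 0$ is used, and your non-degeneracy justification for that is fine.
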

\begin{proof}
    We show that, under the assumption that $d = k(k+1)$, the lattice $L_{d,1}$ contains precisely one vector of square $2$ up to a sign. Indeed, let $H, D\in L_{d,1}$ be the standard basis. In other words, we have $H^2 = 2$, $H\cdot D = 1$, and $D^2 = -2d$. Then there exists a vector $aH + bD \in L_{d,1}$ such that 
    \[
    (aH+bD)^2 = 2a^2 + 2ab -2db^2 = 2
    \]
    and such that $b \neq 0$ if and only if the quantity
    \[
    4b^2 + 16db^2 + 16 = 4\cdot((1+4d)b^2 + 4)
    \]
    is a square. Therefore, such a vector exists if and only if the Pell-type equation
    \begin{equation}\label{eq: pell equation}
    (1+4d)b^2 +4 = c^2
    \end{equation}
    has a solution with $b\neq 0$. Since $d = k(k+1)$, we find that $1+4d = (2k+1)^2$, and it is easy to verify that \eqref{eq: pell equation} has no solutions other than $(b,c) = (0, \pm 2)$ in this case.

    This shows that any isometry $g\colon L_{d,1}\to L_{d,1}$ satisfies $g(H) = \pm H$. Possibly replacing $g$ by $-g$, we may assume that $g(H) = H$. Pick $a,b \in \bZ$ such that $g(D) = aH+bD$, then we must have $2a + b = (aH+bD)\cdot H = g(D)\cdot g(H) = D\cdot H = 1$ and similarly $2a^2 + 2ab - 2db^2 = g(D)^2 = D^2 =  -2d$. It follows from an easy computation that $(a,b) = (0,1),(1,-1)$ are the only two simultaneous solutions to these equations, hence $g = \id,s$.
\end{proof}

\begin{remark}
    In fact, for any two elements $h,h' \in L_{d,1}$ of square $h^2 = (h')^2=2$, there are two isometries $\sigma,\sigma'\in O(L_{d,1})$ with $\sigma(h) = \sigma'(h) = h'$. Since the Pell-type equation \eqref{eq: pell equation} admits infinitely many solutions as soon as $\disc(L_{d,1}) = 1+4d$ is not a square, the group $O(L_{d,1})$ is either infinite or isomorphic to $(\bZ/2\bZ)^{\oplus 2}$.
\end{remark}

\begin{remark}
    The assumption that $d=k(k+1)$ for some non-negative integer $k$ is equivalent to $\disc \ L_{d,1}=1+4d$ being a square. Since a rank $2$ indefinite lattice represents $0$ if and only if its discriminant is a square, it follows that $L_{d,1}$ must contain a non-zero isotropic vector. Indeed, writing $t^2 = 1+4d = (2k+1)^2$ for some $t > 0$, the element 
    \[
    F = \frac{1}{2}(t-1)H + D
    \]
    is a primitive isotropic vector. Moreover, $H,F$ form a basis of $L_{d,1}$ with intersection matrix
    \[
    \begin{pmatrix}
        2 & t \\ t & 0
    \end{pmatrix}.
    \]
    The fact that the orthogonal group of a lattice with such an intersection matrix is isomorphic to $(\bZ/2\bZ)^{\oplus 2}$ was already proved in \cite[Lemma 4.6]{vGe}.
\end{remark}

\subsection{Double planes of type II}
Before we compute the counting formula for Fourier--Mukai partners of a Verra fourfold with an associated K3 surface of type I, we note that there is always at least one Fourier--Mukai partner for type II.

If $(S,H)$ is a polarised K3 surface of Picard rank $\rho(S) = 2$, with degree $H^2 = 2$ and of type II, then the discriminant of $\NS(S)$ is even:

\begin{lemma}
    As a group, $A_{d,2} \coloneqq A_{L_{d,2}}$ is isomorphic to 
    \[
    A_{d,2}\simeq \bZ/2\bZ \oplus \bZ/(2d+2)\bZ.
    \]
\end{lemma}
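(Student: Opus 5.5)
The plan is to compute the Smith normal form of the Gram matrix
\[
\begin{pmatrix} 2 & 2 \\ 2 & -2d \end{pmatrix}
\]
of $L_{d,2}$ from \eqref{eq: definition of Ld2}, since the discriminant group $A_{L_{d,2}} = L_{d,2}^\vee/L_{d,2}$ is the cokernel of the map $\bZ^2 \to \bZ^2$ given by this matrix. This is the same method as in Lemma \ref{lem: discriminant of Ad}.

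First, the order. The determinant is $-4d-4 = -4(d+1)$, so $|A_{d,2}| = 4(d+1)$, which agrees with $2\cdot(2d+2)$.

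Second, the invariant factors. The first one is the gcd of the matrix entries. Every entry is even, and the entry $2$ occurs, so this gcd is exactly $2$. The product of the two invariant factors is $|\det| = 4(d+1)$, so the second one is $4(d+1)/2 = 2d+2$. This gives $A_{d,2}\simeq \bZ/2\bZ\oplus\bZ/(2d+2)\bZ$.

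For an explicit check, subtract the first row from the second to get $\begin{pmatrix} 2 & 2 \\ 0 & -2d-2\end{pmatrix}$. Then subtract the first column from the second to get $\operatorname{diag}(2,-2d-2)$.

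There is no real obstacle here. The only point needing care is the degenerate value $d=-1$, where the determinant vanishes. This is excluded because $d\geq 0$ in Lemma \ref{lem: NS(S) of rank 2}. In particular $2d+2\geq 2$, so the second summand is a genuine nontrivial cyclic group.
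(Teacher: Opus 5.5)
Your proposal is correct and follows the same approach as the paper: the paper's proof is simply ``compute the Smith normal form of $L_{d,2}$,'' and your explicit row and column reduction to $\operatorname{diag}(2,-2d-2)$ carries out that computation. Your remark that $d\geq 0$ guarantees a nonzero determinant is a fine sanity check.
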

\begin{proof}
    This follows by computing the Smith normal form of $L_{d,2}$.
\end{proof}

Therefore, $A_{d,2}$ may contain more than $2$ isotropic subgroups of order $2$, potentially making the counting of the elements of $\FM^2(S,\alpha)$ much more complicated. 
We do not provide a counting formula for Fourier--Mukai partners for double planes of type II in the present paper. Nevertheless, we note that there are non-trivial Fourier--Mukai partners in certain cases:

\begin{proposition}
    Let $V_4$ be a Verra fourfold whose associated twisted K3 surface $(S_1,\alpha_1)$ and $(S_2,\alpha_2)$ are double planes of Picard rank $\rho = 2$ and type II. Assume moreover that $\alpha_1$ is non-trivial and that $S_1 \not \simeq S_2$. Then $V_4$ has at least one non-trivial Fourier--Mukai partner.
\end{proposition}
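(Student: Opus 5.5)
The plan is to produce a second Verra fourfold $V_4'$ from the other twisted K3 surface $(S_2,\alpha_2)$, reversing the roles of the two projections, so that $V_4'$ has $(S_1,\alpha_1)$ as one associated twisted K3 surface but is not isomorphic to $V_4$. Concretely, $\Db(S_1,\alpha_1)\simeq \cA_{V_4}\simeq \Db(S_2,\alpha_2)$ by Example \ref{ex: Verra fourfold SOD} and Lemma \ref{lem: both Verra kuznetsov components equivalent}. Since $\alpha_1$ is non-trivial, $\alpha_2$ is also non-trivial: the twisted transcendental lattices $T(S_1,\alpha_1)\simeq T(S_2,\alpha_2)$ are Hodge isometric, and the discriminant computation ($\disc T(S,\alpha)=|\alpha|^2\disc T(S)$, together with the fact that $\NS(S_1)\simeq L_{d,2}$ and $\NS(S_2)\simeq L_{d',2}$ are constrained by Lemma \ref{lem: NS(S) of rank 2}) rules out one class being trivial while the other is not.

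The main step is to find a Brauer class on $S_2$, or on $S_1$, that is of Verra type and gives a genuinely different fourfold. The cleanest version uses the non-injectivity of $\res$. In type II with $\rho=2$, Theorem \ref{thm: kernel and cokernel of res} gives $r=1$, so $\ker(\res)\simeq\bZ/2\bZ$. Hence there are exactly two classes $\beta,\beta+\kappa\in\Br(\bP^2\setminus C_1)_2$ restricting to $\alpha_1$, where $\kappa\neq 0$ generates the kernel. If $\beta+\kappa$ is non-trivial, it defines a Verra threefold, and hence a Verra fourfold $V_4'$ whose associated twisted polarised K3 surface is again $(S_1,H_1,\alpha_1)$. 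Then $\cA_{V_4'}\simeq\Db(S_1,\alpha_1)\simeq\cA_{V_4}$, and the equivalence is of Fourier--Mukai type because the equivalences in Example \ref{ex: Verra fourfold SOD} come from Kuznetsov's construction. It remains to check that $\beta+\kappa\neq 0$: if it were zero, then $\beta=\kappa$ would lie in $\ker(\res)$, giving $\alpha_1=\res(\beta)=0$, a contradiction.

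The step I expect to be the main obstacle is showing $V_4'\not\simeq V_4$. An isomorphism $V_4\simeq V_4'$ either preserves the two factors of $\bP^2\times\bP^2$ or swaps them. In the first case, it induces an automorphism of $\bP^2$ preserving $C_1$, which lifts to an automorphism $g$ of $(S_1,H_1)$ carrying $\beta$ to $\beta+\kappa$. With $\rho=2$, the group of automorphisms of $S_1$ fixing $H_1$ is tiny: essentially the covering involution, plus possibly maps acting by $\pm1$ on $T(S_1)$. I would argue that the covering involution acts trivially on $\Br(\bP^2\setminus C_1)$, since it is pulled back from the base, so $\beta\mapsto\beta+\kappa$ is impossible generically. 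For the second case, I would use the hypothesis $S_1\not\simeq S_2$: swapping the factors would identify the $(S_1,H_1)$ of $V_4'$ with the second surface $S_2$ of $V_4$, contradicting $S_1\not\simeq S_2$.

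If the first case cannot be fully excluded in this way, the fallback is to reverse the roles. I would apply the same construction starting from $(S_2,\alpha_2)$, and compare the unordered pairs of associated twisted K3 surfaces of $V_4$ and the new fourfolds. Counting shows that not all of these fourfolds can be isomorphic to $V_4$, which again produces a non-trivial Fourier--Mukai partner.
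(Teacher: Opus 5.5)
Your main argument is the paper's. In type II with $\rho=2$, Theorem \ref{thm: kernel and cokernel of res} gives $\ker(\res)\simeq\bZ/2\bZ$, so $\alpha_1$ has exactly two preimages $\beta,\beta+\kappa$, and both are non-trivial because $\alpha_1\neq 1$. These give two distinct Verra threefolds, hence a Verra fourfold $V_4'$ sharing $(S_1,H_1,\alpha_1)$ with $V_4$. An isomorphism $V_4\simeq V_4'$ that swaps the factors is excluded by $S_1\not\simeq S_2$.

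The paper disposes of the factor-preserving case only by asserting that the two conic bundles are non-isomorphic, so the subtlety you raise is not treated there either. Your own treatment does not close it. A factor-preserving isomorphism yields $g\in\Aut(\bP^2)$ with $g(C_1)=C_1$ and $g^*(\beta+\kappa)=\beta$. For $g=\id$ this is impossible, since $\Br(\bP^2\setminus C_1)\hookrightarrow\Br(\bC(\bP^2))$ and $\beta\neq\beta+\kappa$. For $g\neq\id$, the covering involution $\iota$ is irrelevant because it lies over $\id_{\bP^2}$, and ``impossible generically'' is not an argument.

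What does work, under an extra hypothesis such as $O_{\mathrm{Hodge}}(T(S_1))=\{\pm\id\}$, is to show $\Aut(S_1,H_1)=\{\id,\iota\}$:
\begin{itemize}
\item Write $\NS(S_1)=\langle H_1,D\rangle\simeq L_{d,2}$. The isometries fixing $H_1$ are $\id$ and $\sigma\colon D\mapsto 2H_1-D$. The latter is $\iota^*|_{\NS}$ and acts by $-1$ on $A_{\NS(S_1)}$.
\item Gluing $(\id,-\id_T)$ or $(\sigma,\id_T)$ to an isometry of $H^2(S_1,\bZ)$ would force $A_{L_{d,2}}\simeq\bZ/2\bZ\oplus\bZ/(2d+2)\bZ$ to be $2$-elementary, i.e.\ $d=0$.
\item The case $d=0$ is impossible, because then $D-H_1$ is a $(-2)$-class orthogonal to the ample class $H_1$.
\end{itemize}
Hence $\Aut(\bP^2,C_1)$ is trivial. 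Without such a hypothesis, neither your argument nor the paper's rules out a non-trivial $g$.

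Your fallback does not help. Its premise that $\alpha_2\neq 1$ is unjustified. If $\alpha_2=1$, comparing discriminants only gives $16(d+1)=\disc\,T(S_1,\alpha_1)=\disc\,T(S_2)=4(d'+1)$, which is solvable with $d'=4d+3$. Moreover, in type II the trivial class is of Verra type: its non-trivial preimage is the generator $\kappa_2$ of $\ker(\res)$ for $S_2$. In that case $\kappa_2$ is the class of the branch locus of $V_4$ itself over the second $\bP^2$, so reversing the roles produces no new fourfold. The ``counting'' step is not substantiated in any case. The main argument never needs $\alpha_2\neq 1$, so you should drop that claim.
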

\begin{proof}
    Let $(S_1,\alpha_1)$ be one of the two associated K3 surfaces of $V_4$. Letting $C_1 \subset \bP^2$ be the branch locus of the double cover $S_1\to \bP^2$, it follows from Theorem \ref{thm: birationality theorem} that the map
    \[
    \res\colon \Br(\bP^2 \setminus C_1)_2 \to \Br(S_1)_2
    \]
    satisfies $\ker(\res) \simeq \bZ/2\bZ$. In particular, there exist two Brauer classes $$\beta_1, \beta_2 \in \Br(\bP^2 \setminus C_1)_2$$ such that $\res(\beta_1) = \res(\beta_2) = \alpha_1$.
    These correspond to two non-isomorphic conic bundles $V_{\beta_1}\to \bP^2$ and $V_{\beta_2} \to \bP^2$ 
    and hence induce two non-isomorphic Verra fourfolds, namely $V_4$ and some other Verra fourfold $V'$, which cannot be isomorphic to $V_4$ since $S_1\not \simeq S_2$. Since $V'$ has the same associated twisted K3 surface as $V_4$, it is a Fourier--Mukai partner of $V_4$. 
\end{proof}

\section{Cohomology of Verra fourfolds} \label{sec: cohomology of Verra fourfolds}
In this section, we prove some basic results on the cohomology of Verra fourfolds. We then apply this theory to compare the two twisted K3 surfaces associated with the same Verra fourfold.

\subsection{The basics}
Let $V_4\to \bP^2 \times \bP^2$ be a Verra fourfold, and let $p_1, p_2 \colon V_4 \to \bP^2$ be the projections onto the first, resp. second, factor. Let $H \in \Pic\bP^2$ be an ample generator, and write $h_i \coloneqq p_i^*H \in H^2(V_4,\bZ)$. 

We are mostly interested in $H^4(V_4,\bZ)$ and its relationship with the cohomology of the two associated K3 surfaces of $V_4$. We denote by $A \coloneqq \langle h_1^2,h_1\cap h_2, h_2^2\rangle \subset H^4(V_4,\bZ)$ the ambient cohomology of $V_4$, and write
\[
H^4_\mathrm{prim}(V_4,\bZ) \coloneqq A^\perp \subset H^4(V_4,\bZ)
\]
for the primitive cohomology of $V_4$. 

\begin{lemma}\label{lem: A is integral}
    The sublattice $A\subset H^4(V_4,\bZ)$ is primitive. \footnote{We were informed by Federica Galluzzi and Bert van Geemen that this result will appear in their forthcoming work \cite{GvG26}. We present an independent proof of this result here.}
\end{lemma}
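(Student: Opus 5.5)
The plan is to show that $H^4(V_4,\bZ)/A$ is torsion-free in two steps. First reduce any possible torsion to $2$-torsion using the double cover $\pi\colon V_4\to \bP^2\times\bP^2$. Then rule out $2$-torsion by exhibiting explicit algebraic surfaces in $V_4$ whose intersection numbers with $h_1^2, h_1h_2, h_2^2$ are odd where needed.

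For the reduction, recall that $H^4(\bP^2\times\bP^2,\bZ)=\langle H_1^2,H_1H_2,H_2^2\rangle$ is unimodular and that $h_1^ih_2^j=\pi^*(H_1^iH_2^j)$, so $A=\pi^*H^4(\bP^2\times \bP^2,\bZ)$. Suppose $x\in H^4(V_4,\bZ)$ satisfies $nx=a\,h_1^2+b\,h_1h_2+c\,h_2^2$ with $\gcd(a,b,c,n)=1$. By the projection formula $\pi_*\pi^*=2$, so
\[
n\,\pi_*x = 2(aH_1^2+bH_1H_2+cH_2^2)
\]
in the torsion-free group $H^4(\bP^2\times\bP^2,\bZ)$. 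This gives $n\mid 2\gcd(a,b,c)$. Since $\gcd(a,b,c,n)=1$, it follows that $n\in\{1,2\}$. It therefore suffices to show that no nonzero class $\tfrac12(a h_1^2+b h_1h_2+c h_2^2)$ with $a,b,c\in\{0,1\}$ is integral.

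For this it is enough to produce integral classes $R_1,R_2,R_3\in H^4(V_4,\bZ)$ such that the matrix $(R_i\cdot a_j)$, with $a_j$ running over $h_1^2,h_1h_2,h_2^2$, is invertible mod $2$. Pairing with such classes then forces $a\equiv b\equiv c\equiv 0$. The Gram matrix of $A$ itself is useless here: since $h_1^2h_2^2=2$ and $h_1^4=h_1^3h_2=0$, all its entries are even.

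The candidates come from the quadric fibrations.
\begin{itemize}
\item[(i)] Choose a line $L$ in the first $\bP^2$ that is bitangent to the discriminant sextic $C_1$ of $p_1$. The double cover of $L$ branched in $L\cap C_1$ is then branched only at the two points of simple intersection, so it is rational. By Tsen's theorem the corresponding $\bP^1$-bundle of lines in the fibres of $p_1^{-1}(L)\to L$ has a section. This produces a surface $R_1\subset p_1^{-1}(L)$ swept out by one line in each fibre. One computes $R_1\cdot h_1^2=0$, and $R_1\cdot h_1h_2=1$, since a line in a fibre of $p_1$ has $h_2$-degree $1$.
\item[(ii)] The symmetric construction for $p_2$ gives $R_2$ with $R_2\cdot h_2^2=0$ and $R_2\cdot h_1h_2=1$.
\item[(iii)] A third class, for instance a component of a fibre quadric of $p_1$ over a singular point of a degenerate fibre, or a suitable combination of the above with $h_1^2$, is needed to detect $c$ (respectively $a$). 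I would try to compute $R_1\cdot h_2^2$ and hope it is odd, which would then handle $c$, with $R_2\cdot h_1^2$ handling $a$ by symmetry.
\end{itemize}

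I expect the main obstacle to be the last step: actually computing $R_1\cdot h_2^2$ mod $2$ and confirming that the resulting $3\times 3$ matrix is invertible over $\bZ/2$. Two other points need care: the existence of a bitangent line to $C_1$ (the general sextic has $324$ of them), and the fact that the section given by Tsen's theorem yields a genuine algebraic surface class. If (iii) fails, I would fall back to using the twisted K3 surface $(S_1,\alpha_1)$. There the Gram matrix of $A^\perp$ can be compared with $T(S_1,\alpha_1)$, and a discriminant computation via unimodularity of $H^4(V_4,\bZ)$ then forces $A$ to be saturated.
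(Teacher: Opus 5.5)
Your reduction to index $2$ via $\pi_*\pi^*=2$ is correct; it is an alternative to the paper's observation that $\disc(A)=8$. The second step, however, cannot work on a general $V_4$. For a very general Verra fourfold the integral Hodge classes $H^{2,2}\cap H^4(V_4,\bZ)$ are exactly the saturation $A^s$ of $A$, so every algebraic surface $R$ lies in $A^s$. If the lemma holds, then $A^s=A$. Since the Gram matrix of $A$ is even, every algebraic surface on such a $V_4$ then has \emph{even} intersection numbers with $h_1^2$, $h_1h_2$ and $h_2^2$. The odd-pairing cycles you are looking for would therefore contradict the statement you are trying to prove.

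Concretely, step (i) miscounts. Because $L$ is only bitangent, the discriminant double cover $\tilde L\to L$ is connected, and Tsen's theorem gives a section of the $\bP^1$-bundle over $\tilde L$, not over $L$. Over a general $x\in L$ this section picks one line from \emph{each} ruling of $p_1^{-1}(x)$. So $R_1\cap p_1^{-1}(x)$ is a reducible hyperplane section of the fibre quadric, giving $R_1\cdot h_1=h_1^2h_2$ and $R_1\cdot h_1h_2=2$. Getting one line per fibre would require $\tilde L\to L$ to split, i.e.\ a line totally tangent to $C_1$, which a general sextic does not have. Likewise, in (iii) the fibres of $p_1$ on a general $V_4$ are irreducible quadrics of rank $\ge 3$, so they contribute nothing beyond $h_1^2$. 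Finally, even if you had $R_1\equiv(0,1,1)$ and $R_2\equiv(1,1,0)$ mod $2$, these span only a plane in $(\bZ/2\bZ)^3$, and $\tfrac12(h_1^2+h_1h_2+h_2^2)$ would remain unexcluded.

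The missing idea is to use topological rather than cycle-theoretic input, which is what the paper does:
\begin{enumerate}
\item Integrality of $x^2=ac+\tfrac{b^2}{2}$ forces $b=0$.
\item Wu's formula, with $v_4\equiv c_2=2h_1^2+7h_1h_2+2h_2^2\equiv h_1h_2 \pmod 2$ (as $c_1=2(h_1+h_2)$), excludes $\tfrac12(h_1^2+h_2^2)$: its square is $1$, but its product with $h_1h_2$ is $0$.
\item Deformation invariance in the connected family of smooth Verra fourfolds, applied to a symmetric $V_4$ (one admitting the factor swap), excludes $\tfrac12h_1^2$ and $\tfrac12h_2^2$, since their product would be $\tfrac12$.
\end{enumerate}
If you want to keep your approach, you must also specialise and then deform. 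For instance, you could take a smooth $V_4$ in which some fibre of $p_1$ is a pair of planes; such a plane $\Pi$ has $\Pi\cdot h_2^2=1$ and $\Pi\cdot h_1^2=\Pi\cdot h_1h_2=0$. The symmetric situation for $p_2$ handles the other factor, and deformation invariance of primitivity then transfers the result to all $V_4$. Your fallback via $T(S_1,\alpha_1)$ is too vague to assess. Note also that the paper sets up Laszlo's exact sequence using this very lemma, so that route risks circularity unless you take an independent source for the index-$2$ statement.
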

\begin{proof}
    Since the discriminant of $A$ is given by $\disc(A) = 8$, the index of $A\subset A^s$ can be either $1$ or $2$, where $A^s$ is the saturation of $A$ in $H^4(V_4,\bZ)$. If the index equals $[A^s:A] = 2$, then there must be an integral class $x \in A^s\setminus A$ which is given by
    \[
    x = \frac{ah_1^2+b(h_1\cap h_2) + ch_2^2}{2}
    \]
    for some $a,b,c \in \set{0,1}$. However, since the square $x^2 \equiv ac + \frac{b^2}{2}$ must be an integer, it follows that we have $b = 0$. As a consequence, at least one of 
    \begin{equation}\label{eq: these are not integral}
        \frac{1}{2}h_1^2, \qquad \frac{1}{2}h_2^2, \qquad \frac{1}{2}(h_1^2+h_2^2)
    \end{equation}
    is an integral class. We now show that these three classes are all non-integral.
    
    Suppose for a contradiction that $\frac{1}{2}h_1^2$ is integral. Then, under the additional assumption that $V_4$ is symmetric, meaning that the automorphism $s\colon \bP^2 \times \bP^2 \simeq \bP^2 \times \bP^2$ which swaps the two factors extends to an automorphism of $V_4$, the class $\frac{1}{2}h_2^2$ is also integral. However, we have $\frac{1}{2}h_1^2 \cdot \frac{1}{2}h_2^2 = \frac{1}{2}$, so these classes cannot both be integral. As a consequence, $\frac{1}{2}h_1^2$ and $\frac{1}{2}h_2^2$ are both non-integral provided $V_4$ is symmetric. By deforming, it follows that $\frac{1}{2}h_1^2$ and $\frac{1}{2}h_2^2$ are non-integral even for non-symmetric Verra fourfolds. 

    Finally, we rule out the possibility that $\frac{1}{2}(h_1^2+h_2^2)$ is integral. It follows from a standard computation that
    \[
    c_1(V_4) = 2(h_1+h_2), \qquad c_2(V_4) = 2h_1^2 + 7(h_1\cap h_2) + 2 h_2^2.
    \]
    Therefore, by Wu's formula, it follows that the fourth Wu class of $V_4$ is given by $v_4(V_4) \equiv c_2(V_4) \equiv h_1\cap h_2 \pmod 2.$ Thus, if $\frac{1}{2}(h_1^2+h_2^2)$ were integral, it would follow that $$1 =\left(\frac{1}{2}(h_1^2+h_2^2)\right)^2 \equiv \frac{1}{2}(h_1^2+h_2^2) \cdot (h_1\cap h_2) \equiv 0 \pmod 2,$$ a contradiction. Therefore $\frac{1}{2}(h_1^2+h_2^2)$ is also not an integral class.
    Finally, since all three classes in \eqref{eq: these are not integral} are non-integral, it follows that $A$ is a primitive sublattice in $H^4(V_4,\bZ)$. 
\end{proof}

Denote by $(S_1,H_1,\alpha_1)$ and $(S_2,H_2,\alpha_2)$ the two twisted polarised K3 surfaces associated with $V_4$. We denote $H^2_\mathrm{prim}(S_i,\bZ) \coloneqq H_i^\perp \subset H^2(S_i,\bZ)$. 
Following the work of \cite[Th\'eor\`eme II.3.1]{Laszlo} combined with Lemma \ref{lem: A is integral}, it follows that we have short exact sequences:
\begin{equation}\label{eq: laszlos sequence}
0\to H^4_\mathrm{prim}(V_4,\bZ) \overset{\Phi_i}{\to} H^2_\mathrm{prim}(S_i,\bZ)^-(-1) \overset{\gamma_i}{\to} \bZ/2\bZ \to 0 
\end{equation}
for $i = 1,2$.
Here, $H^2_\mathrm{prim}(S_i,\bZ)^-(-1)$ denotes the Tate twist of the Hodge lattice whose underlying group structure is $H^2_\mathrm{prim}(S_i,\bZ)$ and whose bilinear form is the usual one twisted by a sign. The morphism $\Phi_i$ is a Hodge metric morphism, meaning it is a morphism of Hodge structures as well as a lattice embedding. 

We begin our study of the cohomology of Verra fourfolds by computing the underlying lattices. Firstly, we have
\[
A \simeq 
\begin{pmatrix}
    0 & 0 & 2 \\ 0 & 2 & 0 \\ 2 & 0 & 0    
\end{pmatrix} \simeq 
\begin{pmatrix}
    0 & 2 & 0 \\ 2 & 0 & 0 \\ 0 & 0 & 2
\end{pmatrix} = U(2) \oplus \langle 2 \rangle,
\]
where the first matrix is given in the basis $h_1^2, h_1 \cap h_2, h_2^2$ and the second matrix is given in the basis $h_1^2, h_2^2, h_1\cap h_2$. 
Secondly, we can see that $H^4(V_4,\bZ)$ is a unimodular lattice of signature $(21,3)$, which must therefore be isometric to
\[
\Lambda_{V_4} \coloneqq \langle 1 \rangle ^{\oplus 21} \oplus \langle -1 \rangle^{\oplus 3}.
\]
Putting these facts together, we compute the lattice underlying the primitive cohomology $H^4_\mathrm{prim}(V_4,\bZ)$, which is given by the following lattice, which we call the \textit{Verra lattice}
\[
\verlat \coloneqq U(2) \oplus \langle 2 \rangle \oplus U \oplus E_8^{\oplus 2}.
\]

Mimicking the notation for cubic fourfolds, we write $A(V_4) \coloneqq H^{2,2}(V_4) \cap H^4(V_4,\bZ)$ for the algebraic part of $H^4(V_4,\bZ)$, and $T(V_4) \coloneqq A(V_4)^\perp \subset H^4(V_4,\bZ)$ for the \textit{transcendental lattice} of $V_4$. 
The transcendental lattice is a Hodge lattice of K3 type, and it is a primitive sublattice of the primitive cohomology $H^4_\mathrm{prim}(V_4,\bZ)$. For very general Verra fourfolds, the transcendental lattice equals the primitive cohomology.

\begin{lemma}
Let $V_4$ be a smooth Verra fourfold, and let $p_i:V_4\to\bP^2$ be one of its two quadric surface bundle structures. Assume that its discriminant sextic is smooth, and let $(S_i,\alpha_i)$ be the associated twisted K3 surface. Let $Y$ be the associated smooth double EPW quartic. Then there are integral Hodge isometries
$$T(V_4)\simeq T(S_i,\alpha_i)^-(-1)\simeq T(Y)^-.$$
In particular, there is a Hodge isometry $T(S_1,\alpha_1)\simeq T(V_4)^-(-1) \simeq T(S_2,\alpha_2)$.
\end{lemma}
\begin{proof}
By \cite[Theorem II.3.1]{Laszlo}, the Abel--Jacobi map gives an exact
sequence of integral Hodge structures
$$0\rightarrow H^4_\mathrm{prim}(V_4,\bZ)\xrightarrow{\Phi_i}H^2_\mathrm{prim}(S_i,\bZ)^-(-1)\xrightarrow{\gamma_{i}}\bZ/2\bZ\rightarrow0,$$
and satisfies
$(\Phi_i x,\Phi_i y)_S=-(x,y)_{V_4}$.
Thus, after the Tate twist, $\Phi_i$ is an integral Hodge isometry onto
$\ker(\gamma_{i})$. By \cite[3.1]{KKM} and the identification of
\cite[\S9]{vGe}, the restriction of $\gamma_{i}$ to $T(S)$ is $\alpha_i: T(S_i)\longrightarrow\bZ/2\bZ$.
Since $\Phi_i$ is a Hodge isometry, it identifies the transcendental
parts, and therefore
$$T(V_4)\simeq
\ker(\gamma_{i})\cap T(S_i)^-(-1)=\ker(\alpha_i)^-(-1)=T(S_i,\alpha)^-(-1).$$
\end{proof}

\subsection{Comparing the two twisted K3 surfaces}
Let $V_4$ be a Verra fourfold, and let $(S_i,H_i,\alpha_i)$, for $i=1,2$, be the associated polarised twisted K3 surfaces of $V_4$. In this subsection, we compare $(S_1,\alpha_1)$ and $(S_2,\alpha_2)$. The first thing to note is the following:

\begin{lemma} \label{lem: both Verra kuznetsov components equivalent}
    We have $\Db(S_1, \alpha_1) \simeq \Db(S_2, \alpha_2)$. 
\end{lemma}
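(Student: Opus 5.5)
The plan is to pass through $\Db(V_4)$ and compare the two semiorthogonal decompositions coming from the two quadric surface bundle structures $p_1,p_2\colon V_4\to\bP^2$. By Example~\ref{ex: Verra fourfold SOD} and Theorem~\ref{thm:SOD quad fibration} we have, for $i=1,2$,
\[
\Db(V_4)=\langle \Db(\bP^2,\cB_0^{(i)}),\ p_i^*\Db(\bP^2),\ p_i^*\Db(\bP^2)\otimes\cO(h_j)\rangle,\qquad \{i,j\}=\{1,2\},
\]
with $\Db(\bP^2,\cB_0^{(i)})\simeq\Db(S_i,\alpha_i)$. Here the relative $\cO(1)$ for $p_i$ is $\cO(h_j)$, the pullback from the other factor. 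Using Beilinson's collection $\langle\cO,\cO(1),\cO(2)\rangle$ on $\bP^2$, each decomposition therefore reads
\[
\Db(V_4)=\langle \cA_i,\ \cO,\cO(h_i),\cO(2h_i),\ \cO(h_j),\cO(h_i+h_j),\cO(2h_i+h_j)\rangle,
\]
so each $\cA_i\simeq\Db(S_i,\alpha_i)$ is the right orthogonal of a six-object exceptional collection $E_i$.

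The key step is to relate $E_1$ and $E_2$ by mutations. I would first rewrite both as collections of line bundles $\cO(ah_1+bh_2)$ pulled back from the exceptional collection on $\bP^2\times\bP^2$. The aim is to show that, after mutating within $\langle E_i\rangle$ and possibly twisting by $\cO(h_1+h_2)$, both generate a common subcategory, or that the orthogonals are related by a sequence of mutation functors. Since mutations and tensoring by line bundles are equivalences, that would give $\cA_1\simeq\cA_2$.

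A cleaner route, which I would try first, is to pick a symmetric collection directly, for instance
\[
\cO,\ \cO(h_1),\ \cO(h_2),\ \cO(h_1+h_2),\ \cO(2h_1+h_2),\ \cO(h_1+2h_2),
\]
and to show that the right orthogonal $\cB$ of it is equivalent to $\cA_1$. The mechanism is that $\langle E_1\rangle$ and this collection differ by replacing $\cO(2h_1)$ with objects obtained via mutation, using the relation $\cO(-2h_1)$-type Koszul resolutions pulled back from $\bP^2$. The vanishing $H^*(V_4,\cO(ah_1+bh_2))=0$ for the relevant negative $(a,b)$ can be checked from $\pi_*\cO_{V_4}=\cO\oplus\cO(-h_1-h_2)$. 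By the symmetry $h_1\leftrightarrow h_2$, the same argument gives $\cB\simeq\cA_2$.

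I expect the main obstacle to be the explicit mutation bookkeeping, namely verifying exceptionality and semiorthogonality of the intermediate collections and the needed cohomology vanishings on $V_4$. If this becomes unwieldy, I would fall back on a derived-Torelli argument. The preceding lemma gives a Hodge isometry $T(S_1,\alpha_1)\simeq T(S_2,\alpha_2)$, and the Nikulin-type extension argument from the proof of Theorem~\ref{thm: FML and FMVer} can be used to extend it to an orientation-preserving Hodge isometry of twisted Mukai lattices. The twisted derived Torelli theorem \cite{HScaldararu} would then conclude. That fallback requires uniqueness of the embeddings, which holds only under extra hypotheses, so the mutation argument is preferable for the general statement.
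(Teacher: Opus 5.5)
Your route differs from the paper's, which simply invokes \cite[Theorem 1.1]{ADPZ}. Comparing both $\cA_i$ with the orthogonal of the symmetric collection $\mathcal{F}=\{\cO,\cO(h_1),\cO(h_2),\cO(h_1+h_2),\cO(2h_1+h_2),\cO(h_1+2h_2)\}$ is a good idea and can be made to work. However, the proposal does not contain the step that matters, and the mechanism you suggest for it cannot supply it.

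The Beilinson/Koszul relations pulled back from $\bP^2$ stay inside $p_1^*\Db(\bP^2)$; for example, they trade $\cO(2h_1)$ for $\cO(-h_1)$. Mutations inside $\langle E_1\rangle$ never change its orthogonal $\cA_1$. Yet $\langle\mathcal F\rangle\neq\langle E_1\rangle$. Indeed, $h_1^3=h_2^3=0$ and the nine monomials $h_1^ah_2^b$ ($0\le a,b\le 2$) are independent in $H^*(V_4,\bQ)$. So every object of $\langle E_1\rangle$ has Chern character in $\mathrm{span}(1,h_1,h_1^2)\cdot\mathrm{span}(1,e^{h_2})$, and this span does not contain $\mathrm{ch}\,\cO(h_1+2h_2)=e^{h_1}e^{2h_2}$. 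Hence $\langle\mathcal F\rangle^\perp\neq\cA_1$ as subcategories, and some object has to be moved across $\cA_1$. Nothing in your sketch does that, and the cohomology vanishings you plan to check do not address it.

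The missing ingredient is the Serre functor $S_{V_4}=(-)\otimes\omega_{V_4}[4]$ with $\omega_{V_4}=\cO(-2h_1-2h_2)$, i.e.\ $c_1(V_4)=2(h_1+h_2)$. Writing $p_1^*\Db(\bP^2)=\langle\cO(-h_1),\cO,\cO(h_1)\rangle$ gives
\[
\Db(V_4)=\langle \cA_1,\ \cO(-h_1),\ \mathcal{G}\rangle,\qquad \mathcal{G}=\langle \cO,\cO(h_1),\cO(h_2),\cO(h_1+h_2),\cO(2h_1+h_2)\rangle .
\]
Mutating $\cA_1$ to the right through $\cO(-h_1)$ gives $\langle\cO(-h_1),R_{\cO(-h_1)}\cA_1,\mathcal G\rangle$. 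The rule $\langle\mathcal{X},\mathcal{Y}\rangle=\langle\mathcal{Y},S_{V_4}^{-1}\mathcal{X}\rangle$ then moves $\cO(-h_1)$ to the far right as $\cO(h_1+2h_2)[-4]$:
\[
\Db(V_4)=\langle R_{\cO(-h_1)}\cA_1,\ \mathcal G,\ \cO(h_1+2h_2)\rangle .
\]
Thus $\langle\mathcal F\rangle^\perp=R_{\cO(-h_1)}\cA_1\simeq\cA_1$. The same computation with $h_1,h_2$ swapped sends $\cO(-h_2)$ to $\cO(2h_1+h_2)$ and yields the same set of six line bundles, so it gives $\langle\mathcal F\rangle^\perp=R_{\cO(-h_2)}\cA_2$. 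Hence $\cA_2\simeq\cA_1$ via $L_{\cO(-h_1)}\circ R_{\cO(-h_2)}$. No vanishing has to be checked by hand: exceptionality of $\mathcal F$ comes out of the mutations.

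Compared with the citation, this argument is self-contained and gives an explicit equivalence by mutation functors. It also identifies both quadric-bundle components with $\langle\mathcal F\rangle^\perp$. That is the Kuznetsov--Perry double-cover component for the rectangular Lefschetz decomposition of $\bP^2\times\bP^2$ with first block $\langle\cO,\cO(h_1),\cO(h_2)\rangle$, which is the component implicitly used in Proposition~\ref{prop: Ku V_4 V_3 equivalence}. You are right to discard the Torelli fallback, since it needs embedding-uniqueness hypotheses that are not available in general.
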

\begin{proof}
    By \cite[Theorem 1.1]{ADPZ}.
\end{proof}

We begin by investigating the short exact sequence \eqref{eq: laszlos sequence}. We will consider the case $i=1$ for simplicity, as the case $i=2$ simply reverses all the indices. According to \cite[Proposition II.3.2.1]{Laszlo}, the map $\gamma_1 \colon H^2_\mathrm{prim}(S_1,\bZ) \to \bZ/2\bZ$ is given by 
\[
\gamma_1 = (\Phi_1(h_2^2-2x)\cdot-) \pmod 2,
\]
where $x \in H^4(V_4,\bZ)$ is chosen such that $x \cdot h_1^2 = 1$. 

Let us first reinterpret $\gamma_1$ lattice-theoretically. Firstly, an element $x \in H^4(V_4,\bZ)$ with $x\cdot h_1^2 = 1$ can be written as
\[
x = \frac{h_2 + t_1}{2}
\]
for some $t_1 \in H^4_\mathrm{prim}(V_4,\bZ)$. We call the element $t_1 \in H^4(V_4,\bZ)$ the \textit{C\u ald\u araru dual} of $h_2^2$. Using this, we see
\[
\gamma_1 = (\Phi_1(h_2^2-2x)\cdot-) \pmod 2 = (t_1\cdot-)\pmod 2.
\]
Similarly, the C\u ald\u araru dual $t_2$ of $h_1^2$ satisfies $\gamma_2 = (t_2 \cdot-) \pmod 2$. These elements span the lattice $\langle t_1, t_2 \rangle \simeq U(2)$. 

From Lemma \ref{lem: overlattice subgroup correspondence}, the isotropic subgroup of $A_{H^4_\mathrm{prim}(V_4,\bZ)}$ corresponding to the overlattice $\Phi_1\colon H^4_\mathrm{prim}(V_4,\bZ) \hookrightarrow H^2_\mathrm{prim}(S_1,\bZ)^-(-1)$ is given by $\langle \frac{1}{2}\overline{t_2} \rangle\subset A_{H^4_\mathrm{prim}(V_4,\bZ)}$, since $\gamma_1(\frac{1}{2}t_2) = 1$. Vice versa, the subgroup defined by $\Phi_2$ is $\langle \frac{1}{2}\overline{t_1}\rangle$.

\subsubsection{Picard rank 1}

We first apply these observations to the case when $V_4$ is very general, so that $\rho(S_i) = 1$. Then we have $T(V_4) = H^4_\mathrm{prim}(V_4,\bZ)$. 
In particular, the short exact sequence \eqref{eq: laszlos sequence} is simply
\[
0 \to T(V_4) \to T(S_i)^-(-1) \overset{\alpha_i}{\to} \bZ/2\bZ \to 0.
\]
If $O_{\mathrm{Hodge}}(T(V_4)) = \set{\pm \id}$, then the two overlattices $\Phi_i\colon T(V_4) \hookrightarrow T(S_i)^-(-1)$ are not Hodge isometric, because they correspond to two different subgroups of $A_{T(V_4)}$. This proves the following proposition.

\begin{proposition}\label{prop: k3s not isomorphic in general}
    Let $V_4$ be a Verra fourfold with associated twisted K3 surfaces $(S_1,\alpha_1)$ and $(S_2,\alpha_2)$. Assume that $\rho(S_i) = 1$ for $i=1,2$, and assume $V_4$ is very general with this property, so that $O_\mathrm{Hodge}(T(V_4)) \simeq \bZ/2\bZ$. Then we have $(S_1,\alpha_1) \not \simeq (S_2,\alpha_2)$.
\end{proposition}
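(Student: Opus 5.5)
Suppose, for a contradiction, that there is an isomorphism $f\colon (S_1,\alpha_1)\simeq(S_2,\alpha_2)$. The plan is to turn $f$ into a Hodge isometry of $T(V_4)$ that is not $\pm\id$, by comparing the two overlattices $\Phi_i\colon T(V_4)\hookrightarrow T(S_i)^-(-1)$ via Lemma \ref{lem: overlattice subgroup correspondence}.

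Since $\rho(S_i)=1$, the polarisation $H_i$ spans $\NS(S_i)$, so $H^2_\mathrm{prim}(S_i,\bZ)=T(S_i)$. Also $T(V_4)=H^4_\mathrm{prim}(V_4,\bZ)$ for very general $V_4$. Hence the sequences \eqref{eq: laszlos sequence} read $0\to T(V_4)\overset{\Phi_i}{\to} T(S_i)^-(-1)\overset{\alpha_i}{\to}\bZ/2\bZ\to0$. By the computation above, the overlattice $\Phi_1$ corresponds to the isotropic subgroup $\langle\frac12\overline{t_2}\rangle\subset A_{T(V_4)}$ and $\Phi_2$ to $\langle\frac12\overline{t_1}\rangle$.

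These two subgroups are distinct. Indeed, $\langle t_1,t_2\rangle\simeq U(2)$, so $t_1-t_2$ is not divisible by $2$ in $H^4_\mathrm{prim}(V_4,\bZ)$. To confirm this, one checks that $\frac12(t_1-t_2)$ has non-integral pairing with some element; for instance, $\gamma_1(t_2)$ and $\gamma_1(t_1)$ differ mod $2$, and this should follow from $t_1^2=0$ and $t_1\cdot t_2=2$ after adjusting the choices of $x$.

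Now transport $f$. The pullback $f^*\colon H^2(S_2,\bZ)\simeq H^2(S_1,\bZ)$ is a Hodge isometry taking $T(S_2)$ to $T(S_1)$ and $\alpha_2$ to $\alpha_1$. It therefore restricts to a Hodge isometry $\ker\alpha_2\simeq\ker\alpha_1$, that is, $\psi\colon T(V_4)\simeq T(V_4)$ with $\Phi_1\circ\psi=f^*\circ\Phi_2$. Since $O_\mathrm{Hodge}(T(V_4))=\{\pm\id\}$, we get $\psi=\pm\id$. Then $f^*$ is an isometry $T(S_2)^-(-1)\simeq T(S_1)^-(-1)$ extending $\pm\id$ on the common sublattice $T(V_4)$. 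By Lemma \ref{lem: overlattice subgroup correspondence}, $\pm\id$ must send the subgroup of $\Phi_2$ to that of $\Phi_1$, i.e.\ $\langle\frac12\overline{t_1}\rangle=\langle\frac12\overline{t_2}\rangle$. This contradicts the previous step.

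The main obstacle is making this step rigorous. First, one must justify that both overlattices live inside the same $A_{T(V_4)}$ with the identifications used for $t_1,t_2$. Second, one must verify that $\frac12\overline{t_1}\neq\frac12\overline{t_2}$ in $A_{T(V_4)}$. I would do the second by an explicit computation in the Verra lattice $\verlat=U(2)\oplus\langle2\rangle\oplus U\oplus E_8^{\oplus2}$, identifying $\langle t_1,t_2\rangle$ with the $U(2)$ summand so that the discriminant group contains the distinct elements $\frac12 t_1$ and $\frac12 t_2$.
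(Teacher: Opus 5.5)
Your proposal is correct and is the paper's argument: the overlattices $\Phi_1,\Phi_2$ correspond to the subgroups $\langle\frac12\overline{t_2}\rangle$ and $\langle\frac12\overline{t_1}\rangle$ of $A_{T(V_4)}$. An isomorphism $(S_1,\alpha_1)\simeq(S_2,\alpha_2)$ would give a Hodge isometry of $T(V_4)$, necessarily $\pm\id$, carrying one subgroup to the other, which is impossible because $\pm\id$ fixes every subgroup.

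The paper simply asserts that the two subgroups differ, and your parenthetical check of this is wrong as stated: $\gamma_1(t_1)=t_1^2$ and $\gamma_1(t_2)=t_1\cdot t_2$ are both even, since $t_i\in H^4_\mathrm{prim}(V_4,\bZ)$. Distinctness nevertheless follows at once, because $\frac12(t_1-t_2)$ has square $-1$ while $H^4_\mathrm{prim}(V_4,\bZ)$ is an even lattice. Equivalently, $\frac12(t_1-t_2)$ pairs to $-\frac12$ with $\frac12 t_1\in T(V_4)^\vee$.
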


\subsubsection{Arbitrary Picard rank}

We now move on to higher Picard rank, with the aim of generalising Proposition \ref{prop: k3s not isomorphic in general} to this setting.

\begin{theorem}\label{thm: k3s not isomorphic when both are twisted}
    Let $V_4$ be a Verra fourfold with associated twisted K3 surfaces $(S_1,\alpha_1)$ and $(S_2,\alpha_2)$. Assume the following:
    \begin{itemize}
        \item[(a)] The Brauer classes $\alpha_1 \in \Br(S_1)$ and $\alpha_2 \in \Br(S_2)$ are both non-trivial;
        \item[(b)] We have $O_\mathrm{Hodge}(T(V_4)) \simeq \bZ/2\bZ$. 
    \end{itemize}
    Then we $\nu(S_1,\alpha_1) \neq \nu(S_2,\alpha_2)$. 
\end{theorem}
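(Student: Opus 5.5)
Suppose for contradiction that $\nu(S_1,\alpha_1)=\nu(S_2,\alpha_2)$. Then there is a Hodge isometry $\varphi\colon T(S_1)\simeq T(S_2)$ with $\varphi^*\alpha_2=\alpha_1$, so $\varphi$ restricts to a Hodge isometry $T(S_1,\alpha_1)\simeq T(S_2,\alpha_2)$. I will transport everything into $T(V_4)$ and use hypothesis (b) to reach a contradiction, generalising the argument of Proposition \ref{prop: k3s not isomorphic in general}.

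\textbf{Step 1: both overlattices live on $T(V_4)$.} By the preceding lemma, $\Phi_i$ restricts to Hodge isometries $T(V_4)\simeq T(S_i,\alpha_i)^-(-1)$, and the overlattice $T(S_i,\alpha_i)\subset T(S_i)$ becomes an overlattice $\Phi_i\colon T(V_4)\hookrightarrow T(S_i)^-(-1)$. Hypothesis (a), $\alpha_i\neq 1$, makes this index exactly $2$ rather than an isometry, since $\gamma_i|_{T(S_i)}=\alpha_i$ is surjective. By Lemma \ref{lem: overlattice subgroup correspondence} it corresponds to an order-$2$ isotropic subgroup $G_i\subset A_{T(V_4)}$.

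\textbf{Step 2: identify $G_1,G_2$ and show they differ.} From the description $\gamma_i=(t_i\cdot -)\bmod 2$ and the subgroup $\langle\tfrac12\overline{t_2}\rangle$ attached to $\Phi_1$, I expect $G_1$ to be generated by the class of $\tfrac12\pi(t_2)$ and $G_2$ by that of $\tfrac12\pi(t_1)$. Here $\pi$ is the orthogonal projection of $H^4_\mathrm{prim}$ to $T(V_4)_\bQ$, equivalently the functional $\tfrac12 t_j\cdot-$ restricted to $T(V_4)$. Since $\alpha_1\neq 1$, this functional is non-integral on $T(V_4)\cap\Phi_1^{-1}(\cdots)$ in the right sense, so it defines a non-zero element.

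I must show $G_1\neq G_2$. Suppose instead that the functionals $\tfrac12 t_1\cdot-$ and $\tfrac12 t_2\cdot-$ agree modulo $T(V_4)^\vee$ on $T(V_4)$. Then $\gamma_1$ and $\gamma_2$ would coincide on $T(V_4)\otimes$ the relevant overlattice, and $(t_1-t_2)/2$ would pair integrally with $T(V_4)$. I would derive a contradiction with the lattice $\langle t_1,t_2\rangle\simeq U(2)$ and the primitivity from Lemma \ref{lem: A is integral}, for instance by comparing with $\alpha_i$ non-trivial on $T(S_i)$. \textbf{This is the main obstacle}: in higher Picard rank, projecting $t_1,t_2$ to $T(V_4)$ could a priori identify the two classes. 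I would first try a direct argument that $\tfrac12(t_1-t_2)\cdot -$ is non-integral on $T(V_4)$, using that the glue of each $\Phi_i$ overlattice lies in the transcendental part precisely because $\alpha_i\ne1$.

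\textbf{Step 3: conclude.} $\varphi$ conjugated by the $\Phi_i$ gives a Hodge isometry $g\in O_\mathrm{Hodge}(T(V_4))$. It extends to the overlattices $T(S_1)^-(-1)\simeq T(S_2)^-(-1)$, so Lemma \ref{lem: overlattice subgroup correspondence} gives $\overline g(G_1)=G_2$. By (b), $g=\pm\id$, and $\pm\id$ acts trivially on subgroups of $A_{T(V_4)}$. Hence $G_1=G_2$, contradicting Step 2, and therefore $\nu(S_1,\alpha_1)\neq\nu(S_2,\alpha_2)$.
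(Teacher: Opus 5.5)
Your Steps 1 and 3 are sound and match the paper's architecture; the paper leaves the use of (b) implicit, and your Step 3 spells it out correctly. The gap is Step 2, which carries the whole content of the theorem: you do not carry it out, and the route you sketch cannot work.

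First, the class of $\tfrac12\pi(t_2)$ is not in general the generator of $G_1$. We have $H^2_\mathrm{prim}(S_1,\bZ)^-(-1)=H^4_\mathrm{prim}(V_4,\bZ)+\bZ\cdot\tfrac12 t_2$, and $T(S_1)^-(-1)$ is the saturation of $T(V_4)$ inside it. So $\alpha_1\neq 1$ gives a glue vector $g_1=x_1+\tfrac12 t_2\in T(V_4)_\bQ$ with $x_1\in H^4_\mathrm{prim}(V_4,\bZ)$, and $G_1=\langle\overline{g_1}\rangle$. Since $g_1=\pi(g_1)=\pi(x_1)+\tfrac12\pi(t_2)$, the classes $\overline{g_1}$ and $\tfrac12\overline{\pi(t_2)}$ differ by $\overline{\pi(x_1)}\in A_{T(V_4)}$. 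This has no reason to vanish once $T(V_4)\subsetneq H^4_\mathrm{prim}(V_4,\bZ)$.

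Second, your proposed test for distinctness is vacuous. Each $t_i$ pairs evenly with all of $H^4_\mathrm{prim}(V_4,\bZ)$; this is exactly why $\gamma_i=(t_i\cdot-)\bmod 2$ vanishes there. Hence $\tfrac12(t_1-t_2)\cdot-$ is \emph{always} integral on $T(V_4)$, and ``agreeing modulo $T(V_4)^\vee$'' holds trivially for any two elements of $T(V_4)^\vee$. Classes in $A_{T(V_4)}$ must be compared modulo $T(V_4)$, not by integrality of their pairings against $T(V_4)$.

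The missing step is to compare the actual glue vectors inside $H^4_\mathrm{prim}(V_4,\bQ)$. Define $g_2=x_2+\tfrac12 t_1$ symmetrically. Then $G_1=G_2$ if and only if $g_1-g_2\in T(V_4)$. Since $T(V_4)=T(V_4)_\bQ\cap H^4_\mathrm{prim}(V_4,\bZ)$, this only asks whether $g_1-g_2=(x_1-x_2)+\tfrac12(t_2-t_1)$ is integral. It is not. Write $t_1=2x-h_2^2$ and $t_2=2y-h_1^2$ with $x,y\in H^4(V_4,\bZ)$. Then $\tfrac12(t_1-t_2)\equiv\tfrac12(h_1^2+h_2^2)$ modulo $H^4(V_4,\bZ)$, and this class is non-integral by Lemma \ref{lem: A is integral}. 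This is where the primitivity of the ambient lattice actually enters; your proposal mentions it but never uses it. With $s_i=2g_i$, this is precisely the paper's argument.
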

\begin{proof}
    The short exact sequence \eqref{eq: laszlos sequence} can be restricted to $T(V_4)$, which yields
    \[
    0 \to T(V_4) \to T(S_i)^-(-1) \overset{\alpha_i}{\to} \bZ/2\bZ \to 0.
    \]
    We must compute the isotropic subgroup of $T(V_4)$ defined by this overlattice for $i=1,2$. Recall that $$\gamma_i = (t_1\cdot -) \pmod 2,$$ hence we may identify $H^2_\mathrm{prim}(S_1,\bZ)^-(-1)$ as the subgroup of $H^4_\mathrm{prim}(V_4,\bZ) \otimes \bQ$ given by
    \[
    H^2_\mathrm{prim}(S_1,\bZ)^-(-1) \simeq H^4_\mathrm{prim}(V_4,\bZ) + \frac{1}{2}t_2\cdot \bZ,
    \]
    and $T(S_1)$ is the saturation of $T(V_4)$ in this subgroup:
    \[
    T(S_1)^-(-1) \simeq (T(V_4) \otimes \bQ) \cap H^2_\mathrm{prim}(S_1,\bZ).
    \]
    Since $\alpha_1$ is assumed to be non-trivial, we have $T(S_1)^-(-1) \neq T(V_4)$, hence there is an element $x_1 \in H^4_\mathrm{prim}(V_4,\bZ)$ such that $x_1+\frac{1}{2}t_2 \in T(S_1)^-(-1).$ Then the integral vector $$s_1 \coloneqq 2\cdot(x_1+\frac{1}{2}t_2) = 2x_1+t_2 \in H^4_\mathrm{prim}(V_4,\bZ)$$ is contained in $T(V_4)$, since $T(V_4) = T(S_1)^-(-1) \cap H^4_\mathrm{prim}(V_4,\bZ)$.  
    The overlattice $T(V_4) \hookrightarrow T(S_1)^-(-1)$ corresponds to the isotropic subgroup $\langle\frac{1}{2}\overline{s_1}\rangle \subset A_{T(V_4)}$. 

    Completely analogously, we find an element $x_2 \in H^4_\mathrm{prim}(V_4,\bZ)$ such that the overlattice $T(V_4) \hookrightarrow T(S_2)^-(-1)$ corresponds to the isotropic subgroup $\langle \frac{1}{2}\overline{s_2}\rangle$, where $s_2 = 2x_2 + t_1$.

    Finally, to prove that the two overlattices $T(V_4) \hookrightarrow T(S_i)^-(-1)$ are not Hodge isometric, we must prove that the subgroups $\langle\frac{1}{2}s_i\rangle$ are distinct. Note that $\langle \frac{1}{2}\overline{s_i}\rangle = \langle \frac{1}{2}\overline{s_2} \rangle$ if and only if $\frac{s_1-s_2}{2} \in T(V_4)$, but we have
    \[
    \frac{s_1-s_2}{2} = \frac{t_2-t_1}{2} + x_1-x_2,
    \]
    which is not an integral element of $H^4_\mathrm{prim}(V_4,\bZ)$, so it cannot be contained in $T(V_4)$. Therefore, there is no commutative square of Hodge metric morphisms
    \[
    \xymatrix{
    T(S_1,\alpha_1) \ar[r] \ar[d] & T(S_1) \ar[d] \\
    T(S_2,\alpha_2) \ar[r] & T(S_2).
    }
    \]
\end{proof}

\subsubsection{Picard rank 2}
Finally, we focus our attention on Verra fourfolds whose associated K3 surfaces have Picard rank 2. Recall from Definition \ref{def: type I and II} that we say that a polarised K3 surface $(S,H)$ of degree $2$ is \textit{of type I} if $\dv(H) = 1$, and we say that it is \textit{of type II} if $\dv(H) = 2$. 

\begin{lemma}\label{lem: comparing k3s in rank 2}
    Let $V_4$ be a Verra fourfold with associated twisted polarised K3 surfaces $(S_i,H_i,\alpha_i)$ for $i=1,2$. Suppose $(S_1,H_1)$ is of type I, and let $d \geq 0$ such that $\NS(S_1) \simeq L_{d,1}$ (see Lemma \ref{lem: NS(S) of rank 2}). Assume moreover that $O_{\mathrm{Hodge}}(T(V_4)) \simeq \bZ/2\bZ$. Then exactly one of the following holds:
    \begin{itemize}
        \item[(a)] The surface $(S_2,H_2)$ is also of type I, and $\alpha_1$ and $\alpha_2$ are both non-trivial, and $\nu(S_1,\alpha_1) \neq \nu(S_2,\alpha_2)$:
        \[
        \dv(H_2) = 1, \qquad \alpha_1 \neq 1, \quad \alpha_2 \neq 1, \qquad \nu(S_1,\alpha_1) \neq \nu(S_2,\alpha_2).
        \]
        In this case, we have $\NS(S_2) \simeq L_{d,1} \simeq \NS(S_1)$. 
        \item[(b)] The surface $(S_2,H_2)$ is of type II, and $\alpha_2$ is trivial while $\alpha_1$ is non-trivial. The N\'eron--Severi lattice of $S_2$ is given by $\NS(S_2) \simeq L_{4d,2}$:
        \[
        \dv(H_2) = 2, \qquad \alpha_1 \neq 1, \quad \alpha_2 = 1, \qquad \NS(S_2) \simeq L_{4d,2} = \begin{pmatrix}
        2 & 2 \\ 2 & -8d
    \end{pmatrix}.
        \]
    \end{itemize}
    Moreover, (a) holds if and only if $|I^2(A_{T(S_1,\alpha_1)})| = 2$, and (b) holds if and only if $|I^2(A_{T(S_1,\alpha_1)})| = 1$.
\end{lemma}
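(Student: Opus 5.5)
The plan is to work entirely with the overlattice $T(V_4)\simeq T(S_1,\alpha_1)^-(-1)\hookrightarrow T(S_1)^-(-1)$ and its companion $T(V_4)\hookrightarrow T(S_2)^-(-1)$ coming from \eqref{eq: laszlos sequence}. The two cases are distinguished by whether $\alpha_2$ is trivial.

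First, $\alpha_1\neq 1$. Indeed, $\disc T(V_4)=\disc T(S_1,\alpha_1)=|\alpha_1|^2(4d+1)$. If $\alpha_1$ were trivial, then $T(V_4)\simeq T(S_1)^-(-1)$ would have odd discriminant $4d+1$. On the other hand, $T(V_4)$ is primitive in $H^4_\mathrm{prim}(V_4,\bZ)\simeq\verlat$, whose discriminant group is $(\bZ/2)^2\oplus\bZ/2$ with $q$ taking specific values. I would compare this with the rank two orthogonal complement $A(V_4)\cap H^4_\mathrm{prim}$ and check, via the Nikulin gluing equation $\disc(T)\disc(N)=|G|^2\disc(\verlat)$, that this is impossible. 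In any case, for $(S_1,H_1)$ of type I, Proposition \ref{prop: kernel and cokernel of res for double planes of rank 2} together with the definition of Verra type forces $\alpha_1$ to be the restriction of a non-trivial $\beta$, hence $\alpha_1\ne1$, since $\res$ is injective when $\rho=r=2$. Consequently $A_{T(V_4)}\simeq\bZ/(4d+1)\oplus(\bZ/2)^2$ by Lemma \ref{lem: group structure of discriminant}, and its $2$-torsion contains exactly three order-two elements. Of these, $\langle\frac12\overline{s_1}\rangle$ corresponds to $T(S_1)$.

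Next I split according to $\alpha_2$. If $\alpha_2\neq1$, the subgroup $\langle\frac12\overline{s_2}\rangle$ is a second isotropic order-two subgroup, distinct from the first by Theorem \ref{thm: k3s not isomorphic when both are twisted}, which also gives $\nu(S_1,\alpha_1)\ne\nu(S_2,\alpha_2)$. Hence $|I^2|=2$. Moreover $T(S_2)\simeq T(S_2,\alpha_2)\oplus_{\text{glue}}$ has discriminant $4d+1$, so $\NS(S_2)$ has rank $2$, odd discriminant $4d+1$, and contains $H_2$ with $H_2^2=2$. Odd discriminant forces $\dv(H_2)=1$, since $L_{d',2}$ has even discriminant. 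Lemma \ref{lem: NS(S) of rank 2} then gives $\NS(S_2)\simeq L_{d,1}$.

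If $\alpha_2=1$, then $T(S_2)^-(-1)\simeq T(V_4)$ has discriminant $4(4d+1)$. So $\NS(S_2)$ has discriminant $4(4d+1)$ and discriminant group $(\bZ/2)^2\oplus\bZ/(4d+1)$, which is not cyclic. Since $A_{L_{d',1}}$ is cyclic, this forces type II. Matching $A_{d',2}\simeq\bZ/2\oplus\bZ/(2d'+2)$ of order $4(d'+1)$ with $4(4d+1)$ gives $d'=4d$, so $\NS(S_2)\simeq L_{4d,2}$. In this case $T(S_2)=T(V_4)$ has no isotropic order-two overlattice of the relevant form other than $\langle\frac12\overline{s_1}\rangle$. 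Here I expect the main obstacle: proving $|I^2|=1$ directly. My approach is to use that $\langle t_1,t_2\rangle\simeq U(2)$ and that in case (b) the vector $t_1$, which defines $\gamma_2$, lies in $A(V_4)$ (as $\gamma_2|_{T}=\alpha_2=0$). Then the $2$-part of $A_{T(V_4)}$ is computed as the image of the gluing with the algebraic part, giving a form whose only isotropic element is $\frac12\overline{s_1}$. This is to be compared with the $B_\alpha^2\bmod 8$ criterion of Lemma \ref{lem: isotropic subgroups of twisted discriminant}.

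Finally, the "if and only if" statements follow from dichotomy and exclusivity. Case (a) produces two isotropic subgroups and case (b) only one, and exactly one of $\alpha_2=1$, $\alpha_2\ne1$ holds.
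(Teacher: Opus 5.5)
Most of your outline matches the paper. This covers $\alpha_1\neq 1$ via injectivity of $\res$ for a type I double plane of Picard rank $2$; your preliminary discriminant aside is unnecessary, and in any case the orthogonal complement of $T(V_4)$ in $H^4_{\mathrm{prim}}(V_4,\bZ)$ has rank one, not two. It also covers case (a) via Theorem~\ref{thm: k3s not isomorphic when both are twisted} together with the odd discriminant of $\NS(S_2)$. Finally it covers the identification $\NS(S_2)\simeq L_{4d,2}$ in case (b); your non-cyclicity argument for type II is a fine substitute for the paper's Verra-type argument.

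The genuine gap is the count $|I^2(A_{T(S_1,\alpha_1)})|=1$ in case (b). You assert it, flag it as the main obstacle, and leave only a sketch, yet both ``if and only if'' statements depend on it. The sketched route also fails as stated. The condition $\alpha_2=1$ is only a parity condition: the vector defining $\gamma_2$ pairs evenly with $T(S_2)$. That does not place it in $A(V_4)=T(V_4)^\perp$. Even granting it, you never carry out the gluing computation with the algebraic part.

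You do not need any of this, because you already have everything required. Since $\alpha_2=1$, there are Hodge isometries $T(S_1,\alpha_1)^-(-1)\simeq T(V_4)\simeq T(S_2)^-(-1)$, so $A_{T(S_1,\alpha_1)}\simeq A_{T(S_2)}$. Moreover $A_{T(S_2)}\simeq A_{\NS(S_2)}(-1)$, because $T(S_2)=\NS(S_2)^\perp$ in the unimodular lattice $H^2(S_2,\bZ)$, and sign changes do not affect isotropy.

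Take a basis $H_2,D$ of $L_{4d,2}$ with $H_2^2=2$, $H_2\cdot D=2$, $D^2=-8d$. The elements $\frac12 H_2$, $\frac12 D$, $\frac12(H_2+D)$ lie in $L_{4d,2}^\vee$ and are the three elements of order $2$ in $A_{L_{4d,2}}\simeq(\bZ/2\bZ)^{\oplus 2}\oplus\bZ/(4d+1)\bZ$. Their squares are $\frac12$, $-2d\equiv 0$ and $\frac32-2d\equiv\frac32$ modulo $2\bZ$. So exactly one is isotropic, which is precisely the paper's argument.

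Separately, in case (a) you exhibit two distinct isotropic elements $\frac12\overline{s_1}\neq\frac12\overline{s_2}$. To conclude $|I^2(A_{T(S_1,\alpha_1)})|=2$ you also need the upper bound $|I^2|\leq 2$ from Lemma~\ref{lem: isotropic subgroups of twisted discriminant}.
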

\begin{proof}
    Firstly, since $(S_1,H_1)$ is of type I, the Brauer class $\alpha_1$ is non-trivial. This follows from Theorem \ref{thm: kernel and cokernel of res}, since the trivial Brauer class on $S_1$ is not of Verra type. Indeed, the restriction map $\res\colon \Br(\bP^2\setminus C_1)_2 \to \Br(S_1)_2$ is an isomorphism, thus $1 \in \Br(\bP^2\setminus C_1)_2$ is the unique preimage of $1 \in \Br(S_1)_2$. 

    Suppose first that $\alpha_2$ is also non-trivial. Then we have 
    \[
    4\cdot (4d+1) = 4\cdot\disc\NS(S_1) = \disc \ T(S_1,\alpha_1) = \disc \ T(S_2,\alpha_2) =4\cdot \disc\NS(S_2),
    \]
    thus $\NS(S_2)$ has odd discriminant and must be isometric to $L_{d,1}$ by Lemma \ref{lem: NS(S) of rank 2}. It follows that $(S_2,H_2)$ is of type I. Moreover, 
    since $V_4$ satisfies the assumptions of Theorem \ref{thm: k3s not isomorphic when both are twisted}, it follows that $\nu(S_1,\alpha_1) \neq \nu(S_2,\alpha_2)$. 
    Then, by Lemma \ref{lem: isotropic subgroups of twisted discriminant}, we must have $|I^2(A_{T(S_1,\alpha_1)})| = 2$. Indeed, Lemma \ref{lem: isotropic subgroups of twisted discriminant} 
    shows that $I^2(A_{T(S_1,\alpha_1)})$ contains either 1 or 2 elements, and $\nu(S_1,\alpha_1) \neq \nu(S_2,\alpha_2)$ implies $\mu(\nu(S_1,\alpha_1)) \neq \mu(\nu(S_2,\alpha_2)) \in I^2(A_{T(S_1,\alpha_1)})$. 

    Secondly, suppose $\alpha_2$ is trivial. As the trivial Brauer class on a surface of Picard rank 2 and of type I is not of Verra type, $(S_2,H_2)$ must be of type II. Therefore, there is a 
    unique $e \in \bZ$ such that $\NS(S_2) \simeq L_{e,2}$. Computing discriminants, 
    we see that $e = 4d$. With a straightforward computation, one can show that $A_{L_{4d,2}}$ contains precisely $1$ isotropic subgroup of order $2$, so we must also have
    \[
    |I^2(A_{T(S_1,\alpha_1)})| = |I^2(A_{T(S_2)})| = |I^2(A_{\NS(S_2)})| = |I^2(A_{L_{4d,2}})| = 1.
    \]
\end{proof}

Finally, we note the following technical lemma, which will be used in Section \ref{sec: Verra involution}.

\begin{lemma} \label{lem: lattice salads}
    Let $V_4$ be a Verra fourfold with associated twisted polarised K3 surfaces $(S_1,H_1,\alpha_1)$ and $(S_2,H_2,\alpha_2)$, and assume $\NS(S_1) \simeq \NS(S_2) \simeq L_{d,1}$ for some $d\geq 0$ such that $4d+1$ is a power of a prime number. Then there is a commutative diagram of metric morphisms, where the vertical maps are \textit{not necessarily} Hodge metric morphisms:
    \[
    \xymatrix{
    T(S_1,\alpha_1) \ar@{^(->}[r] \ar[d]_\simeq & T(S_1) \ar@{^(->}[r] \ar[d]_\simeq & H_1^\perp \ar[d]_\simeq \\ 
    T(S_2,\alpha_2) \ar@{^(->}[r] & T(S_2) \ar@{^(->}[r] & H_2^\perp.
    }
    \]
\end{lemma}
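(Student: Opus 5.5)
We want a commutative diagram of lattice isometries (Hodge structures ignored) relating the two chains $T(S_i,\alpha_i)\subset T(S_i)\subset H_i^\perp$. The plan is to build the isometries from the inside out: first $T(S_1,\alpha_1)\simeq T(S_2,\alpha_2)$, then extend to $T(S_i)$ via Lemma \ref{lem: overlattice subgroup correspondence}, and finally extend to $H_i^\perp$ via the uniqueness statement in Lemma \ref{lem: embedding orbits}.

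\textbf{Step 1: the innermost isometry.} By the preceding lemma there is a Hodge isometry $T(S_1,\alpha_1)\simeq T(V_4)^-(-1)\simeq T(S_2,\alpha_2)$. We may replace it by any lattice isometry $f$, since the vertical maps need not respect Hodge structures. Both $\alpha_i$ are non-trivial: if one were trivial, that $S_i$ would be of type II by Lemma \ref{lem: comparing k3s in rank 2}, contradicting $\NS(S_i)\simeq L_{d,1}$.

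\textbf{Step 2: extending to $T(S_i)$.} The overlattice $T(S_i,\alpha_i)\subset T(S_i)$ corresponds to an order-$2$ isotropic subgroup $G_i\subset A_{T(S_i,\alpha_i)}$. By Lemma \ref{lem: group structure of discriminant} and Remark \ref{rmk: lattice structures in type I}, both $T(S_i,\alpha_i)$ have the same abstract lattice type. We need an isometry $f$ with $\bar f(G_1)=G_2$.
\begin{itemize}
\item If $|I^2|=1$, this is automatic.
\item If $|I^2|=2$, we instead modify $f$ by an isometry of $T(S_2,\alpha_2)$ that swaps the two isotropic order-$2$ subgroups. Using the decomposition $T(S,\alpha)\simeq L_{d,1}(-1)\oplus U(2)\oplus E_8(-1)^{\oplus 2}$, the two isotropic subgroups come from $\tfrac12 e$ and $\tfrac12 f$ in $A_{U(2)}$. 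The isometry of $U(2)$ exchanging $e$ and $f$, extended by the identity on the other summands, realizes the swap.
\end{itemize}
Lemma \ref{lem: overlattice subgroup correspondence} then extends $f$ to an isometry $g\colon T(S_1)\simeq T(S_2)$. Alternatively, we can check directly that the subgroup of $A_{T(S_i,\alpha_i)}$ defining $T(S_i)$ is the distinguished one, namely the image of $T(S_i)$. Relative to the chosen decomposition it is then characterized intrinsically, so a suitable $f$ preserving the decomposition works.

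\textbf{Step 3: extending to $H_i^\perp$.} We have $T(S_i)\simeq T_{d,1}$, primitively embedded in $H_i^\perp\simeq\Lambda_2$. Since $4d+1$ is a prime power, Lemma \ref{lem: embedding orbits} says there is a unique $O(\Lambda_2)$-orbit of primitive embeddings. Equivalently, $O(T_{d,1})\to O(A_{T_{d,1}})$ surjects onto the relevant quotient, since $O(A_{T_{d,1}})=\{\pm1\}$ for $\bZ/p^k$ with $p$ odd. Concretely, fix any isometry $\psi\colon H_1^\perp\simeq H_2^\perp$. Then $\psi|_{T(S_1)}$ and $g$ give two primitive embeddings $T(S_1)\hookrightarrow H_2^\perp$, and they lie in the same orbit. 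So there is $\sigma\in O(H_2^\perp)$ with $\sigma\circ\psi$ carrying $T(S_1)$ onto $T(S_2)$. The restriction $\sigma\psi|_{T(S_1)}$ may differ from $g$ by an element $h\in O(T(S_2))$, but that is harmless: we simply redefine $g:=\sigma\psi|_{T(S_1)}$ and rerun Step 2 backwards. For this we need $g$ to carry $G_1$ to $G_2$, which holds because the subgroup $T(S_i)/T(S_i,\alpha_i)$ is intrinsic. The subtle point is then whether $g^{-1}(T(S_2,\alpha_2))=T(S_1,\alpha_1)$, i.e. whether $\alpha_2\circ g=\alpha_1$. This is the main obstacle.

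\textbf{Main obstacle: matching $\alpha_1$ and $\alpha_2$.} I would resolve it by showing that $O(T_{d,1})$ acts transitively on surjections $T_{d,1}\to\bZ/2$ with a fixed isotropy type of kernel. This follows from Nikulin's uniqueness (Remark \ref{rmk: lattice structures in type I}) together with the surjectivity of $O(T(S,\alpha))\to O(A)$ on the $2$-part. Under that transitivity, we can correct $g$ within $O(T(S_2))$ while staying in the image of the restriction from $O(H_2^\perp)$. The extension in Step 3 is then recovered from Lemma \ref{lem: embedding orbits}, since elements of $O(T_{d,1})$ acting by $\pm1$ on $A_{T_{d,1}}$ extend to $H_2^\perp$.
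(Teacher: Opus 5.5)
Your proposal is correct and follows the paper's route: first produce a commutative square at the level of $T(S_i,\alpha_i)\subset T(S_i)$, then extend to $H_i^\perp$ using Lemma~\ref{lem: embedding orbits}. Your Step~2 (swapping the two isotropic subgroups of $A_{U(2)}$ if necessary, then applying Lemma~\ref{lem: overlattice subgroup correspondence}) supplies a justification for the square, which the paper only asserts.

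Your ``main obstacle'' in Step~3 is illusory. Lemma~\ref{lem: embedding orbits} classifies embeddings up to post-composition by $O(\Lambda_2)$ with the source fixed. Equivalently, as you observe at the end, every element of $O(T_{d,1})$ acts by $\pm1$ on $A_{T_{d,1}}\simeq\bZ/(4d+1)\bZ$ and therefore extends. So you can choose $\sigma$ with $\sigma\circ\psi|_{T(S_1)}=g$ exactly, and the $g$ from Step~2 extends directly; there is no need to redefine $g$ or to invoke a transitivity statement.
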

\begin{proof}
    There is a commutative square
    \[
     \xymatrix{
    T(S_1,\alpha_1) \ar@{^(->}[r] \ar[d]_\simeq & T(S_1) \ar[d]^\Psi_\simeq\\ 
    T(S_2,\alpha_2) \ar@{^(->}[r] & T(S_2) 
    }
    \]
    Since $4d+1$ is a power of a prime, there is a unique $O(H_1^\perp)$-orbit of embeddings $T(S_1)\hookrightarrow H_1^\perp$ by Lemma \ref{lem: embedding orbits}. Therefore $\Psi$ extends to an isometry $\tilde\Psi\colon H_2^\perp \simeq H_1^\perp$, as required.
\end{proof}

\section{The Fourier--Mukai number of a Verra fourfold} \label{sec: FM formula for verra fourfolds}
The main goal of this section is to prove Theorem \ref{thm: counting formula}, which is a counting formula for Fourier--Mukai partners of Verra fourfolds whose associated K3 surfaces are double planes of Picard rank $2$ and type I. We first note the following general fact:

\begin{theorem} \label{thm: finitely many FM partners}
    Let $V_4$ be a Verra fourfold. Then $|\FM(V_4)|$ is finite. That is, $V_4$ has finitely many Fourier--Mukai partners. 
\end{theorem}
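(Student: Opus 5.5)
The plan is to reduce the finiteness of $\FM(V_4)$ to two known finiteness statements: finiteness of twisted Fourier--Mukai partners of a twisted K3 surface, and finiteness of the data attaching a Verra fourfold to a given twisted K3 surface. Fix one associated twisted K3 surface $(S,\alpha)$ of $V_4$, so that $\cA_{V_4}\simeq \Db(S,\alpha)$ by Example~\ref{ex: Verra fourfold SOD}. If $V_4'$ is a Fourier--Mukai partner of $V_4$, then any associated twisted K3 surface $(S',\alpha')$ of $V_4'$ satisfies $\Db(S',\alpha')\simeq \cA_{V_4'}\simeq \cA_{V_4}\simeq\Db(S,\alpha)$. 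Choosing, say, the first projection $p_1$ therefore gives a map
\[
\FM(V_4)\to \FM^{2}(S,\alpha)\cup\FM(S), \qquad V_4'\mapsto (S',\alpha').
\]
Here the order of $\alpha'$ is at most $2$. The order of the twist is preserved, since $\disc T(S',\alpha')=|\alpha'|^2\disc T(S')$ and $T(S',\alpha')\simeq T(S,\alpha)$. So the target lies in the twisted Fourier--Mukai partners of $(S,\alpha)$ with Brauer class of the same order.

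The first step is to show that this target set is finite. For untwisted $S$ this is the classical finiteness of Fourier--Mukai partners of K3 surfaces. In the twisted case I would use the approach already used in Section~\ref{sec: FM partners of twisted K3 surfaces}.

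1. The map $\mu$ injects $\cFM^2(S,\alpha)$ into the finite set $O_{\mathrm{Hodge}}(T(S,\alpha))\backslash I^2(A_{T(S,\alpha)})$.
2. Each fibre of $\nu$ consists of twisted surfaces $(S',\alpha')$ where $S'$ is an untwisted Fourier--Mukai partner of a fixed $S''$, by the derived Torelli argument of Proposition~\ref{prop: fibres of nu}.
3. For each such $S'$ there are finitely many classes $\alpha'\in\Br(S')_2$, since $\Br(S')_2\simeq\Hom(T(S'),\bZ/2\bZ)$ is finite.

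Hence $\FM^2(S,\alpha)$ is finite. This part could also simply be quoted from \cite{Ma}.

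The second step is to bound the fibres of the map. Given $(S',\alpha')$ together with its degree-$2$ polarisation $H'$, the Verra fourfold is determined by a class $\beta\in\Br(\bP^2\setminus C')_2$ with $\res(\beta)=\alpha'$, where $C'$ is the branch curve. The conic bundle $V_\beta\to\bP^2$ determines $V_3$ and hence the double cover $V_4'$. By Theorem~\ref{thm: kernel and cokernel of res}, the number of such $\beta$ is at most $|\ker(\res)|\le 2^{\rho-1}\le 2^{20}$.

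The remaining finiteness is in the choice of $H'$. I expect this to be the main obstacle, because a K3 surface of higher Picard rank can carry infinitely many degree-$2$ polarisations. Finiteness holds only up to $\Aut(S')$, and isomorphic pairs give isomorphic double planes. I would invoke Sterk's theorem, which says there are finitely many ample classes of fixed degree up to automorphisms. The Brauer class $\alpha'$ is then moved along by the automorphism, and only finitely many twisted surfaces $(S',g^*\alpha')$ arise, since $\Br(S')_2$ is finite. So I would count pairs $(S',\alpha')$ up to isomorphism together with $H'$ up to $\Aut(S',\alpha')$. The stabiliser of $\alpha'$ has finite index in $\Aut(S')$, so Sterk's finiteness persists.

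Combining the two steps, every $V_4'\in\FM(V_4)$ arises from finitely many $(S',\alpha')$, each with finitely many $H'$ up to isomorphism and finitely many $\beta$. Hence $|\FM(V_4)|<\infty$.
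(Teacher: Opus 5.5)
Your strategy matches the paper's: bound the twisted K3 surfaces that can be associated with a partner, then use Sterk's finiteness for degree-$2$ polarisations up to automorphisms, finiteness of $\ker(\res)$, and the fact that $V_3$ determines the double cover. Your handling of the $\Aut(S')$-action is correct and more careful than the paper's, which passes over this point: the stabiliser of $\alpha'$ has finite index because $\Br(S')_2$ is finite.

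There is one false step. The order of the Brauer class is \emph{not} preserved under twisted derived equivalence. Your discriminant argument only gives $|\alpha'|^2\disc T(S') = |\alpha|^2\disc T(S)$, and $\disc T(S')$ need not equal $\disc T(S)$. The paper itself contains counterexamples. In case (b) of Lemma~\ref{lem: comparing k3s in rank 2}, one Verra fourfold has $\alpha_1\neq 1$ and $\alpha_2=1$. Yet $\Db(S_1,\alpha_1)\simeq\Db(S_2)$ by Lemma~\ref{lem: both Verra kuznetsov components equivalent}, with $\disc T(S_1)=4d+1$ and $\disc T(S_2)=4(4d+1)$. So when $\alpha\neq 1$, your target $\FM^2(S,\alpha)\cup\FM(S)$ misses the partners $(S',1)$ with $\Db(S')\simeq\Db(S,\alpha)$. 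These really occur, since $S'$ need not be a Fourier--Mukai partner of $S$.

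The repair is easy. Take as target all $(S',\alpha')$ with $\Db(S',\alpha')\simeq\Db(S,\alpha)$ and $|\alpha'|\le 2$. The order-$2$ part is finite by your Step 1. The untwisted part, if non-empty, equals $\FM(S_0)$ for any one of its members $S_0$, so it is finite by the classical result. Alternatively, quote the finiteness of all twisted Fourier--Mukai partners \cite[Corollary 0.5]{HStwisted}, as the paper does. With this change your argument goes through and coincides with the paper's proof.
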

\begin{proof}
    We denote by $(S,\alpha)$ one of the two associated twisted K3 surfaces of $V_4$.
    For any Fourier--Mukai partner $V\in \FM(V_4)$, the two associated K3 surfaces of $V$ are each Fourier--Mukai partners of the associated twisted K3 surfaces of $V_4$. There are finitely many of those by \cite[Corollary 0.5]{HStwisted}. Thus, for any $(T,\beta) \in \FM(S,\alpha)$, we must prove that there are finitely many Verra fourfolds for which one of the associated twisted K3 surfaces is $(T,\beta)$. This follows from combining Lemma \ref{lem: finitely many double covers} with \cite[Theorem 2.4]{Ford} and \cite[Lemma 5.9]{JM}, since the kernel of the map
    \[
    \res \colon \Br(\bP^2\setminus C)_2 \to \Br(T)_2
    \]
    is finite.
\end{proof}

\begin{lemma}\label{lem: finitely many double covers}
    Let $S$ be a K3 surface. Then $S$ admits finitely many double covers $S\to \bP^2$ up to isomorphism, where we say that two double covers $f_1\colon S \to \bP^2$, $f_2 \colon S \to \bP^2$ are isomorphic if there exist automorphisms $\varphi \colon S \simeq S$, $\overline{\varphi}\colon \bP^2 \simeq \bP^2$ such that $\overline{\varphi}\circ f_1=f_2\circ \varphi$.
\end{lemma}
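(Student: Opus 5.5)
We reduce the problem to counting degree $2$ polarisations modulo automorphisms.

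\emph{Step 1: reduction.} A double cover $f\colon S\to \bP^2$ is determined, up to an automorphism of $\bP^2$, by the line bundle $f^*\cO_{\bP^2}(1)$. This is a base point free class $H\in\NS(S)$ with $H^2=2$, and $f$ is the morphism $\varphi_H$ given by the complete linear system $|H|$, since $h^0(H)=3$ by Riemann--Roch and Kodaira vanishing. Conversely, such a class with $\varphi_H$ of degree $2$ onto $\bP^2$ gives a double cover. Two double covers $f_1,f_2$ are isomorphic in the sense of the statement exactly when there is an automorphism $\varphi\in\Aut(S)$ with $\varphi^*H_2=H_1$. It therefore suffices to prove that the set $\set{H\in \NS(S)\mid H^2=2,\ H \text{ nef}}$ has finitely many $\Aut(S)$-orbits. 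We may even drop base point freeness, since we only need an upper bound.

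\emph{Step 2: finiteness via the cone theorem.} We invoke Sterk's theorem, the finiteness result of Sterk and Looijenga: $\Aut(S)$ acts on the nef cone with a rational polyhedral fundamental domain $\Pi$. Every nef class of square $2$ is $\Aut(S)$-equivalent to one in $\Pi$, so we are left to show that $\Pi$ contains only finitely many integral classes $H$ with $H^2=2$.

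Write $\Pi$ as the cone spanned by finitely many integral generators $v_1,\dots,v_k$, each nef, so that $v_i\cdot v_j\ge 0$. For $H=\sum a_iv_i$ with $a_i\ge 0$ rational, we have $H^2=\sum a_ia_j\, v_i\cdot v_j$. Every term is nonnegative, but some generators may be isotropic, so this alone does not bound the $a_i$.

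\emph{Step 3: bounding the coefficients.} This is the main obstacle. First handle the interior: fix an ample integral class $A$. For nef $H$ with $H^2=2$, the Hodge index theorem gives $(A\cdot H)^2\ge A^2H^2$, which is only a lower bound and so does not help directly. Instead, use that each generator $v_i$ is nonzero nef, hence $v_i\cdot v_j>0$ unless $v_i,v_j$ are proportional isotropic classes. The reason is that two nef classes with $v_i\cdot v_j=0$ must both be isotropic and proportional, again by Hodge index.

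Consequently, if $a_i$ is large and there are at least two non-proportional generators, then $H^2\ge a_i a_j\, v_i\cdot v_j$ for every $j$. If $a_i$ is large for the unique isotropic generator $v_i$ along some ray, then $H^2\ge 2a_ia_j\, v_i\cdot v_j$ bounds every other $a_j$ with $v_i\cdot v_j>0$, and the remaining $a_j$ sit on the same ray. Splitting into the cases ``$\Pi$ meets the boundary of the positive cone'' and ``it does not'', one should get that either all $a_i$ are bounded, or $H$ lies in a small neighbourhood of an isotropic ray.

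Near an isotropic ray the classes of square $2$ could a priori accumulate. The remedy is to subdivide $\Pi$ so that each cone contains at most one isotropic ray. One then checks that $H^2=2$ together with the bounded coefficients on the other generators forces the coefficient of the isotropic generator to be bounded too. Indeed, $H^2$ is linear in that coefficient with a positive slope whenever some other coefficient is nonzero. If all other coefficients vanish, then $H^2=0\neq 2$, so this case does not occur. With all coefficients bounded, $H$ lies in a bounded set, which contains finitely many lattice points. This finishes the proof.
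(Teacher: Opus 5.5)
Your Steps 1 and 2 are sound and follow the same route as the paper, whose proof is only a citation of Sterk's Theorem 0.1(b). That theorem already \emph{is} the finiteness of $\Aut(S)$-orbits of classes of fixed square, so your Step 3 is an attempt to re-derive it from the rational polyhedral fundamental domain. Your Step 1 reduction to ample classes $H$ with $H^2=2$ modulo $\Aut(S)$ is correct, and it is a piece the paper leaves implicit. One minor point: Sterk's fundamental domain is for the effective nef cone, i.e.\ the convex hull of the integral nef classes, not the full real nef cone. This is harmless here, since a nef class of square $2$ is effective and rational.

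The gap is in Step 3, exactly at the point you flag as the main obstacle. Write $H=a_1v_1+H'$ with $v_1$ isotropic and $H'=\sum_{j\ge 2}a_jv_j$. Then $2=H^2=2a_1(v_1\cdot H')+H'^2$, so $a_1\le 1/(v_1\cdot H')$. Knowing that this slope is positive for each $H$, and that the other coefficients are bounded \emph{above}, gives no bound on $a_1$, because the slope can tend to $0$. For real classes boundedness genuinely fails. In $U$ with isotropic basis $e,f$ and $\Pi=\langle e,f\rangle$, the classes $te+t^{-1}f$ all have square $2$ and unbounded coefficients. So integrality must enter before your last sentence, as a \emph{lower} bound on the nonzero coefficients.

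To repair this, triangulate $\Pi$, without new rays, into simplicial cones spanned by linearly independent integral nef classes $v_1,\dots,v_m$. Let $N$ be the index of $\bigoplus_i\bZ v_i$ in its saturation in $\NS(S)$. Then every lattice point $H=\sum a_iv_i$ of such a cone has $a_i\in\frac1N\bZ_{\ge0}$. For $i\ne j$ you have $v_i\cdot v_j\ge 1$: it is positive by your Hodge index remark and it is an integer. Hence if $a_i,a_j>0$, then $2=H^2\ge 2a_ia_j\,v_i\cdot v_j\ge 2a_i/N$, so $a_i\le N$. If $a_i$ is the only nonzero coefficient, then $H=a_iv_i$ with $v_i^2\ge 2$, so $a_i\le 1$. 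The coefficients therefore range over a finite set, which proves finiteness and makes your case analysis around isotropic rays unnecessary.
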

\begin{proof}
    This is a direct application of \cite[Theorem 0.1(b)]{Sterk} 
\end{proof}

\begin{definition}
    For a K3 surface $S$, we write $$\mathrm{Amp}_2(S)\coloneqq \set{H \in \NS(S) \mid H \text{ is ample and } H^2=2}\big/{\Aut(S)}$$
    for the set of isomorphism classes of double covers $S\to \bP^2$. This is a finite set by Lemma \ref{lem: finitely many double covers}.
\end{definition}

\begin{example}
    In the following three cases, it is easy to compute the size of $\mathrm{Amp}_2(S)$.
    \begin{enumerate}
        \item If $\rho(S)=1$, then $\mathrm{Amp}_2(S)$ is either empty, or a singleton, depending on the square of the ample generator of $\NS(S)$.
        \item If $\NS(S) \simeq L_{d,1}$ with $d= k(k+1)$ for some $k\geq 0$, we claim that $\mathrm{Amp}_2(S)$ is a singleton. Indeed, we proved this in Lemma \ref{lem: few isometries criterion}. In this case, $\disc(\NS(S))$ is a square, hence $S$ admits an elliptic fibration (see e.g. \cite{vGe,MS24}). In fact, a K3 surface $S$ with $\NS(S) \simeq L_{d,1}$ admits an elliptic fibration if and only if $d = k(k+1)$ for some integer $k \geq 0$.
        \item If $\NS(S) \simeq L_{d,1}$ with $\disc(\NS(S)) = 4d+1$ a power of a prime number, then $\mathrm{Amp}_2(S)$ is a singleton. We show this in Lemma \ref{lem: power of a prime? one double cover for you}
    \end{enumerate}
\end{example}

\begin{lemma}\label{lem: power of a prime? one double cover for you}
    Let $S$ be a K3 surface with $\NS(S) \simeq L_{d,1}$, where $4d+1$ is a power of a prime number. Then $|\mathrm{Amp}_2(S)| = 1$.
\end{lemma}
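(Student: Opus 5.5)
Our plan is to show that all ample classes of square $2$ in $\NS(S)\simeq L_{d,1}$ lie in a single $\Aut(S)$-orbit. We will use the Torelli theorem for K3 surfaces: an isometry $g$ of $H^2(S,\bZ)$ that preserves the Hodge structure and maps the ample cone to itself is induced by an automorphism of $S$. Now take two ample classes $H,H'\in\NS(S)$ with $H^2=H'^2=2$. It suffices to find a Hodge isometry $g$ of $H^2(S,\bZ)$ with $g(H)=H'$ that preserves the ample cone.

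\textbf{Step 1: an isometry of $\NS(S)$.} Both $H$ and $H'$ have divisibility $1$ in $\NS(S)$, because $\disc(L_{d,1})=4d+1$ is odd and so no class of square $2$ can have divisibility $2$. Lemma \ref{lem: NS(S) of rank 2} then gives bases $\langle H,D\rangle$ and $\langle H',D'\rangle$, both with Gram matrix $L_{d,1}$. Sending $H\mapsto H'$ and $D\mapsto D'$ defines $\sigma\in O(\NS(S))$ with $\sigma(H)=H'$. By the remark after Lemma \ref{lem: few isometries criterion}, we can also compose with $s$-type reflections that fix $H'$, so we have two such choices $\sigma$ and $\sigma'$.

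\textbf{Step 2: gluing with the transcendental lattice.} This is the main obstacle. Since $\NS(S)\oplus T(S)\subset H^2(S,\bZ)$ is a unimodular overlattice, Lemma \ref{lem: overlattice subgroup correspondence} tells us that $\sigma$ extends to $H^2(S,\bZ)$ provided there is a Hodge isometry $\tau$ of $T(S)$ whose action on $A_{T(S)}\simeq A_{\NS(S)}\simeq\bZ/(4d+1)\bZ$ agrees with $\bar\sigma$ under the gluing isomorphism. Assuming $O_{\mathrm{Hodge}}(T(S))$ contains $\pm\id$, we can take $\tau=\pm\id$. So we need $\bar\sigma=\pm\id$ on $A_{\NS(S)}$.

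This is where the prime-power hypothesis enters. When $4d+1=p^k$ with $p$ odd, the group $\bZ/p^k\bZ$ carries a nondegenerate quadratic form, and its isometry group is $\{\pm\id\}$: an automorphism is multiplication by a unit $u$ with $u^2\equiv1 \pmod{p^k}$, and because $(\bZ/p^k\bZ)^\times$ is cyclic, the only solutions are $u=\pm1$. Hence $\bar\sigma=\pm\id$ holds automatically, and we choose $\tau=\bar\sigma$-compatibly as $\pm\id$. This gives a Hodge isometry $g$ of $H^2(S,\bZ)$ extending $\sigma$. The same fact underlies the uniqueness statement of Lemma \ref{lem: embedding orbits}.

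\textbf{Step 3: the ample cone.} The isometry $g$ sends the ample class $H$ to the ample class $H'$, so it maps the positive cone to itself. It also maps the ample chamber containing $H$ to the chamber containing $H'$. Since $H'$ is ample, that chamber is the ample cone, so $g$ preserves the ample cone. By the strong Torelli theorem, $g=\varphi^*$ for some $\varphi\in\Aut(S)$, and therefore $H$ and $H'$ define isomorphic double covers. Finally, $\mathrm{Amp}_2(S)$ is nonempty: $S$ already carries an ample class of square $2$, which is how the lattice $L_{d,1}$ arises via Lemma \ref{lem: NS(S) of rank 2}. Hence $|\mathrm{Amp}_2(S)|=1$.
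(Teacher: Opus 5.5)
Your proof is correct and follows the same argument as the paper: find an isometry of $\NS(S)$ taking $H$ to $H'$, extend it by $\pm\id$ on $T(S)$ using $O(A_{L_{d,1}})=\{\pm\id\}$ for $4d+1$ an odd prime power, and apply the strong Torelli theorem. You also supply details the paper leaves implicit: existence of $\sigma$ via divisibility $1$, the computation of $O(A_{L_{d,1}})$, and why the extension preserves the ample cone. Only your one-line justification that $\mathrm{Amp}_2(S)$ is non-empty is loose; it does hold for $d\geq 1$, because no $(-2)$-class in $L_{d,1}$ is orthogonal to a class of square $2$, so a square-$2$ class moved into the nef cone is automatically ample.
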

\begin{proof}
    Let $H_1,H_2 \in \NS(S)$ be two ample divisors of degree $2$. Then there exists a isometry $\varphi\in O(\NS(S))$ such that $\varphi(H_1) = H_2$. Moreover, since $O(A_{\NS(S)}) = \set{\pm \id}$, we can extend $\varphi$ to an effective Hodge isometry of $H^2(S,\bZ)$, hence there is an isomorphism $f\colon S \simeq S$ with $f^*|_{\NS(S)} = \varphi$, and therefore $[H_1] = [H_2] \in \mathrm{Amp}_2(S)$. 
\end{proof}

\subsection{Double planes of type I}

\begin{remark}
    For the proof of the Counting Formula in Theorem \ref{thm: counting formula} below, we need to assume $O_{\mathrm{Hodge}}(T(S_1,\alpha_1))\simeq \bZ/2\bZ$ to apply Lemma \ref{lem: comparing k3s in rank 2}, but we also need $O_\mathrm{Hodge}(T(S_1)) \simeq \bZ/2\bZ$ to apply Theorem \ref{thm: FML and FMVer}. We note that both of these isomorphisms hold under the assumption $O_{\mathrm{Hodge}}(T(S_1)_\bQ)\simeq \bZ/2\bZ$, which holds for very general K3 surfaces in any Noether--Lefschetz locus for Picard rank $\rho  \leq 19$, see for example \cite[Lemma 3.1]{MeinsmaOnLEquivalence}. 
\end{remark}

\begin{theorem}\label{thm: counting formula}
    Let $V_4$ be a Verra fourfold, and let $(S_1,H_1,\alpha_1)$ and $(S_2,H_2,\alpha_2)$ be the two associated twisted polarised K3 surfaces of $V_4$. Assume $\NS(S_1) \simeq L_{d,1}$ for some $d\in \bZ$, and assume that $O_{\mathrm{Hodge}}(T(S_1)_\bQ)\simeq \bZ/2\bZ$. 
    Then we have
    \begin{align*}
    |\FM(V_4)| &= |\mathrm{Amp}_2(S_1)|\cdot |\FM_{L_{d,1}}(S_1)| \\
    &= |\mathrm{Amp}_2(S_1)|\cdot |O(\NS(L_{d,1})) \backslash O(A_{L_{d,1}})/O_{\mathrm{Hodge}}(T(S_1))|. 
    \end{align*}
\end{theorem}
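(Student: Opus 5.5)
We will build a map from Fourier--Mukai partners of $V_4$ to pairs (Fourier--Mukai partner of $S_1$, double cover) and show it is a bijection. The first equality is the real content. The second equality is the standard counting of Fourier--Mukai partners of an untwisted K3 surface with prescribed Néron--Severi lattice, as in Hosono--Lian--Oguiso--Yau. It follows from the genus of $\NS(S_1)$ being determined by $T(S_1)$ and from $L_{d,1}$ being the unique lattice in its genus, which holds since it is rank $2$ with cyclic discriminant group $\bZ/(4d+1)\bZ$ by Lemma \ref{lem: discriminant of Ad}. Partners with $\NS\simeq L_{d,1}$ then correspond to double cosets $O(L_{d,1})\backslash O(A_{L_{d,1}})/O_{\mathrm{Hodge}}(T(S_1))$, the middle group parametrising gluings of $L_{d,1}$ and $T(S_1)$ into $\klat$.

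For the first equality, start from $V\in\FM(V_4)$ with associated twisted polarised K3 surfaces $(S'_1,H'_1,\alpha'_1)$ and $(S'_2,H'_2,\alpha'_2)$. By Lemma \ref{lem: both Verra kuznetsov components equivalent} these are twisted Fourier--Mukai partners of $(S_1,\alpha_1)$. The untwisted surfaces $S'_i$ are then Fourier--Mukai partners of $S_1$ with $\NS\simeq L_{d,1}$ or $L_{4d,2}$, by Lemma \ref{lem: comparing k3s in rank 2} applied to $V$; the hypothesis on $O_{\mathrm{Hodge}}$ transfers because $T(V)\simeq T(V_4)$ as Hodge lattices.

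\emph{Normalising the partner.} We first choose a canonical type I side of $V$. If $|I^2(A_{T(S_1,\alpha_1)})|=1$, case (b) of Lemma \ref{lem: comparing k3s in rank 2} applies, and exactly one side of $V$ is of type I. If $|I^2|=2$, case (a) applies: both sides are type I and have distinct $\nu$-images, which are the two elements of $I^2$ (via $\mu$). We pick the side whose $\nu$-class equals $\nu(S_1,\alpha_1)$. This assigns to $V$ a well-defined element $(S',H',\alpha')$ with $(S',\alpha')\in\FM^2_{\mathrm{Ver}}(S_1,\alpha_1)$. Theorem \ref{thm: FML and FMVer} identifies $\FM^2_{\mathrm{Ver}}(S_1,\alpha_1)$ with $\FM_{L_{d,1}}(S_1)$.

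\emph{Counting the fibres.} Conversely, take $S'\in\FM_{L_{d,1}}(S_1)$ with its induced class $\alpha'$ and a choice $[H']\in\mathrm{Amp}_2(S')$. Every degree 2 polarisation on $L_{d,1}$ has divisibility 1, and Proposition \ref{prop: kernel and cokernel of res for double planes of rank 2} gives a unique Verra fourfold with associated surface $(S',H',\alpha')$. This fourfold is a Fourier--Mukai partner of $V_4$ by the twisted derived Torelli theorem. The assignment $V\mapsto(S',[H'])$ is therefore a bijection onto $\bigsqcup_{S'}\mathrm{Amp}_2(S')$.

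\emph{Main obstacle.} The step we expect to be hardest is showing $|\mathrm{Amp}_2(S')|=|\mathrm{Amp}_2(S_1)|$ for every $S'\in\FM_{L_{d,1}}(S_1)$, together with checking that isomorphic $V$'s give the same element. For the first point we would argue that $\Aut(S')$ is the group of isometries of $\NS(S')$ preserving the ample cone and lifting through $O(A_{L_{d,1}})$ compatibly with $O_{\mathrm{Hodge}}(T(S'))=\{\pm\id\}$. Since this data is identical for all partners, the orbit count is the same for each $S'$. If that argument stalls, we would instead compute $\mathrm{Amp}_2$ directly, via the Pell equation \eqref{eq: pell equation}, as orbits of square-$2$ ample classes under the image of $\Aut$. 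This would give a formula depending only on $L_{d,1}$ and the Hodge isometry group. Multiplying the fibre size by $|\FM_{L_{d,1}}(S_1)|$ then gives the first equality.
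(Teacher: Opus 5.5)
Your proposal is correct and is essentially the paper's proof. You normalise each partner $V$ by its type~I side whose $\nu$-class is $\nu(S_1,\alpha_1)$, identify these sides with $\FM_{L_{d,1}}(S_1)$ via Theorem~\ref{thm: FML and FMVer}, count the fibres by $\mathrm{Amp}_2$ using Proposition~\ref{prop: kernel and cokernel of res for double planes of rank 2}, and read the second equality off Hosono--Lian--Oguiso--Yau. Your Torelli description of the image of $\Aut(S')$ in $O(\NS(S'))$, as the stabiliser of the ample chamber acting by $\pm\id$ on $A_{\NS(S')}$, is intrinsic to $L_{d,1}$ once ample chambers are matched by an element of $O(L_{d,1})$. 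It therefore justifies $|\mathrm{Amp}_2(S')|=|\mathrm{Amp}_2(S_1)|$, which the paper only asserts.

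Two of your side remarks are false, though neither is used:
\begin{enumerate}
\item A rank-$2$ lattice with cyclic discriminant group need not be unique in its genus. For $d=57$, the lattice $\left(\begin{smallmatrix} 6 & 1 \\ 1 & -38 \end{smallmatrix}\right)$ lies in the genus of $L_{57,1}$ but does not represent $2$, since the ring of integers of $\bQ(\sqrt{229})$ has no element of norm $\pm 3$. This is harmless, because the second equality is only the $N=L_{d,1}$ term of the Hosono--Lian--Oguiso--Yau sum.
\item Only the normalised side of $V$ is guaranteed to be an untwisted partner of $S_1$. For instance, an $L_{4d,2}$ side has $\disc T=4(4d+1)\neq 4d+1$.
\end{enumerate}
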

\begin{proof}
    There are two possibilities for $(S_2,H_2,\alpha_2)$, which we consider separately.

    \textit{Case 1. $\alpha_2$ is non-trivial.}
    
    Suppose $V \in \FM(V_4)$. Let $(T_1,L_1,\beta_1)$ and $(T_2,L_2,\beta_2)$ be the two associated twisted K3 surfaces of $V$. Then $(T_1,\beta_1), (T_2,\beta_2) \in \FM(S,\alpha)$ since $V$ is a Fourier--Mukai partner of $V_4$. 
    
    Note that $V_4$ satisfies the conclusions of Lemma \ref{lem: comparing k3s in rank 2}(a), since $\alpha_1$ and $\alpha_2$ are both non-trivial. In particular, we find 
    \[
    |I^2(A_{T(S_1,\alpha_1)})| = 2,
    \]
    and since $T(T_i,\beta_i) \simeq T(S_i,\alpha_i)$, case (a) of Lemma \ref{lem: comparing k3s in rank 2} also applies to $V$. In particular, we have $\nu(T_1,\beta_1) \neq \nu(T_2,\beta_2)$. Assume without loss of generality that $\nu(T_1,\beta_1) = (S_1,\alpha_1)$. Then we have $(T_1,\beta_1) \in \FM_{\mathrm{Ver}}^2(S_1,\alpha_1)$.
    We claim that this constructs a well-defined surjection
    \[
    \isomorphism{\varphi}{\FM(V_4)}{\FM_{\mathrm{Ver}}^2(S_1,\alpha_1)}{V}{(T_1,\beta_1).}
    \]
    The fact that it is well-defined follows from the construction, since the construction of $(T_1,\beta_1)$ from $V$ is uniquely determined by the above procedure. The map is also surjective, by definition of $\FM_{\mathrm{Ver}}^2(S_1,\alpha_1)$.

    We claim that the preimage of any $(T,\beta)\in \FM^2_\mathrm{Ver}(S_1,\alpha_1)$ along $\varphi$ contains precisely $|\mathrm{Amp}_2(S_1)|$ elements. Indeed, once we fix an ample divisor $H \in \NS(T)$ of degree 2, we may apply Theorem \ref{thm: kernel and cokernel of res} to show that
    \[
    \res\colon \Br(\bP^2 \setminus C)_2 \simeq \Br(T)_2
    \]
    is an isomorphism. In particular, the triple $(T,H,\beta)$ determines a unique Verra threefold $V_3 \to \bP^2$, and hence a unique Verra fourfold by \cite[Proposition 4.23]{JM}. In other words, the number of elements in the preimage of $(T,\beta)$ is $|\mathrm{Amp}_2(T)| = |\mathrm{Amp}_2(S)|$, as claimed.

    Now it follows from Theorem \ref{thm: FML and FMVer} that there is a bijection
    \[
    \FM(V_4) \simeq \FM_{L_{d,1}}(S),
    \]
    and the size of $\FM_{L_{d,1}}(S)$ is computed in \cite[Theorem 2.3]{HLOY}.

    \textit{Case 2. $\alpha_2$ is trivial.}

    The proof in this case is very similar to the proof in Case 1, except now $V_4$ and all of its Fourier--Mukai partners satisfy the conclusions of Lemma \ref{lem: comparing k3s in rank 2}(b). To be precise, suppose $V \in \FM(V_4)$, and let $(T_1,\beta_1)$ and $(T_2,\beta_2)$ be the two associated twisted K3 surfaces of $V$. Since $|I^2(A_{T(S_1,\alpha_1)})| = |I^2(A_{T(T_1,\beta_1)})| =1$, $V$ satisfies Lemma \ref{lem: comparing k3s in rank 2}(b). In particular, exactly one of $\beta_1$ and $\beta_2$ is trivial. Assume without loss of generality that $\beta_1 \neq 1 \in \Br(T_1)$, then we claim that the assignment
    \[
    \isomorphism{\varphi}{\FM(V_4)}{\FM^2_{\mathrm{Ver}}(S_1,\alpha_1)}{V}{(T_1,\beta_1)}
    \]
    is a well-defined surjection, all of whose fibres have size $|\mathrm{Amp}_2(S)|$. We omit the proof of this fact, as the argument is completely analogous to Case 1. 
\end{proof}

In Theorem \ref{thm: counting formula}, the group $O(A_{L_{d,1}})$ is easy to compute. Namely, as $A_{L_{d,1}}$ is cyclic of order $4d+1$, we find that
\[
O(A_{L_{d,1}}) \simeq (\bZ/2\bZ)^{\omega(4d+1)},
\]
where $\omega(4d+1)$ is the number of prime divisors of $4d+1$.
Using this, we obtain a categorical Torelli theorem for very general Verra fourfolds contained in certain Noether--Lefschetz loci. On the other hand, we can show that the number of Fourier--Mukai partners of a Verra fourfold is unbounded.

\begin{corollary} \label{cor: categorical torelli for rank 2}
    Let $V_4$ be a Verra fourfold whose associated twisted K3 surfaces satisfy $\NS(S) \simeq \NS(S') \simeq L_{d,1}$ such that $4d+1$ is a power of a prime number. Then for any Verra fourfold $V_4'$, we have
    \[
    \cA_{V_4} \simeq \cA_{V_4'} \iff V_4 \simeq V_4'.
    \]
\end{corollary}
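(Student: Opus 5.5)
The plan is to read this off from Theorem \ref{thm: counting formula} by showing that, under the hypothesis that $4d+1$ is a prime power, $|\FM(V_4)| = 1$. The implication $V_4 \simeq V_4' \Rightarrow \cA_{V_4}\simeq\cA_{V_4'}$ is trivial, so only the converse needs work. If $\cA_{V_4}\simeq\cA_{V_4'}$, then $V_4'$ lies in $\FM(V_4)$. Showing that $\FM(V_4)$ is a singleton, namely the class of $V_4$ itself, then gives $V_4'\simeq V_4$.

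First I would check the hypotheses of Theorem \ref{thm: counting formula}. We have $\NS(S_1)\simeq L_{d,1}$ by assumption. The condition $O_{\mathrm{Hodge}}(T(S_1)_\bQ)\simeq\bZ/2\bZ$ is not in the statement as written, so I would either add a very-generality hypothesis or note that it holds for very general members of the relevant Noether--Lefschetz locus, by the remark preceding Theorem \ref{thm: counting formula}. I would make this assumption explicit; it is the point I expect to need the most care, since the corollary as stated omits it.

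Next I would compute the two factors in the formula.
\begin{enumerate}
\item By Lemma \ref{lem: power of a prime? one double cover for you}, $|\mathrm{Amp}_2(S_1)|=1$.
\item Since $A_{L_{d,1}}$ is cyclic of order $4d+1=p^k$, the displayed computation gives $O(A_{L_{d,1}})\simeq(\bZ/2\bZ)^{\omega(p^k)}=\bZ/2\bZ=\{\pm\id\}$. The double coset $O(\NS)\backslash O(A_{L_{d,1}})/O_{\mathrm{Hodge}}(T(S_1))$ therefore has exactly one element, because $-\id\in O(\NS(S_1))$ already maps to $-\id$ on the discriminant group.
\end{enumerate}
Equivalently, $|\FM_{L_{d,1}}(S_1)|=1$. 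Hence $|\FM(V_4)|=1\cdot1=1$.

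Finally, the unique element of $\FM(V_4)$ is the class of $V_4$ itself, so any $V_4'$ with $\cA_{V_4'}\simeq\cA_{V_4}$ is isomorphic to $V_4$. The hypothesis that $\NS(S')\simeq L_{d,1}$ as well places us in Case 1 of Theorem \ref{thm: counting formula}, where both Brauer classes are nontrivial. Case 2 would give the same count anyway.
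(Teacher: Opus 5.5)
Your proposal is correct and follows the paper's own argument. The paper also deduces the corollary directly from Theorem \ref{thm: counting formula}: it uses $|\mathrm{Amp}_2(S_1)|=1$ when $4d+1$ is a prime power, and $O(A_{L_{d,1}})\simeq\bZ/2\bZ$ collapses the double coset to a point, so $|\FM(V_4)|=1$. You are also right that the very-generality hypothesis $O_{\mathrm{Hodge}}(T(S_1)_\bQ)\simeq\bZ/2\bZ$ is needed to apply that theorem but is missing from the corollary's statement; the paper's one-line proof uses it without saying so.
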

\begin{proof}
    This follows immediately from Theorem \ref{thm: counting formula} combined with Lemma \ref{lem: power of a prime? one double cover for you}, since we have $O(A_{L_{d,1}}) \simeq \bZ/2\bZ$.
\end{proof}

\begin{corollary}\label{cor: unbounded}
    Let $N$ be a positive integer. Then there are $N$ Verra fourfolds $V_1,\cdots, V_N$, all of whose associated twisted K3 surfaces have Picard number $2$, such that
    for all $1\leq i\neq  j \leq N$, we have $\cA_{V_i}\simeq \cA_{V_j}$ and $V_i \not\simeq V_j$.
\end{corollary}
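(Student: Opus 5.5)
We will deduce Corollary \ref{cor: unbounded} from the counting formula of Theorem \ref{thm: counting formula}. It suffices to produce a single Verra fourfold $V_4$ with $|\FM(V_4)| \geq N$, all of whose Fourier--Mukai partners also have associated K3 surfaces of Picard number $2$. The Fourier--Mukai partners $V_1,\dots,V_N$ are then pairwise non-isomorphic with equivalent Kuznetsov components. Since $|\mathrm{Amp}_2(S_1)| \geq 1$, it is enough to make $|\FM_{L_{d,1}}(S_1)|$ large.

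The plan has four steps.

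\begin{enumerate}
\item \emph{Choice of $d$.} Choose $d$ so that $4d+1$ has many distinct prime divisors, say $4d+1 = p_1\cdots p_k$ with $p_i \equiv 1 \pmod 4$ distinct. Then $O(A_{L_{d,1}}) \simeq (\bZ/2\bZ)^{k}$.

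\item \emph{Choice of the K3 surface.} Take a very general K3 surface $S_1$ with $\NS(S_1) \simeq L_{d,1}$ and $O_{\mathrm{Hodge}}(T(S_1)_\bQ)\simeq \bZ/2\bZ$. Such a surface exists by the surjectivity of the period map, since $L_{d,1}$ embeds primitively into $\klat$ (Lemma \ref{lem: group structure of discriminant}), together with the remark preceding Theorem \ref{thm: counting formula}. Then $O_{\mathrm{Hodge}}(T(S_1)) = \{\pm\id\}$, which acts on $A_{L_{d,1}}$ as $\pm 1$.

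\item \emph{Lower bound for the double coset.} The group $O(L_{d,1})$ acts on $A_{L_{d,1}}$ through its image in $O(A_{L_{d,1}})$. I expect this image to be small. When $4d+1$ is not a square, $O(L_{d,1})$ is infinite, but its image in $O(A_{L_{d,1}})$ is still controlled. The key point is that $O(L_{d,1})$ is generated by $\pm\id$, $\pm s$ and the units of the order of discriminant $4d+1$. The image should then have order bounded by a small constant, say at most $4$ or $8$. One could instead argue via the general estimate of \cite{HLOY}, which states that the double coset count for rank-$2$ Picard lattices grows like $2^{\omega(\disc)-c}$. Either way we get
\[
|O(L_{d,1})\backslash O(A_{L_{d,1}})/\{\pm\id\}| \geq 2^{k}/C
\]
for an absolute constant $C$, and this tends to infinity with $k$.

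\item \emph{Realising $V_4$ and concluding.} By Proposition \ref{prop: kernel and cokernel of res for double planes of rank 2}, any non-trivial $\alpha_1 \in \Br(S_1)_2$ gives a Verra fourfold $V_4$ with associated twisted polarised K3 surface $(S_1,H_1,\alpha_1)$. Theorem \ref{thm: counting formula} then gives $|\FM(V_4)| \geq 2^k/C \geq N$ for $k$ large. Each partner $V$ has associated twisted K3 surfaces $(T_i,\beta_i)$ that are twisted Fourier--Mukai partners of $(S_1,\alpha_1)$. Hence $T(T_i) \simeq T(S_1)$ up to the index-$2$ overlattice, so $\rho(T_i)=2$. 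This gives the Picard number condition.
\end{enumerate}

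The main obstacle is step 3, bounding the image of $O(L_{d,1})$ in $O(A_{L_{d,1}})$. I would try first the classical description of $O$ of a binary form: proper automorphs correspond to norm-$1$ units $\varepsilon$ of $\bZ[\frac{1+\sqrt{4d+1}}{2}]$, which act on the cyclic discriminant group by multiplication by a number $u$ with $u^2 \equiv 1$. The task is to show that this multiplication is $\pm1$. Alternatively, one can choose the primes so that the fundamental unit has norm $-1$ and track the action explicitly. If that fails, I would fall back on citing the asymptotic of \cite{HLOY} (or Oguiso's formula) for $|\FM(S)|$ in Picard rank $2$, which is $2^{\omega(\disc)-1}$ up to the above correction.
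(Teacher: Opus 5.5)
Your overall architecture matches the paper's: apply Theorem \ref{thm: counting formula} to a K3 surface with $\NS(S_1)\simeq L_{d,1}$ and $O_{\mathrm{Hodge}}(T(S_1))=\{\pm\id\}$, make $\omega(4d+1)$ large, and bound the image of $O(L_{d,1})$ in $O(A_{L_{d,1}})\simeq(\bZ/2\bZ)^{\omega(4d+1)}$. Steps 1, 2 and 4 are fine, including the Picard number check via $T(T_i,\beta_i)\simeq T(S_1,\alpha_1)$.

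\textbf{The gap is step 3.} You leave it open, and the route you propose would fail. It is \emph{not} true that norm-one units act on $A_{L_{d,1}}$ by $\pm1$. Take $d=51$, so $4d+1=205=5\cdot 41$ with both primes $\equiv 1 \pmod 4$. Identify $L_{51,1}$ with $\bZ[\theta]$, $\theta=\frac{1+\sqrt{205}}{2}$, via $H\leftrightarrow 1$ and $D\leftrightarrow\theta$. The fundamental unit $\varepsilon=20+3\theta=\frac{43+3\sqrt{205}}{2}$ has norm $+1$. The isometry $x\mapsto\varepsilon x$ sends $H-2D$ to $-286H-43D=124(H-2D)+205(D-2H)$. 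So it acts on the generator $\frac{H-2D}{205}$ of $A_{L_{51,1}}$ by multiplication by $124$, and $124\not\equiv\pm1\pmod{205}$.

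Your other two suggestions do not close the gap either:
\begin{itemize}
\item Choosing primes so that the fundamental unit has norm $-1$ would force $u(\varepsilon_0)=u(\eta)^2\equiv N(\eta)=-1$. But that is a negative-Pell condition you have no control over.
\item Citing HLOY or Oguiso does not give what you need. The counting formula requires $|\FM_{L_{d,1}}(S_1)|$, i.e.\ only the partners whose N\'eron--Severi lattice is isometric to $L_{d,1}$, not $|\FM(S_1)|$. Oguiso's examples also use different lattices.
\end{itemize}

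\textbf{The paper's route.} The paper avoids units entirely. It takes $4d+1=(2k+1)^2$ a perfect square, i.e.\ $d=k(k+1)$, with $2k+1$ having at least $\log_2 N+2$ prime factors. Then Lemma \ref{lem: few isometries criterion} gives $O(L_{d,1})=\{\pm\id,\pm s\}$, whose image in $O(A_{L_{d,1}})$ has order at most $4$. Meanwhile $O(A_{L_{d,1}})$ is still $(\bZ/2\bZ)^{\omega(4d+1)}$ because $4d+1$ is odd. Hence the double coset has at least $2^{\omega(4d+1)-2}\geq N$ elements.

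\textbf{Repairing your squarefree choice.} Your route can also be completed, at the cost of the unit theorem instead of the elementary Lemma \ref{lem: few isometries criterion}:
\begin{itemize}
\item Every isometry of $L_{d,1}$ has the form $x\mapsto\pm\varepsilon_0^m x$ or $x\mapsto\pm\varepsilon_0^m\bar x$, where $x\mapsto\bar x$ is $s$.
\item The map $s$ acts on $A_{L_{d,1}}$ by $-1$, since $s(H-2D)=-(H-2D)$.
\item Hence the image of $O(L_{d,1})$ lies in $\langle -1,u(\varepsilon_0)\rangle$. Since $u(\varepsilon_0)^2=1$, this subgroup has order at most $4$.
\end{itemize}
This gives the same bound $2^{k-2}$ as the paper.
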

\begin{proof}
    Let $S$ be a K3 surface with $\NS(S) \simeq L_{d,1}$, and suppose we have $O_{\mathrm{Hodge}}(T(S)) = \set{\pm \id}$.
    Then we have
    \[
    O(A_{\NS(S)}) \simeq (\bZ/2\bZ)^{\omega(4d+1)},
    \]
    
    where $\omega(4d+1)$ is the number of prime divisors of $4d+1$. 
    Let $\alpha$ be a Brauer class on $S$ of Verra type, and let $V$ be the unique Verra fourfold whose associated K3 surface is $(S,\alpha)$.
    
    By Lemma \ref{lem: few isometries criterion}, if $4d+1$ is a square with at least $\log_2(N) + 2$ prime divisors, we compute
    \[
    |\FM(V)| =|O(L_{d,1})\backslash O(A_{L_{d,1}})/O_\mathrm{Hodge}(T(S))| \geq 2^{\log_2(N)+2-2} = N,
    \]
    as required.
\end{proof}

\subsection{Fourier--Mukai partners of Verra threefolds}
In this subsection, we relate the Fourier--Mukai number of a Verra threefold to the Fourier--Mukai number of the associated Verra fourfolds, computed in the previous subsection. We first observe a relationship between the equivariant Kuznetsov component of the Verra fourfold, and the Kuznetsov component of the Verra threefold. For a recollection of equivariant categories, see for example \cite[Section 3]{KP}.

\begin{proposition} \label{prop: Ku V_4 V_3 equivalence}
    Let $V_4$ be a Verra fourfold with branch locus $V_3$, and assume that $V_3$ is a smooth ordinary Verra threefold. Then we have
    \[
    \cA_{V_4}^{\mu_2}\simeq \cA_{V_3}.
    \]
\end{proposition}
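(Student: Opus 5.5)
The plan is to deduce the statement from a general principle for double covers: if $f\colon V_4 \to \bP^2\times\bP^2$ is the double cover branched along $V_3$, and $\iota$ is the covering involution generating $\mu_2$, then $\Db(V_4)^{\mu_2}$ has a semiorthogonal decomposition
\[
\Db(V_4)^{\mu_2} = \langle \Db(V_3), \Db(\bP^2\times\bP^2) \rangle
\]
(up to twisting the embedding of $\Db(\bP^2\times\bP^2)$ by a character), as in Kuznetsov--Perry \cite[Section 3]{KP}. The goal is to compare this with the $\mu_2$-equivariant version of the decomposition of $\Db(V_4)$ from Example \ref{ex: Verra fourfold SOD}. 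The target is a decomposition of $\Db(V_3)$ of the form $\langle \cA_{V_3}, \Db(\bP^2)\rangle$, from which $\cA_{V_3}$ can be read off.

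First I would check that the involution $\iota$ preserves each component of
\[
\Db(V_4) = \langle \Db(\bP^2,\cB_0), p_1^*\Db(\bP^2), p_1^*\Db(\bP^2)\otimes\cO(h_2)\rangle.
\]
This holds because $\iota$ commutes with $p_1$, fixes $h_2$, and therefore preserves the last two components; it then preserves the first one, being its orthogonal. Taking $\mu_2$-invariants componentwise (the equivariant SOD formalism of \cite{KP}) gives
\[
\Db(V_4)^{\mu_2} = \langle \cA_{V_4}^{\mu_2}, \Db(\bP^2)^{\mu_2}, \Db(\bP^2)^{\mu_2}\otimes\cO(h_2)\rangle,
\]
where $\mu_2$ acts trivially on $\Db(\bP^2)$. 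Hence each of the last two components is $\Db(\bP^2)\otimes \mathrm{Rep}(\mu_2)$, i.e.\ two copies of $\Db(\bP^2)$ indexed by the characters.

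Next I would write $\Db(\bP^2\times\bP^2)$ relative to $\bP^2$ via Beilinson as $\langle p^*\Db(\bP^2), p^*\Db(\bP^2)\otimes\cO(h_2), p^*\Db(\bP^2)\otimes\cO(2h_2)\rangle$. I would decompose $\Db(V_3)$ similarly via its conic bundle structure: $\Db(V_3)=\langle \cA_{V_3}, p^*\Db(\bP^2)\rangle$, where $\cA_{V_3}=\Db(\bP^2,\cB_0^{V_3})$. Substituting both into the double cover decomposition yields a second decomposition of $\Db(V_4)^{\mu_2}$, consisting of $\cA_{V_3}$ followed by four copies of $\Db(\bP^2)$. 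Both decompositions thus have the shape $\langle \text{(K3-type or conic part)}, \text{four copies of } \Db(\bP^2)\rangle$.

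The main step is to mutate the four copies of $\Db(\bP^2)$ in one decomposition onto those in the other, so that the remaining components are forced to be equivalent: $\cA_{V_4}^{\mu_2}\simeq\cA_{V_3}$. The key inputs are:
\begin{itemize}
\item the pullback of $\cO(h_2)$ to $V_4$ is $\iota$-invariant with a canonical linearisation;
\item the relative canonical class $K_{V_4/\bP^2}$ satisfies $\cO(K_{V_4})=\cO(-2h_1-2h_2)$;
\item the embedding $\Db(V_3)\to\Db(V_4)^{\mu_2}$ is pushforward along the ramification divisor, whose normal bundle is $\cO(h_1+h_2)$ restricted.
\end{itemize}
These will be used to track the character twists under the mutations. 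I expect this bookkeeping to be the main obstacle: one must identify which character-twisted copy of $\Db(\bP^2)\otimes\cO(kh_2)$ appears where, and check that the mutations (via Serre functors, $S^{-1}=\otimes\cO(2h_1+2h_2)[-4]$) carry one set of four admissible subcategories onto the other.

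An alternative, more direct approach is available if the mutation bookkeeping becomes unwieldy. One would verify that the equivariant even Clifford algebra $\cB_0^{V_4}\rtimes\mu_2$ of the quadric surface bundle is Morita equivalent to the even Clifford algebra of the conic bundle $V_3\to\bP^2$. Fibrewise this is the classical fact that the $\mu_2$-invariant Clifford theory of a double cover of $\bP^n$ branched in a quadric recovers the Clifford algebra of that quadric. This would give $\Db(\bP^2,\cB_0)^{\mu_2}\simeq\Db(\bP^2,\cB_0^{V_3})$ directly.
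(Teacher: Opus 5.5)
Your route differs from the paper's, and it can be completed, but as written the decisive step is only announced. The paper does no mutation bookkeeping. It notes that $\Db(\bP^2\times\bP^2)=\langle\mathcal{B},\mathcal{B}(1,1),\mathcal{B}(2,2)\rangle$, with $\mathcal{B}=\langle\cO,\cO(1,0),\cO(0,1)\rangle$, is a rectangular Lefschetz decomposition, that $V_3\in|\cO(2,2)|$ is twice the Lefschetz class, and then quotes \cite[Theorem 1.1]{KP}.

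Your plan is essentially the proof of that theorem rerun relative to the first projection $p\colon\bP^2\times\bP^2\to\bP^2$. Relative Beilinson and the quadric/conic bundle decompositions replace the absolute Lefschetz decomposition. This buys something: it lands directly on $\cA_{V_4}=\Db(\bP^2,\cB_0)$ and $\cA_{V_3}=\Db(\bP^2,\cB_0^{V_3})$ as the paper defines them. The citation instead yields Kuznetsov--Perry's components, the orthogonals of $\iota$-invariant line bundles. Those still have to be matched, compatibly with $\iota$, with the Clifford ones; this is routine. Your set-up is correct: $\iota$ preserves each component, and there are four copies of $\Db(\bP^2)$ on each side. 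The Morita alternative is unnecessary, and a fibrewise statement would not by itself give a global equivalence.

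What is missing is the character bookkeeping, and one of your listed inputs is wrong in the equivariant category. The Serre functor you need is that of $\Db(V_4)^{\mu_2}$, namely $-\otimes\omega_{V_4}[4]$ with its canonical linearisation. That linearisation carries the sign character $\chi_1$. The ideal of the ramification divisor $R\simeq V_3$ is generated by an anti-invariant local function, so $\cO_{V_4}(-R)\simeq f^*\cO(-1,-1)\otimes\chi_1$. Hence $\omega_{V_4}\simeq f^*\omega_{\bP^2\times\bP^2}(R)\simeq f^*\cO(-2,-2)\otimes\chi_1$, and likewise $f_*\cO_{V_4}\simeq\cO\oplus\cO(-1,-1)\otimes\chi_1$. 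With your trivial-character $S^{-1}$, the Serre move below would produce a piece already present, which is impossible.

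With the signs the argument closes. Write $\chi_0$ for the trivial character, $\mathcal{D}_k^{\chi}\coloneqq f^*p^*\Db(\bP^2)\otimes\cO(kh_2)\otimes\chi$, and $i\colon R\hookrightarrow V_4$. Then
\[
\Db(V_4)^{\mu_2}=\big\langle i_*\cA_{V_3}\otimes\chi_1,\ i_*\big(p^*\Db(\bP^2)\otimes\cO(0,1)|_{R}\big)\otimes\chi_1,\ \mathcal{D}_0^{\chi_0},\ \mathcal{D}_1^{\chi_0},\ \mathcal{D}_2^{\chi_0}\big\rangle .
\]
Three moves finish the proof.
\begin{enumerate}
\item The triangle $f^*(F\otimes\cO(-1,-1))\otimes\chi_1\to f^*F\to i_*(F|_R)$ lets you replace the second and third pieces by $\langle\mathcal{D}_0^{\chi_0},\mathcal{D}_1^{\chi_1}\rangle$. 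The required vanishing $\Hom(\mathcal{D}_1^{\chi_1},\mathcal{D}_0^{\chi_0})=0$ reduces, via $f_*\cO_{V_4}$, to $Rp_*\cO(0,-2)=0$.
\item The Serre functor moves $\mathcal{D}_2^{\chi_0}$ to the far left, where it becomes $\mathcal{D}_0^{\chi_1}$.
\item Moving $\mathcal{D}_0^{\chi_1}$ one step to the right replaces $i_*\cA_{V_3}\otimes\chi_1$ by an equivalent copy $\cA'$.
\end{enumerate}
The four remaining pieces $\mathcal{D}_0^{\chi_0},\mathcal{D}_0^{\chi_1},\mathcal{D}_1^{\chi_0},\mathcal{D}_1^{\chi_1}$ generate exactly $(p_1^*\Db(\bP^2))^{\mu_2}$ and $(p_1^*\Db(\bP^2)\otimes\cO(h_2))^{\mu_2}$. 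So $\cA'$ and $\cA_{V_4}^{\mu_2}$ are the right orthogonal of the same subcategory, which gives $\cA_{V_4}^{\mu_2}=\cA'\simeq\cA_{V_3}$.
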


\begin{proof}
    Note that the derived category of $\bP^2 \times \bP^2$, the base of the double cover $V_4$, admits a rectangular Lefschetz decomposition. Hence, \cite[Theorem 1.1]{KP} gives the desired result.
\end{proof}

\begin{lemma} \label{lem: equivalence descent}
    Let $V_4$ and $V_4'$ be Verra fourfolds whose respective branch loci are smooth ordinary Verra threefolds $V_3$ and $V_3'$. Then
    \[
    \cA_{V_3}\simeq \cA_{V_3'} \implies \cA_{V_4}\simeq \cA_{V_4'} .
    \]
\end{lemma}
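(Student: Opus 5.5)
The plan is to pass from the threefolds to the fourfolds using the equivariant description of Proposition \ref{prop: Ku V_4 V_3 equivalence} together with Elagin-type reconstruction, rather than working lattice-theoretically. Fix a (Fourier--Mukai) equivalence $\Phi\colon \cA_{V_3}\simeq \cA_{V_3'}$. By Proposition \ref{prop: Ku V_4 V_3 equivalence} this is an equivalence $\cA_{V_4}^{\mu_2}\simeq \cA_{V_4'}^{\mu_2}$. Since $\mu_2$ is finite abelian, the equivariant category carries a residual action of the dual group $\widehat{\mu_2}\simeq\bZ/2\bZ$, given by twisting the linearisation by the nontrivial character. The equivariant category of this dual action recovers the original category:
\[
(\cA_{V_4}^{\mu_2})^{\widehat{\mu_2}}\simeq \cA_{V_4}.
\]
This is the reconstruction theorem of Elagin, also used in \cite{KP}. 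So it suffices to show that $\Phi$ can be made to commute with the residual $\bZ/2\bZ$-actions, i.e.\ is $\widehat{\mu_2}$-equivariant.

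To identify the residual action on the threefold side, I would trace through the construction in \cite[Theorem 1.1]{KP}. There the residual involution on $\cA_{V_4}^{\mu_2}\simeq\cA_{V_3}$ is identified with a rotation-type autoequivalence of $\cA_{V_3}$, built from the spherical functor attached to the embedding $V_3\hookrightarrow \bP^2\times\bP^2$. Concretely, it is the composition of $-\otimes\cO(-h_1-h_2)$ (the half of the branch divisor class, which is the line bundle defining the double cover) with left mutation through the Lefschetz part, possibly up to a shift. In the present situation this is essentially the Serre-type functor of $\cA_{V_3}$, suitably normalised: the decomposition $\Db(V_3)=\langle \cA_{V_3},\Db(\bP^2)\rangle$ together with the rectangular Lefschetz structure lets one express the rotation functor in terms of $S_{\cA_{V_3}}$.

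Granting that the residual action is generated by an intrinsic functor of $\cA_{V_3}$ (one expressible via its Serre functor and shifts), any equivalence $\Phi$ commutes with it up to natural isomorphism, since equivalences commute with Serre functors. One must then upgrade this commutation to a genuine equivariant structure, i.e.\ check the cocycle condition. Here I would use the fact that the relevant obstruction is a class in $H^2(\bZ/2\bZ,\bC^*)=0$, after possibly rescaling the chosen isomorphism, so the obstruction vanishes. Taking $\widehat{\mu_2}$-invariants then gives $\cA_{V_4}\simeq\cA_{V_4'}$, and the equivalence is of Fourier--Mukai type when $\Phi$ is, because equivariant kernels descend.

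The main obstacle is the middle step: showing that the residual involution on $\cA_{V_3}$ is intrinsic, i.e.\ is (a shift of) a power of the Serre functor or otherwise canonically determined. If this fails, I would restrict to Fourier--Mukai $\Phi$ and argue via kernels instead. In that approach one checks that the kernel commutes with the rotation functor, which is built from $\cO(h_1+h_2)$ and the projection $\Db(V_3)\to\cA_{V_3}$, using that $\Phi$ intertwines the Serre functors $S_{\cA_{V_3}}$ and $S_{\cA_{V_3'}}$.
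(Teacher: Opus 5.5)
This is essentially the paper's proof. The paper also transports the residual $\widehat{\mu}_2$-action to $\cA_{V_3}$ through $\cA_{V_4}^{\mu_2}\simeq\cA_{V_3}$ and uses that its generator $\tau$ satisfies $S_{\cA_{V_3}}\simeq\tau[2]$. From this it concludes that any equivalence $\cA_{V_3}\simeq\cA_{V_3'}$ intertwines the involutions and descends to the $\widehat{\mu}_2$-equivariant categories, and finishes with Elagin's theorem $\cA_{V_4}\simeq\cA_{V_3}^{\widehat{\mu}_2}$. The coherence check, which you handle via $H^2(\bZ/2\bZ,\bC^*)=0$ and which also needs $\Phi$ to have only scalar automorphisms, is delegated in the paper to earlier references.

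The step you single out as the main obstacle is exactly what the paper asserts without further argument: $\cA_{V_3}$ is an Enriques category whose associated involution is the residual generator. This follows from Kuznetsov's description of the Serre functor of a divisor in a variety with a rectangular Lefschetz decomposition (here $V_3\subset\bP^2\times\bP^2$) in terms of the rotation functor, so your argument is complete once that result is cited.
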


\begin{proof}
    By Proposition \ref{prop: Ku V_4 V_3 equivalence}, we have $\cA_{V_4}^{\mu_2}\simeq \cA_{V_3}$. Denote by $\tau$ the generator of the $\widehat{\mu}_2$-action on $\cA_{V_3}$, where $\widehat{\mu}_2$ is the dual group of $\mu_2$. Indeed, $\cA_{V_3}$ is an Enriques category and $\tau$ is the associated involutive functor of $\cA_{V_3}$. In particular, the Serre functor of $\cA_{V_3}$ is given by $S_{\cA_{V_3}} \simeq \tau[2]$.

    Since the Serre functors of $\cA_{V_3}$ and $\cA_{V_3'}$ are the respective involutive functors generating the action of $\widehat{\mu}_2$ on $\cA_{V_3}$ and $\cA_{V_3'}$, an equivalence $\cA_{V_3} \simeq \cA_{V_3'}$ descends to an equivalence $ \cA_{V_3}^{\widehat{\mu}_2} \simeq  \cA_{V_3'}^{\widehat{\mu}_2}$, by the same argument as in \cite[Theorem 9.9]{JLLZ}, \cite[Lemma 6.2]{DJR}, \cite[Theorem 6.1]{JM}. By \cite[Theorem 1.3]{ElaginTri} and Proposition \ref{prop: Ku V_4 V_3 equivalence}, we also have $\cA_{V_4} \simeq \cA_{V_3}^{\widehat{\mu}_2}$. Hence, we get an equivalence $\cA_{V_4} \simeq \cA_{V_4'}$.
\end{proof}

\begin{corollary} \label{cor: bound on FM partners of verra threefolds}
    Let $V_4$ be a Verra fourfold with branch locus $V_3$. Then we have 
    \[
    |\FM(V_3)| \leq |\FM(V_4)|.
    \]
\end{corollary}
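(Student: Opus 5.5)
We construct a map $\FM(V_3) \to \FM(V_4)$ and show it is injective. Here $\FM(V_3)$ denotes the set of isomorphism classes of smooth ordinary Verra threefolds $V_3'$ with $\cA_{V_3'}\simeq \cA_{V_3}$.

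\emph{Construction of the map.} Given $V_3' \in \FM(V_3)$, note that $V_3'$ is a bidegree $(2,2)$ divisor in $\bP^2\times\bP^2$. We let $V_4'$ be the double cover of $\bP^2\times\bP^2$ branched along $V_3'$, which is a Verra fourfold. By Lemma \ref{lem: equivalence descent}, the equivalence $\cA_{V_3}\simeq\cA_{V_3'}$ yields $\cA_{V_4}\simeq\cA_{V_4'}$. Hence $V_4'\in\FM(V_4)$, and we obtain a map
\[
\FM(V_3)\to\FM(V_4),\qquad V_3'\mapsto V_4'.
\]

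\emph{Well-definedness.} An isomorphism $V_3'\simeq V_3''$ should be induced by an automorphism of $\bP^2\times\bP^2$. The embedding $V_3'\subset \bP^2\times\bP^2$ is given by the two conic bundle structures, that is, by the pair of line bundles $h_1|_{V_3'}, h_2|_{V_3'}$, which are intrinsic up to swapping the factors. Any isomorphism $V_3'\simeq V_3''$ therefore preserves this pair up to swap, and so extends to an automorphism of $\bP^2\times\bP^2$ (possibly exchanging the factors). Such an automorphism lifts to an isomorphism of the double covers branched along these divisors. Hence $V_4'$ depends only on the isomorphism class of $V_3'$.

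\emph{Injectivity.} We recover $V_3'$ from $V_4'$ as the branch locus of the covering involution, i.e. the fixed locus of the $\mu_2$-action. The main obstacle is to check that an isomorphism $V_4'\simeq V_4''$ is compatible with the double cover structures. We would argue that the double cover $V_4'\to\bP^2\times\bP^2$ is the morphism given by $|h_1|\times|h_2|$, and that $h_1,h_2$ are the only nef classes on $V_4'$ with $h_i^3=0$ generating the two extremal rays of the nef cone; this should follow from Lefschetz, since $\Pic(V_4')=\bZ h_1\oplus\bZ h_2$. Consequently, any isomorphism $V_4'\simeq V_4''$ commutes with the projections to $\bP^2\times\bP^2$ up to an automorphism of the base, and therefore maps the branch divisor $V_3'$ isomorphically onto $V_3''$.

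It follows that the map is injective, and hence $|\FM(V_3)|\le|\FM(V_4)|$.
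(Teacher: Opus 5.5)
Your proof is correct and follows the argument the paper intends: the paper gives no written proof and treats the corollary as an immediate consequence of Lemma~\ref{lem: equivalence descent}, via the assignment $V_3'\mapsto V_4'$ (the double cover branched along $V_3'$). Injectivity on isomorphism classes uses the fact, left implicit there, that a Verra fourfold and its branch threefold determine each other. Your nef-cone argument makes that step explicit: the extremal rays spanned by $h_1,h_2$ give the intrinsic contractions $p_1,p_2$, so any isomorphism of Verra fourfolds descends to an automorphism of $\bP^2\times\bP^2$ carrying one branch locus onto the other.
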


We note that non-trivial Fourier--Mukai partners of Verra threefolds have not been constructed yet. Corollary \ref{cor: bound on FM partners of verra threefolds}, combined with Theorem \ref{thm: finitely many FM partners} shows that any Verra threefold has finitely many Fourier--Mukai partners.
We propose the following question in search of Fourier--Mukai partners of Verra threefolds.

\begin{question}
    Suppose we have a polarised K3 surface $(S,H)$ of degree $H^2 = 2$, and suppose that $\beta,\beta'\in \Br(\bP^2\setminus C)_2$ have $\res(\beta) = \res(\beta')$, where $C\subset \bP^2$ is the branch curve of the double cover $\varphi_H \colon S \to \bP^2$. Then are $V_\beta$ and $V_{\beta'}$ Fourier--Mukai partners?
\end{question}

\section{A derived Torelli theorem for Verra fourfolds}

The Kuznetsov component of a Verra fourfold is equivalent to the derived category of a twisted K3 surface. Therefore, the derived Torelli theorem for twisted K3 surfaces \cite[Theorem 0.1]{HScaldararu} induces a derived Torelli theorem for Verra fourfolds. Namely, two Verra fourfolds are Fourier--Mukai partners if and only if there is an orientation-preserving Hodge isometry between the twisted Mukai lattices of their K3 surfaces.

In this section, we show that the existence of a Hodge isometry of transcendental lattices is equivalent to the existence of a Fourier--Mukai equivalence of Kuznetsov components for large classes of Verra fourfolds. Thereby, we establish another derived Torelli theorem for these Verra fourfolds, which is used in Subsection \ref{subsec: Lequivalence} to study $\bL$-equivalence for Verra fourfolds.

\begin{theorem}\label{thm: derived torelli for verra fourfolds}
    Let $V_4$ be a Verra fourfold with associated twisted K3 surfaces $(S_1,\alpha_1)$ and $(S_2,\alpha_2)$. Assume $\rho(S_i) = 2$ for $i=1,2$, and suppose $S_1$ is of type I. Let $V_4'$ be a Verra fourfold. Then the following are equivalent:
    \begin{enumerate}
        \item There is a Fourier--Mukai equivalence $\cA_{V_4} \simeq \cA_{V_4'}$;
        \item There is a Hodge isometry $T(V_4) \simeq T(V_4')$.
    \end{enumerate}
\end{theorem}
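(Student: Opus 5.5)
The plan is to reduce both directions to statements about twisted K3 surfaces, using the Hodge isometry $T(V_4)\simeq T(S_1,\alpha_1)^-(-1)$ from Section \ref{sec: cohomology of Verra fourfolds} and the twisted derived Torelli theorem \cite[Theorem 0.1]{HScaldararu}.

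For (1)$\Rightarrow$(2), a Fourier--Mukai equivalence $\cA_{V_4}\simeq\cA_{V_4'}$ gives $\Db(S_1,\alpha_1)\simeq\Db(S_1',\alpha_1')$, where $(S_1',\alpha_1')$ is an associated twisted K3 surface of $V_4'$. Such an equivalence induces a Hodge isometry of twisted Mukai lattices $\tilde H(S_1,\alpha_1,\bZ)\simeq\tilde H(S_1',\alpha_1',\bZ)$. This isometry restricts to the transcendental parts $T(S_1,\alpha_1)\simeq T(S_1',\alpha_1')$. Twisting by $^-(-1)$ and applying the lemma identifying $T(V_4)$ with $T(S_i,\alpha_i)^-(-1)$ on both sides gives $T(V_4)\simeq T(V_4')$.

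For (2)$\Rightarrow$(1), start from a Hodge isometry $T(V_4)\simeq T(V_4')$. The same lemma converts it into a Hodge isometry $\psi\colon T(S_1,\alpha_1)\simeq T(S_1',\alpha_1')$. Next, compare discriminant groups. By Lemma \ref{lem: group structure of discriminant}, $A_{T(S_1,\alpha_1)}\simeq\bZ/(4d+1)\bZ\oplus(\bZ/2\bZ)^{\oplus 2}$, where $\NS(S_1)\simeq L_{d,1}$. Hence $T(S_1',\alpha_1')$ has rank $20$, so $\rho(S_1')=2$. Its discriminant is $4(4d+1)$; combined with Lemma \ref{lem: comparing k3s in rank 2} (applied to $V_4'$, after replacing $(S_1',\alpha_1')$ by the other associated surface if needed), this gives a twisted surface of $V_4'$ that is of type I with $\NS\simeq L_{d,1}$ and nontrivial Brauer class.

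Then extend $\psi$ to the full twisted Mukai lattices, exactly as in the proof of Theorem \ref{thm: FML and FMVer}. The orthogonal complements of $T(S_1,\alpha_1)$ and $T(S_1',\alpha_1')$ in the twisted Mukai lattices are even, of rank $4$ and signature $(2,2)$, and have discriminant group of length $\le 2$ by Lemma \ref{lem: group structure of discriminant}. So \cite[Theorem 1.14.2]{Nik80} makes them isometric and lets us choose the isometry compatibly with $\overline\psi$ on discriminant groups. Gluing produces a Hodge isometry $\tilde H(S_1,\alpha_1,\bZ)\simeq\tilde H(S_1',\alpha_1',\bZ)$. Since the Brauer classes have order $2$, we may compose with $-\id$ on a suitable summand to make it orientation-preserving (\cite[Remark 4.10]{HStwisted}). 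The twisted derived Torelli theorem then gives a Fourier--Mukai equivalence $\Db(S_1,\alpha_1)\simeq\Db(S_1',\alpha_1')$, hence $\cA_{V_4}\simeq\cA_{V_4'}$.

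The main obstacle is the middle step: checking Nikulin's hypotheses, i.e.\ that the length of the discriminant group of the rank-$4$ complement is at most $\mathrm{rk}-2=2$. This uses that the odd part is cyclic and the $2$-part is $(\bZ/2\bZ)^2$. The type I / $\rho=2$ hypothesis is used precisely to obtain this control via Lemma \ref{lem: group structure of discriminant}. It also rules out the case where the relevant surface of $V_4'$ is untwisted. If that case occurs, one instead uses the untwisted surface and the usual Mukai lattice, with the same gluing argument.
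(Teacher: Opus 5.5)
Your proof is correct, and for (2)$\Rightarrow$(1) it is more direct than the paper's.

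The paper first uses Lemma~\ref{lem: isotropic subgroups of twisted discriminant} and Lemma~\ref{lem: overlattice subgroup correspondence} to ask whether the Hodge isometry $T(S_1,\alpha_1)\simeq T(S_1',\alpha_1')$ is compatible with the overlattices $T(S_1)$ and $T(S_1')$. If it is, the paper runs the gluing argument from the proof of Theorem~\ref{thm: FML and FMVer}. If not, it uses the $\nu$-comparison of Lemma~\ref{lem: comparing k3s in rank 2} to trade $(S_1',\alpha_1')$ for a surface in the same $\nu$-class as $(S_2,\alpha_2)$.

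You avoid this case split entirely. The Nikulin gluing on the rank-$4$ complement of the twisted transcendental lattice inside the twisted Mukai lattice never refers to the untwisted overlattices. It works verbatim when $\alpha_1'$ is trivial. Its only input is $\ell(A_{T(S_1,\alpha_1)})\le 2$, which is exactly where type I and $\rho=2$ enter, via Lemma~\ref{lem: group structure of discriminant}. In particular, your route does not need $O_{\mathrm{Hodge}}(T(V_4))\simeq\bZ/2\bZ$. The paper's second case relies on that implicitly through Lemma~\ref{lem: comparing k3s in rank 2}, which is not among the theorem's hypotheses. In exchange, the paper's route also identifies which associated surface of $V_4'$ shares a $\nu$-class with which surface of $V_4$.

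Two small corrections.

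\emph{The appeal to Lemma~\ref{lem: comparing k3s in rank 2}.} Your intermediate use of this lemma for $V_4'$ is not justified as stated. It takes type I of $(S_1',H_1')$ and $O_{\mathrm{Hodge}}(T(V_4'))\simeq\bZ/2\bZ$ as hypotheses. Nothing later in your argument uses it, so simply drop it.

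\emph{The orientation fix.} It cannot be ``$-\id$ on a summand'' of $N\oplus T$. Composing with $-\id$ on $N$ alone, or on $T$ alone, acts on the positive four-space (a positive $2$-plane in each summand) with determinant $+1$, so it preserves orientation. It also does not extend to the unimodular overlattice when $d>0$, since $-1$ acts nontrivially on $\bZ/(4d+1)\bZ$. What works is the $B$-field conjugate of $\id_{H^2}\oplus(-\id_{H^0\oplus H^4})$. This map is integral because $2B$ is integral, and it is what \cite[Remark 4.10]{HStwisted} supplies.
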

\begin{proof}
    Let $(S_1',\alpha_1')$ and $(S_2', \alpha_2')$ be the twisted K3 surfaces associated with $V_4'$. 

    If $V_4$ and $V_4'$ are Fourier--Mukai partners, then we have $\Db(S_1,\alpha) \simeq \cA_{V_4}\simeq \cA_{V_4'}\simeq \Db(S_1',\alpha_1')$ and in particular there is a Hodge isometry \[T(V_4) \simeq T(S_1,\alpha_1)^-(-1) \simeq T(S_1',\alpha_1')^-(-1) \simeq T(V_4'). \] 

    Conversely, suppose that we have a Hodge isometry $T(V_4) \simeq T(V_4')$. As before, this induces a Hodge isometry $T(S_1,\alpha_1) \simeq T(S_1',\alpha_1')$. 
    Denote by $T$ the underlying lattice structure of $T(S_1)$. Note that $T$ is also the underlying lattice of $T(S_1')$, since $T(S_1,\alpha_1)$ only has one overlattice of index $2$ up to isometry. 
    By Lemma \ref{lem: isotropic subgroups of twisted discriminant} and Lemma \ref{lem: overlattice subgroup correspondence}, there are at most $2$ isomorphism classes of lattice embeddings $T(S_1,\alpha_1) \hookrightarrow T$, and we obtain two such embeddings. The first one is given by the natural embedding $T(S_1,\alpha_1) \hookrightarrow T(S_1)$, and the second embedding is the composition $T(S_1,\alpha_1) \simeq T(S_1',\alpha_1') \hookrightarrow T(S')$. 

    Suppose first that these two embeddings are isomorphic. That is, assume that there is a commutative diagram of Hodge lattice embeddings:
    \[
    \xymatrix{
    T(S_1,\alpha_1) \ar@{^(->}[r] \ar[d]_\simeq & T(S_1) \ar[d]_\simeq \\
    T(S_1',\alpha_1') \ar@{^(->}[r] & T(S_1').
    }
    \]
    Then, as in the proof of Theorem \ref{thm: FML and FMVer}, there is an equivalence $\Db(S) \simeq \Db(S')$ which induces an equivalence $\Db(S,\alpha) \simeq \Db(S',\alpha')$. Thus, in this case $V_4$ and $V_4'$ are Fourier--Mukai partners.

    On the other hand, if the two embeddings are not isomorphic, then there are two elements in $O_\mathrm{Hodge}(T(S_1,\alpha_1)\backslash I^2(A_{T(S_1,\alpha_1)})$, and we have $\nu(S_1',\alpha_1') \neq \nu(S_2',\alpha_2')$. Therefore, we have $\nu(S_1',\alpha_1') = \nu(S_2,\alpha_2)$. Now it follows from the steps above that there is an equivalence $\Db(S_1',\alpha_1') \simeq \Db(S_2,\alpha_2)$, hence $V_4$ and $V_4'$ are Fourier--Mukai partners in this case as well. 
\end{proof}

We note the following Torelli theorem for very general Verra fourfolds, which can also be thought of as a derived Torelli theorem.

\begin{proposition}\label{prop: torelli for very general verra with transcendental}
    Let $V_4$ be a Verra fourfold whose associated twisted K3 surface $(S,\alpha)$ has Picard rank $\rho(S)=1$, and whose associated K3 surfaces are not isomorphic. Let $V_4'$ be a Verra fourfold such that there is a Hodge isometry $T(V_4) \simeq T(V_4')$. Then $V_4$ and $V_4'$ are isomorphic.
\end{proposition}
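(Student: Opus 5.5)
Starting from a Hodge isometry $\psi\colon T(V_4)\simeq T(V_4')$, I will produce an isomorphism of twisted polarised K3 surfaces between an associated twisted K3 surface of $V_4$ and one of $V_4'$. I then recover the Verra fourfold from that twisted K3 surface via the uniqueness of the preimage under $\res$. Since $\rho(S)=1$, we have $\NS(S_i)=\bZ H_i$ with $H_i^2=2$, and $T(S_i)=H^2_\mathrm{prim}(S_i,\bZ)$. Also $T(V_4)=H^4_\mathrm{prim}(V_4,\bZ)\simeq\verlat$, and by \eqref{eq: laszlos sequence} the two overlattices $\Phi_i\colon T(V_4)\hookrightarrow T(S_i)^-(-1)$ correspond to the isotropic subgroups $\langle\tfrac12\overline{t_2}\rangle$ and $\langle\tfrac12\overline{t_1}\rangle$ of $A_{T(V_4)}$. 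First I will check that $\rho(S_1')=1$ as well. Indeed, $T(V_4')\simeq\verlat$ has rank $22$, so $T(V_4')=H^4_\mathrm{prim}(V_4',\bZ)$ and both K3 surfaces of $V_4'$ have Picard rank $1$.

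The key step is to show that $\psi$ transports the subgroup $\{\langle\tfrac12\overline{t_1}\rangle,\langle\tfrac12\overline{t_2}\rangle\}$ of $A_{T(V_4)}$ to the corresponding pair for $V_4'$. I expect this to be the main obstacle, since $\psi$ is an arbitrary isometry of $\verlat$ (a priori not respecting the ambient class $A$). The plan is to identify these two subgroups intrinsically. They are precisely the order-two isotropic elements $\xi$ of $A_{\verlat}\simeq(\bZ/2)^2\oplus\bZ/2$ (coming from $U(2)\oplus\langle2\rangle$) whose associated overlattice is isometric to $H^2_\mathrm{prim}$ of a degree-two K3 surface, i.e. to $\langle-2\rangle\oplus U^2\oplus E_8(-1)^2$ after the sign change, which has discriminant group $\bZ/2$ with $q=\mp\tfrac12$. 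I will compute $q$ on $A_{\verlat}$ explicitly: $U(2)$ contributes values $0,0,1$ mod $2\bZ$, and $\langle2\rangle$ contributes $\tfrac12$. From this one should find that the isotropic nonzero elements with the correct overlattice discriminant form are exactly $\tfrac12\overline{t_1}$ and $\tfrac12\overline{t_2}$; the remaining candidate either has the wrong quadratic value on $\xi^\perp/\xi$ or is non-isotropic. Hence $\psi$ sends $\{\tfrac12\overline{t_1},\tfrac12\overline{t_2}\}$ to $\{\tfrac12\overline{t_1'},\tfrac12\overline{t_2'}\}$. After relabelling, Lemma~\ref{lem: overlattice subgroup correspondence} extends $\psi$ to a Hodge isometry $T(S_1)\simeq T(S_1')$ compatible with $\alpha_1,\alpha_1'$, because $\alpha_i$ is the restriction of $\gamma_i$.

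Next, I will extend this Hodge isometry of $H_1^\perp$ to a Hodge isometry $H^2(S_1,\bZ)\simeq H^2(S_1',\bZ)$ sending $H_1$ to $H_1'$. This works since $H^\perp\hookrightarrow\klat$ is unique with $A_{H^\perp}=\bZ/2$, so $O(A)$ is trivial and the gluing is automatic. Composing with $-1$ if needed, the isometry maps the ample class to the ample class. The strong Torelli theorem then gives $f\colon S_1'\simeq S_1$ with $f^*H_1=H_1'$ and $f^*\alpha_1=\alpha_1'$, so $(S_1,H_1,\alpha_1)\simeq(S_1',H_1',\alpha_1')$.

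Finally, I will use the exact sequence of Ford recalled before Theorem~\ref{thm: kernel and cokernel of res}: for $\rho=1$, $\res$ is injective. So $\alpha_1$ determines a unique $\beta\in\Br(\bP^2\setminus C_1)_2$, hence a unique Verra threefold $V_\beta$, hence a unique Verra fourfold by \cite[Proposition 4.23]{JM}. Therefore $V_4\simeq V_4'$. The hypothesis that the two K3 surfaces of $V_4$ are non-isomorphic is used as a safeguard in the relabelling step: it guarantees that the two subgroups are genuinely distinct, as in Proposition~\ref{prop: k3s not isomorphic in general}, so that the transport argument is well posed.
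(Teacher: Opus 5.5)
Your proposal is correct and follows essentially the paper's route. You match one of the two index-two overlattices $T(V_4)\subset T(S_i)^-(-1)$ with one coming from $V_4'$, use $\rho=1$ to upgrade this to an isomorphism of twisted polarised K3 surfaces, and recover the Verra fourfold. Your observation that $A_{\verlat}$ has exactly two non-zero isotropic elements makes precise the paper's appeal to \cite[\S9.8]{vGe}, and your gluing and Ford-injectivity argument replaces its citations of \cite{StellariFMrankonetwo} and \cite[Theorem 7.2]{JM}.

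Two small corrections. First, $\verlat$ has rank $21$, not $22$; the argument is unaffected. Second, the distinctness of $\tfrac12\overline{t_1}$ and $\tfrac12\overline{t_2}$, which you attribute to the non-isomorphism hypothesis, is automatic: $\bigl(\tfrac12(t_1-t_2)\bigr)^2=-1$ is odd, so this element cannot lie in the even lattice $\verlat$. Your bijection-of-isotropic-elements argument does not need distinctness anyway.
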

\begin{proof}
    Choose an associated twisted K3 surface $(S',\alpha')$ of $V_4'$ such that there is a commutative square
    \[
    \xymatrix{
    T(S,\alpha) \ar@{^(->}[r] \ar[d]_\simeq & T(S) \ar[d]_\simeq \\
    T(S',\alpha') \ar@{^(->}[r] & T(S').
    }
    \]
    Such a twisted K3 surface exists because $T(S,\alpha)$ has at most two overlattices of index $2$, up to Hodge isometries \cite[\S9.8]{vGe}. In particular, $S$ and $S'$ are Fourier--Mukai partners by the derived Torelli theorem for K3 surfaces. Thus we have an isomorphism $f\colon S\simeq S'$ with $f^*\alpha' = \alpha$, since $S$ has no non-trivial Fourier--Mukai partners \cite[Corollary 2.2]{StellariFMrankonetwo}. Therefore, $V_4$ and $V_4'$ have isomorphic associated twisted K3 surfaces, thus they are isomorphic by the categorical Torelli theorem for very general Verra fourfolds \cite[Theorem 7.2]{JM}.
\end{proof}

Note that, in the setting of Proposition \ref{prop: torelli for very general verra with transcendental}, we have $T(V_4) = H^4_\mathrm{prim}(V_4,\bZ)$. Therefore, Proposition \ref{prop: torelli for very general verra with transcendental} asserts that two very general Verra fourfolds are isomorphic if and only if their primitive cohomologies are Hodge isometric. 

The results in this section raise two important questions:

\begin{question}
\leavevmode
    \begin{itemize} 
        \item[(1)] \normalfont{(Derived Torelli for Verra fourfolds)}
        Which Verra fourfolds satisfy derived Torelli? That is, can we classify the Verra fourfolds $V_4$ for which the existence of an equivalence $\cA_{V_4}\simeq \cA_{V_4'}$ is equivalent to the existence of a Hodge isometry $T(V_4) \simeq T(V_4')$?
        \item[(2)] \normalfont{(Torelli for Verra fourfolds)} Let $V_4$ and $V_4'$ be Verra fourfolds. Is it true that $V_4$ and $V_4'$ are isomorphic if and only if there is a Hodge isometry $H^4_\mathrm{prim}(V_4,\bZ) \simeq H^4_\mathrm{prim}(V_4',\bZ)$?
    \end{itemize}
\end{question}

\subsection{$\bL$-equivalence for Verra fourfolds} \label{subsec: Lequivalence}
We now apply Theorem \ref{thm: derived torelli for verra fourfolds} to $\bL$-equivalence. Our main references for $\bL$-equivalence are \cite{KS,MeinsmaOnLEquivalence,Mei}. 

Two varieties $X,Y$ are \textit{$\bL$-equivalent}, denoted $X\sim_\bL Y$, if there is a non-negative integer $n \in \bZ$ such that 
\[
\bL^n \cdot ([X]-[Y]) = 0 \in \gravc,
\]
where $\gravc$ is the Grothendieck ring of varieties, and $\bL = [\bA^1]$ is the class of the affine line. It is conjectured that $\bL$-equivalence implies derived equivalence for hyperk\"ahler manifolds, and these ideas extend to Fano varieties of K3 type \cite{MM,BL}:

\begin{conjecture}[Fano $\bL$-equivalence conjecture] \label{conj: Fano L equivalence conjecture}
    Let $X$, $Y$ be Fano varieties of K3 type, with Kuznetsov components $\cA_X$ and $\cA_Y$, and assume $X$ and $Y$ are $\bL$-equivalent. Then there is an equivalence $\cA_{X} \simeq \cA_{Y}$.
\end{conjecture}

The main strategy for tackling Conjecture \ref{conj: Fano L equivalence conjecture} is by making use of the Hodge realisation map $\gravc \to \gghs$ combined with derived Torelli theorems, first carried out by Efimov \cite{Efi18}. 

\begin{lemma}\label{lem: L equivalence implies Hodge isometry}
    Let $V_4$ and $V_4'$ be Verra fourfolds, and assume that we have $V_4\sim_\bL V_4'$. Assume moreover that $\End_{\mathrm{Hodge}}(T(V_4)) \simeq \bZ$ and that $\rk(T(V_4)) \neq 4$. Then there is a Hodge isometry $T(V_4) \simeq T(V_4')$. 
\end{lemma}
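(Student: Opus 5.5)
The plan is to follow Efimov's strategy via the Hodge realisation. Apply the motivic Hodge realisation $\gravc \to \gghs$, or rather its weight-$4$ analogue landing in $\gghsfour$ (the Grothendieck group of polarisable integral Hodge structures of weight $4$ with respect to direct sums). Since $\bL$ maps to the class of $\bZ(-1)$, which is invertible after tensoring, the relation $\bL^n([V_4]-[V_4'])=0$ yields $[H^4(V_4,\bZ)] = [H^4(V_4',\bZ)]$ in $\gghsfour$. Here we use that the cohomology of a Verra fourfold is torsion-free and concentrated in even degrees. The odd cohomology vanishes, and the other even degrees are of Tate type, so they do not interfere.

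Next, pass from equality of classes to isomorphism of Hodge structures. The group $\gghsfour$ is built from direct sums, and polarisable integral Hodge structures satisfy a Krull--Schmidt-type property up to isogeny. To avoid isogeny issues, I would use Efimov's argument (as adapted in \cite{MeinsmaOnLEquivalence}) that equality of classes implies the existence of an isomorphism $H^4(V_4,\bZ)\oplus M \simeq H^4(V_4',\bZ)\oplus M$ for some Hodge structure $M$.

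From this, restrict to transcendental parts. The transcendental part $T(V_4)$ is the minimal sub-Hodge structure containing $H^{3,1}$. Because $\End_{\mathrm{Hodge}}(T(V_4))\simeq \bZ$, it is an indecomposable summand whose rational endomorphisms are $\bQ$. Cancellation then gives a Hodge isomorphism (not yet isometry) $\phi\colon T(V_4)\simeq T(V_4')$.

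The main obstacle is upgrading $\phi$ to an isometry. The two forms $b$ and $b'\circ(\phi\times\phi)$ are both polarisations on $T(V_4)$, i.e.\ Hodge-invariant bilinear forms, so they differ by a Hodge endomorphism. Since $\End_{\mathrm{Hodge}}(T(V_4))=\bZ$, we get $b'(\phi x,\phi y) = \lambda\, b(x,y)$ for some $\lambda\in \bQ^\times$.

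To show $\lambda=\pm1$, I would follow Efimov's argument, which is where $\rk(T(V_4))\neq 4$ enters. Compare discriminants: $\disc(T(V_4')) = \lambda^{\rk}\disc(T(V_4))$. Together with sign/signature considerations (the signature is $(\rk-2,2)$ up to Tate twist, forcing $\lambda>0$ when $\rk-2\neq 2$), this pins down $\lambda$ once one also uses the unimodular ambient $H^4$ and the matching algebraic parts.

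More precisely, one uses the refined statement (as in \cite{Efi18}) that $\bL$-equivalence yields a stable isometry of the full unimodular lattices. Orthogonal complements of isomorphic algebraic lattices $A(V_4)\simeq A(V_4')$, the latter coming from the Tate part of the decomposition, then have equal discriminant forms. Hence $\lambda^{\rk}=1$, and positivity excludes $\lambda=-1$ unless the signature is symmetric, which is the excluded rank-$4$ case. This gives the desired Hodge isometry $T(V_4)\simeq T(V_4')$.
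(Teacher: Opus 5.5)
\paragraph{Where the proposal breaks.} Your skeleton matches what the paper gets by citing \cite[Proposition 3.11]{MeinsmaOnLEquivalence}. You obtain an integral Hodge isomorphism $\phi\colon T(V_4)\simeq T(V_4')$ from the Hodge realisation. You then write $\phi^*b'=\lambda b$ with $\lambda\in\bQ^\times$, using $\End_{\mathrm{Hodge}}(T(V_4))=\bZ$, and note $\disc T(V_4')=|\lambda|^{\rk}\disc T(V_4)$. The gap is the step that should force $|\lambda|=1$, namely $\disc T(V_4)=\disc T(V_4')$. Your justification for it does not work.

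The Hodge realisation $\gravc[\bL^{-1}]\to\gghsfour$ lands in the split Grothendieck group of integral Hodge structures, which forgets the intersection form entirely. So it gives no ``stable isometry of the full unimodular lattices'' (nor is one proved in \cite{Efi18}). The Tate part of a stable isomorphism $H^4(V_4,\bZ)\oplus M\simeq H^4(V_4',\bZ)\oplus M$ only says that the $H^{2,2}$-parts are stably isomorphic as free abelian groups, i.e.\ have the same rank. It says nothing about the lattice $A(V_4)$. Hence neither $A(V_4)\simeq A(V_4')$ nor equality of discriminant forms follows, and without that your argument leaves $\lambda$ undetermined.

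\paragraph{The missing idea.} You need an invariant that is defined purely in terms of the integral Hodge structure, without the form, yet still detects $\disc T$. The paper uses Meinsma's discriminant map
\[
[X]\longmapsto \Bigl|\bigl(H^4(X,\bZ)/(T(X)\oplus H^{2,2}(X,\bZ))\bigr)_{\mathrm{tors}}\Bigr|.
\]
It is multiplicative on direct sums and therefore factors through $\gravc$ as an invariant of $\bL$-equivalence. Since $H^4(V_4,\bZ)$ is unimodular and $T(V_4)$, $A(V_4)$ are mutually orthogonal primitive sublattices, this number equals $\disc T(V_4)$. This gives $\disc T(V_4)=\disc T(V_4')$, hence $|\lambda|^{\rk}=1$. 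Your signature argument for $\rk\neq 4$ then excludes $\lambda=-1$ and finishes the proof; Hodge--Riemann on $H^{3,1}$ would in fact give $\lambda>0$ directly.

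\paragraph{A secondary point.} When $\disc T(V_4)>1$, $T(V_4)$ is not a direct summand of $H^4(V_4,\bZ)$, so plain cancellation from indecomposability is not available. The integral Hodge isomorphism has to come from Efimov's argument applied to the $T(V_4)_\bQ$-isotypic parts, and you should cite that argument rather than rederive it this way.
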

\begin{proof}
    Recall from \cite{MeinsmaOnLEquivalence} that we have the discriminant map:
    \[
    \functionstar{K_0(\operatorname{Var}_\bC)}{\bQ^\times}{\text{[}X]}{|\left(\frac{H^4(X,\bZ)}{T(X) \oplus H^{2,2}(X,\bZ)}\right)_{\operatorname{tors}}|}
    \]
    which takes value $\disc(T(V_4))$ on $[V_4]$. Therefore, we can apply \cite[Proposition 3.11]{MeinsmaOnLEquivalence} to obtain a Hodge isometry $T(V_4) \simeq T(V_4')$.
\end{proof}

\begin{theorem}\label{thm: L implies D for Verra of type I}
     Let $V_4$ be a Verra fourfold, and let $(S,H,\alpha)$ be one of the two associated twisted polarised K3 surfaces of $V_4$. Assume that $V_4$ satisfies one of the following:
     \begin{itemize}
         \item[(a)] $V_4$ is very general, that is, we have $\End_\mathrm{Hodge}(T(V_4)) \simeq \bZ$ and $\rho(S) = 1$;
         \item[(b)] We have $\rho(S) = 2$, and $(S,H)$ is a double plane of type I, and $V_4$ is very general with these properties, that is, we have $\End_\mathrm{Hodge}(T(V_4)) \simeq \bZ$.
     \end{itemize}
     Then, for any Verra fourfold $V_4'$, if $V_4$ and $V_4'$ are $\bL$-equivalent, then $V_4$ and $V_4'$ are Fourier--Mukai partners:
     \[
     V_4 \sim_\bL V_4' \implies \cA_{V_4}\simeq \cA_{V_4'}.
     \]
\end{theorem}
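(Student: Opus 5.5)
The plan is to factor the implication through a Hodge isometry of transcendental lattices. First, use Lemma \ref{lem: L equivalence implies Hodge isometry} to go from $\bL$-equivalence to $T(V_4)\simeq T(V_4')$. Then, in each of the cases (a) and (b), use a derived or classical Torelli statement to go from this Hodge isometry to $\cA_{V_4}\simeq \cA_{V_4'}$.

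\emph{Step 1: the rank hypothesis.} To invoke Lemma \ref{lem: L equivalence implies Hodge isometry}, I need $\End_{\mathrm{Hodge}}(T(V_4))\simeq \bZ$, which is assumed in both cases, and $\rk T(V_4)\neq 4$. For the rank, recall that $T(V_4)\simeq T(S,\alpha)^-(-1)$ has rank $22-\rho(S)$. In case (a) this is $21$, and in case (b) it is $20$. Both differ from $4$, so the lemma gives a Hodge isometry $T(V_4)\simeq T(V_4')$.

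\emph{Step 2, case (b).} This case is immediate from Theorem \ref{thm: derived torelli for verra fourfolds}. To apply it I must check its hypotheses: $\rho(S_i)=2$ for both associated K3 surfaces and $S_1$ of type I, taking $(S_1,H_1,\alpha_1)=(S,H,\alpha)$. Since $T(S_1,\alpha_1)\simeq T(V_4)^-(-1)\simeq T(S_2,\alpha_2)$, both K3 surfaces have transcendental lattices of rank $20$, so $\rho(S_2)=2$ as well. The theorem then turns the Hodge isometry from Step 1 into a Fourier--Mukai equivalence $\cA_{V_4}\simeq\cA_{V_4'}$.

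\emph{Step 2, case (a).} I would reduce to Proposition \ref{prop: torelli for very general verra with transcendental}, which even yields $V_4\simeq V_4'$ and hence $\cA_{V_4}\simeq \cA_{V_4'}$. That proposition additionally requires the two associated K3 surfaces of $V_4$ to be non-isomorphic.

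This extra hypothesis is where I expect the main obstacle. I would try to get it from Proposition \ref{prop: k3s not isomorphic in general}, noting that $\End_\mathrm{Hodge}(T(V_4))\simeq\bZ$ forces $O_\mathrm{Hodge}(T(V_4))=\{\pm\id\}$. However, that proposition gives non-isomorphism of the twisted surfaces $(S_1,\alpha_1)\not\simeq(S_2,\alpha_2)$, not of the untwisted $S_1,S_2$.

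To avoid this gap, I would rerun the argument of Proposition \ref{prop: torelli for very general verra with transcendental} without the extra hypothesis. The Hodge isometry $T(S,\alpha)\simeq T(S',\alpha')$ extends to some index-$2$ overlattice $T(S')$ for a suitable choice of associated surface $(S',\alpha')$ of $V_4'$, by \cite[\S9.8]{vGe}. Hence $T(S)\simeq T(S')$ as Hodge lattices. Since $\rho(S)=1$, the surface $S$ has no nontrivial Fourier--Mukai partners \cite{StellariFMrankonetwo}, so $S\simeq S'$ compatibly with the Brauer classes. Therefore $\Db(S,\alpha)\simeq\Db(S',\alpha')$, and so $\cA_{V_4}\simeq\cA_{V_4'}$. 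This weaker conclusion, an equivalence rather than an isomorphism, is all the theorem needs, so the non-isomorphism hypothesis can be dropped.
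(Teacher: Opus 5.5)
Your proposal is correct and follows the paper's proof. Lemma \ref{lem: L equivalence implies Hodge isometry} turns $\bL$-equivalence into a Hodge isometry $T(V_4)\simeq T(V_4')$; Theorem \ref{thm: derived torelli for verra fourfolds} then settles (b), and the argument of Proposition \ref{prop: torelli for very general verra with transcendental} settles (a), with your checks of $\rk T(V_4)\neq 4$ and $\rho(S_2)=2$ filling in details the paper leaves implicit. Your workaround in case (a) is valid but not needed: under $\End_{\mathrm{Hodge}}(T(V_4))\simeq\bZ$, any Hodge isometry $T(S_1)\simeq T(S_2)$ becomes $\pm\id$ on $T(V_4)_\bQ$ after the identifications $\Phi_1,\Phi_2$, so it cannot carry one of the two distinct index-$2$ overlattices of $T(V_4)$ to the other; hence $S_1\not\simeq S_2$ holds automatically and the proposition applies as stated.
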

\begin{proof}
    By Lemma \ref{lem: L equivalence implies Hodge isometry}, $\bL$-equivalent Verra fourfolds have Hodge isometric transcendental lattices. The result follows from this fact, combined with the derived Torelli theorems for Verra fourfolds satisfying \textit{(a)} (Proposition \ref{prop: torelli for very general verra with transcendental}) or \textit{(b)} (Theorem \ref{thm: derived torelli for verra fourfolds}).
\end{proof}

\begin{remark}
    Note that Theorem \ref{thm: L implies D for Verra of type I} holds for all Verra fourfolds that satisfy derived Torelli with $T(V_4)$.

    Moreover, since very general Verra fourfolds (whose associated twisted K3 surfaces have Picard rank $\rho = 1$) satisfy the Torelli theorem (cf. Proposition \ref{prop: torelli for very general verra with transcendental}), $\bL$-equivalence implies isomorphism for these Verra fourfolds.
\end{remark}

Recall from Theorem \ref{thm: finitely many FM partners} that a Verra fourfold has finitely many Fourier--Mukai partners. Therefore, if the Fano $\bL$-equivalence conjecture holds, $\bL$-equivalence classes of Verra fourfolds should also be finite. We now show that this is indeed the case.

\begin{theorem}\label{thm: L equivalence classes are finite}
    Given a Verra fourfold $V_4$, there are finitely many Verra fourfolds that are $\bL$-equivalent to $V_4$.
\end{theorem}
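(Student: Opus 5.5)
Since Verra fourfolds are not assumed very general here, the plan avoids Theorem \ref{thm: L implies D for Verra of type I} and Lemma \ref{lem: L equivalence implies Hodge isometry}. Instead we use a weaker invariant of $\bL$-equivalence: the Hodge structure on $H^4(V_4,\bQ)$ up to rational Hodge isometry, or at least the integral transcendental lattice up to a finite ambiguity. We then bound the number of Verra fourfolds realising a given such invariant by combining the finiteness results already proved.

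\textbf{Step 1.} Use the Hodge realisation $\gravc\to\gghsfour$ as in \cite{Efi18,MeinsmaOnLEquivalence}. If $V_4\sim_\bL V_4'$, then $[H^4(V_4,\bZ)]=[H^4(V_4',\bZ)]$ in the Grothendieck group of integral Hodge structures of weight $4$, up to the Tate twist by $\bL^n$, which cancels. Splitting off the algebraic part, we get an isomorphism of rational Hodge structures $T(V_4)_\bQ\simeq T(V_4')_\bQ$. By \cite{MeinsmaOnLEquivalence}, the discriminant invariant additionally gives $\disc T(V_4)=\disc T(V_4')$. In particular $T(V_4')$ is an integral lattice of fixed rank and discriminant whose rational Hodge structure is isomorphic to $T(V_4)_\bQ$.

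\textbf{Step 2.} Prove that only finitely many integral Hodge lattices $T'$, up to Hodge isometry, have the following properties: $T'$ sits inside $T(V_4)_\bQ$ as a lattice with quadratic form a rational multiple of the given one, compatible with the Hodge structure, and $T'$ has the prescribed discriminant and signature. The lattice is determined up to a scalar by the polarisation being unique up to $\End_{\mathrm{Hodge}}$. Since $\End_{\mathrm{Hodge}}(T(V_4)_\bQ)$ is a number field $E$ (totally real or CM), Hodge isometry classes correspond to lattices in $T(V_4)_\bQ$ of bounded discriminant modulo the action of the unit group. We expect this to be finite by Zarhin-type arguments, or more directly by the argument used in \cite{HStwisted} to prove finiteness of twisted Fourier--Mukai partners. \textbf{This step is the main obstacle.} We would first try to reduce it to the finiteness statement in \cite[Corollary 0.5]{HStwisted} and the lattice-theoretic finiteness in \cite{MeinsmaOnLEquivalence}, treating $T(V_4')$ as a twisted transcendental lattice $T(S',\alpha')^-(-1)$ via the Hodge isometry $T(V_4)\simeq T(S_i,\alpha_i)^-(-1)$ from Section \ref{sec: cohomology of Verra fourfolds}.

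\textbf{Step 3.} Pass from the transcendental lattice to the twisted K3 surface. Given $T(V_4')$, the associated $(S_1',\alpha_1')$ satisfies $T(S_1',\alpha_1')\simeq T(V_4')^-(-1)$, and $T(S_1')$ is one of finitely many index-$2$ overlattices, one for each element of $I^2(A_{T(S_1',\alpha_1')})$. The derived Torelli theorem for K3 surfaces shows that $T(S_1')$ determines $S_1'$ up to its finitely many Fourier--Mukai partners. By Lemma \ref{lem: overlattice subgroup correspondence}, the inclusion $T(S_1',\alpha_1')\subset T(S_1')$ determines $\alpha_1'$ up to the finite group $O_{\mathrm{Hodge}}(T(S_1'))$. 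Hence only finitely many twisted K3 surfaces $(S_1',\alpha_1')$ occur.

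\textbf{Step 4.} Conclude as in the proof of Theorem \ref{thm: finitely many FM partners}. Each $(S_1',\alpha_1')$ admits finitely many degree-$2$ polarisations up to automorphism by Lemma \ref{lem: finitely many double covers}. For each polarisation, $\ker(\res)$ is finite by Theorem \ref{thm: kernel and cokernel of res}, so there are finitely many classes $\beta$. Each $\beta$ determines a Verra threefold and hence a Verra fourfold by \cite[Lemma 5.9]{JM} and \cite[Proposition 4.23]{JM}. Therefore only finitely many Verra fourfolds are $\bL$-equivalent to $V_4$.
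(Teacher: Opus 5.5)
\textbf{Step 2 is a genuine gap: it is false as formulated.} Your Steps 3--4 are essentially the paper's Lemma \ref{lem: finite up to hodge isometry}, but the finiteness they take as input is never established. After Step 1 you keep only $T(V_4)_\bQ$, the signature and the discriminant, and these data do not pin down the integral Hodge lattice up to finitely many choices.

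Take $V_4$ very general, so that for $T=T(V_4)$ (rank $21$, indefinite) we have $\End_{\mathrm{Hodge}}(T_\bQ)=\bQ$. Then the only Hodge isometries of $T_\bQ$ are $\pm\id$, and the ``unit group'' in your Step 2 is just $\set{\pm 1}$. For each prime $p\nmid 2\disc T$, pick $v\in T\setminus pT$ with $v^2\equiv 0\pmod{2p^2}$, and set
\[
T'_p\coloneqq\set{x\in T \mid x\cdot v\equiv 0 \pmod p}+\bZ\tfrac{v}{p}\subset T_\bQ .
\]
These lattices have the following properties.
\begin{enumerate}
\item $T'_p$ is an even lattice in the genus of $T$, with the restricted form, the induced Hodge structure, and the same signature and discriminant.
\item $T'_p\cap T$ has index $p$ in $T$, so the $T'_p$ are pairwise distinct.
\item A Hodge isometry $T'_p\simeq T'_q$ would extend to a Hodge isometry of $T_\bQ$, hence to $\pm\id$, forcing $p=q$.
\end{enumerate}
So infinitely many Hodge isometry classes satisfy every condition of Step 2, and no Zarhin-type argument can rescue it. The finiteness in \cite[Corollary 0.5]{HStwisted} rests on \emph{integral} Hodge isometries of twisted Mukai lattices, not rational ones.

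\textbf{The missing idea is to keep the integral class.} Hodge realisation gives $[H^4(V_4,\bZ)]=[H^4(V_4',\bZ)]$ in $\gghsfour$, which means $H\oplus M\simeq H'\oplus M$ for some $M$. This is a genuinely integral condition. By \cite[Theorem 2.7]{Efi18}, such a class contains only finitely many objects up to isomorphism, via a Jordan--Zassenhaus-type finiteness. That yields finitely many $H^4(V_4',\bZ)$ up to Hodge \emph{isomorphism}. The paper then passes to Hodge \emph{isometry} as in \cite[Proposition 5.1]{MM} (Lemma \ref{lem: finite up to hodge isomorphism}). Only at that point does the argument of your Steps 3--4 (Lemma \ref{lem: finite up to hodge isometry}) finish the proof. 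The discriminant invariant of \cite{MeinsmaOnLEquivalence} plays no role in this theorem.
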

\begin{proof}
    As in \cite{Efi18,Mei,MeinsmaOnLEquivalence}, it follows from the existence of the Hodge realisation morphism: 
    \[
    \functionstar{\gravc[\bL^{-1}]}{\gghsfour}{\ [X]}{[H^4(X,\bZ)]}
    \]
    that any two Verra fourfolds $V_4$ and $V_4'$ which are $\bL$-equivalent satisfy $[H^4(V_4,\bZ)] = [H^4(V_4',\bZ)] \in \gghsfour$.
    The class $[H^4(V_4,\bZ)] \in \gghsfour$ contains finitely many isomorphism classes of objects by \cite[Theorem 2.7]{Efi18}. The result now follows from Lemma \ref{lem: finite up to hodge isomorphism} below.
\end{proof}

\begin{lemma} \label{lem: finite up to hodge isometry}
     Let $V_4$ be a Verra fourfold. Then there are finitely many isomorphism classes of Verra fourfolds $V_4'$ for which there is a Hodge isometry $H^4(V_4,\bZ) \simeq H^4(V_4',\bZ)$.
\end{lemma}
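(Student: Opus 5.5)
Given a Hodge isometry $H^4(V_4,\bZ)\simeq H^4(V_4',\bZ)$, the plan is to reduce to the finiteness of twisted Fourier--Mukai partners together with Lemma \ref{lem: finitely many double covers} and the finiteness of $\ker(\res)$, exactly as in Theorem \ref{thm: finitely many FM partners}.

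\textbf{Step 1: pass to transcendental lattices.} Since $T(V_4)=A(V_4)^\perp$ and $A(V_4)=H^{2,2}\cap H^4(V_4,\bZ)$ are determined intrinsically by the integral Hodge structure, a Hodge isometry $\psi\colon H^4(V_4,\bZ)\simeq H^4(V_4',\bZ)$ restricts to a Hodge isometry $T(V_4)\simeq T(V_4')$. By the lemma identifying $T(V_4)\simeq T(S_i,\alpha_i)^-(-1)$, this gives a Hodge isometry $T(S_1,\alpha_1)\simeq T(S_1',\alpha_1')$. (If the discriminant sextic is singular, I would note that the Laszlo sequence still gives this, or simply work with smooth ones as in the rest of the paper.)

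\textbf{Step 2: finitely many twisted K3 surfaces.} Up to isomorphism, there are only finitely many twisted K3 surfaces $(S',\alpha')$ of order $2$ with $T(S',\alpha')$ Hodge isometric to a fixed Hodge lattice $T$. The argument is as follows.
\begin{enumerate}
\item $T(S')$ is an index-$2$ overlattice of $T(S',\alpha')$. By Lemma \ref{lem: overlattice subgroup correspondence} it is determined by an order-$2$ isotropic element of the finite group $A_T$, so there are finitely many choices for the Hodge lattice $T(S')$.
\item For each such choice, there are finitely many K3 surfaces with that transcendental lattice. This holds because $T(S')$ determines the genus of $\NS(S')$, each genus contains finitely many classes, and each primitive embedding datum $(\NS,\text{gluing})$ gives finitely many choices. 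This is the standard finiteness of Fourier--Mukai partners \cite[Corollary 0.5]{HStwisted}.
\item Given $S'$, the class $\alpha'$ is one of finitely many $2$-torsion classes.
\end{enumerate}

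\textbf{Step 3: finitely many Verra fourfolds.} Given $(S',\alpha')$, there are finitely many degree-$2$ polarisations up to $\Aut(S')$ by Lemma \ref{lem: finitely many double covers}. For each one, the preimages $\beta$ of $\alpha'$ under $\res$ form a coset of $\ker(\res)\simeq(\bZ/2\bZ)^{\rho-r}$ by Theorem \ref{thm: kernel and cokernel of res}, so they are finite. Each $\beta$ determines a Verra threefold and hence a unique Verra fourfold by \cite[Proposition 4.23]{JM} and \cite[Lemma 5.9]{JM}. Since $V_4'$ is recovered from any one of its associated twisted polarised K3 surfaces together with $\beta$, the number of $V_4'$ is finite.

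\textbf{Expected obstacle.} The main obstacle is Step 2. Finiteness of K3 surfaces with given transcendental lattice is really a lattice-theoretic finiteness count (the number of genus classes times the double coset of gluings), and I would invoke it rather than reprove it. A subtler point is Step 1: it requires that each Verra fourfold under consideration satisfies the hypotheses of the identification lemma, namely a smooth discriminant sextic.
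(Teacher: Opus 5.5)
Your proposal is correct and follows essentially the same route as the paper's proof. Both restrict the Hodge isometry to $T(V_4)\simeq T(V_4')$ and bound the possible $(S',\alpha')$ using the finitely many index-$2$ overlattices of $T(S,\alpha)$, finiteness of K3 surfaces with a given transcendental lattice, and finiteness of $\Br(S')_2$. Both then bound the Verra fourfolds attached to each $(S',\alpha')$ via Lemma~\ref{lem: finitely many double covers}, the finiteness of $\ker(\res)$ from Theorem~\ref{thm: kernel and cokernel of res}, and \cite[Proposition 4.23]{JM}.

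One cosmetic point: you should also allow $\alpha'$ to be trivial, in which case $T(S')=T(S',\alpha')$. This can happen for one of the two associated surfaces, as in Lemma~\ref{lem: comparing k3s in rank 2}(b), and it is trivially finite.
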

\begin{proof}
    Let $V_4'$ be a Verra fourfold for which there is a Hodge isometry $H^4(V_4,\bZ) \simeq H^4(V_4',\bZ)$. Restricting this Hodge isometry to transcendental lattices, we obtain $T(V_4) \simeq T(V_4')$. 
    Therefore, it suffices to show that there are finitely many isomorphism classes of Verra fourfolds $V_4'$ for which there exists a Hodge isometry $T(V_4) \simeq T(V_4')$. 

    Let $(S,\alpha)$ be one of the two associated twisted K3 surfaces of $V_4$.
    Firstly, we note that there are finitely many isomorphism classes of K3 surfaces $S'$ that admit a Brauer class $\alpha'$ such that $T(S,\alpha) \simeq T(S',\alpha')$. Indeed, there are finitely many overlattices of index two $T(S,\alpha)\hookrightarrow T$, and for each such overlattice $T$ there are finitely many K3 surfaces $S'$ which admit a Hodge isometry $T \simeq T(S')$. 

    Secondly, each such K3 surface $S'$ admits finitely many Brauer classes $\alpha'$ for which there is a Hodge isometry $T(S,\alpha) \simeq T(S',\alpha')$, since the $2$-torsion group $\Br(S')_2 \simeq (\bZ/2\bZ)^{\oplus(22-\rho)}$ is finite. We conclude that there are finitely many twisted K3 surfaces $(S',\alpha')$ for which there is a Hodge isometry $T(S,\alpha) \simeq T(S',\alpha')$. 

    Finally, we claim that each twisted K3 surface $(S',\alpha')$ is associated with finitely many Verra fourfolds. This follows from Lemma 
    \ref{lem: finitely many double covers} and Theorem \ref{thm: kernel and cokernel of res}, since for every ample class of square $2$ on $S'$, such a Brauer class $\alpha'$ is the pullback of finitely many conic bundles $V_3 \to \bP^2$, hence there are finitely many Verra threefolds that can arise as the branch locus of a Verra fourfold $V_4'$ with $T(V_4')\simeq T(S',\alpha')$. Since a Verra fourfold is uniquely determined by its branch Verra threefold by \cite[Proposition 4.23]{JM}, this concludes the proof. 
\end{proof}

\begin{lemma}\label{lem: finite up to hodge isomorphism}
    Let $V_4$ be a Verra fourfold. Then there are finitely isomorphism classes of Verra fourfolds $V_4'$ such that there is a Hodge isomorphism (not isometry) $f\colon H^4(V_4,\bZ) \simeq H^4(V_4',\bZ)$. 
\end{lemma}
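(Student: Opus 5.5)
The plan is to reduce Lemma \ref{lem: finite up to hodge isomorphism} to Lemma \ref{lem: finite up to hodge isometry}. Both $H^4(V_4,\bZ)$ and $H^4(V_4',\bZ)$ carry the same lattice structure $\Lambda_{V_4}$, so the task is to control how far a Hodge isomorphism can be from a Hodge isometry. We cannot hope that $f$ itself is an isometry. Instead we work with the transcendental parts, where the Hodge structure is of K3 type and the endomorphism algebra is restricted.

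\textbf{Step 1: restrict $f$ to transcendental parts.} Any Hodge isomorphism preserves $H^{2,2}\cap H^4(-,\bZ)$. Its transpose with respect to the unimodular pairings is also a Hodge morphism. Hence $f$ induces a Hodge isomorphism $T(V_4)\simeq T(V_4')$, where each transcendental lattice is the smallest primitive sub-Hodge structure containing $H^{3,1}$.

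By the proof of Lemma \ref{lem: finite up to hodge isometry}, it suffices to show that only finitely many isometry classes of Hodge lattices $T'$ arise. Here $T'$ ranges over Hodge lattices of K3 type that admit a Hodge isomorphism $T(V_4)\simeq T'$, and that embed primitively into $H^4_\mathrm{prim}$ of a Verra fourfold (so their signature and rank are fixed). Once this is shown, the last two paragraphs of the proof of Lemma \ref{lem: finite up to hodge isometry} give finitely many $V_4'$ for each such $T'$.

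\textbf{Step 2: transport the form.} Fix a Hodge isomorphism $g\colon T(V_4)\to T'$ and pull back the form of $T'$. This gives a second Hodge-compatible symmetric bilinear form $b'$ on the Hodge structure $T\coloneqq T(V_4)$. Let $b$ denote the original form. Since $T_\bQ$ is irreducible of K3 type, $b'=b(\varphi\,\cdot,\cdot)$ for a unique $\varphi\in K\coloneqq\End_{\mathrm{Hodge}}(T_\bQ)$. By Zarhin, $K$ is a totally real or CM number field, and $\varphi$ is $b$-symmetric, so $\varphi$ lies in the totally real subfield $K_0$. The condition that $b'$ be integral on $T$ restricts $\varphi$ to a fractional ideal of $K_0$. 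Equality of discriminants, both being determined by $\disc T(S,\alpha)$ bounded via the Verra picture, together with the fixed signature, bounds $|N_{K_0/\bQ}(\varphi)|$ and the signs of $\varphi$ at the real places.

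\textbf{Step 3: finiteness modulo units.} Changing $g$ by an element $u\in(\End_{\mathrm{Hodge}} T)^\times$ replaces $\varphi$ by $\bar u\varphi u$. So the isometry class of $T'$ depends only on the class of $\varphi$ in a fractional ideal of bounded norm, modulo the action of an order's units by $\varphi\mapsto \bar u\varphi u$. Ideals of bounded norm are finitely many. The remaining claim is that elements generating a fixed ideal, modulo this twisted unit action (which contains squares of units of $K_0$ and norms $u\bar u$), form finitely many classes. This is true because the unit group modulo squares is finite by Dirichlet's theorem.

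The main obstacle is this unit-group step, together with making the norm bound precise. I would first try to quote the analogous finiteness statement from \cite{Efi18} or \cite{MeinsmaOnLEquivalence}, namely that Hodge lattices which are Hodge isomorphic to a fixed one with fixed discriminant form finitely many isometry classes. Only if no such citation is available would I carry out the Dirichlet argument directly.
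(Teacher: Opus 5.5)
Your outline has the same shape as the paper's proof, which is a one-line transplant of \cite[Proposition 5.1]{MM}. That argument first bounds the lattice structures compatible with the given integral Hodge structure up to Hodge automorphism, and then applies Lemma \ref{lem: finite up to hodge isometry}. Working only with $T$ is legitimate, because the proof of Lemma \ref{lem: finite up to hodge isometry} immediately passes to transcendental lattices. Your Zarhin/Dirichlet argument in Steps 2--3 is also fine. One small correction there: in Step 3 the units acting are $u\bar u$ with $u$ in the order $\End_{\mathrm{Hodge}}(T)$. So you get squares of units of $\End_{\mathrm{Hodge}}(T)\cap K_0$, not of $\mathcal{O}_{K_0}^\times$. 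Since that unit group has finite index in $\mathcal{O}_{K_0}^\times$, finiteness still follows.

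\textbf{The gap is the equality of discriminants.} This is the one place where the hypothesis on all of $H^4$, not just on $T$, has to be used, and your stated reason does not work. For $V_4'$ you only know $\disc T(V_4')=\disc T(S',\alpha')$. Nothing in the Verra picture relates this to $\disc T(S,\alpha)$ or bounds it.

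In fact, the reduced statement at the end of Step 1 discards exactly the needed information. For every $n\geq 1$, the lattice $(T,nb)$ is Hodge isomorphic to $T$ and has the same rank and signature. These lattices have discriminants $n^{\rk T}\disc T$, so they are pairwise non-isometric. Hence ``Hodge isomorphic to $T(V_4)$ and embeddable in some Verra $H^4_{\mathrm{prim}}$'' does not give a finite set of isometry classes in general.

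\textbf{The fix.} The map $f$ carries $A(V_4)$ onto $A(V_4')$, and by your intrinsic characterisation it carries $T(V_4)$ onto $T(V_4')$. It therefore induces an isomorphism of finite groups
\[
H^4(V_4,\bZ)/(A(V_4)\oplus T(V_4)) \simeq H^4(V_4',\bZ)/(A(V_4')\oplus T(V_4')).
\]
Both $H^4$ are unimodular, and $T=A^\perp$ with $A$ primitive. So these groups are the discriminant groups $A_{T(V_4)}$ and $A_{T(V_4')}$, which gives $\disc T(V_4')=\disc T(V_4)$. This is the discriminant invariant of \cite{MeinsmaOnLEquivalence} used in Lemma \ref{lem: L equivalence implies Hodge isometry}.

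It follows that $|\det_{\bQ}\varphi|=1$, hence $|N_{K_0/\bQ}(\varphi)|=1$, which is exactly the norm bound Step 3 needs. Add this discriminant condition to the data you retain in Step 1, and the rest of your argument goes through.
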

\begin{proof}
    This is completely analogous to \cite[Proposition 5.1]{MM}, where \cite[Lemma 5.2]{MM} is replaced by Lemma \ref{lem: finite up to hodge isometry}.
\end{proof}

\section{Cubic fourfolds of Verra type}

\subsection{Birationally equivalent quadric surface bundles}
\begin{theorem} \label{thm: birationality theorem}
    Let $Q_i \to \mathbf{P}^2$ be two quadric surface bundles (where $i=1,2$). Suppose that they have the same discriminant double covers $S_1 = S_2 = S \to \mathbf{P}^2$ with the same Brauer classes $\alpha_1 = \alpha_2\in \mathrm{Br}(S)_2$. Then $Q_1$ and $Q_2$ are birational.
\end{theorem}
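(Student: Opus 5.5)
The strategy is to pass to the generic fibre. Let $K=\bC(\bP^2)$ and let $q_i$ be the quadratic form of rank $4$ over $K$ defining the generic fibre $Q_{i,\eta}\subset \bP^3_K$. A quadric surface bundle $Q_i\to\bP^2$ is birational to any other model with the same generic fibre. It therefore suffices to show that the quadric surfaces $Q_{1,\eta}$ and $Q_{2,\eta}$ are isomorphic over $K$, or at least birational over $K$. Equivalently, we want $q_1$ and $q_2$ to be similar, i.e. $q_2\simeq \lambda q_1$ for some $\lambda\in K^\times$.

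The classical invariants of a quadric surface over a field are the discriminant extension and the Clifford invariant.

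\textbf{Step 1: the discriminants agree.} The discriminant $\disc(q_i)\in K^\times/K^{\times 2}$ determines the quadratic extension $K(\sqrt{\disc q_i})$. This extension is the function field of the discriminant double cover. Since $S_1=S_2=S$ as double covers of $\bP^2$, we get $\disc(q_1)=\disc(q_2)=:\delta$. We may assume $\delta$ is not a square, since $S$ is a K3 surface and hence irreducible. Put $L=K(\sqrt\delta)=\bC(S)$.

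\textbf{Step 2: the Clifford invariants agree over $L$.} The even Clifford algebra $C_0(q_i)$ is a central simple algebra of degree $4$ over $L$. Its Brauer class is the restriction to the generic point of the class of $\cB_0$ on $S$, that is, of the Brauer class given by the $\bP^1$-fibration of lines in the fibres (as in Example \ref{ex:cubic fourfolds quadric fibration}). By hypothesis this class is $\alpha_1=\alpha_2$. Since $\Br(S)\hookrightarrow\Br(L)$, we obtain $[C_0(q_1)]=[C_0(q_2)]\in\Br(L)$. Both algebras have degree $4$, so equal Brauer classes imply $C_0(q_1)\simeq C_0(q_2)$ as $L$-algebras.

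\textbf{Step 3: conclude similarity.} We invoke the classical theorem on quaternary forms (Knus–Parimala–Sridharan; Knus, \emph{The Book of Involutions}, \S15). Quadratic forms of dimension $4$ with discriminant $\delta$, up to similarity, correspond bijectively to quaternion algebras over $L=K(\sqrt\delta)$ up to isomorphism (more precisely, to the classes in $\Br_2(L)$ whose corestriction to $K$ is trivial). The correspondence is $q\mapsto C_0(q)$, viewed as a quaternion algebra over $L$, and it is inverse to the Albert-type construction $q=N_{L/K}$ of the quaternion norm form. By Steps 1 and 2, $q_1$ and $q_2$ are therefore similar. Hence $Q_{1,\eta}\simeq Q_{2,\eta}$ over $K$, and $Q_1$ and $Q_2$ are birational over $\bP^2$.

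\textbf{Main obstacle.} The delicate step is Step 2 together with the setup of Step 3. One must check that the Brauer class $\alpha_i$ attached to $Q_i$ via the Stein factorisation of the relative Fano scheme of lines coincides generically with the class of $C_0(q_i)$, viewed as an algebra over its centre $L$. This follows from the standard fact that the variety of lines on a smooth quadric surface over $K$ is the Weil restriction–type object $R_{L/K}$ of the Severi–Brauer conic of $C_0(q)$, which is compatible with Kuznetsov's $\cB_0$ description \cite[Lemma 4.2]{Kuzcub}. One must also make sure that the hypothesis "same double cover" gives equality of discriminants as classes in $K^\times/K^{\times2}$, not just an abstract isomorphism of the covers. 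Here I would fix the identification of both covers with $S\to\bP^2$ and use that $\alpha_1=\alpha_2$ is meant with respect to that fixed identification. If the isomorphism of double covers were only abstract, one would need to compose with the covering involution. Doing so replaces $C_0(q)$ by its conjugate algebra, whose class is still mapped compatibly, so the argument still goes through.
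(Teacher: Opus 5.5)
Your proposal is correct and follows essentially the same route as the paper: both pass to the generic fibre over $\bC(\bP^2)$ and use the correspondence between quaternary forms (up to similarity) with discriminant extension $L=\bC(S)$ and quaternion algebras over $L$ with split norm. Since these algebras inject into $\Br(\bC(S))$, the equality $\alpha_1=\alpha_2$ forces isomorphic generic fibres; the paper quotes the correspondence in its scheme-theoretic form at the generic point, you quote it in its classical Knus--Parimala--Sridharan form, and you additionally spell out why $\alpha_i$ is the class of $C_0(q_i)$.

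There are two harmless slips: $C_0(q_i)$ has dimension $4$, i.e.\ degree $2$, over $L$, not degree $4$; and it is the quadric $Q_{i,\eta}$ itself, not its variety of lines, that is $R_{L/K}$ of the conic of $C_0(q_i)$, while the variety of lines is that conic regarded as a $K$-scheme.
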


\begin{proof}
    Let $T = \Spec(\bC(S))$ and $B = \Spec(\bC(\bP^2))$. By \cite[Theorem 1]{APL}, we have the one-to-one correspondence
    \begin{equation} \label{eq:APS correspondence}
\left\{
\begin{array}{c}
\text{Quadric surface bundles over $B$}\\
\text{with simple degeneration along} \\
C \text{ and discriminant cover } T\to B
\end{array}
\right\}
\overset{1:1}{\longleftrightarrow}
\left\{
\begin{array}{c}
\text{Quaternion algebras}\\
\text{over } T \text{ with}\\
\text{split norm to } B
\end{array}
\right\}
\;\;\hookrightarrow\;\;
\Br(\mathbf{C}(S)),
    \end{equation}
    along with the inclusion on the right hand side. Since $\alpha_1 = \alpha_2 \in \Br(\bC(S))_2$, the quaternion algebras which map to $\alpha_1, \alpha_2$ under the inclusion are equal, which by the correspondence (\ref{eq:APS correspondence}) means that the quadric bundles are equal over the generic point. In other words, for $\eta \in \bP^2$ the generic point, we have an equality of fibres $Q_{1,\eta} = Q_{2, \eta}$. But this means that that $Q_1$ and $Q_2$ are birational, as required.  
\end{proof}

\subsection{Cubic fourfolds of Verra type}

Let $X$ be a cubic fourfold containing a plane $P$. Then the projection
\[
\Bl_P X \to P
\]
is a quadric surface bundle.
The Stein factorisation of the moduli space of lines on fibres of this quadric surface bundle $\cF(\Bl_PX/P) \to P$ defines a $\bP^1$-bundle over a degree-2 K3 surface $S$. 
This way, we obtain a $2$-torsion Brauer class $\alpha\in \Br(S)_2$, and the Kuznetsov component of $X$ is $\cA_X\simeq \Db(S,\alpha)$. 
In this subsection, we will consider the special case when there is a non-trivial Brauer class $\beta \in \Br(\bP^2\setminus C)_2$ with $\res(\beta)=\alpha$, where $\res$ denotes the restriction map $\res\colon \Br(P\setminus C)_2 \to \Br(S)_2$ from Section \ref{sec: brauer classes of verra type}. 
In other words, we consider the special case that $\alpha$ is of Verra type (see Definition \ref{def: verra type}). 

\begin{definition}\label{def: cubic fourfold verra type}
    Let $X$ be a cubic fourfold containing a plane $P\subset X$. We say that the pair $(X,P)$ is \textit{of Verra type} if the twisted K3 surface $(S,\alpha)$ associated to the quadric surface bundle $\Bl_PX\to P$ is of Verra type.
\end{definition}

Before we prove the existence of cubic fourfolds of Verra type, we note that they have the following important property:

\begin{theorem}\label{thm: cubic fourfolds of verra type are birational to verra}
    Let $(X,P)$ be a cubic fourfold of Verra type, and let $(S,\alpha)$ be the associated twisted K3 surface. Then there exists a Verra fourfold $V_4$, one of whose associated K3 surfaces is also $(S,\alpha)$, which is birational to $X$:
    \[
    X\overset{\sim}\dashrightarrow V_4.
    \]
    Moreover, we have $\cA_{X}\simeq \cA_{V_4}.$
\end{theorem}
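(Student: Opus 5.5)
Since $(X,P)$ is of Verra type, Definition \ref{def: verra type} provides a degree $2$ polarisation $H$ on $S$, whose double cover $\varphi_H\colon S\to P\simeq\bP^2$ is branched along the discriminant sextic $C$ of $\Bl_PX\to P$, together with a non-trivial class $1\neq\beta\in\Br(\bP^2\setminus C)_2$ satisfying $\res(\beta)=\alpha$. The first step is to check that this $H$ is the polarisation coming from the quadric surface bundle itself, i.e.\ that the double cover $S\to P$ given by Stein factorisation of $\cF(\Bl_PX/P)\to P$ is $\varphi_H$. I would arrange this by fixing the $H$ from the cubic from the outset, so that $C$ is literally the discriminant curve of $\Bl_PX\to P$; this is the reading under which the Verra-type condition is meaningful for the pair $(X,P)$. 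Next, $\beta$ gives a conic bundle $V_\beta\to\bP^2$ with discriminant $C$, which is a Verra threefold, a $(2,2)$-divisor in $\bP^2\times\bP^2$ \cite[\S 9.8]{vGe}. Let $V_4$ be the double cover of $\bP^2\times\bP^2$ branched along $V_\beta$. By \cite[Lemma 5.9]{JM}, the twisted K3 surface associated with the projection $p_1\colon V_4\to\bP^2$ is $(S,\varphi_H^*\beta)=(S,\alpha)$, with the same double cover $S\to\bP^2$ branched along $C$.

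Now both $\Bl_PX\to P\simeq\bP^2$ and $p_1\colon V_4\to\bP^2$ are quadric surface bundles over $\bP^2$ with the same discriminant double cover $S\to\bP^2$ and the same Brauer class $\alpha\in\Br(S)_2$. Theorem \ref{thm: birationality theorem} then shows that they are birational, and hence $X\sim_{\mathrm{bir}}\Bl_PX\dashrightarrow V_4$. For the categorical statement, combine Example \ref{ex:cubic fourfolds quadric fibration}, which gives $\cA_X\simeq\Db(P,\cB_0)\simeq\Db(S,\alpha)$, with Example \ref{ex: Verra fourfold SOD}, which gives $\cA_{V_4}\simeq\Db(S,\alpha)$. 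Therefore $\cA_X\simeq\cA_{V_4}$.

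The main obstacle is checking the hypotheses of Theorem \ref{thm: birationality theorem}, namely that both bundles have simple degeneration along $C$, so that the correspondence of \cite{APL} applies. For $V_4$ this needs smoothness of $C$ (equivalently of $V_\beta$) or at least corank at most one for the fibres. For $X$ it needs the fibres of $\Bl_PX\to P$ to have rank $\geq 3$, as already noted in Example \ref{ex:cubic fourfolds quadric fibration}. I would first try to reduce to $X$ sufficiently general in its Verra-type locus, which makes $C$ smooth and the degenerations simple. A second check is that the identification of $\alpha$ with a class in $\Br(\bC(S))$ is compatible on both sides, but this follows by restricting to the generic point.
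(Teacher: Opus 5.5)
Your proposal is correct and follows the paper's proof: you build $V_4$ from the class $\beta$ attached to the polarisation coming from $\Bl_PX\to P$ (which is how the paper uses the Verra-type condition), apply Theorem~\ref{thm: birationality theorem} to the two quadric surface bundles over $P\simeq\bP^2$, and deduce $\mathcal{A}_X\simeq\Db(S,\alpha)\simeq\mathcal{A}_{V_4}$ from Kuznetsov's identification. The simple-degeneration hypothesis you flag needs no genericity reduction: $S$ is smooth and $H$ is ample, so the branch sextic $C$ is smooth, and since points where a fibre has corank $\geq 2$ are singular points of the discriminant, both bundles automatically have fibres of corank at most one.
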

\begin{proof}
    Since $(S,\alpha)$ is of Verra type, there exists a Verra fourfold $V_4$ such that $(S,\alpha)$ is one of the two associated twisted K3 surfaces of $V_4$. This means that $(S,\alpha)$ is the twisted K3 surface associated with the quadric surface bundle $p\colon V_4 \to \bP^2$, where $p$ is one of the two projection maps. We may thus apply Theorem \ref{thm: birationality theorem} to conclude that $\Bl_PX$, and thus $X$, is birational to $V_4$. The final claim follows from \cite[Lemma 4.2]{Kuzcub}.
\end{proof}

\begin{remark}
   The blow-up $\Bl_P X$ from above and the Verra fourfold $V_4$ are not isomorphic, since their Neron--Severi lattices are not isometric. Indeed, if $\Pic \Bl_P X = \bZ \cdot h \oplus \bZ \cdot e$ where $h$ is the hyperplane class and $e$ is the class of the exceptional divisor, and $\Pic V_4 = \bZ \cdot H_1 \oplus \bZ \cdot H_2$, then we have $H_1^3H_2 = H_2^3H_1= H_1^4 = H_2^4 = 0$. Therefore, for any $a,b \in \bZ$, the number $(aH_1 + bH_2)^4 = 6a^2b^2$ is even. However, the intersection number $h^4 = 3$ is odd.
\end{remark}

We also note the following immediate corollary regarding rationality:

\begin{corollary}
    Let $X$ and $V_4$ be as in Theorem \ref{thm: cubic fourfolds of verra type are birational to verra}. Then $X$ is irrational if and only if $V_4$ is irrational.
\end{corollary}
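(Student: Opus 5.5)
This is an immediate consequence of Theorem \ref{thm: cubic fourfolds of verra type are birational to verra}, and I would argue it in that order.

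First, rationality is a birational invariant. Theorem \ref{thm: cubic fourfolds of verra type are birational to verra} provides a birational map $X \dashrightarrow V_4$. Composing a birational map $\bP^4 \dashrightarrow X$ with it (or with its inverse in the other direction) shows that $X$ is rational if and only if $V_4$ is rational. Taking contrapositives gives the claim that $X$ is irrational if and only if $V_4$ is irrational.

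Second, I would be careful about what ``birational'' means in the theorem. Its proof passes through $\Bl_P X$: by Theorem \ref{thm: birationality theorem}, the quadric surface bundles $\Bl_P X \to P$ and $V_4 \to \bP^2$ agree over the generic point of $\bP^2$, so $\Bl_P X$ is birational to $V_4$. Since $\Bl_P X \to X$ is itself birational, the composite is a genuine birational equivalence $X \dashrightarrow V_4$ between varieties of the same dimension $4$. This is exactly what the first step needs.

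There is no real obstacle. The only point to check is that both $X$ and $V_4$ are irreducible varieties, so that birationality is an equivalence relation respecting rationality. Both are smooth projective fourfolds in the setting of the theorem, so this holds.
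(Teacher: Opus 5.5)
Your proposal is correct and is exactly the paper's reasoning: the paper states this as an immediate corollary of Theorem \ref{thm: cubic fourfolds of verra type are birational to verra}, since rationality is a birational invariant. Your second paragraph, which re-derives the birational map through $\Bl_P X$, is harmless but not needed, because the theorem already provides $X \dashrightarrow V_4$.
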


The main goal for the remainder of this subsection is to construct examples of cubic fourfolds of Verra type.
Firstly, we note that a very general cubic fourfold containing a plane is not of Verra type.
\begin{proposition} \label{prop: generally not of verra type}
    Let $X$ be a very general member of $\cC_8$, i.e. such that $$A(X) \simeq K_8\coloneqq \begin{pmatrix}
        3 & 1 \\ 1 & 3
    \end{pmatrix}.$$ Let $P\subset X$ be a plane. Then the pair $(X,P)$ is not of Verra type. 
\end{proposition}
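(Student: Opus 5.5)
The plan is to show that when $X$ is very general in $\cC_8$, the associated K3 surface $S$ has Picard rank $\rho(S)=1$, and then to use Ford's exact sequence (recalled before Theorem \ref{thm: kernel and cokernel of res}) to rule out Verra type.

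First, we pin down the Picard rank. Hassett's lattice theory for $\cC_8$ (via \cite{Kuzcub} and \cite[\S9]{vGe}) gives a Hodge-theoretic relation between $X$ and $(S,\alpha)$. Concretely, since $A(X)=K_8$ has rank $2$, we have $\rk T(X)=21$. Via the identification $T(X)\simeq T(S,\alpha)$ (up to sign and twist) coming from $\cA_X\simeq\Db(S,\alpha)$, it follows that $\rk T(S)=21$. Hence $\rho(S)=1$ and $\NS(S)=\bZ H$ with $H^2=2$. In particular $\dv_{\NS(S)}(H)=2$, so $r=1$ in Theorem \ref{thm: kernel and cokernel of res}.

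Next, we determine the image of $\res$. Applying Theorem \ref{thm: kernel and cokernel of res} with $\rho=1$ and $r=1$ gives that $\ker(\res)=0$ and that $\operatorname{coker}(\res)\simeq\bZ/2\bZ$. This recovers Ford's sequence $0\to\Br(\bP^2\setminus C)_2\to\Br(S)_2\to\bZ/2\bZ\to0$. So the classes of Verra type for $H$ are exactly the non-trivial elements of the index-two subgroup $\operatorname{im}(\res)$. Moreover, $H$ is the only degree-$2$ polarisation, so Verra type for $H$ is the same as Verra type.

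The main step is to show that the Clifford class $\alpha$ of the cubic containing a plane lies outside $\operatorname{im}(\res)$. I would do this lattice-theoretically, using the discriminant.

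On one side, for $X$ very general in $\cC_8$ the twisted lattice satisfies $T(S,\alpha)\simeq T(X)(-1)$. This has discriminant $8$, and since $\alpha\neq1$ we get $\disc T(S,\alpha)=4\disc T(S)=8$.

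On the other side, suppose $\alpha=\res(\beta)$ with $\beta\neq 1$. Then $\alpha$ comes from a Verra fourfold $V_4$ with $\rho(S)=1$, and there $T(V_4)=H^4_\mathrm{prim}(V_4,\bZ)\simeq\verlat$. This lattice has discriminant group $(\bZ/2\bZ)^3$, because $U(2)\oplus\langle2\rangle$ contributes $(\bZ/2)^2\oplus\bZ/2$. By the lemma identifying $T(V_4)$ with $T(S_i,\alpha_i)^-(-1)$, the lattice $T(S,\alpha)$ would then have discriminant group $(\bZ/2)^3$. For $X$, by contrast, $A_{T(X)}\simeq A_{K_8}$, and $K_8$ has determinant $8$ with Smith normal form $\mathrm{diag}(1,8)$, so $A_{K_8}\simeq\bZ/8\bZ$ is cyclic.

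A cyclic group of order $8$ is not isomorphic to $(\bZ/2)^3$, and this contradiction shows that $\alpha$ is not of Verra type. The point I expect to need the most care is justifying the identification $T(S,\alpha)\simeq T(X)(-1)$ for cubics containing a plane, including the sign conventions. For this I would cite Kuznetsov/van Geemen: van Geemen's \S9 shows that in rank one, Clifford classes and Verra classes are exactly the two distinct cosets distinguished by $B_\alpha^2\bmod 8$. If needed, I would argue directly with $B$-field squares using Lemma \ref{lem: isotropic subgroups of twisted discriminant}-style computations.
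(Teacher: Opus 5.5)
Your proof is correct, but it takes a different route from the paper.

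\textbf{The paper's route.} The paper stays on the branch curve. For $\rho(S)=1$, van Geemen's isomorphism $\mathcal{A}\colon(\Pic(C)/\bZ K_C)_2\simeq\Br(S)_2$ sends an odd theta characteristic to the Clifford class $\alpha$. By Remark~\ref{rem: compatibility of res and A}, classes of Verra type are images of non-zero elements of $\Jac(C)_2$. Since $\mathcal{A}$ is injective, $\alpha$ cannot be of Verra type.

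\textbf{Your route.} You compute the abstract group $A_{T(S,\alpha)}$ in two ways.
\begin{enumerate}
\item From the cubic side, it is $A_{K_8}\simeq\bZ/8\bZ$. This uses $T(X)\simeq T(S,\alpha)(-1)$ and the primitivity of $A(X)$ in the unimodular lattice $H^4(X,\bZ)$.
\item From an auxiliary Verra fourfold built from $\beta$, it is $A_{\verlat}\simeq(\bZ/2\bZ)^{3}$. This uses $T(V_4)=H^4_{\mathrm{prim}}(V_4,\bZ)$ (a rank count, valid when $\rho(S)=1$) and $T(V_4)\simeq T(S,\alpha)^-(-1)$.
\end{enumerate}
Both computations are right, and the contradiction is sound.

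\textbf{Comparison.} Your argument is a pure lattice obstruction. It is insensitive to sign conventions, since only the group structure matters. It also does not require knowing which theta characteristic $\alpha$ corresponds to. The cost is heavier input:
\begin{itemize}
\item You must build from $\beta$ a Verra fourfold satisfying the hypotheses of the lemma comparing $T(V_4)$ with $T(S_i,\alpha_i)$: smooth, with smooth discriminant sextic. This is harmless here, since $C$ is smooth for very general $X$.
\item You rely on Laszlo's sequence and Lemma~\ref{lem: A is integral}, none of which the paper needs.
\item Your appeal to Theorem~\ref{thm: kernel and cokernel of res} is superfluous; only $\rho(S)=1$ is used in the contradiction.
\end{itemize}

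\textbf{Your fallback remark.} It should be dropped or corrected, for two reasons.
\begin{itemize}
\item In rank one, Verra classes are separated from Clifford classes by whether $B\cdot H\in\bZ$ for a $B$-field lift $B$ of $\alpha$. Equivalently, this is $\Jac(C)_2$ versus theta characteristics, i.e.\ the degree map \eqref{eq: degree map}. It is not $B_\alpha^2\bmod 8$, which depends on the chosen lift. Moreover, Clifford classes fill only part of the non-trivial coset of $\operatorname{im}(\res)$, since even theta characteristics also lie there.
\item In any case, that citation would not justify $T(S,\alpha)\simeq T(X)(-1)$. Take that identification from van Geemen's treatment of cubics containing a plane, or deduce it from $\mathcal{A}_X\simeq\Db(S,\alpha)$.
\end{itemize}
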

\begin{proof}
    Let $(S,\alpha)$ be the associated twisted K3 surface. Then we have $\rho(S) = 1$. Therefore, by \cite[\S9]{vGe}, we have an isomorphism
    \[
    \cA\colon \left(\frac{\Pic C}{K_C}\right)_2\simeq \Br(S)_2,
    \]
    where $C\subset P$ is the branch sextic of the double cover $S\to P$.  Moreover, from \cite[\S 9]{vGe}, we can see that $\alpha$ corresponds to an odd theta characteristic on $C$ through this isomorphism. If $\alpha$ were of Verra type, then there would be a $2$-torsion line bundle $\cL \in \Jac(C)_2$ with $\cA(\cL) =\alpha$ by Remark \ref{rem: compatibility of res and A}. However, since $\cA$ is injective, no such line bundle exists, so $(S,\alpha)$ is not of Verra type.
\end{proof}

On the other hand, suppose that the associated twisted K3 surface $(S,\alpha)$ satisfies
\[
\NS(S) \simeq L_{d,1}\coloneqq  \begin{pmatrix}
    2 & 1 \\ 1 & -2d
\end{pmatrix}.
\]
Then, by Theorem \ref{thm: kernel and cokernel of res}, every non-trivial $2$-torsion Brauer class in $\Br(S)_2$ is of Verra type. Thus $(X,P)$ is also of Verra type in this case. This procedure gives rise to infinitely many Noether--Lefschetz divisors of $\cC_8$ consisting of cubic fourfolds of Verra type.

We consider a cubic fourfold $X$ containing a plane $P$ such that $A(X)$ has rank $3$. 
In this set-up, it was proved in \cite{YY} that $A(X)$ is isometric to precisely one lattice of the form 
\[
M_{\tau,n} \coloneqq \begin{pmatrix}
    3 & 1 & 0 \\ 1 & 3 & \tau \\ 0 & \tau & 2n
\end{pmatrix},
\]
where $0\leq \tau \leq 4$ and $n\geq 2$. Moreover, the divisors $\cC_{M_{\tau,n}}$ of $\cC_8$ are irreducible, and non-empty if $(\tau,n)\neq (3,2),(4,2),(4,3)$.
\begin{proposition} \label{prop: verra type criterion}
    Let $X$ be a cubic fourfold containing a plane $P$ and such that $\rk \ A(X) = 3$. Pick $\tau,n$ such that there is an isometry
    \[
    A(X) \simeq M_{\tau,n}.
    \]
    Then $(X,P)$ is of Verra type if and only if $\tau = 2$.
\end{proposition}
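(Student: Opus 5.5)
The plan is to translate the condition on $A(X)\simeq M_{\tau,n}$ into information about $\NS(S)$ and the Brauer class $\alpha$. Then decide the Verra-type property with Theorem \ref{thm: kernel and cokernel of res} and the theta-characteristic description of Theorem \ref{thm: IOOV main theorem}.

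\textbf{Step 1: lattice data.} Since $\det M_{\tau,n}=16n-3\tau^2$, we get $\disc T(X)=16n-3\tau^2$. From $\cA_X\simeq\Db(S,\alpha)$, or from the analogue for $\Bl_PX\to P$ of the sequence \eqref{eq: laszlos sequence} (Voisin's correspondence), we get a Hodge isometry $T(X)\simeq T(S,\alpha)^-(-1)$. It follows that $\disc T(S,\alpha)=|\alpha|^2\disc\NS(S)$. Since $\rho(S)=2$ and $H^2=2$, Lemma \ref{lem: NS(S) of rank 2} gives $\NS(S)\simeq L_{d,1}$ (discriminant $4d+1$) or $L_{d,2}$ (discriminant $4d+4$).

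\textbf{Step 2: case analysis on $\tau$.}
\begin{itemize}
\item If $\tau$ is odd, then $16n-3\tau^2$ is odd. Hence $\alpha$ is trivial, and $\disc\NS(S)$ is odd, so $(S,H)$ is of type I.
\item If $\tau$ is even, then $\alpha$ is non-trivial. Indeed, if $\alpha$ were trivial, the discriminant $16n-3\tau^2\equiv 0\pmod 4$ would force type II, i.e.\ $\disc\NS(S)=4(d+1)$. One then rules this out by checking that the image of $h^2$ and $P$ in $H^2(S)$ recovers $\NS(S)$ with the wrong divisibility; the cleanest way is to compute the class $v=\tau h^2-3\tau P+8T$ spanning $\langle h^2,P\rangle^\perp\subset A(X)$ and its image under the Abel--Jacobi map. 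This gives $\disc\NS(S)=(16n-3\tau^2)/4$.
\item For $\tau=2$ this equals $4n-3$, which is odd, so $S$ is of type I.
\item For $\tau\in\{0,4\}$ it equals $4n$ or $4n-12$, which are even, so $S$ is of type II.
\end{itemize}

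\textbf{Step 3: deciding Verra type.}
\begin{itemize}
\item \emph{$\tau=2$.} Type I together with $\alpha\neq1$ gives Verra type directly, by Theorem \ref{thm: kernel and cokernel of res}: $\res$ is an isomorphism, so the preimage of $\alpha$ is non-trivial.
\item \emph{$\tau$ odd.} Here $\alpha=1$ and $S$ is of type I, so $\ker(\res)=(\bZ/2\bZ)^{\rho-r}=0$. The only preimage of $1$ is therefore $\beta=1$, and $(X,P)$ is not of Verra type.
\item \emph{$\tau\in\{0,4\}$.} This is the remaining and hardest case. As in the proof of Proposition \ref{prop: generally not of verra type}, the class of $(\Pic C/\bZ L)_2$ produced by the quadric bundle $\Bl_PX\to P$ (via \cite[\S9]{vGe}) is an odd theta characteristic mapping to $\alpha$ under $\cA$. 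One must check that this identification is compatible with the map $\cA$ of \eqref{eq: IOOV short exact sequence} without any Picard rank assumption; I would argue this by deformation from the Picard rank one case. By exactness, every other preimage of $\alpha$ differs from it by the image of $\Pic(S)/(\bZ H+2\Pic S)$. By Lemma \ref{lem: degs}, this image has degree $D\cdot H\equiv0\pmod 2$ because $\dv(H)=2$. Hence every preimage of $\alpha$ has $\deg=1$, so none lies in $\Jac(C)_2$, and by Theorem \ref{thm: IOOV main theorem} $\alpha$ is not of Verra type.
\end{itemize}

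I expect the main obstacle to be Step 2's precise determination of $\NS(S)$ and $\alpha$ from $M_{\tau,n}$, especially the integrality and index-two bookkeeping of $v$ under the Abel--Jacobi map. For the case $\tau\in\{0,4\}$, the delicate point is justifying that the quadric-bundle theta characteristic is a preimage of $\alpha$ in general.
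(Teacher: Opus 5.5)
Your case split and your conclusion in each case match the paper: Theorem~\ref{thm: kernel and cokernel of res} for $\tau$ odd and for $\tau=2$, and theta characteristics for $\tau\in\{0,4\}$. The difference is in the inputs. The paper quotes \cite[Proposition 4.7]{Kuzcub} for ``$\alpha$ is trivial if and only if $\tau$ is odd'' and \cite[Theorem 4.3.1]{GvG} for the type of $(S,H)$. You try to extract both from discriminants.

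For odd $\tau$ this works, since an odd discriminant forces $\alpha=1$ and type I. For even $\tau$ there is a genuine gap, and both the $\tau=2$ direction and the type II conclusion for $\tau\in\{0,4\}$ depend on it. The numerical data cannot exclude $\alpha=1$. For $\tau=2$, a trivial $\alpha$ with $\NS(S)\simeq L_{4n-4,2}$ gives discriminant $16n-12$. It even gives the same discriminant group $\bZ/2\bZ\oplus\bZ/(8n-6)\bZ$ as $M_{2,n}$. Your proposed remedy via $v=\tau h^2-3\tau P+8T$ is only asserted. Its Abel--Jacobi image determines $\NS(S)\cap H^\perp$ only up to exactly the index-two ambiguities you yourself flag. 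You also never say what ``the image of $h^2$ and $P$ in $H^2(S)$'' means.

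The missing idea is the criterion the paper invokes via \cite[Proposition 4.7]{Kuzcub}. It says that $\alpha$ is trivial if and only if some class of $A(X)$ has odd intersection with the class $Q=h^2-P$ of a fibre of $\Bl_PX\to P$. In the basis $(h^2,P,T)$ of $M_{\tau,n}$, the class $Q$ pairs to $2,-2,-\tau$, so $\alpha\neq 1$ exactly when $\tau$ is even.

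With this in hand, your Step 1 immediately gives $\disc\NS(S)=(16n-3\tau^2)/4$ for even $\tau$, so $v$ is not needed. Your parity argument then yields type I for $\tau=2$ and type II for $\tau\in\{0,4\}$. This is a clean way to avoid \cite{GvG}.

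Once this is repaired, your treatment of $\tau\in\{0,4\}$ is actually more complete than the paper's. The paper only notes that the cokernel of $\res$ is generated by theta-characteristic classes and that $\alpha$ is one of them. That alone does not rule out $\alpha\in\operatorname{im}(\res)$. Your coset argument supplies the missing step: preimages under $\mathcal{A}$ differ by $\ker\mathcal{A}$, which has degree $0$ by Lemma~\ref{lem: degs} when $\dv(H)=2$. You rightly flag the identification of $\alpha$ with $\mathcal{A}$ of an odd theta characteristic in Picard rank $2$; the paper also takes it for granted.
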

\begin{proof}
    Let $(S,\alpha)$ be the twisted K3 surface associated with the quadric surface bundle $\Bl_PX\to P$, and let $C\subset \bP^2$ be the branch sextic of the double cover $S\to \bP^2$.

    Let us first assume $\tau = 2$.
    By \cite[Theorem 4.3.1]{GvG}, we have  
    \[
    \NS(S) \simeq L_{n-1,1}.
    \]
    Therefore, by Theorem \ref{thm: kernel and cokernel of res}, each non-trivial Brauer class in $\Br(S)_2$ is of Verra type. Since $\tau$ is even, the Brauer class $\alpha$ is non-trivial by \cite[Proposition 4.7]{Kuzcub}.
    Hence $(S,\alpha)$ is of Verra type, and therefore $(X,P)$ is of Verra type as well.

    For the converse, we show that $(X,P)$ is not of Verra type for the other four cases $\tau = 0,1,3,4$. 
    
    Firstly, if $\tau = 1,3$, then the Brauer class $\alpha$ vanishes by \cite[Proposition 4.7]{Kuzcub}. However, since $S$ is of type I by \cite[Theorem 4.3.1]{GvG}, the map $\res$ from Theorem \ref{thm: kernel and cokernel of res} is an isomorphism. Therefore is no \textit{non-trivial} Brauer class $\beta \in \Br(\bP^2\setminus C)_2$ with $\res(\beta) = \alpha = 1$. Therefore $(X,P)$ is not of Verra type. 

    On the other hand, if $\tau = 0,4$, then $\alpha$ is non-trivial. However, by Theorem \ref{thm: kernel and cokernel of res}, the cokernel of $\res$ is generated by classes of the form $\cA([\overline{\theta}])$, where $\theta$ is a theta characteristic of $C$. Since $\alpha$ is of this form, there is no Brauer class $\beta\in \Br(\bP^2\setminus C)_2$ with $\res(\beta) = \alpha$, so $(X,P)$ is not of Verra type. 
\end{proof}

\subsection{Fano varieties of lines and double EPW quartics} \label{sec: Fanos and EPWs}

As well as via their double cover description, Verra fourfolds $V_4$ can also be described as the intersection of a cone over $\bP^2 \times \bP^2$ with a quadric hypersurface. Consider the Hilbert scheme of $(1,1)$-conics in $V_4$, i.e. plane conics in $V_4$ which project to lines under the projections to each factor of $\bP^2$. Then the Hilbert scheme of such conics on a general $V_4$ (arising as an intersection with a general quadric) admits a $\bP^1$-fibration over a hyperkahler fourfold $Y$ of K$3^{[2]}$-type, called a \emph{double EPW quartic}. If $\cY$ is the moduli space of double EPW quartics, then the general member $Y \in \cY$ arises this way \cite[Theorem 0.2]{IKKR}. 

Another description of double EPW quartics is as moduli spaces of twisted sheaves on a twisted K3 surface. In particular, for the two twisted K3 surfaces $(S_1, \alpha_1)$ and $(S_2, \alpha_2)$ associated to a Verra fourfold $V_4$ via its quadric surface fibrations, by \cite[Theorem 0.3]{IKKR} we have 
\begin{equation*}
    \cM_{(0,H_1,0)}(S_1, \alpha_1) \simeq Y \simeq \cM_{(0,H_2,0)}(S_2, \alpha_2) ,
\end{equation*}
and this $Y \in \cY$ will also be general.

Now let $\cF(X)$ be the Fano variety of lines on a cubic fourfold $X$.

\begin{theorem}[{\cite[Theorem 1.4]{MSFano}}]
    Suppose that $X$ contains a plane $P$. Then there is a birational map
    \begin{equation} \label{eq: F(X) bir M}
        \cF(X) \dashrightarrow \cM_{(0,H,s)}(S, \alpha)
    \end{equation}
where $s \in \bZ$ is some integer. If furthermore, the discriminant sextic $C$ of the double cover $S \to \bP^2$ is assumed to be smooth, and the pullback of the hyperplane class in $\bP^2$ to $S$ is indecomposable (i.e. it cannot be written as the sum of two effective divisors in $S$)\footnote{The authors in \emph{loc. cit.} call a cubic fourfold satisfying these extra conditions \emph{very generic}.}, then (\ref{eq: F(X) bir M}) is either an isomorphism or a Mukai flop in the plane $P^\vee$.
\end{theorem}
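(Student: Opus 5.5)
The plan is to realise a general line $\ell\subset X$ as a twisted sheaf supported on a curve in $|H|$ on $S$, and then compare the resulting moduli spaces using the equivalence $\cA_X\simeq\Db(S,\alpha)$ of Example \ref{ex:cubic fourfolds quadric fibration}.

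\textbf{Step 1: lines as twisted sheaves.} Let $\ell\subset X$ be a line with $\ell\cap P=\emptyset$; such lines form a dense open subset of $\cF(X)$. Projection from $P$ maps $\ell$ isomorphically onto a line $L_\ell\subset\bP^2$. Its preimage $C_\ell\coloneqq\varphi^{-1}(L_\ell)\subset S$ is a curve in $|H|$. For $x\in L_\ell$, the line $\ell$ meets the quadric fibre $Q_x$ in exactly one point. Through that point pass exactly two lines of $Q_x$, one in each ruling, that is, one line for each of the two points of $\varphi^{-1}(x)$. This produces a section of the $\bP^1$-bundle $\cF(\Bl_PX/P)\to S$ over $C_\ell$. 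Equivalently, it gives an $\alpha|_{C_\ell}$-twisted line bundle on $C_\ell$, pushed forward to an $\alpha$-twisted sheaf $F_\ell$ on $S$ with Mukai vector $(0,H,s)$ for a fixed $s$. Since $C_\ell$ is reduced and irreducible for general $\ell$, and $F_\ell$ has rank one on its support, $F_\ell$ is stable. Categorically, the same object is obtained by projecting $I_{\ell}$ (or $\cO_\ell$) into $\cA_X$ and transporting it through Kuznetsov's equivalence $\cA_X\simeq\Db(S,\alpha)$. Either description yields a morphism from a dense open $U\subset\cF(X)$ to $\cM_{(0,H,s)}(S,\alpha)$.

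\textbf{Step 2: birationality.} I would show that this morphism is injective on $U$ and unramified. Injectivity holds because $\ell$ can be recovered from $F_\ell$: the support recovers $L_\ell$, and the section recovers the point $\ell\cap Q_x$ for each $x$. Both $\cF(X)$ and $\cM_{(0,H,s)}(S,\alpha)$ are irreducible, smooth, projective, $4$-dimensional hyperk\"ahler manifolds. An injective morphism from a dense open subset of one into the other is therefore an open embedding, which gives the birational map \eqref{eq: F(X) bir M}. The integer $s$ is determined by the degree computation on $C_\ell$.

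\textbf{Step 3: the very generic case.} Now assume $C$ is smooth and $H$ is indecomposable. Then every member of $|H|$ is integral, so every sheaf in $\cM_{(0,H,s)}(S,\alpha)$ is stable and $\cM$ is a smooth hyperk\"ahler manifold. I would extend the construction to lines that meet $P$ in a point, using the limiting twisted sheaves. The only locus where the construction genuinely breaks down is the set of lines contained in $P$, which is $P^\vee\simeq\bP^2$. A birational map between hyperk\"ahler fourfolds that is regular off a Lagrangian $\bP^2$ is known to be either an isomorphism or the Mukai flop along that plane. To pin down the dichotomy, I would follow Macr\`i--Stellari: phrase both spaces as moduli of Bridgeland-stable objects in $\cA_X\simeq\Db(S,\alpha)$ and perform wall-crossing. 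The contribution from $P^\vee$ appears as a single wall whose destabilised locus is $\bP^2$.

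\textbf{Main obstacle.} I expect this last step to be the hardest: controlling stability and the extension over lines meeting $P$ or lying in $P$. This is exactly where the hypotheses that $C$ is smooth and $H$ is indecomposable are needed.
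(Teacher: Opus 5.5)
The paper gives no proof of this statement. It is quoted from Macr\`i--Stellari, and only the first assertion is used later (in Corollary~\ref{cor: F(X) bir Y}, after shifting $s$ by $\mathcal{O}_S(D)$).

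\textbf{Steps 1--2.} For the first assertion you take a more elementary route than the cited one, which goes through $\mathcal{A}_X\simeq\Db(S,\alpha)$ and Bridgeland stability. You build the twisted sheaf directly from the projection from $P$, and stability comes for free because the support is integral. This is sound in outline, with one hole in the injectivity argument. Write $\bP(E)\to S$ for the $\bP^1$-bundle. The point $[F_\ell]$ of the moduli space records only the twisted line bundle $M$ on $C_\ell$. It does not record the quotient $E|_{C_\ell}\to M$ that defines your section. So ``the section recovers $\ell\cap Q_x$'' presupposes that $\Hom(E|_{C_\ell},M)$ is one-dimensional for general $\ell$. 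You must check this (it is a statement about $h^0(E^\vee|_{C_\ell}\otimes M)$ on the genus-$2$ curve $C_\ell$). Alternatively, actually prove the categorical identification you mention; injectivity then follows from full faithfulness. Without one of these, the fibres of $U\to\mathcal{M}$ could be positive-dimensional. A minor point: for arbitrary $X$ the moduli space need not be smooth, but irreducibility and smoothness at the stable points $[F_\ell]$ are all you use.

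\textbf{Step 3: the genuine gap.} This step carries the whole second assertion, and the key input is missing.

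Your reduction is fine. If $\mathcal{F}(X)\dashrightarrow\mathcal{M}$ is a morphism on $\mathcal{F}(X)\setminus P^\vee$, then both sides are projective hyperk\"ahler fourfolds. The Wierzba--Wi\'sniewski decomposition of birational maps into Mukai flops along disjoint Lagrangian planes then gives either an isomorphism or the Mukai flop along $P^\vee$. No wall-crossing is needed to ``pin down the dichotomy'', since the statement only asserts it.

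But regularity off $P^\vee$ is exactly what you do not establish. The lines meeting $P$ in one point form a divisor $D\subset\mathcal{F}(X)$: they are the lines of the fibres $Q_x$ not contained in $P$. For such $\ell$ the span $\langle P,\ell\rangle=\Pi_x$ is only a $3$-plane, so Step~1 produces no $L_\ell$, no $C_\ell$ and no section.

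The limit is not routine either. Take $\ell_t\to\ell$ with $\ell_t\cap P=\emptyset$. Since $\Pi_x\cap\Pi_{x'}=P$ for $x'\neq x$, the points of $\ell_t$ on the fibres over points $x'\neq x$ of the limiting line all tend to $p=\ell\cap P$. Two consequences follow. The limiting support is always the curve over the line of $3$-planes $\Pi\supset P$ contained in $T_pX$. The naive limiting section (the line through $p$ in each ruling) depends only on $p$. It would therefore contract the divisor $D$, which a birational map between hyperk\"ahler manifolds cannot do.

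So the true limit must be a modification at the point of $\varphi^{-1}(x)$ that records the ruling of $\ell$. You must identify it and prove the family extends flatly over \emph{every} point of $D$. Regularity at the generic point of $D$ is automatic, since indeterminacy has codimension $\geq 2$. The hyperk\"ahler argument, however, needs all of $\mathcal{F}(X)\setminus P^\vee$. The cited proof gets this uniformly, by attaching objects of $\Db(S,\alpha)$ to all lines at once and controlling their stability. In your proposal this step is announced but not carried out, so the second assertion remains unproved.
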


\begin{corollary} \label{cor: F(X) bir Y}
    Let $X$ and $V_4$ be as in Theorem \ref{thm: cubic fourfolds of verra type are birational to verra}. Assume moreover that $\rk \ A(X) =3$, so that $A(X) \simeq M_{2,n}$ for some $n\geq 2$ by Proposition \ref{prop: verra type criterion}. Then there is a birational map
    \begin{equation*}
        \cF(X) \dashrightarrow Y .
    \end{equation*}
\end{corollary}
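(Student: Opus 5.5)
The plan is to compose two birational maps to moduli spaces of twisted sheaves on the same twisted K3 surface. By the theorem of \cite[Theorem 1.4]{MSFano} just recalled, $\cF(X)$ is birational to $\cM_{(0,H,s)}(S,\alpha)$ for some $s\in\bZ$. Here $(S,H,\alpha)$ is the twisted polarised K3 surface of the quadric surface bundle $\Bl_PX\to P$. On the other side, Theorem \ref{thm: cubic fourfolds of verra type are birational to verra} gives a Verra fourfold $V_4$, one of whose associated twisted polarised K3 surfaces is $(S,H,\alpha)$. By \cite[Theorem 0.3]{IKKR}, for $V_4$ general the double EPW quartic $Y$ attached to $V_4$ satisfies $Y\simeq \cM_{(0,H,0)}(S,\alpha)$. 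So it suffices to produce a birational map $\cM_{(0,H,s)}(S,\alpha)\dashrightarrow \cM_{(0,H,0)}(S,\alpha)$.

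For this I would tensor by line bundles. For a line bundle $L$ on $S$, tensoring a twisted sheaf supported on a curve in $|H|$ by $L$ gives an isomorphism $\cM_{(0,H,s)}(S,\alpha)\simeq \cM_{(0,H,s+H\cdot L)}(S,\alpha)$; the Brauer class is unchanged and the stability condition (with respect to a generic polarisation) is preserved. By Proposition \ref{prop: verra type criterion}, $A(X)\simeq M_{2,n}$ implies $\NS(S)\simeq L_{n-1,1}$, so $\dv(H)=1$ and there is $D\in\NS(S)$ with $H\cdot D=1$. Tensoring by $\cO_S(-sD)$ then identifies $\cM_{(0,H,s)}(S,\alpha)$ with $\cM_{(0,H,0)}(S,\alpha)$. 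Composing gives $\cF(X)\dashrightarrow \cM_{(0,H,0)}(S,\alpha)\simeq Y$.

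The main obstacle is checking the generality hypotheses. First, \cite{IKKR} needs $V_4$ general enough to arise from a general quadric section, so that $Y$ is a smooth double EPW quartic and the identification holds. This has to be verified for the Verra fourfold coming from a cubic in $\cC_{M_{2,n}}$, presumably by taking $X$ very general in $\cC_{M_{2,n}}$ and arguing that the corresponding $V_4$ avoids the bad locus. Second, in the twisted setting one must check that the tensoring step respects twisted Mukai vectors: with a $B$-field lift $B$ of $\alpha$, the twisted Mukai vector of $\cE\otimes L$ is $v\cdot e^{L}$. Its degree-$2$ part remains $H$, and the shift $H\cdot L$ in the last coordinate is integral. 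Since the argument is a birational identification of moduli spaces, walls in the polarisation only cost further birational maps, not isomorphisms. If the \cite{IKKR} identification only holds for general $Y$, I would fall back on a deformation argument within the family $\cC_{M_{2,n}}$.
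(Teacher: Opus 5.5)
Your proposal is correct and is essentially the paper's proof. Both compose three steps: the birational map $\mathcal{F}(X)\dashrightarrow\mathcal{M}_{(0,H,s)}(S,\alpha)$ of \cite[Theorem 1.4]{MSFano}; the identification obtained by tensoring with a power of $\mathcal{O}_S(D)$, where $H\cdot D=1$ because $\NS(S)\simeq L_{n-1,1}$ is of type I; and $\mathcal{M}_{(0,H,0)}(S,\alpha)\simeq Y$ from \cite[Theorem 0.3]{IKKR}. One refinement, which the paper also glosses over: $D$ is not proportional to the polarisation, so tensoring by $\mathcal{O}_S(D)$ is only guaranteed to preserve stability on the dense open locus of sheaves supported on integral curves in $|H|$, where stability is automatic. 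That locus is all a birational map requires.
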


\begin{proof}
      Let $P\subset X$ be a plane, and let $(S,\alpha)$ be the associated twisted K3 surface of $(X,P)$. 
      Due to the description of $\NS(S)$ in our case (type I), we have $H \cdot D = 1$. Let $\cF$ be a sheaf with Mukai vector $(0,H,0)$. The divisibility condition means that $v(\cF \otimes \cO_S(D))) = (0,H,1)$. Note that since $\cO_S(D)$ is untwisted, the Brauer twist of $\cF$ does not change. Tensoring by a line bundle induces an isomorphism of moduli spaces of (twisted) Gieseker stable sheaves, hence we get the desired birational equivalence.
\end{proof}

We note that Corollary \ref{cor: F(X) bir Y} also follows from \cite[Theorem 1.1]{HM25}, see also \cite[Remark 5.3]{MM24}.

\section{The Verra involution on very general special cubic fourfolds} \label{sec: Verra involution}
In this section, we study Fourier--Mukai partners of cubic fourfolds in $\cC_{M_{2,n}}$, and study Huybrechts's conjecture and Brooke--Frei--Marquand's conjecture for very general members of $\cC_{M_{2,n}}$ for $n\geq 2$ even such that $4(n+1)-1$ is a prime power. 

The main tools used in this section are Theorem \ref{thm: birationality theorem} and the counting formula of Theorem \ref{thm: counting formula}.

\subsection{The construction}

Our main construction is as follows.

\begin{theorem} \label{thm: cubic correspondence}
    Let $(X,P)$ be a cubic fourfold of Verra type which is a very general member of $\cC_{M_{2,n}},$ where $n$ is even and $4(n-1)+1$ is a power of a prime number. Let $(S_1,H_1,\alpha_1)$ be the twisted polarised K3 surface associated with the quadric surface bundle $\Bl_P X \to P$, and let $V_4$ be a Verra fourfold, one of whose associated twisted polarised K3 surfaces is also $(S_1,H_1,\alpha_1)$. Let $(S_2,H_2,\alpha_2)$ be the other twisted K3 surface associated with $V_4$. Then there exists a unique cubic fourfold $X'$ containing a plane $P'$ such that the twisted K3 surface associated with the quadric surface bundle $\Bl_{P'} X' \to P'$ is $(S_2,H_2,\alpha_2)$:
    \begin{equation*}
        \xymatrix{
        \Bl_PX \ar[dr] \ar@{-->}[rr]^\simeq && V_4 \ar[dl] \ar[dr] && \Bl_{P'}X' \ar[dl] \ar@{-->}[ll]_\simeq \\
        & \bP^2 && \bP^2 &
        }
    \end{equation*}
\end{theorem}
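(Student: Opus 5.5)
The plan has two parts: show that $(S_2,H_2,\alpha_2)$ ``looks like'' $(S_1,H_1,\alpha_1)$ lattice-theoretically, and then use surjectivity of the period map for cubic fourfolds containing a plane to realise it by a cubic $X'$.

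\emph{Step 1: the Néron--Severi lattice of $S_2$.} By \cite[Theorem 4.3.1]{GvG} (as in the proof of Proposition \ref{prop: verra type criterion}), $\NS(S_1)\simeq L_{n-1,1}$, so $(S_1,H_1)$ is of type I with $d=n-1$. Very generality of $X$ gives $O_{\mathrm{Hodge}}(T(V_4))\simeq\bZ/2\bZ$, since $T(V_4)\simeq T(S_1,\alpha_1)^-(-1)\simeq T(X)$ up to twist. Lemma \ref{lem: comparing k3s in rank 2} then leaves two cases. If $\alpha_2$ is non-trivial we are in case (a), so $\NS(S_2)\simeq L_{n-1,1}$ and $(S_2,H_2)$ is of type I. The statement's standing hypothesis that $\alpha_2$ is non-trivial (the intro version) selects this case. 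I would check via Lemma \ref{lem: isotropic subgroups of twisted discriminant} that $n$ even forces $|I^2|=2$, i.e.\ $B_{\alpha_1}^2\equiv 0,4 \pmod 8$, using the explicit lattice $T(X)$ determined by $M_{2,n}$.

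\emph{Step 2: constructing $X'$.} Since $4d+1=4(n-1)+1$ is a prime power, Lemma \ref{lem: lattice salads} gives a metric isomorphism of the chains $T(S_1,\alpha_1)\subset T(S_1)\subset H_1^\perp$ with $T(S_2,\alpha_2)\subset T(S_2)\subset H_2^\perp$; it need not be Hodge. Now use the lattice-theoretic description of cubics containing a plane (Voisin/Hassett/Kuznetsov): $\langle h^2,P\rangle^\perp\subset H^4(X,\bZ)$ is identified with the index-two sublattice $\ker(\alpha)\cap H^\perp\subset H^2_{\mathrm{prim}}(S)(-1)$ determined by the Brauer class $\alpha$ (\cite[\S9]{vGe}). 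Transport the Hodge structure of $(S_2,H_2,\alpha_2)$ through the chain isomorphism onto the abstract lattice $\langle h^2,P\rangle^\perp\subset\Lambda_{\mathrm{cubic}}$. This gives a period point in the $\cC_8$ period domain with algebraic part again $M_{2,n}$. To confirm this, compute $A$ as the saturation of $K_8$ plus the algebraic class coming from $\NS(S_2)\simeq L_{n-1,1}$, with the same $\tau=2$ because the embeddings agree metrically.

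By Laza--Looijenga surjectivity and the fact that $\cC_{M_{2,n}}$ is nonempty for $n\ge 2$ ($\tau=2$), the point is the period of a smooth cubic $X'$ containing a plane $P'$. Its associated twisted K3 surface has the same polarised Hodge lattice chain, so it is $(S_2,H_2,\alpha_2)$ by global Torelli for degree-2 K3 surfaces, with $\alpha_2$ matched via $T(S,\alpha)=\ker\alpha$. The birational maps in the diagram then follow from Theorem \ref{thm: birationality theorem}, as in Theorem \ref{thm: cubic fourfolds of verra type are birational to verra}.

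\emph{Step 3: uniqueness.} If $(X'',P'')$ is another such pair, its period in $\cC_8$ is determined by $(S_2,H_2,\alpha_2)$ via the same lattice dictionary, up to the ambiguity of extending the isometry. That ambiguity is killed because the embedding $T\hookrightarrow H^\perp$ is unique up to $O(H^\perp)$ (Lemma \ref{lem: embedding orbits}, using the prime power condition) and $O_{\mathrm{Hodge}}=\pm1$. Torelli for cubic fourfolds (Voisin) then gives $X''\simeq X'$.

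The main obstacle is Step 2: making the dictionary between $(S,H,\alpha)$ and the lattice $\langle h^2,P\rangle^\perp$ precise enough to run surjectivity, and verifying that the transported period avoids the excluded loci $\cC_2,\cC_6$ and any degeneracy (rank $\le 2$ quadric fibres). I would first handle this by noting that $A(X')\simeq M_{2,n}\simeq A(X)$ and that very general members of nonempty $\cC_{M_{2,n}}$ are smooth with good fibres.
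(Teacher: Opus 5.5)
Your Steps 1 and 2 follow the paper's proof, except that you open up one cited result by hand.

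\textbf{Existence (Steps 1--2).} The paper's proof also needs $\NS(S_2)\simeq L_{n-1,1}$. This comes from case (a) of Lemma~\ref{lem: comparing k3s in rank 2}, with case (b) excluded via \cite[Corollary 8.14(3)]{YY} (cf.\ Remark~\ref{rmk: not isomorphic unless n is 2}). Your parity check also goes through. In $M_{2,n}\cap(h^2)^\perp=\langle h^2-3P,\,e\rangle$, the three order-two discriminant elements have squares $6$, $n/2$, $3+n/2$, so exactly two are isotropic iff $n$ is even.

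Transporting through the chain isometry $\Psi$ of Lemma~\ref{lem: lattice salads} is exactly Lemma~\ref{lem: all for one}. It amounts to choosing the lift $\tilde\alpha_2=\tilde\alpha_1\circ\Psi\in\Hom(H_2^\perp,\bZ/2\bZ)$. For $\rho=2$ it is this lift, not the Brauer class $\alpha_2$, that cuts out the index-two sublattice of $H_2^\perp$.

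Your remark that $A(X')=A(X)$ inside $\Lambda_{\mathrm{cub}}$, compatibly with $K_8$, is what Lemma~\ref{lem: cubic general equivalent} encodes. That remark, not very generality (which says nothing about $X'$ a priori), keeps the period off $\cC_2\cup\cC_6$.

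Where the paper quotes Brakkee's map $\psi\colon M_{w_{1,1}}\dashrightarrow\cC_8$, biregular on the cubic-general locus \cite{Brakkee}, you rebuild it from Voisin's description, Laza--Looijenga and the two Torelli theorems. That is legitimate and more self-contained. For matching the K3 side, use that $A_{K_8}\simeq\bZ/8\bZ$ is cyclic, so $H^\perp(-1)$ is the unique index-two overlattice of $\langle h^2,P\rangle^\perp$.

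\textbf{The gap is in Step 3.} Write $\NS(S_2)\cap H_2^\perp=\bZ u$ with $u=H_2-2D_2$, so $u^2=-2(4n-3)$. Since $[H_2^\perp:T(S_2)\oplus\bZ u]=4n-3$ is odd, $\alpha_2$ has exactly two lifts to $\Hom(H_2^\perp,\bZ/2\bZ)$, distinguished by their value on $u$. The two Hodge lattices $\ker(\tilde\alpha)(-1)$ they produce are not Hodge isometric, since their algebraic parts are generated by $u$ and by $2u$ respectively. A competitor $(X'',P'')$ with associated twisted K3 surface $(S_2,H_2,\alpha_2)$ has one of these two as its period. So the period is \emph{not} determined by $(S_2,H_2,\alpha_2)$ through the dictionary. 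Lemma~\ref{lem: embedding orbits}, which belongs to the existence step via Lemma~\ref{lem: lattice salads}, does nothing against this ambiguity.

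The ambiguity is closed by a short computation:
\begin{itemize}
\item We have $\disc A(X'')=\disc T(S_2,\alpha_2)=16n-12$. By the mod-$16$ argument of Lemma~\ref{lem: what happens in CM2n stays in CM2n}, this forces $A(X'')\simeq M_{2,n}$.
\item In $M_{2,n}$, the lattice $A\cap K_8^\perp$ is generated by $h^2-3P+4e$, of square $8(4n-3)$. This matches $2u$, not $u$, whose square after the twist is $2(4n-3)$.
\item Hence $\tilde\alpha''(u)=1=\tilde\alpha_2(u)$, so $\tilde\alpha''=\tilde\alpha_2$.
\item Voisin's Torelli theorem then gives $X''\simeq X'$. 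To apply it, extend the resulting Hodge isometry of $K_8^\perp$ by the identity on $K_8$. Compose with $-\id$ if needed, since the discriminant form of the even lattice $K_8^\perp$ has isometry group $\{\pm\id\}$.
\end{itemize}

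The paper does not write this out either, relying instead on the biregularity of $\psi|_V$. But $M_{w_{1,1}}$ parametrises triples with a chosen lift, so the same observation is implicitly what lets one pass from $\tilde\alpha_2$ to $\alpha_2$ there.
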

\begin{proof}
    This follows immediately from Lemma \ref{lem: all for one} and Lemma \ref{lem: cubic general equivalent} below.
\end{proof}

\begin{remark} \label{rmk: not isomorphic unless n is 2}
    Keeping the notation of Theorem \ref{thm: cubic correspondence}, suppose $\NS(S) \simeq L_{n-1,1}$ for some even $n\geq 2$ and that $X$ and $X'$ are very general in the moduli space $\cC_{M_{2,n}}$. Then we claim that $X$ and $X'$ are isomorphic if and only if $n = 2$. 
    
    Firstly, since $n$ is even, $T(X)$ cannot be primitively embedded in the K3 lattice $\klat$ by \cite[Corollary 8.14(3)]{YY}. Therefore, the Verra fourfold $V_4$ satisfies the conclusions of Lemma \ref{lem: comparing k3s in rank 2}(a), i.e. we have $\nu(S_1,\alpha_1)\neq \nu(S_2,\alpha_2)$, which in particular means $(S_1,\alpha_1)\not\simeq (S_2,\alpha_2)$.
    
    Therefore, we have $(X,P) \not \simeq (X',P')$. In other words, if we have $X = X'$, then it follows that the two planes $P$ and $P'$ must be distinct, $P \neq P'$, and they cannot be disjoint, $P\cap P' \neq \emptyset$, since the quadric surface fibrations associated to two disjoint planes are isomorphic, see \cite[Remark 2.9]{StellariSingular}. Therefore, $P\cap P'$ is either a line or a single point.

    If $P\cap P' = \ell$ for some line $\ell \subset \bP^5$, then the K3 surfaces $S$ and $S'$ are elliptic, and we have 
    \[\NS(S) \simeq \begin{pmatrix} 2 & 0 \\ 0 & -2 \end{pmatrix},
    \]
    see for example \cite{HartliebPlanes}. This is a contradiction with $\NS(S) \simeq L_{d,1}$.

    On the other hand, if $P\cap P' = p$ for some point $p \in \bP^5$, then it follows that 
    $\NS(S) \simeq L_{1,1},$ in other words $n = 2$. 

    Conversely, if $n=2$, then $X$ contains two planes $P,P'$ meeting in a point, and the two associated twisted K3 surfaces are not isomorphic, so that $X = X'$.
\end{remark}

\subsubsection{Moduli spaces of twisted K3 surfaces and cubic fourfolds, after Brakkee}
We recall the construction of moduli spaces of twisted polarised K3 surfaces of \cite{Brakkee}. 

Let $h \in \klat$ be a class of square $h^2 = 2$, and write $\Lambda_2 \coloneqq h^\perp \subset \klat$. The isometry class of the lattice $\Lambda_2$ is independent of the choice of $h$, and it is given by 
\[
\Lambda_2 \simeq \langle -2 \rangle \oplus U^{\oplus 2}\oplus E_8(-1)^{\oplus2}.
\]

Given a map $w \colon \Lambda_2 \to \bZ/2\bZ$, we denote by $M_w$ the coarse moduli space of triples $(S,H,\alpha)$ consisting of
\begin{enumerate}
    \item[(1)] a K3 surface $S$;
    \item[(2)] a polarisation $H \in \NS(S)$ of degree $H^2 = 2$, and;
    \item[(3)] a map $\alpha \colon H^\perp \to \bZ/2\bZ$,
\end{enumerate}
such that there exists an isometry $\varphi \colon H^\perp \simeq \Lambda_2$ which induces a commutative diagram where the vertical maps are isomorphisms
\[
\xymatrix{
0 \ar[r] & \ker(\alpha) \ar[r] \ar[d]^\simeq & H^\perp \ar[r]^\alpha \ar[d]_\varphi^\simeq & \bZ/2\bZ \ar[r] \ar@{=}[d] & 0 \\
0 \ar[r] & \ker(w) \ar[r] & \Lambda_2 \ar[r]^w & \bZ/2\bZ \ar[r] & 0.
}
\]

Note that $\alpha \in \Hom(H^\perp, \bZ/2\bZ)$ induces a unique Brauer class via the surjective map
\begin{equation}\label{eq: not so special anymore}
\Hom(H^\perp, \bZ/2\bZ) \twoheadrightarrow \Hom(T(S),\bZ/2\bZ)\simeq \Br(S)_2. 
\end{equation}
For a K3 surface of Picard rank $\rho(S)=1$, \eqref{eq: not so special anymore} is an isomorphism. This is no longer the case for $\rho(S)>1$, meaning that any Brauer class has multiple lifts to $\Hom(H^\perp, \bZ/2\bZ)$.

For a well-chosen map, denoted $w_{1,1} \colon \Lambda_2 \to \bZ/2\bZ$ in \cite[\S2.2 and \S4.1]{Brakkee}, Brakkee constructed a birational map 
\[
\psi \colon M_{w_{1,1}}\overset{\sim}{\dashrightarrow} \cC_8
\]
which, for $\rho(S)=1$, is an inverse to the construction that associates a twisted K3 surface to a cubic fourfold with a plane. Moreover, there is an open subset $V \subset M_{w_{1,1}}$ such that $\psi|_V \colon V \simeq \cC_8$ is a biregular isomorphism. We say that a twisted polarised K3 surface contained in such a $V$ is \textit{cubic general}.

\begin{lemma}\label{lem: all for one}
    Let $V_4$ be a Verra fourfold with associated polarised twisted K3 surfaces $(S_1,H_1,\alpha_1)$ and $(S_2,H_2,\alpha_2)$. Suppose $\NS(S_1) \simeq \NS(S_2) \simeq L_{d,1}$, where $4d+1$ is a power of a prime. Assume moreover that $\alpha_1$ admits a lift $\tilde{\alpha_1} \in \Hom(H_1^\perp,\bZ/2\bZ)$ such that $(S_1,H_1,\tilde{\alpha_1}) \in M_{w_{1,1}}$.     
  Then $\alpha_2$ also admits a lift $\tilde{\alpha_2} \in \Hom(H_2^\perp, \bZ/2\bZ)$ such that the triple $(S_2,H_2,\tilde{\alpha_2})$ is contained in $M_{w_{1,1}}$.
\end{lemma}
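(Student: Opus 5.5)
The plan is to transport the lifted class $\tilde{\alpha_1}$ from $H_1^\perp$ to $H_2^\perp$ along the lattice isometry produced by Lemma \ref{lem: lattice salads}, and then to check that the transported map still lies in the $O(\Lambda_2)$-orbit of $w_{1,1}$ and restricts to $\alpha_2$ on $T(S_2)$.

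\textbf{Step 1: the isometry.} Since $\NS(S_1)\simeq\NS(S_2)\simeq L_{d,1}$ and $4d+1$ is a prime power, Lemma \ref{lem: lattice salads} gives a commutative diagram of (not necessarily Hodge) isometries
\[
\Psi\colon T(S_1)\simeq T(S_2), \qquad \tilde\Psi\colon H_1^\perp\simeq H_2^\perp,
\]
with $\Psi$ extended by $\tilde\Psi$. Moreover, $\Psi$ carries $T(S_1,\alpha_1)$ onto $T(S_2,\alpha_2)$. Equivalently, $\alpha_1=\alpha_2\circ\Psi$ as maps $T(S_1)\to\bZ/2\bZ$, because both are the unique surjection with the given kernel.

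\textbf{Step 2: the lift.} Define
\[
\tilde{\alpha_2}\coloneqq \tilde{\alpha_1}\circ\tilde\Psi^{-1}\colon H_2^\perp\to\bZ/2\bZ.
\]
Its restriction to $T(S_2)$ is $\tilde{\alpha_1}|_{T(S_1)}\circ\Psi^{-1}=\alpha_1\circ\Psi^{-1}=\alpha_2$. So $\tilde{\alpha_2}$ is a lift of $\alpha_2$ under the map \eqref{eq: not so special anymore}.

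\textbf{Step 3: membership in $M_{w_{1,1}}$.} By hypothesis there is an isometry $\varphi_1\colon H_1^\perp\simeq\Lambda_2$ with $w_{1,1}\circ\varphi_1=\tilde{\alpha_1}$. Set $\varphi_2\coloneqq\varphi_1\circ\tilde\Psi^{-1}\colon H_2^\perp\simeq\Lambda_2$. Then
\[
w_{1,1}\circ\varphi_2=\tilde{\alpha_1}\circ\tilde\Psi^{-1}=\tilde{\alpha_2}.
\]
Hence $\varphi_2$ induces the required commutative diagram of short exact sequences, and $(S_2,H_2,\tilde{\alpha_2})\in M_{w_{1,1}}$. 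The definition of $M_w$ only asks for an abstract lattice isometry $H^\perp\simeq\Lambda_2$, not a Hodge isometry, so the non-Hodge nature of $\tilde\Psi$ is harmless here.

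\textbf{Main obstacle.} The delicate point is Step 1: we need the isometry $T(S_1)\simeq T(S_2)$ to restrict to $T(S_1,\alpha_1)\simeq T(S_2,\alpha_2)$ and to extend to $H_i^\perp$ compatibly.

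The restriction property should follow from Lemma \ref{lem: comparing k3s in rank 2}(a). Both $\alpha_i$ are non-trivial, the two twisted lattices are Hodge isometric, and the twisted lattice has a unique isomorphism class of overlattice structure as an abstract lattice (Remark \ref{rmk: lattice structures in type I}, together with Lemma \ref{lem: overlattice subgroup correspondence}). So there is an abstract isometry $T(S_1)\simeq T(S_2)$ preserving the twisted sublattices. This is exactly the left square of Lemma \ref{lem: lattice salads}.

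The extension to $H_i^\perp$ then uses the uniqueness of the $O(\Lambda_2)$-orbit of embeddings from Lemma \ref{lem: embedding orbits}. If that uniqueness only gives an extension up to $\pm\id$ on the discriminant, I would compose with $-\id$, which does not affect the mod-$2$ maps.
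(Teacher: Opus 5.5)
Your proof is correct and is the paper's argument. Both transport $\tilde{\alpha_1}$ along the isometry $H_1^\perp\simeq H_2^\perp$ of Lemma~\ref{lem: lattice salads}, which carries $T(S_1)$ and $T(S_1,\alpha_1)$ onto $T(S_2)$ and $T(S_2,\alpha_2)$; the transported map therefore restricts to $\alpha_2$, and composing the given marking of $H_1^\perp$ with this isometry shows $(S_2,H_2,\tilde{\alpha_2})\in M_{w_{1,1}}$.

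Your closing paragraph re-deriving that lemma is not needed. If you keep it, note that the fact actually used is that $O(T(S_2,\alpha_2))$ acts transitively on the isotropic elements of order $2$ in $A_{T(S_2,\alpha_2)}$, which does hold here; uniqueness of the index-$2$ overlattice up to isometry alone is not enough.
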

\begin{proof}
    From the assumption that $(S_1,H_1,\tilde{\alpha_1}) \in M_{w_{1,1}}$, it follows immediately that there is a commutative diagram
    \[
    \xymatrix{
    0 \ar[r] & \ker(\tilde{\alpha_1}) \ar[r] \ar[d]^\simeq & H_1^\perp \ar[r]^{\tilde{\alpha_1}} \ar[d]_\varphi^\simeq & \bZ/2\bZ \ar[r] \ar@{=}[d] & 0 \\
    0 \ar[r] & \ker(w_{1,1}) \ar[r] & \Lambda_2 \ar[r]^{w_{1,1}} & \bZ/2\bZ \ar[r] & 0.
    }
    \]
    
    We must find a lift $\tilde{\alpha_2}$ of $\alpha_2$ such that there is a commutative square
     \[
    \xymatrix{
    H_2^\perp \ar[r]^{\tilde{\alpha_2}} \ar[d]^\simeq & \bZ/2\bZ \ar@{=}[d]\\
    \Lambda_2 \ar[r]^{w_{1,1}} & \bZ/2\bZ.
    }
    \]
    Equivalently, we must find a lift $\tilde{\alpha_2}$ of $\alpha_2$ and an isometry $\Psi \colon H_2^\perp \simeq H_1^\perp$ fitting in a commutative square
     \[
    \xymatrix{
    H_2^\perp \ar[r]^{\tilde{\alpha_2}} \ar[d]_\Psi^\simeq & \bZ/2\bZ \ar@{=}[d]\\
    H_1^\perp \ar[r]^{\tilde{\alpha_1}} & \bZ/2\bZ.
    }
    \]
    For any isometry $\Psi\colon H_2^\perp \simeq H_1^\perp$, the map $\tilde{\alpha_1}\circ\Psi:H_2^\perp \to \bZ/2\bZ$ is a lift of $\alpha_2$ if and only if $\ker(\tilde{\alpha_1}\circ\Psi) \cap T(S_2) = T(S_2,\alpha_2)$, since $\alpha_2$ is a $2$-torsion Brauer class.
    In other words, it suffices to find $\Psi$ such that $\Psi(T(S_2,\alpha_2)) = T(S_1,\alpha_1)$ and $\Psi(T(S_2)) = T(S_1)$. Therefore, the result follows from Lemma \ref{lem: lattice salads}.
\end{proof}

\begin{lemma}\label{lem: cubic general equivalent}
    Keeping the notation of Lemma \ref{lem: all for one}, $(S_1,H_1,\tilde{\alpha_1})$ is cubic general if and only if $(S_2,H_2,\tilde{\alpha_2})$ is cubic general. 
\end{lemma}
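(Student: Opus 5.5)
The plan is to reduce cubic generality to a purely lattice-theoretic condition on the triple $(H^\perp,\tilde\alpha,\NS(S)\cap H^\perp)$, and then transport that condition from $S_1$ to $S_2$ using the isometry $\Psi$ produced in the proof of Lemma \ref{lem: all for one}.

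\emph{Step 1: describe $V$ via Heegner divisors.} Brakkee's birational map $\psi\colon M_{w_{1,1}}\dashrightarrow \cC_8$ comes from an identification of period domains. It identifies $\ker(w_{1,1})$, up to the twist $(-1)$, with the orthogonal complement of $K_8$ in the cubic lattice. The indeterminacy and exceptional loci of $\psi$ should be exactly the preimages of the loci that the cubic period map misses, or where it fails to be injective. By Voisin and Laza--Looijenga, these are the Hassett divisors of discriminant $2$ and $6$ meeting $\cC_8$, together possibly with finitely many further Heegner divisors coming from the irregular part of $\cC_8$. Each such locus is the set of periods $\sigma\in\Lambda_2\otimes\bC$ orthogonal to some vector $v\in\Lambda_2$ with prescribed square and prescribed value $w_{1,1}(v)$, or prescribed divisibility in $\ker(w_{1,1})$. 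I would therefore first state, citing \cite[\S4]{Brakkee}, that $(S,H,\tilde\alpha)\in M_{w_{1,1}}$ is cubic general if and only if $H^\perp\cap \NS(S)$ contains no vector $v$ of one of finitely many explicit numerical types. A type is determined by $v^2$, $\tilde\alpha(v)$, and the divisibility of $v$ in $H^\perp$ and in $\ker(\tilde\alpha)$.

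\emph{Step 2: transport by $\Psi$.} By the proof of Lemma \ref{lem: all for one}, which rests on Lemma \ref{lem: lattice salads} and uses that $4d+1$ is a prime power, there is an isometry $\Psi\colon H_2^\perp\simeq H_1^\perp$ with $\tilde\alpha_1\circ\Psi=\tilde\alpha_2$ and $\Psi(T(S_2))=T(S_1)$. The isometry $\Psi$ need not be a Hodge isometry. However, since $\Psi$ maps $T(S_2)$ onto $T(S_1)$, it maps orthogonal complements onto orthogonal complements, so
\[
\Psi\big(\NS(S_2)\cap H_2^\perp\big)=\NS(S_1)\cap H_1^\perp .
\]
The isometry $\Psi$ preserves squares, divisibility in $H^\perp$, and divisibility in $\ker(\tilde\alpha)$, because $\Psi(\ker\tilde\alpha_2)=\ker\tilde\alpha_1$; it also preserves the values of $\tilde\alpha$. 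Hence a vector of a forbidden numerical type lies in $\NS(S_2)\cap H_2^\perp$ if and only if its image, which has the same type, lies in $\NS(S_1)\cap H_1^\perp$. Combined with Step 1, this gives the claimed equivalence.

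\emph{Expected obstacle.} The hard part is Step 1. The open set $V$ is defined abstractly, as a locus where $\psi$ is biregular, so one has to extract from \cite{Brakkee}, and from the description of the image of the cubic period map, that the complement of $V$ is cut out by Noether--Lefschetz conditions rather than by some non-Hodge-theoretic condition. The point to verify is that every bad locus is a Heegner divisor defined purely by the lattice triple $(\Lambda_2,w_{1,1},v)$. Once this is established, Step 2 is formal. Here the rank-one lattice $\NS(S_i)\cap H_i^\perp$ is generated by $2D-H$ of square $-2(4d+1)$, so the check reduces to a single vector.
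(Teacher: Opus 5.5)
Your proposal is correct and is essentially the paper's argument. The paper likewise states, without proof, that cubic generality is a lattice condition: the algebraic part of $\Lambda_{\mathrm{cub}}$ contains no vector of square $2$ and none of square $6$ and divisibility $3$, i.e.\ the period avoids $\mathcal{C}_2\cup\mathcal{C}_6$. This is your Step 1. It then transports that condition along the non-Hodge commutative square $T(S_i)\hookrightarrow H_i^\perp$ from Lemma~\ref{lem: lattice salads}, exactly as in your Step 2.
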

\begin{proof}
    The polarised twisted K3 surface $(S_1,H_1,\tilde{\alpha_1})$ is cubic general if and only if there is a primitive embedding $H_1^\perp \hookrightarrow \Lambda_{\mathrm{cub}}$ such that $T(S_1)^\perp \subset \Lambda_{\mathrm{cub}}$ contains no vectors of square $2$, nor any vectors of square $6$ and divisibility $3$ in $\Lambda_{\mathrm{cub}}$. Therefore, the result follows from the existence of a commutative square of (not necessarily Hodge) metric morphisms:
    \[
    \xymatrix{
    T(S_1) \ar@{^(->}[r] \ar[d]^\simeq & H_1^\perp \ar[d]^\simeq \\
    T(S_2) \ar@{^(->}[r] & H_2^\perp.
    }
    \]
    
\end{proof}

\begin{proposition} \label{prop: properties of Verra involution}
    Suppose $X$ is a very general member of $\cC_{M_{2,n}}$ for $n \neq 2$ even, and let $X'$ be the cubic fourfold constructed in Theorem \ref{thm: cubic correspondence}.
    Then both of the following statements hold.
    \begin{enumerate}
        \item $X$ and $X'$ are birational;
        \item $X$ and $X'$ are Fourier--Mukai partners.
    \end{enumerate}
\end{proposition}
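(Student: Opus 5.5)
The plan is to read both statements off the diagram produced in Theorem \ref{thm: cubic correspondence}. By that theorem we have $(X,P)$ and $(X',P')$, together with a Verra fourfold $V_4$ carrying two quadric surface bundle structures $p_1,p_2\colon V_4\to\bP^2$. The twisted polarised K3 surface of $p_1$ is $(S_1,H_1,\alpha_1)$, which is the one attached to $\Bl_PX\to P$. The twisted K3 surface of $p_2$ is $(S_2,H_2,\alpha_2)$, which is the one attached to $\Bl_{P'}X'\to P'$.

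For (1), apply Theorem \ref{thm: birationality theorem} twice.
\begin{itemize}
\item The quadric surface bundles $\Bl_PX\to P$ and $p_1\colon V_4\to\bP^2$ have the same discriminant double cover $S_1\to\bP^2$ and the same Brauer class $\alpha_1$. Hence they are birational, which gives $X\dashrightarrow V_4$; this is exactly Theorem \ref{thm: cubic fourfolds of verra type are birational to verra}.
\item The bundles $\Bl_{P'}X'\to P'$ and $p_2\colon V_4\to\bP^2$ share $(S_2,\alpha_2)$, so they are birational as well.
\end{itemize}
Composing gives $X\dashrightarrow V_4\dashrightarrow X'$.

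For (2), chain the categorical identifications:
\[
\cA_X\simeq \Db(S_1,\alpha_1)\simeq \cA_{V_4}\simeq \Db(S_2,\alpha_2)\simeq \cA_{X'}.
\]
\begin{itemize}
\item The outer equivalences come from \cite[Lemma 4.2, Theorem 4.3]{Kuzcub}, as recalled in Example \ref{ex:cubic fourfolds quadric fibration}. These need the fibres of the quadric bundles to have corank at most one. This holds because $X$ is very general in $\cC_{M_{2,n}}$ and, through the biregular identification $\psi|_V$ together with Lemma \ref{lem: cubic general equivalent}, $X'$ is cubic general as well.
\item The middle equivalence is Lemma \ref{lem: both Verra kuznetsov components equivalent}, i.e.\ \cite[Theorem 1.1]{ADPZ}.
\end{itemize}

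The main obstacle is to show that the composite is a Fourier--Mukai equivalence, not merely an exact equivalence. The twisted-sheaf equivalences from Kuznetsov's construction are given by kernels (the Clifford-module and $\bP^1$-bundle descriptions are relative Fourier--Mukai). For the ADPZ step, if the equivalence is not visibly of Fourier--Mukai type, I would bypass it via the twisted derived Torelli theorem \cite[Theorem 0.1]{HScaldararu}. The argument runs as follows.
\begin{itemize}
\item The lemma comparing $T(V_4)$ with $T(S_i,\alpha_i)$ gives a Hodge isometry $T(S_1,\alpha_1)\simeq T(S_2,\alpha_2)$.
\item By Nikulin's extension argument, as in the proof of Theorem \ref{thm: FML and FMVer} using Lemma \ref{lem: group structure of discriminant}, this extends to a Hodge isometry of the twisted Mukai lattices. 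We may take it orientation preserving since the Brauer classes have order two \cite[Remark 4.10]{HStwisted}.
\item This yields a Fourier--Mukai equivalence $\Db(S_1,\alpha_1)\simeq\Db(S_2,\alpha_2)$. Composing with the Fourier--Mukai equivalences on both sides gives the claim, since compositions of Fourier--Mukai functors are Fourier--Mukai.
\end{itemize}
Finally, the hypothesis $n\neq 2$ plays no role in (1) or (2). It only ensures, via Remark \ref{rmk: not isomorphic unless n is 2}, that the statement is non-vacuous, i.e.\ that $X'\not\simeq X$.
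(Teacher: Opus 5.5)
Your proposal is correct and follows the paper's own argument: (1) is read off from the birational maps $\Bl_P X \dashrightarrow V_4 \dashleftarrow \Bl_{P'}X'$ in the diagram of Theorem~\ref{thm: cubic correspondence}, and (2) is exactly the chain $\mathcal{A}_X\simeq \Db(S_1,\alpha_1)\simeq \mathcal{A}_{V_4}\simeq \Db(S_2,\alpha_2)\simeq \mathcal{A}_{X'}$ that the paper writes down. Your additional checks are sound and only make explicit what the paper leaves implicit: that $X'$ has corank $\le 1$ fibres, and that the composite can be taken of Fourier--Mukai type via the Nikulin gluing and twisted derived Torelli argument from Theorem~\ref{thm: FML and FMVer}.
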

\begin{proof}
    It is obvious from the construction that $X$ and $X'$ are birational. Moreover, they are Fourier--Mukai partners because we have equivalences
    \[
    \cA_X \simeq \Db(S_1,\alpha_1) \simeq \cA_{V_4} \simeq \Db(S_2,\alpha_2) \simeq \cA_{X'}.
    \]
\end{proof}

\subsection{Application to conjectures of Huybrechts and Brooke--Frei--Marquand}

In this subsection, we prove Huybrechts' conjecture and Brooke--Frei--Marquand's conjecture for cubic fourfolds in $\cC_{M_{2,n}}$ for special values of $n$. Firstly, let us recall the statements of the conjectures. 

\begin{conjecture}[{\cite[Conjecture 2.5]{HuybrechtsUpdate}}] \label{conj: Huybrechts birational categorical Torelli}
    Let $X$ and $X'$ be smooth cubic fourfolds, and suppose that their Kuznetsov components are equivalent. Then $X$ and $X'$ are birationally equivalent. In other words,
    \begin{equation*}
        \cA_X \simeq \cA_{X'} \implies X \sim_{\mathrm{bir}} X' .
    \end{equation*}
\end{conjecture}

\begin{conjecture}[{\cite[Conjecture 1.2]{BFM}}] \label{conj: BFM}
    Suppose $X$ and $X'$ are smooth cubic fourfolds with birationally equivalent Fano varieties of lines. Then $X$ and $X'$ are birationally equivalent. In other words,
    \begin{equation*}
        \cF(X) \sim_{\mathrm{bir}} \cF(X') \implies X \sim_{\mathrm{bir}} X' .
    \end{equation*}
\end{conjecture}

Firstly, we note the following fact about Fourier--Mukai partners of cubic fourfolds contained in $\cC_{M_{2,n}}$.

\begin{lemma}\label{lem: what happens in CM2n stays in CM2n}
    Let $X$ be a cubic fourfold with $A(X) \simeq M_{2,n}$. Then, if $X'$ is a Fourier--Mukai partner of $X$ containing a plane, we have $A(X') \simeq M_{2,n}$.
\end{lemma}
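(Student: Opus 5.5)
Since $\cA_X\simeq\cA_{X'}$, the derived Torelli theorem for cubic fourfolds (Addington--Thomas, Huybrechts) yields a Hodge isometry $T(X)\simeq T(X')$. Because $H^4(X,\bZ)$ and $H^4(X',\bZ)$ are the same odd unimodular lattice, and $A(X)=T(X)^\perp$, $A(X')=T(X')^\perp$, the orthogonal complements have isometric discriminant forms up to sign, both have rank $3$, and both are positive definite. Hence $A(X')$ lies in the genus of $A(X)\simeq M_{2,n}$, and in particular $\disc A(X')=\disc M_{2,n}$. The plan is to show that the extra information that $X'$ contains a plane pins down the isometry class within this genus.

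First, since $X'$ contains a plane $P'$ and $\rk A(X')=3$, the result of \cite{YY} recalled before Proposition \ref{prop: verra type criterion} gives $A(X')\simeq M_{\tau',n'}$ for a unique pair $0\le\tau'\le4$, $n'\ge2$. Next we compare discriminants. A direct computation gives $\det M_{\tau,n}=16n-3\tau^2$, so $\disc M_{2,n}=16n-12$. The equation $16n'-3\tau'^2=16n-12$ forces $3\tau'^2\equiv12\pmod{16}$, that is $\tau'^2\equiv4\pmod{16}$. Among $0\le\tau'\le 4$ this gives $\tau'=2$, since $\tau'=0$ gives $0$, $\tau'=4$ gives $16\equiv0$, and odd $\tau'$ gives an odd square. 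Substituting $\tau'=2$ then gives $n'=n$.

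For robustness I would also give a Brauer-class argument, which confirms $\tau'$ even independently. By \cite[Proposition 4.7]{Kuzcub}, $\tau'$ even is equivalent to the associated Brauer class $\alpha'$ being nontrivial. Nontriviality is a derived invariant here: $\Db(S,\alpha)\simeq\Db(S',\alpha')$ gives $T(S,\alpha)\simeq T(S',\alpha')$, and the discriminant distinguishes trivial from nontrivial $\alpha'$. Either argument suffices.

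The main obstacle is justifying that $A(X')$ has rank exactly $3$, so that the classification of \cite{YY} applies. This follows because $\rk T(X')=\rk T(X)=20$, given that the Hodge isometry preserves rank and $\rk H^4=23$. After that, the discriminant computation is elementary, and the only remaining care is checking that the $\det$ formula for $M_{\tau,n}$ is correct.
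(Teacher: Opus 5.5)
Your main argument is correct and is essentially the paper's proof: the Hodge isometry $T(X)\simeq T(X')$ gives $16n'-3\tau'^2=16n-12$, and reducing modulo $16$ forces $\tau'=2$ and then $n'=n$; your check that $\rk A(X')=3$ is a detail the paper leaves implicit. Drop the auxiliary Brauer-class argument, though, because its key claim is false in general: the discriminant of $T(S',\alpha')$ does not detect whether $\alpha'$ is trivial, and case (b) of Lemma \ref{lem: comparing k3s in rank 2} gives $T(S_1,\alpha_1)\simeq T(S_2)$ with $\alpha_1\neq 1$ and $\alpha_2=1$.
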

\begin{proof}
    Since $X$ and $X'$ are Fourier--Mukai partners, there is a Hodge isometry $T(X) \simeq T(X')$, hence we have $\disc \ T(X) = \disc \ T(X')$. By assumption, we have $\disc \ T(X) = \disc \ M_{2,n} = 16n-12$. Moreover, there exist $0\leq \tau'\leq 4,n'\geq 2$ such that $X' \in \cC_{M_{\tau',n'}}$. Then we have 
    \[
    16n-12 = \disc \ T(X) = \disc \ T(X') = 16n'-3(\tau')^2.
    \]
    Computing modulo 16, it is easy to see that it follows that we have $\tau'=2$ and thus $n'=n$. 
\end{proof}

\begin{theorem}\label{thm: FM count for CM2n}
   
    Let $(X,P)$ be a cubic fourfold of Verra type contained in $\cC_{{M}_{2,n}}$, where $n\neq 2$ is even, and $4(n-1)+1$ is a prime power, and suppose $X$ is very general with this property. Then 
    \[
    |\FM_8(X)| = 2.
    \]
    Here, $\FM_8(X)$ denotes the set of isomorphism classes of Fourier--Mukai partners $X'$ of $X$ containing a plane, i.e. the set of isomorphism classes of Fourier--Mukai partners contained in $\cC_8$.
    
    Moreover, the unique non-trivial Fourier--Mukai partner $X' \in \FM_8(X)$ is birational to $X$, and the Fano varieties of lines $\cF(X)$ and $\cF(X')$ are birational.
\end{theorem}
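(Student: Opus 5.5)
The plan is to reduce the count of $\FM_8(X)$ to the count of Fourier--Mukai partners of the associated Verra fourfold, and then use that each such Verra fourfold contributes exactly the two cubic fourfolds obtained from its two quadric fibrations.

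\textbf{Setting up.} Let $(S_1,H_1,\alpha_1)$ be the twisted K3 surface of $(X,P)$. By Proposition \ref{prop: verra type criterion} and \cite[Theorem 4.3.1]{GvG} we have $\NS(S_1)\simeq L_{n-1,1}$, so $d=n-1$ and $4d+1$ is a prime power. Theorem \ref{thm: cubic fourfolds of verra type are birational to verra} gives a Verra fourfold $V_4$ with $\cA_X\simeq\cA_{V_4}$. By Proposition \ref{prop: kernel and cokernel of res for double planes of rank 2}, $V_4$ is unique.

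\textbf{Counting the partners.} Let $X'\in\FM_8(X)$ with plane $P'$. By Lemma \ref{lem: what happens in CM2n stays in CM2n} we have $A(X')\simeq M_{2,n}$, so $(X',P')$ is again of Verra type and its twisted K3 surface $(S',\alpha')$ has $\NS(S')\simeq L_{n-1,1}$ and $\alpha'\neq 1$. Hence Theorem \ref{thm: cubic fourfolds of verra type are birational to verra} gives a Verra fourfold $V'$ with $\cA_{V'}\simeq\cA_{X'}\simeq\cA_X\simeq\cA_{V_4}$. Corollary \ref{cor: categorical torelli for rank 2} applies, since both associated K3 surfaces of $V'$ and $V_4$ have Néron--Severi lattice $L_{d,1}$ (as in Lemma \ref{lem: comparing k3s in rank 2}(a), invoked via Remark \ref{rmk: not isomorphic unless n is 2} because $n$ is even). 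It yields $V'\simeq V_4$. Therefore $(S',\alpha')$ is one of $(S_1,\alpha_1)$ or $(S_2,\alpha_2)$.

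Next we recover the cubic from its twisted K3 surface. We use Brakkee's isomorphism $\psi|_V\colon V\simeq\cC_8$ together with Lemmas \ref{lem: all for one} and \ref{lem: cubic general equivalent}. The main obstacle is that $\alpha$ has several lifts to $\Hom(H^\perp,\bZ/2\bZ)$ when $\rho=2$. I would argue that, via the prime-power uniqueness of embeddings in Lemma \ref{lem: embedding orbits}, only one lift lies in $M_{w_{1,1}}$ up to the relevant isometries. This would also supply the uniqueness claim already asserted in Theorem \ref{thm: cubic correspondence}. The conclusion is that $X'$ is isomorphic to $X$ or to the Verra involute, so $|\FM_8(X)|\le 2$.

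Conversely, the involute $X'$ lies in $\FM_8(X)$ by Proposition \ref{prop: properties of Verra involution}. It is not isomorphic to $X$ by Remark \ref{rmk: not isomorphic unless n is 2}, since $n\ne2$. Hence $|\FM_8(X)|=2$.

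\textbf{Birationality.} The partner $X'$ is birational to $X$ through $V_4$, by Proposition \ref{prop: properties of Verra involution}. For the Fano varieties of lines, Corollary \ref{cor: F(X) bir Y} gives $\cF(X)\dashrightarrow Y$ and $\cF(X')\dashrightarrow Y$. Here $Y\simeq\cM_{(0,H_1,0)}(S_1,\alpha_1)\simeq\cM_{(0,H_2,0)}(S_2,\alpha_2)$ is the double EPW quartic of $V_4$, which is general by very generality of $X$. Composing these maps shows that $\cF(X)$ and $\cF(X')$ are birational.
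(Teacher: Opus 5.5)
Your proposal is correct and is essentially the paper's proof: Corollary~\ref{cor: categorical torelli for rank 2} pins the twisted K3 surface of $X'$ down to $(S_1,\alpha_1)$ or $(S_2,\alpha_2)$, Theorem~\ref{thm: cubic correspondence} gives uniqueness of the cubic, and the Fano varieties are compared through $Y$; you are even slightly more careful than the paper in routing $X'$ through Lemma~\ref{lem: what happens in CM2n stays in CM2n} and its own Verra fourfold $V'$, and in checking the lower bound via Remark~\ref{rmk: not isomorphic unless n is 2}. The only loose end is your proposed reason for uniqueness of the lift, since Lemma~\ref{lem: embedding orbits} is about embeddings $T_{d,1}\hookrightarrow\Lambda_2$ and not about lifts of $\alpha$: either just cite Theorem~\ref{thm: cubic correspondence} as the paper does, or note that the two lifts of $\alpha$ to $\Hom(H^\perp,\bZ/2\bZ)$ differ exactly in their value on $H-2D$ (with $\NS(S)=\langle H,D\rangle\simeq L_{d,1}$), and since $\tfrac12(H-2D)$ generates $A_{H^\perp}\simeq\bZ/2\bZ$ this value is an isometry invariant, so at most one lift is of type $w_{1,1}$.
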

\begin{proof}  
   Let $V_4$ be the Verra fourfold which is birational to $X$ via Theorem \ref{thm: cubic correspondence}. Letting $(S_1,H_1,\alpha_1)$ and $(S_2,H_2,\alpha_2)$ be the associated twisted polarised K3 surfaces of $V_4$, we have $\NS(S_1) \simeq L_{n-1,1}$ by \cite[Theorem 4.3.1]{GvG}. Without loss of generality, we assume that the associated twisted polarised K3 surface of $(X,P)$ is $(S_1,H_1,\alpha_1)$. 
   
   Since $4(n-1)+1$ is a prime power, it follows from Corollary \ref{cor: categorical torelli for rank 2} that $V_4$ has no non-trivial Fourier--Mukai partners. In particular, for any twisted K3 surface $(S,\alpha)$ with $\Db(S,\alpha) \simeq \Db(S_1,\alpha_1)$, we have $(S,\alpha) \simeq (S_1,\alpha_1)$ or $(S,\alpha) \simeq (S_2,\alpha_2)$. 
   Therefore, if $X' \in \cC_{M_{2,n}}$ is a non-trivial Fourier--Mukai partner of $X$, then the associated twisted polarised K3 surface of $X'$ must be $(S_2,H_2,\alpha_2)$. There is a unique such cubic fourfold, which is birational to $X$ by Theorem \ref{thm: cubic correspondence}.

   By Corollary \ref{cor: F(X) bir Y}, we get birational equivalences $\cF(X) \dashrightarrow Y$ and $\cF(X') \dashrightarrow Y$, where $\cM_{(0,H_1,0)}(S_1, \alpha_1) \simeq Y \simeq \cM_{(0,H_2,0)}(S_2, \alpha_2)$. Taking the inverse of the second birational equivalence, we get the desired birational equivalence $\cF(X) \dashrightarrow \cF(X')$.
\end{proof}       

\begin{corollary}
    Conjecture \ref{conj: Huybrechts birational categorical Torelli} and Conjecture \ref{conj: BFM} hold for very general members of $\cC_{M_{2,4}}$.
\end{corollary}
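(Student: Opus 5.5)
The plan is to deduce the corollary from Theorem \ref{thm: FM count for CM2n} with $n=4$. First I check the numerical hypotheses. The integer $n=4$ is even and $n\neq 2$, and $4(n-1)+1 = 13$ is prime. The pair $(X,P)$ is of Verra type by Proposition \ref{prop: verra type criterion}, since $\tau=2$. The divisor $\cC_{M_{2,4}}$ is non-empty and irreducible because $(\tau,n)=(2,4)$ is not among the excluded pairs. Hence a very general member $X\in\cC_{M_{2,4}}$, with its plane $P$, satisfies all the hypotheses of Theorem \ref{thm: FM count for CM2n}. So $\FM_8(X)=\{X,X'\}$, where $X'$ is the Verra involution of $X$, and moreover $X\sim_{\mathrm{bir}}X'$ and $\cF(X)\sim_{\mathrm{bir}}\cF(X')$.

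For Conjecture \ref{conj: Huybrechts birational categorical Torelli}, let $X''$ be any cubic fourfold with $\cA_{X''}\simeq\cA_X$. The key step is to show that $X''$ contains a plane, so that it lies in $\FM_8(X)$. An equivalence of Kuznetsov components induces a Hodge isometry of Mukai lattices $\widetilde H(\cA_{X''},\bZ)\simeq\widetilde H(\cA_X,\bZ)$. This isometry restricts to an isometry of the algebraic parts, which contain the $A_2$-lattice. The lattice $A(X'')$ is then recovered as the saturation of $\langle h^2\rangle\oplus A_2^\perp$, so $A(X'')$ and $A(X)\simeq M_{2,4}$ are isometric as lattices. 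Since $M_{2,4}$ contains $K_8$ with the class of $P$, the class of a plane lies in $A(X'')$. By Voisin's criterion a class of square $3$ with intersection $1$ with $h^2$ is represented by a plane, using that $X''$ is smooth and the lattice has no short roots. It follows that $X''\in\cC_8$, and Lemma \ref{lem: what happens in CM2n stays in CM2n} then gives $A(X'')\simeq M_{2,4}$.

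I also need that $X''$ is very general in $\cC_{M_{2,4}}$. This holds because $T(X'')\simeq T(X)$ as Hodge lattices, so $O_{\mathrm{Hodge}}$ and the endomorphism field coincide. Therefore $X''\in\FM_8(X)$, so $X''\simeq X$ or $X''\simeq X'$, and in both cases $X''$ is birational to $X$.

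For Conjecture \ref{conj: BFM}, let $\cF(X'')\sim_{\mathrm{bir}}\cF(X)$. Birational hyperkähler manifolds have Hodge isometric $H^2$, and hence Hodge isometric transcendental lattices. Through the Abel--Jacobi isometry this gives $T(X'')\simeq T(X)$ up to sign and twist. Using \cite{HuybrechtsUpdate}, $\cF(X'')$ and $\cF(X)$ being birational, or even derived equivalent, implies that $\cA_{X''}\simeq\cA_X$ (possibly up to twisted equivalence). I expect this step to be the main obstacle. I would try to argue that the transcendental Hodge isometry extends to the Mukai lattices via Nikulin's extension argument, since $\disc T(X)=52=4\cdot 13$ has a small discriminant group. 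The unique-embedding count then gives an orientation-preserving Hodge isometry, and hence the categorical equivalence. The first part of the proof then finishes the argument.
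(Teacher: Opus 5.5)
Your reduction to Theorem \ref{thm: FM count for CM2n} with $n=4$ matches the paper's. The gap is in the step showing that an arbitrary $X''$ with $\mathcal{A}_{X''}\simeq\mathcal{A}_X$ contains a plane. The Hodge isometry $\widetilde H(\mathcal{A}_{X''},\bZ)\simeq\widetilde H(\mathcal{A}_X,\bZ)$ does restrict to the algebraic parts. However, nothing forces it to send the distinguished $A_2=\langle\lambda_1,\lambda_2\rangle$ of $\mathcal{A}_{X''}$ onto that of $\mathcal{A}_X$, since an arbitrary equivalence has no reason to respect these classes. So $A_2^\perp\cap\widetilde H_{\mathrm{alg}}$ is not transported by it. You therefore cannot rebuild $A(X'')$ from it, let alone $h^2$ and a plane class inside it. All the equivalence honestly gives is a Hodge isometry $T(X'')\simeq T(X)$. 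That only shows that $A(X'')=T(X'')^\perp$ lies in the \emph{genus} of $M_{2,4}$, not that it is isometric to $M_{2,4}$.

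This is exactly where the paper brings in extra input. It checks with OSCAR (\texttt{genus\_representatives()}) that $M_{2,4}$ is unique in its genus. Hence every Fourier--Mukai partner has $A(X'')\simeq M_{2,4}$, so $\FM(X)=\FM_8(X)$, and Theorem \ref{thm: FM count for CM2n} applies. You need this, or an analogous uniqueness-in-genus computation. One option is to do it for the rank-two lattice $\langle h^2\rangle^\perp\cap A(X'')$; that would also let your saturation argument keep track of $h^2$. Your check that $X''$ is very general is unnecessary, because the theorem is only applied to $X$.

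For Conjecture \ref{conj: BFM}, reducing to the categorical statement is the right move, and the paper is brief on this direction. Rather than extending from $T$, it is cleaner to use $H^2(\mathcal{F}(X),\bZ)\simeq v^\perp\subset\widetilde H(\mathcal{A}_X,\bZ)$ with $v=\lambda_1+\lambda_2$ and $v^2=2$. Since $A_{v^\perp}\simeq\bZ/2\bZ$, the Hodge isometry induced by a birational map, together with $v\mapsto v''$, extends to the Mukai lattices. The orientation can then be corrected by composing with the reflection in $\lambda_1$.
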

\begin{proof}
    One can verify using the Oscar function
    \texttt{genus\_representatives()}, 
    developed by Simon Brandhorst and Stevell Muller, that $M_{2,4}$ is unique in its genus \cite{OSCAR, OSCAR-book}. 
    Let $X$ be a cubic fourfold with $A(X) \simeq M_{2,4}$, and suppose $X$ is very general with this property. Then any Fourier--Mukai partner $X'$ of $X$ also satisfies $A(X') \simeq M_{2,4}$, since $M_{2,4}$ is unique in its genus. 
    Since $4\cdot(4-1)+1 = 13$ is prime, Theorem \ref{thm: FM count for CM2n} applies to $X$, hence $|\FM_8(X)| = |\FM(X)| = 2$, and $X$ is birational to its unique non-trivial Fourier--Mukai partner, and their Fano varieties are birational as well.
\end{proof}

\bibliographystyle{alpha}
{\small{\bibliography{references}}}
\end{document}